\newtheorem{thm}{Theorem}[section]
\newtheorem{prop}[thm]{Proposition}
\newtheorem{cor}[thm]{Corollary}
\newtheorem{lem}[thm]{Lemma}
\newtheorem{pro}[thm]{Proposition}
\theoremstyle{definition}
\newtheorem{defn}[thm]{Definition}
\newtheorem{rem}[thm]{Remark}
\newtheorem{exa}[thm]{Example}
\numberwithin{equation}{section}
\renewcommand{\leq}{\leqslant}
\renewcommand{\geq}{\geqslant}
\newcommand{\ex}{\mathbb{E}}
\newcommand{\allchisum}{\sum_{\chi\bmod m}}
\begin{document}
\author[Youssef Sedrati]{Youssef Sedrati}
\address{INSTITUT ÉLIE CARTAN DE LORRAINE, UNIVERSITÉ DE LORRAINE, BP 70239,  54506 VANDOEUVRE LÉS-NANCY , FRANCE }
\email{youssef.sedrati@univ-lorraine.fr}

\keywords{Chebyshev's bias, function fields, $L$-functions}
\subjclass[2020]{11T55, 11N45, 11M06}

\title[Prime number race over function fields]{Inequities in the Shanks-Renyi Prime number \\ race over function fields}

\baselineskip=17pt

\begin{abstract}
Fix a prime $p >2$ and a finite field $\mathbb{F}_{q}$ with $q$ elements, where $q$ is a power of $p$. Let $m$ be a monic polynomial in the polynomial ring $\mathbb{F}_{q}[T]$ such that  $\deg(m)$ is large. Fix an integer $r\geq 2$, and let $a_1,\dots,a_r$ be distinct residue classes modulo $m$ that are relatively prime to $m$. In this paper, we derive an asymptotic formula for the natural density $\delta_{m;a_1,\dots,a_r}$ of the set of all positive integers $X$ such that $\sum\limits_{N=1}^{X} \pi_{q}(a_1,m,N) > \sum\limits_{N=1}^{X} \pi_{q}(a_2,m,N)>\dots> \sum\limits_{N=1}^{X} \pi_{q}(a_r,m,N)$, where $\pi_{q}(a_i,m,N)$ denotes the number of irreducible monic polynomials in $\mathbb{F}_{q}[T]$ of degree $N$ that are congruent to $ a_i \bmod m$, under the assumption of LI (Linear Independence Hypothesis). Many consequences follow from our results. First, we deduce the exact rate at which $\delta_{m;a_1,a_2}$ converges to $\frac{1}{2}$ as $\deg(m)$ grows, where $a_1$ is a quadratic non-residue and $a_2$ is a quadratic residue modulo $m$, generalizing the work of Fiorilli and Martin. Furthermore, similarly to the number field setting, we show that two-way races behave differently than races involving three or more competitors, once $\deg(m)$ is large. In particular, biases do appear in races involving three or more quadratic residues (or quadratic non-residues) modulo $m$. This work is a function field analog of the work of Lamzouri, who established similar results in the number field case. However, we exhibit some examples of races in function fields where LI is false, and where the associated densities vanish, or behave differently than in the number field setting.
\end{abstract}
\maketitle
\section{Introduction}

Chebyshev noticed in 1853 that there seems to be more primes of the form $4n+3$ than those of the form $4n+1$ in initial intervals of the integers. More generally, $\pi(x;q,a)$ tends to exceed $\pi(x;q,b)$ when $a$ is a quadratic non-residue and $b$ is a quadratic residue modulo $q$,
where $\pi(x;q,a)$ is the number of primes less than $x$ that are congruent to $a$ modulo $q$. This phenomenon is called "Chebyshev's bias". In $1994$, Rubinstein and Sarnak \cite{RS94} determined conditionally under some natural assumptions that the logarithmic density $\delta (q;a,b)$ of the set of real numbers $x \geq 2$ for which $\pi(x;q,a) > \pi(x;q,b)$, exists and is positive. They also computed some of these densities and showed in particular that $\delta (4;3,1) \approx 0.9959$. 
The assumptions they used are the Generalized Riemann Hypothesis GRH and the Linear Independence Hypothesis LI (which is the assumption that the non-negative imaginary parts of the zeros of all Dirichlet $L$-functions attached to primitive characters modulo $q$ are linearly independent over $\mathbb{Q}$). They proved, under these two hypotheses, that $\delta (q;a,b) =  \frac{1}{2}$ if $a$ and $b$ are both quadratic residues or quadratic non-residues modulo $q$, and otherwise that  $\delta (q;a,b) > \frac{1}{2}$ if $a$ is a quadratic non-residue and $b$ is a quadratic residue modulo $q$, thus confirming Chebyshev's observation. We mention the interesting articles of Granville and Martin \cite{GM}, and Ford and Konyagin \cite{FK2} for detailed review about the history of this subject. 
Under GRH and LI, Rubinstein and Sarnak also showed that $\delta(q;a,b)$ approaches $\frac{1}{2}$ when $q \rightarrow \infty$. Under the same assumptions, Fiorilli and Martin \cite{fiorilliMartin} established an asymptotic formula for the density $\delta(q;a,b),$ and then provided the exact rate at which $\delta(q;a,b)$ converges to $\frac{1}{2}$ as $q$ grows. They also used this asymptotic formula in order to compute many of these densities. 

Under GRH and LI, Rubinstein and Sarnak completely solved the Shanks-Rényi prime number race problem, by showing for any fixed integer $r \geq 2$ that the set of real numbers $x \geq 2$ such that $\pi(x;q,a_1) > \dots > \pi(x;q,a_r)$ has a positive logarithmic density $\delta (q;a_1,\dots,a_r)$. 
Feuerverger and Martin \cite{FeM1} were the first to compute densities with three competitors. They showed, under GRH and LI, that $\delta (8;3,5,7) = \delta (8;7,5,3) \ne \frac{1}{3!}$ even if $3$, $5$ and $7$ are quadratic non-residues modulo $8$. This proves, that unlike the case of two competitors, some densities $\delta (q;a_1,a_2,a_3)$ involving three competitors are asymmetric under permutations of the $a_i$, even if the $a_i$ are all quadratic non-residues modulo $q$. However, all the explicit examples they exhibit satisfy $r \leq 4$ and $q \leq 12$.  
Lamzouri \cite{lamzouri} has then established an asymptotic formula for the densities in races with three or more competitors when $q$ is large enough. He discovered that two-way races behave differently than races involving three or more competitors, if $q$ is sufficiently large. 

In this article we generalize the results of \cite{lamzouri} in the case of polynomial rings over finite fields. We shall first recall some definitions and results. To this end, we fix a finite field $\mathbb{F}_{q}$ with $q$ elements, where $q$ is a power of a prime $p > 2$. Define $\log_q x$, for a positive real number $x$, by $\log_q x := \frac{\log x}{\log q}$. Let $m$ be a monic polynomial in $\mathbb{F}_{q}[T]$ whose degree is at least two. Let $M$ be the degree of $m$. We denote by $\phi(m)$ the number of invertible congruence classes modulo $m$. The letter $P$ will always denote an irreducible monic polynomial in $\mathbb{F}_{q}[T].$ To abbreviate our notations, we will sometimes use the word prime to refer to an irreducible monic polynomial in $\mathbb{F}_{q}[T]$.  
Let $\mathcal{M}_{q}$, $\mathcal{P}_{q}$ be the sets of monic and prime polynomials in $\mathbb{F}_{q}[T]$, respectively. 
For a polynomial $f \in \mathbb{F}_{q}[T]$, we denote its norm by $|f|$ which is defined to be $q^{\deg(f)}$ if $f \ne 0$, and equals $0$ otherwise.
There is a striking similarity between the ring of integers and polynomial rings over finite fields (see \cite{rosen}). Using this analogy, Cha \cite{BCha} found that many results of \cite{RS94} persists for irreducible polynomials over finite fields under some analogous assumptions. In particular, he established, under a certain hypothesis,
that like in the number field case primes are biased toward quadratic non-residues. On the other hand, he proved unconditionally in the case of polynomial rings that there are some biases in unexpected directions.
Since then, other generalizations have been extensively studied by many authors. Cha and Seick \cite{CSK} generalized the results of \cite{FeM1}. Cha, Fiorilli and Jouve studied the prime number race for elliptic curves over the function field of a proper, smooth and geometrically connected curve over a finite field. Further related subjects have been studied since then in  \cite{DevMeng}, \cite{devin2020limiting}, \cite{chaIm}, \cite{perret2020roots}, \cite{bailleul2021chebyshev}.

Let $r \geq 2$ be an integer. For any positive integer $N$ and an element $a$ in $\mathbb{F}_{q}[T]$ prime to $m$, we let $\pi_q(a,m,N)$ be the prime counting function defined by 
$$
\pi_q(a,m,N) := \# \{ P \in \mathcal{P}_{q} \ | \ P  \equiv a \bmod m, \deg(P) = N \}.
$$
Define $\mathcal{A}_r(m)$ to be the set of $r$-tuples of distinct residue classes $(a_1,a_2,\dots, a_r)$ modulo $m$ which are coprime to $m$. For $(a_1,a_2,\dots, a_r) \in \mathcal{A}_r(m)$, let $P_{m;a_1,\dots,a_r}$ be the set of all positive integers $X$ with 
$$
\sum\limits_{N=1}^{X} \pi_q(a_1,m,N) > \sum\limits_{N=1}^{X} \pi_q(a_2,m,N) >\dots > \sum\limits_{N=1}^{X} \pi_q(a_r,m,N).
$$
Throughout this article, we will use the following definition. 
\begin{defn}[Linear Independence Hypothesis (LI)]
We say that $m$ satisfies LI if the multi-set  $$\bigcup\limits_{\substack{\chi \bmod m \\  \chi \neq \chi_0}}\lbrace \gamma \in [0,\pi] :  L(\frac{1}{2} + i\gamma, \chi) = 0  \rbrace \cup \lbrace\pi\rbrace$$ is linearly independent over $\mathbf{Q}$ (see Section \ref{background} for the definition of the $L$-functions).
\end{defn}
Assuming LI, Cha proved in \cite{BCha} that the limit 
\begin{equation*}
\begin{aligned}
\delta_{m;a_1,\dots,a_r} := \lim\limits_{X \rightarrow \infty}^{} \frac{\left| P_{m;a_1,\dots,a_r} \cap \left\{1,2, \dots, X \right\} \right|}{X}
\end{aligned}
\end{equation*}
exists and is positive.
He also established that $\delta_{m;a,b}\to 1/2$ as $\deg(m) \to \infty$, uniformly for all $(a,b) \in \mathcal{A}_{2}(m)$. In fact, he proved that in general all biases disappear when $ \deg(m) \to \infty$. Let
$$ \Delta_r(m):= \max_{(a_1,a_2,\dots,a_r)\in \mathcal{A}_r(m)}\left|\delta_{m;a_1,\dots,a_r}-\frac{1}{r!}\right|.$$
 Generalizing the work of Rubinstein and Sarnak \cite{RS94}, Cha showed under the assumption of LI, that for any fixed $r \geq 2$ we have
\begin{equation}
\Delta_r(m)\to 0 \text{ as } \deg(m) \to \infty.
\end{equation}
We establish asymptotic formulas for the densities $\delta_{m;a_1,\dots,a_r}$ when $r \geq 2$ is fixed and $\deg(m)$ is large enough. An interesting consequence of our results is that $\Delta_r(m)$ behaves in a completely different way than $\Delta_2(m)$ when $r \geq 3$. A similar behavior was uncovered in the work of Lamzouri \cite{lamzouri} in the number field setting. 
\begin{thm}
Assume LI. Let $r \geq 3$ be a fixed integer. If $\deg(m)$ is large enough, then 
\begin{equation}
\label{delta2}
\begin{aligned}
\Delta_2(m) &= \frac{1}{|m|^{1/2 + o(1)}},
\end{aligned}
\end{equation}
and 
\begin{equation}
\begin{aligned}
\label{deltar3}
\Delta_r(m)  &\asymp_{r} \frac{1}{\log_q |m|}.
\end{aligned}
\end{equation}
\end{thm}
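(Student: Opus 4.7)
The plan is to adapt the Fourier-analytic and Gaussian approximation strategy of Lamzouri \cite{lamzouri} to the function field setting. Under LI, using the explicit formula for $L(u,\chi)$ in $\mathbb{F}_q[T]$, one can express
\[
\delta_{m;a_1,\ldots,a_r} = \Pr\bigl(X(m;a_1)>X(m;a_2)>\cdots>X(m;a_r)\bigr),
\]
where $X(m;a) = -c_m(a) + Y(m;a)$, with $c_m(a):=\#\{x\in(\mathbb{F}_q[T]/m)^\times : x^2\equiv a\pmod m\}$ and $Y(m;a) = \sum_{\chi\neq\chi_0}\bar\chi(a)\,Z_\chi$, the $Z_\chi$ being independent (by LI) random variables built from the angles of the zeros of $L(u,\chi)$. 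A direct computation gives $\mathrm{Var}(Z_\chi)=V_0\asymp M$ independent of $\chi$, so that $\mathrm{Var}(X(m;a))\asymp \phi(m)M$, $\mathrm{Cov}(X(m;a_i),X(m;a_j))\asymp -V_0$ for $i\neq j$, and the covariance matrix of the consecutive differences $D_i := X(m;a_i)-X(m;a_{i+1})$ equals, to leading order, the one obtained from consecutive differences of $r$ i.i.d.\ random variables.

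For the two-way race I would apply a univariate Gaussian approximation to $X(m;a_1)-X(m;a_2)$, giving
\[
\left|\delta_{m;a_1,a_2} - \tfrac{1}{2}\right| = \frac{|c_m(a_1)-c_m(a_2)|}{\sqrt{2\pi\,\mathrm{Var}(X(m;a_1)-X(m;a_2))}} + O(|m|^{-1}).
\]
Using $0\le c_m(a)\le 2^{\omega(m)}\le 2^{O(M/\log_q M)}=|m|^{o(1)}$ (the second inequality because a polynomial of degree $M$ has at most $O(M/\log_q M)$ distinct prime factors), together with $\mathrm{Var}(X(m;a_1)-X(m;a_2))\asymp|m|M$, one deduces the upper bound $\Delta_2(m)\le|m|^{-1/2+o(1)}$. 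The matching lower bound follows by choosing $a_1=1$ (so $c_m(a_1)=2^{\omega(m)}\ge 1$) and $a_2$ a quadratic non-residue modulo $m$ (so $c_m(a_2)=0$), thereby establishing \eqref{delta2}.

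For races with $r\ge 3$ competitors, the exact i.i.d.-difference covariance structure implies that the Gaussian probability $\Pr(D_1>0,\ldots,D_{r-1}>0)$ computed using only the leading covariance equals exactly $1/r!$. The deviation $\delta_{m;a_1,\ldots,a_r}-1/r!$ therefore splits into (i) a mean-bias contribution of size $O(|m|^{-1/2+o(1)})$, negligible against $1/M$, and (ii) a non-Gaussian correction coming from the Edgeworth expansion of the characteristic function of $(D_1,\ldots,D_{r-1})$ around its Gaussian approximation. Since each $Z_\chi$ is a sum of $M-1$ i.i.d.\ pieces, the normalised third and fourth cumulants of $Z_\chi$ are of order $M^{-1/2}$ and $M^{-1}$ respectively; inverting the Fourier transform against the indicator of the positive orthant yields a leading correction of exact order $1/M$, giving the upper bound in \eqref{deltar3}.

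The main obstacle is the matching lower bound $\Delta_r(m)\gg_r 1/M$. This requires exhibiting specific tuples $(a_1,\ldots,a_r)\in\mathcal{A}_r(m)$ for which the leading Edgeworth coefficient---a multilinear expression in $\{\bar\chi(a_i)\}_{i,\chi}$ weighted by the cumulants of the $Z_\chi$---is bounded away from zero by a positive constant $c_r>0$. Doing so demands a careful choice of residue classes (for instance, asymmetric configurations of three or more quadratic residues) so that the underlying character sums do not accidentally cancel, together with a uniform control on the error of the multivariate Edgeworth expansion. This is the principal technical challenge of the proof.
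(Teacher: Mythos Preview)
Your treatment of the two-way case is essentially correct and matches the paper's approach. The problem is in your analysis for $r\ge 3$: you have misidentified the mechanism that produces the $1/M$ correction.

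You assert that $\mathrm{Var}(Z_\chi)=V_0$ is constant in $\chi$, and hence that the covariance matrix of $(X(m;a_1),\ldots,X(m;a_r))$ agrees, to leading order, with that of i.i.d.\ variables. From this you conclude that the Gaussian part of the density is exactly $1/r!$, and that the deviation must come entirely from higher cumulants via an Edgeworth expansion. But the assumption is false in the relevant sense. Writing $I(\chi)=\tfrac{1}{2}\sum_{\gamma_\chi}|\gamma_\chi/(\gamma_\chi-1)|^2$, one has the exact formula (Proposition~\ref{formuleI})
\[
2I(\chi)=\frac{q}{q-1}M(\chi^*)+\frac{2q}{(q-1)\log q}\,\Re\Bigl(\frac{L'}{L}(1,\chi^*)\Bigr)-\frac{q^2+q}{2(q-1)^2}\chi(-1)-\frac{3q^2-q}{2(q-1)^2},
\]
and the off-diagonal covariance is $B_m(a_j,a_k)=2\sum_{\chi\ne\chi_0}\Re(\chi(a_j/a_k))\,I(\chi)$. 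Because of the terms $M(\chi^*)$, $L'/L(1,\chi^*)$, and $\chi(-1)$, the quantity $I(\chi)$ genuinely varies with $\chi$; summing against $\chi(a_j/a_k)$ one finds (Proposition~\ref{forExpBm}) that $|B_m(a_j,a_k)|$ can be as large as $c\,\phi(m)$, for instance when $a_j+a_k\equiv 0\pmod m$ or when $a_j/a_k$ is a prime power. The Gaussian contribution to $\delta_{m;a_1,\ldots,a_r}-1/r!$ then contains a term $N_m^{-1}\sum_{j<k}\beta_{j,k}(r)B_m(a_j,a_k)$, and since $N_m\asymp\phi(m)M$ this is of exact order $1/M$ for suitable tuples. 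The genuinely non-Gaussian (fourth-cumulant) correction, your $Q_4$ term, contributes only $O(1/N_m)=O(1/(\phi(m)M))$, far smaller.

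Consequently your proposed route to the lower bound $\Delta_r(m)\gg_r 1/M$ via an Edgeworth coefficient cannot work: that coefficient is too small by a factor of $\phi(m)$. The correct construction is concrete and purely second-order: take $a_1=1$, $a_r=-1$ (and intermediate $a_j$ chosen so that all other $B_m(a_j,a_k)$ are small), so that $B_m(a_1,a_r)=-\tfrac{q^2+q}{2(q-1)^2}\phi(m)+O(M^2)$, and use $\beta_{1,r}(r)<0$ to conclude (see the proof of Theorem~\ref{biasedRaces}). No delicate non-cancellation of character sums in a higher-cumulant expression is needed.
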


Under the assumption of LI, we know from \cite[Theorem $6.1$]{BCha} that if $a_1$ and $a_2$ are both quadratic residues or both quadratic non-residues modulo $m$, then $\delta_{m;a_1,a_2} = \delta_{m;a_2,a_1} = \frac{1}{2}$.  We say in this case that the race $\{m;a_1,a_2\}$ is unbiased. In general, we give the following definition.  
\begin{defn} Let $(a_1,\dots,a_r)\in \mathcal{A}_r(m)$. The race $\{m;a_1,\dots,a_r\}$ is said to be unbiased if for every permutation $\sigma$ of the set $\{1,2,\dots,r\}$ we have
$$ \delta_{m;a_{\sigma(1)},\dots, a_{\sigma(r)}}=\delta_{m;a_1,\dots,a_r}=\frac{1}{r!}.$$
Furthermore, a race is said to be biased if this condition does not hold.
\end{defn}
Our next result shows that, unlike two-way races, biases appear in races involving three or more quadratic residues (or quadratic non-residues) modulo $m$, if $\deg(m) = M$ is large enough. 
\begin{thm}
\label{aboutQuadRaces}Assume LI. Let $r \geq 3$ be a fixed integer. Then there exists a positive number $m_{0}(r,q)$ such that if $\deg(m) \geq m_{0}(r,q)$ then there are two $r$-tuples $(a_1,\dots,a_r),(b_1,\dots,b_r) \in \mathcal{A}_{r}(m)$, with all of the $a_i$ being quadratic residues and all of $b_i$ quadratic non-residues modulo $m$, and such that both races $\{ m; a_1,\dots,a_r \}$ and $\{ m; b_1,\dots,b_r \}$ are biased. 
\end{thm}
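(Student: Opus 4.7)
The plan is to exploit the explicit asymptotic formula for $\delta_{m;a_1,\dots,a_r}$ that underlies the estimate \eqref{deltar3}, of the shape
\begin{equation*}
\delta_{m;a_1,\dots,a_r} = \frac{1}{r!} + \frac{B_r(m;a_1,\dots,a_r)}{\log_q|m|} + O_r\!\left(\frac{1}{(\log_q|m|)^2}\right),
\end{equation*}
for $r \geq 3$, where $B_r$ is an explicit main bias coefficient expressible as a sum over triples $(\chi,\chi',\chi'')$ of non-principal Dirichlet characters modulo $m$ with $\chi\chi'\chi'' = \chi_0$, whose coefficients are \emph{permutation-asymmetric} in the indices of $(a_1,\dots,a_r)$. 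In contrast, for $r=2$ the surviving bias of size $|m|^{-1/2+o(1)}$ given by \eqref{delta2} is purely ``quadratic'', attached to the real character $\chi_{\mathrm{q}}$ modulo $m$, and vanishing whenever $a_1,a_2$ have the same quadratic character.

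First I would isolate within $B_r$ the ``genuinely cubic'' contribution arising from triples containing at least one non-real character, and verify that the complementary ``quadratic'' part (built from differences such as $\chi_{\mathrm{q}}(a_i)-\chi_{\mathrm{q}}(a_j)$) vanishes identically on tuples of equal quadratic character. To exhibit explicit biased all-QR (and all-QNR) races, I would then run a second moment argument: letting $\mathcal{A}_r^{\mathrm{QR}}(m)$ denote the subset of $\mathcal{A}_r(m)$ consisting of tuples of QRs modulo $m$, compute
\begin{equation*}
\sum_{(a_1,\dots,a_r)\in\mathcal{A}_r^{\mathrm{QR}}(m)} |B_r(m;a_1,\dots,a_r)|^2
\end{equation*}
by character orthogonality restricted to the subgroup of squares in $(\mathbb{F}_q[T]/m)^*$. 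The diagonal contribution yields a lower bound of order $|\mathcal{A}_r^{\mathrm{QR}}(m)|$, which by pigeonhole forces the existence of a tuple $(a_1,\dots,a_r)$ of QRs with $|B_r(m;a_1,\dots,a_r)|\gg_r 1$. Once $\deg(m)\geq m_0(r,q)$, the main term dominates the error and gives $\delta_{m;a_1,\dots,a_r}\neq 1/r!$, hence a biased race. The parallel computation with QNR in place of QR produces the required $(b_1,\dots,b_r)$.

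The main obstacle lies in the second moment step: restricting the average to squares introduces sign factors $\prod_i (1+\chi_{\mathrm{q}}(a_i))/2$, which mix the cubic character sum with versions twisted by $\chi_{\mathrm{q}}$. One must enumerate the surviving triples of characters carefully and verify that a positive proportion of the permutation-asymmetric cubic coefficients still contribute nontrivially after this twist, so that the lower bound persists. This is the function field analog of Lamzouri's argument in \cite{lamzouri} for the number field case, with $\log_q|m|$ playing the role of $\log q$ and characters of $(\mathbb{F}_q[T]/m)^*$ replacing Dirichlet characters modulo $q$.
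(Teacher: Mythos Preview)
Your proposal takes a fundamentally different route from the paper, and the description of the bias term is structurally incorrect.

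First, the bias coefficient is not a sum over triples of characters with $\chi\chi'\chi''=\chi_0$. In this paper (following Lamzouri), the asymptotic formula for $\delta_{m;a_1,\dots,a_r}$ when all $a_i$ are quadratic residues (Corollary~\ref{cor3}) reads
\[
\delta_{m;a_1,\dots,a_r}=\frac{1}{r!}+\frac{1}{N_m}\sum_{1\le j<k\le r}\beta_{j,k}(r)\,B_m(a_j,a_k)+O_r\!\left(\frac{1}{N_m}+\frac{C_mB_m}{N_m^{3/2}}+\frac{B_m^2}{N_m^2}\right),
\]
where each $B_m(a_j,a_k)$ is a sum over \emph{single} characters $\chi$, weighted by the zeros of $L(s,\chi)$; there is no cubic character structure here. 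The terms that vanish on all-QR tuples are those involving $C_m(a_j)$ (which is constant on QRs, so $\sum_j\alpha_j(r)C_m(a_j)=0$ etc.), not a ``quadratic part'' attached to the real character in the way you describe.

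Second, neither this paper nor Lamzouri's uses a second-moment argument over QR tuples. The paper proves the stronger Theorem~\ref{biasedQuadratic} (of which Theorem~\ref{aboutQuadRaces} is the qualitative corollary) by an \emph{explicit construction}: one chooses small primes $P_1,P_2\nmid m$ with $|P_i|\le 2qM$, sets $a_1=1$, $a_r=P_1^2$, and $a_j=(P_1P_2)^{2j}$ for $2\le j\le r-1$. All $a_i$ are squares, and Proposition~\ref{moreExpBm} gives $B_m(a_1,a_r)\asymp -\phi(m)\log|P_1|/|P_1|^2$ while all other $B_m(a_j,a_k)\ll_q M^{4r}$, since $P_1P_2$ divides the relevant ratios. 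Plugging into Corollary~\ref{cor3} and using $\beta_{1,r}(r)<0$ yields $\delta_{m;a_1,\dots,a_r}>1/r!+c(q,r)/M^3$; a transposition produces the reverse inequality. The QNR case follows immediately from the translation invariance $\delta_{m;ba_1,\dots,ba_r}=\delta_{m;a_1,\dots,a_r}$.

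Your averaging approach, even if the bias term were correctly identified, would require computing $\sum_{\text{QR tuples}}\bigl|\sum_{j<k}\beta_{j,k}(r)B_m(a_j,a_k)\bigr|^2$, which involves second moments of $B_m$ over pairs of squares and cross-terms between different $(j,k)$; this is considerably more work than the direct construction, and you have not indicated how the diagonal would be isolated or why it dominates after the restriction to squares.
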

Generalizing a result of Rubinstein and Sarnak [Proposition $3.1$, \cite{RS94}], 
Cha proved in \cite[Theorem $6.1$]{BCha} that if there exists a polynomial $\rho \not \equiv 1$ mod $m$ satisfying the following:
\begin{equation*}
\begin{aligned}
\rho^{3} \equiv 1 \bmod m, \quad a_2 \equiv a_1 \rho \bmod m, \quad \text{and} \quad a_3 \equiv a_1 \rho^2 \bmod m,
\end{aligned}    
\end{equation*}
then the race $\{m;a_1,a_2,a_3\}$ is unbiased under the assumption of LI.
We show that when LI is violated, this phenomenon does not hold. In fact, by taking $\rho = T \in \mathbb{F}_{3}[T]$, $m = T^2 + T + 1 \in \mathbb{F}_{3}[T]$, $a_1 = 2 \in \mathbb{F}_{3}[T]$, $a_2 = 2 T \in \mathbb{F}_{3}[T]$, and $a_3 = T + 1\in \mathbb{F}_{3}[T]$, we can show (see Section \ref{section10}) that $$\delta_{m;a_3,a_2,a_1} = \frac{1}{4} \ne \frac{1}{6}.$$ Moreover, we find out that a generalization of Littlewood's Theorem \cite{littlewood} in the function field setting is false. Indeed, for the same $m$ we have that 
$$\sum\limits_{N=1}^{X} \pi_q(T,m,N) < \sum\limits_{N=1}^{X} \pi_q(T+1,m,N),$$ for all large enough positive integers $X$. In particular, this implies $$\delta_{m;T,T+1} = 0.$$ We also give another example in the case of polynomials of degree $3$.  Choosing $m = T (T+1) (T+2) \in \mathbb{F}_{3}[T]$, we prove that $$\delta_{m;1,T^2 + 1} = 0,$$ and more generally that $\delta_{m;1,a} = 0$ for all quadratic non-residues $a$ mod $m$ (see Section \ref{section10} for the proof of these results).

In the next section we will discuss these results in details. In particular we shall describe the asymptotic formulas we prove for the densities $\delta_{m;a_1,\dots,a_r}$ according to whether $r=2$ or $r \geq 3$ and deduce further consequences.

\section{Detailed statement of results}

Before stating our results, we first define some notations that will be used throughout this paper. For any positive integer $N$, we define $\pi_q(N)$ by 
$$
\pi_q(N) := \# \{ P \in \mathcal{P}_q \ | \ \deg(P) = N \}.
$$
Let $(a_1,\dots,a_r) \in \mathcal{A}_{r}(m)$ and define $E_{m;a_1,\dots,a_r}(X)$, for a positive integer $X$, by 
$$
E_{m;a_1,\dots,a_r}(X) := \left( E_{m;a_1}(X),\dots, E_{m;a_r}(X)\right),
$$
where 
$$
E_{m;a_i}(X) := \frac{X}{q^{X/2}} \sum\limits_{N=1}^{X} \left( \phi(m) \pi_q(a_i,m,N) - \pi_q(N) \right). 
$$
Cha showed the existence of a certain limiting distribution $\mu_{m;a_1,\dots,a_r}$ that is constructed from $E_{m;a_1,\dots,a_r}(X)$. Moreover, it follows from his work that 
$$
\delta_{m;a_1,\dots,a_r} = \mu_{m;a_1,\dots,a_r} \left\{  (x_1,\dots,x_r) \in \mathbb{R}^{r}: x_1 > x_2 > \dots > x_r \right\}.
$$
For a non-principal Dirichlet character $\chi$ modulo $m$, we denote by $\{\gamma_{\chi}\}$ the multiset of inverse zeros of the Dirichlet $L$-function associated to the character $\chi$ (see Section \ref{background}). Let $\chi_{0}$ denote the principal character modulo $m$. Define $S = \bigcup\limits_{\substack{\chi \bmod m \\ \chi \ne \chi_{0}}} \gamma_{\chi}$ and $\{ U(\gamma_{\chi}) \}_{\gamma_{\chi} \in S }$ be a multiset of independent random variables uniformly distributed on the unit circle. Let $a$ be in $\mathbb{F}_{q}[T]$ such that $(a,m)=1.$ It follows from the results of \cite{BCha} that under the assumption of LI, the limiting distribution $\mu_{m;a_1,\dots,a_r}$ is the probability measure corresponding to the random variable $X_{m;a_1,\dots,a_r} = (X_{m;a_1},\dots,X_{m,a_r})$, 
with 
\begin{equation*}
\begin{aligned}
X_{m;a} &= - C_{m}(a) X^{'} + \sum\limits_{\substack{\chi \bmod m \\ \chi \ne \chi_{0}} }^{} \sum\limits_{\Im{\left( \gamma_{\chi} \right) > 0}}^{} 2 \Re(\chi(a) U(\gamma_{\chi})) \left|  \frac{\gamma_{\chi}}{\gamma_{\chi} - 1} \right|,
\end{aligned}
\end{equation*}
where $X^{'}$ is a random variable independent from the $U(\gamma_{\chi})$, and which takes the values $\frac{\sqrt{q}}{q-1}$ and $\frac{q}{q-1}$ with equal probability $\frac{1}{2}$,
and $$C_m(a):=-1+ \sum_{\substack{b^2\equiv a \bmod m\\ b \in (\mathbb{F}_{q}[T]/(m))^{*}}} 1.$$
We remark that for $(a,m)=1$, the function $C_{m}(a)$ takes only two values : $C_{m}(a) = -1$ if $a$ is a quadratic non-residue modulo $m$, and $C_{m}(a) = C_{m}(1)$ if $a$ is quadratic residue modulo $m.$
Moreover, it is easy to check that $|C_m(a)| = |m|^{o(1)}$. \\
Let $\text{Cov}_{m;a_1,\dots,a_r}$ be the covariance matrix of $X_{m;a_1,\dots,a_r}$. Then a straightforward computation shows the following lemma (see the proof in Section \ref{background}). 
\begin{lem}
\label{varAlea}
The entries of  $\textup{Cov}_{m;a_1,\dots,a_r}$ are
$$\textup{Cov}_{m;a_1,\dots,a_r}(j,k)= \begin{cases}  N_m + \frac{1}{4} \left( \frac{q}{q-1} - \frac{\sqrt{q}}{q-1} \right)^{2} C_{m}(a_j)^{2}&\text{ if } j=k \\ B_m(a_j,a_k) + \frac{1}{4} \left( \frac{q}{q-1} - \frac{\sqrt{q}}{q-1} \right)^{2} C_{m}(a_j) C_{m}(a_k) &\text{ if } j\neq k,\end{cases}$$
where
\begin{equation}
\begin{aligned}
N_m & := 2 \sum\limits_{\substack{\chi  \bmod m \\ \chi \ne \chi_{0}} }^{} \sum\limits_{\Im{\left( \gamma_{\chi} \right) > 0}}^{} \left| \frac{\gamma_{\chi}}{\gamma_{\chi} - 1} \right|^{2}, 
\end{aligned}
\end{equation}
and
\begin{equation}
\begin{aligned}
B_m(a_j,a_k) &:= \sum\limits_{\substack{\chi  \bmod m \\ \chi \ne \chi_{0}}}^{} \sum\limits_{\Im{\left( \gamma_{\chi} \right)} > 0}^{} (\chi(a_j/a_k) + \chi(a_k/a_j)) \left| \frac{\gamma_{\chi}}{\gamma_{\chi} - 1}\right|^{2}.
\end{aligned}
\end{equation}
\end{lem}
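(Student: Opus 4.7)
The plan is to exploit the fact that $X'$ is independent of the collection $\{U(\gamma_\chi)\}_{\gamma_\chi \in S}$, and that the $U(\gamma_\chi)$ are mutually independent (indexed by pairs $(\chi,\gamma_\chi)$). Writing $Y_a := \sum_{\chi \neq \chi_0} \sum_{\Im \gamma_\chi > 0} 2\Re(\chi(a) U(\gamma_\chi)) |\gamma_\chi/(\gamma_\chi - 1)|$ for the ``zeros'' part of $X_{m;a}$, independence gives $\textup{Cov}(X_{m;a_j}, X_{m;a_k}) = C_m(a_j) C_m(a_k) \textup{Var}(X') + \textup{Cov}(Y_j, Y_k)$, so the two halves of the formula will arise from these two summands.

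First I would dispose of the $X'$ contribution. Using the two-point distribution of $X'$, the expectation $E[X']$ equals $\tfrac{1}{2}(\tfrac{\sqrt{q}}{q-1} + \tfrac{q}{q-1})$ and $E[X'^2]$ equals $\tfrac{1}{2}(\tfrac{q}{(q-1)^2} + \tfrac{q^2}{(q-1)^2})$, and an elementary simplification reveals $\textup{Var}(X') = \tfrac{1}{4}\bigl(\tfrac{q}{q-1} - \tfrac{\sqrt{q}}{q-1}\bigr)^2$. This is exactly the factor appearing in both the diagonal and off-diagonal entries of the matrix in the statement.

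Next, using independence of the $U(\gamma_\chi)$ across pairs $(\chi, \gamma_\chi)$, the covariance $\textup{Cov}(Y_j, Y_k)$ splits as $\sum_{\chi \neq \chi_0} \sum_{\Im \gamma_\chi > 0} 4 \bigl|\tfrac{\gamma_\chi}{\gamma_\chi - 1}\bigr|^2 \, \textup{Cov}\bigl(\Re(\chi(a_j) U(\gamma_\chi)), \Re(\chi(a_k) U(\gamma_\chi))\bigr)$. For $U$ uniform on the unit circle one has $E[U] = E[U^2] = 0$ and $E[|U|^2]=1$; expanding $\Re(\chi(a_j)U) = \tfrac{1}{2}(\chi(a_j) U + \overline{\chi(a_j)}\overline{U})$ and similarly for $\chi(a_k)$ then yields $E[\Re(\chi(a_j) U) \Re(\chi(a_k) U)] = \tfrac{1}{2} \Re(\chi(a_j) \overline{\chi(a_k)}) = \tfrac{1}{2}\Re(\chi(a_j/a_k))$, with both marginal expectations vanishing.

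Finally I plug this in. When $j = k$, the factor equals $1/2$, so the sum reduces to $2 \sum_{\chi \neq \chi_0} \sum_{\Im \gamma_\chi > 0} |\gamma_\chi/(\gamma_\chi - 1)|^2 = N_m$. When $j \neq k$, using $2\Re(\chi(a_j/a_k)) = \chi(a_j/a_k) + \overline{\chi(a_j/a_k)} = \chi(a_j/a_k) + \chi(a_k/a_j)$ recovers $B_m(a_j, a_k)$. Combining with the $X'$ contribution completes the computation. There is no real obstacle here: the argument is entirely a second-moment calculation, and the only care needed is to correctly interpret the $U(\gamma_\chi)$ as independent even when two characters may share a zero, so that the double sum in $\textup{Cov}(Y_j, Y_k)$ truly collapses to its diagonal.
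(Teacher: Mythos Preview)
Your proof is correct and follows essentially the same approach as the paper's: both rely on the independence of $X'$ from the $U(\gamma_\chi)$, the moment identities $E[U]=E[U^2]=0$, $E[|U|^2]=1$ for $U$ uniform on the unit circle, and the resulting identity $E[\Re(\chi(a_j)U)\Re(\chi(a_k)U)]=\tfrac{1}{2}\Re(\chi(a_j/a_k))$. Your organization is marginally cleaner --- you invoke bilinearity and independence at the outset to split the covariance as $C_m(a_j)C_m(a_k)\textup{Var}(X')+\textup{Cov}(Y_j,Y_k)$, whereas the paper expands $(X_{m;a_j}-E[X_{m;a_j}])(X_{m;a_k}-E[X_{m;a_k}])$ directly and simplifies the $X'$-constants afterward --- but the substance is the same.
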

We prove asymptotic formulas for $N_{m}$ and $B_{m}(a,b)$ for $(a,b) \in \mathcal{A}_{2}(m)$ in Section \ref{section4}. This leads us to deduce later (see Lemma \ref{formuleNm} and Corollary \ref{majorBm}) that 
\begin{equation}
\label{asymptotics}
\begin{aligned}
N_{m} \sim \frac{q}{q-1} \phi(m) \log_q |m| \text{ and} \  B_{m}(a,b) \ll \phi(m),
\end{aligned}
\end{equation}
for all $(a,b) \in \mathcal{A}_{2}(m).$ \\
Under the assumption of LI, in the case $r=2$, we establish later the following asymptotic formula for the densities $\delta_{m;a,b}$. 
\begin{thm}
\label{asymDelta2}
Let $m \in \mathcal{M}_{q}$ be of large degree.
Assume LI. Let $(a,b) \in \mathcal{A}_{2}(m)$, then 
\begin{equation}
\begin{aligned}
\delta_{m;a,b} = \frac{1}{2} - \frac{(\sqrt{q} + q)}{2(q-1)} \frac{\left( C_{m}(a) - C_{m}(b) \right) }{\sqrt{2 \pi V_{m}(a,b)}}+ O \left( \frac{C_{m}(1)^2 \log_q |m|}{\phi(m)} \right),
\end{aligned}
\end{equation}
where $V_{m}(a,b) = 2 \left( N_m - B_m(a,b) \right)$. 
\end{thm}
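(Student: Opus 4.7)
The plan is to compute $\delta_{m;a,b} = \Pr(Y > 0)$ where $Y := X_{m;a} - X_{m;b}$, by approximating the law of $Y$ by a Gaussian with matching first two moments, via Fourier inversion. First, I would decompose
$$Y = -(C_m(a) - C_m(b)) X' + W, \qquad W := \sum_{\substack{\chi \bmod m \\ \chi \neq \chi_0}} \sum_{\Im(\gamma_\chi) > 0} 2\Re\!\bigl((\chi(a)-\chi(b))U(\gamma_\chi)\bigr)\Big|\tfrac{\gamma_\chi}{\gamma_\chi - 1}\Big|,$$
where $W$ is independent of $X'$. Each summand of $W$ is symmetric in distribution since $U(\gamma_\chi)$ is uniform on the unit circle, so $W$ itself is symmetric with $\mathbb{E}[W]=0$, and a short computation (parallel to Lemma \ref{varAlea}) gives $\mathrm{Var}(W) = V_m(a,b)$. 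Conditioning on the two equally-likely values $\sqrt{q}/(q-1)$ and $q/(q-1)$ of $X'$ reduces the problem to
$$\delta_{m;a,b} = \tfrac{1}{2}\Pr(W > c_1) + \tfrac{1}{2}\Pr(W > c_2),$$
with $c_1 = (C_m(a) - C_m(b))\sqrt{q}/(q-1)$ and $c_2 = (C_m(a) - C_m(b))q/(q-1)$.

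Next, by the symmetry of $W$ and Fourier inversion,
$$\Pr(W > c) = \tfrac{1}{2} - \tfrac{1}{\pi}\int_0^\infty \hat{W}(\xi)\,\frac{\sin(\xi c)}{\xi}\, d\xi,$$
while independence of the $U(\gamma_\chi)$ together with the identity $\mathbb{E}[e^{it\Re(\alpha U)}] = J_0(t|\alpha|)$ gives the factorization
$$\hat{W}(\xi) = \prod_{\substack{\chi \bmod m \\ \chi \neq \chi_0}} \prod_{\Im(\gamma_\chi) > 0} J_0\!\Bigl(2\xi\,|\chi(a)-\chi(b)|\,\Big|\tfrac{\gamma_\chi}{\gamma_\chi - 1}\Big|\Bigr).$$
The core step is then to compare $\hat W(\xi)$ with the Gaussian $G(\xi) := \exp(-V_m(a,b)\xi^2/2)$. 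I would fix a threshold $\xi_0$ of size roughly $(\log|m|)^{1/2}/\sqrt{V_m(a,b)}$. For $|\xi|\leq \xi_0$, the expansion $\log J_0(x) = -x^2/4 + O(x^4)$, combined with the trivial bound $|\chi(a)-\chi(b)| \leq 2$ and the fourth-moment estimate on the $|\gamma_\chi/(\gamma_\chi-1)|$ arising from \eqref{asymptotics}, yields $\hat W(\xi) = G(\xi)\bigl(1 + O(\xi^4 V_m(a,b))\bigr)$. For $|\xi| > \xi_0$, one combines the global bound $|J_0(x)|\leq \min(1,Cx^{-1/2})$ with the fact that there are of order $\phi(m)\log_q|m|$ Bessel factors to obtain an essentially superpolynomial saving, making the tail of the $\xi$-integral absorbable into the claimed error.

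Finally, inserting $G(\xi)$ and using the closed-form evaluation $\int_0^\infty e^{-V\xi^2/2}\sin(\xi c)/\xi\,d\xi = \tfrac{\pi}{2}\mathrm{erf}(c/\sqrt{2V})$ produces $\Pr(W > c) = \Phi\bigl(-c/\sqrt{V_m(a,b)}\bigr) + (\text{error})$. A Taylor expansion $\Phi(-x) = \tfrac{1}{2} - x/\sqrt{2\pi} + O(x^3)$, applied at $x = c_i/\sqrt{V_m(a,b)}$ and averaged over $i = 1,2$, produces the leading term
$$-\frac{(\sqrt{q}+q)(C_m(a)-C_m(b))}{2(q-1)\sqrt{2\pi V_m(a,b)}},$$
since $c_1 + c_2 = (\sqrt q + q)(C_m(a)-C_m(b))/(q-1)$. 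The main obstacle I anticipate is the large-$\xi$ tail: many factors $|J_0|$ are close to $1$ when $|\chi(a) - \chi(b)|$ is small, so a crude product bound does not suffice. Refining this using the asymptotics \eqref{asymptotics} and counting pairs $(\chi,\gamma_\chi)$ for which $|\chi(a)-\chi(b)|$ is bounded away from zero — while tracking the Gaussian and Taylor errors, both of which involve at worst the size $|C_m(1)| \leq |m|^{o(1)}$ — should deliver the stated error term $O(C_m(1)^2\log_q|m|/\phi(m))$.
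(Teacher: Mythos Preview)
Your approach is correct and takes a genuinely different route from the paper. The paper works in two dimensions throughout: it applies Fourier inversion to the joint measure $\mu_{m;a_1,a_2}$ on $\mathbb{R}^2$, truncates via the tail bound of Proposition~\ref{tail} and the decay estimate of Proposition~\ref{majorMesur}, rescales by $\sqrt{N_m}$, and then invokes Lemma~\ref{lemSimCase2} together with the bivariate Gaussian identity of Lemma~\ref{lemInter} to evaluate three integrals $I_0,I_1,I_2$ over the region $\{x_1>x_2\}$. You instead collapse the problem to one dimension by passing to the difference $Y=X_{m;a}-X_{m;b}$, conditioning on $X'$, and analyzing the symmetric variable $W$ via the Gil--Pelaez formula; the bivariate Gaussian computation is replaced by the closed-form $\int_0^\infty e^{-V\xi^2/2}\frac{\sin(\xi c)}{\xi}\,d\xi = \tfrac{\pi}{2}\operatorname{erf}(c/\sqrt{2V})$ followed by a one-variable Taylor expansion of $\Phi$.

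The trade-offs: your reduction is cleaner for $r=2$ and in fact yields a sharper error term, since the paper's $O(C_m(1)^2 M^2/N_m)$ in \eqref{estimate64} arises from bounding $|e^{i(t_1x_1+t_2x_2)}|$ by $1$ over an $x$-box of area $\asymp M^2$, a loss you avoid entirely. The obstacle you flag for the large-$\xi$ tail is real but is resolved by exactly the counting argument behind Proposition~\ref{majorMesur}: the set of $\chi$ with $|\chi(a)-\chi(b)|\geq 1/2$ has size $\gg\phi(m)$ (via orthogonality, as in \eqref{mesureLem21}--\eqref{mesureLem22}), and each such $\chi$ contributes $\asymp M(\chi^*)$ decaying Bessel factors, giving the required superpolynomial saving. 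On the other hand, the paper's two-dimensional framework is set up so that the case $r\geq 3$ (Theorem~\ref{forAsymDelta3}) follows the same template, whereas your difference trick is specific to two competitors.
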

Combining this result with \eqref{asymptotics} and using that $\phi(m) = |m|^{1 + o(1)}$ we deduce \eqref{delta2}. \\
We now state our result for $\delta_{m;a_1,\dots,a_r}$ when $r \geq 3$.
Let
\begin{equation}
\begin{aligned}
C_m := \max\limits_{1 \leq i \leq r} \left| C_m(a_i) \right| \text{ and }  B_{m} := \max\limits_{1 \leq j  < k \leq r}^{} \left| B_{m}(a_j,a_k)\right|
\end{aligned}
\end{equation}
for all $(a_1,\dots,a_r) \in \mathcal{A}_{r}(m).$ Furthermore, 
for $1 \leq j \ne k \leq r$ we recall the following integrals which appear in the work of Lamzouri [see \cite{lamzouri}].
\begin{equation*}
\begin{aligned}
\alpha_j(r) &:= (2 \pi)^{-r/2} \int_{x_1 >\dots>x_r}^{} x_j \exp{\left( - \frac{x_1^2 +\dots+ x_r^{2}}{2}\right)} {d}x_1\dots{d}x_r, \\
\lambda_j(r) &:= (2 \pi)^{-r/2} \int_{x_1 >\dots>x_r}^{} (x_j^2 - 1) \exp{\left( - \frac{x_1^2 +\dots+ x_r^{2}}{2}\right)} {d}x_1\dots{d}x_r,
\end{aligned}
\end{equation*}
and 
\begin{equation*}
\begin{aligned}
\beta_{j,k}(r) &:= (2 \pi)^{-r/2} \int_{x_1 >\dots>x_r}^{} x_j x_k \exp{\left( - \frac{x_1^2 +\dots+ x_r^{2}}{2}\right)} {d}x_1 \dots{d}x_r.
\end{aligned}
\end{equation*}

Then we have 
\begin{thm} 
\label{forAsymDelta3}
Fix an integer $r \geq 3$ and let $m \in \mathcal{M}_{q}$ of large degree. Assume LI, then 
\begin{equation*}
\begin{aligned}
\delta_{m;a_1,\dots,a_r}&= \frac{1}{r!} - \frac{(q + \sqrt{q})}{2 \sqrt{N_{m}} (q-1)} \sum\limits_{j=1}^{r}  \alpha_j(r) C_{m}(a_j) +  \frac{1}{N_{m}} \sum\limits_{1 \leq j < k \leq r}^{}  \beta_{j,k}(r) B_{m}(a_j,a_k)   \\
&\quad+ \frac{1}{4 N_{m}}  \frac{q + q^2}{(q-1)^{2}} \left(\sum\limits_{j=1}^{r} \lambda_{j}(r) C_{m}(a_j)^{2} + 2 \sum\limits_{1 \leq j < k \leq r}^{} \beta_{j,k}(r) C_{m}(a_j) C_{m}(a_k)  \right)   \\
&\quad+ O_{r} \left( \frac{1}{N_{m}} + \frac{C_m B_m}{N_{m}^{3/2}} + \frac{B_{m}^{2}}{N_{m}^{2}}\right).
\end{aligned}
\end{equation*}
\end{thm}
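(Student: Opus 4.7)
The plan is to show that the rescaled vector $\tilde X_j := X_{m;a_j}/\sqrt{N_m}$ is approximately multivariate Gaussian and then Taylor-expand the probability that a Gaussian vector lies in the simplex $\{x_1>\dots>x_r\}$ in powers of a small mean and a small covariance perturbation. By Lemma \ref{varAlea} and the asymptotics in \eqref{asymptotics}, $\tilde X$ has mean vector $\mu_m$ with components
\[
\mu_m^{(j)} = -\frac{q+\sqrt q}{2(q-1)\sqrt{N_m}}\, C_m(a_j) = O_r\!\left(\frac{C_m}{\sqrt{N_m}}\right),
\]
and covariance $\Sigma_m = I + E_m$, where $E_m^{(j,j)} = \frac{1}{4N_m}\bigl(\tfrac{q-\sqrt q}{q-1}\bigr)^{2} C_m(a_j)^2$ and, for $j\ne k$,
\[
E_m^{(j,k)} = \frac{B_m(a_j,a_k) + \tfrac14\bigl(\tfrac{q-\sqrt q}{q-1}\bigr)^{2} C_m(a_j)C_m(a_k)}{N_m},
\]
so $\|\mu_m\|=O(C_m/\sqrt{N_m})$ and $\|E_m\|=O\bigl((B_m+C_m^2)/N_m\bigr)=o(1)$ as $\deg m\to\infty$.

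First I would compute the characteristic function of $\tilde X$ explicitly. Since $X'$ and $\{U(\gamma_\chi)\}$ are independent and each $U(\gamma_\chi)$ is uniform on the unit circle, $\hat\mu_{\tilde X}(\xi)$ factors as a product of Bessel-function factors $J_0\bigl(\tfrac{2}{\sqrt{N_m}}\bigl|\tfrac{\gamma_\chi}{\gamma_\chi-1}\bigr|\bigl|\sum_j\xi_j\chi(a_j)\bigr|\bigr)$ over non-principal $\chi$ and $\Im(\gamma_\chi)>0$, times a two-term exponential coming from $X'$. Expanding $\log J_0(z)=-z^2/4-z^4/64+O(z^6)$, and the analogous expansion of the $X'$-factor, the quadratic part sums exactly to $i\,\xi\!\cdot\!\mu_m-\tfrac12\xi^T\Sigma_m\xi$ (the reduction of the double sum uses $\sum_{\chi\ne\chi_0}\chi(a_j/a_k)\sum_\gamma|\gamma/(\gamma-1)|^2$, which is $N_m/2$ for $j=k$ and $B_m(a_j,a_k)/2$ after symmetrizing $j\leftrightarrow k$). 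One thus obtains
\[
\hat\mu_{\tilde X}(\xi) = \exp\!\bigl(i\xi\!\cdot\!\mu_m - \tfrac12\xi^T\Sigma_m\xi\bigr)\bigl(1 + O_r(|\xi|^4/N_m)\bigr)
\]
on, say, $|\xi|\le N_m^{1/4}$, while for large $|\xi|$ the product of $J_0$'s decays rapidly. After smoothing the indicator of the simplex and applying Plancherel (conditioning on the two values of $X'$ to handle the singular component of $\mu_{m;a_1,\dots,a_r}$), this yields
\[
\delta_{m;a_1,\dots,a_r} = G_m + O_r(1/N_m), \qquad G_m := \mathbb{P}(Y_1>\dots>Y_r),\ Y\sim\mathcal N(\mu_m,\Sigma_m).
\]
This multivariate Berry--Esseen--type bound, adapted to the sum structure where each summand has size $O(1/\sqrt{N_m})$ and there are $\sim N_m$ summands, is the main obstacle.

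Finally I would Taylor-expand $G_m$ around $(\mu_m,E_m)=(0,0)$. The density of $\mathcal N(\mu_m,I+E_m)$ equals, to second order in $(\mu_m, E_m)$,
\[
\phi(x)\!\left[1 + \mu_m\!\cdot\!x + \sum_{j<k}\!\bigl(\mu_m^{(j)}\mu_m^{(k)}+E_m^{(j,k)}\bigr)x_jx_k + \tfrac12\!\sum_j\bigl((\mu_m^{(j)})^2+E_m^{(j,j)}\bigr)(x_j^2-1)\right],
\]
with remainder $O_r(\|\mu_m\|^3+\|\mu_m\|\|E_m\|+\|E_m\|^2)$. Integrating each term against $\phi$ over $\{x_1>\dots>x_r\}$ produces $1/r!$, $\sum_j\mu_m^{(j)}\alpha_j(r)$, and the corresponding $\lambda_j(r)$ and $\beta_{j,k}(r)$ sums. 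Substituting the explicit values of $\mu_m^{(j)}$ and $E_m^{(j,k)}$, the two contributions proportional to $C_m(a_j)^2$ (from $(\mu_m^{(j)})^2$ and from $E_m^{(j,j)}$) combine via the identity
\[
\Bigl(\tfrac{q+\sqrt q}{q-1}\Bigr)^{\!2}+\Bigl(\tfrac{q-\sqrt q}{q-1}\Bigr)^{\!2}=\frac{2(q+q^2)}{(q-1)^2},
\]
and the same identity handles the cross products $C_m(a_j)C_m(a_k)$, producing the coefficient $\frac{1}{4N_m}\frac{q+q^2}{(q-1)^2}$ in the statement. The remainder $\|\mu_m\|\|E_m\|$ yields the $O_r(C_m B_m/N_m^{3/2})$ term, $\|E_m\|^2$ gives $O_r(B_m^2/N_m^2)$, and $\|\mu_m\|^3 = O(C_m^3/N_m^{3/2})$ is absorbed into $O_r(1/N_m)$ since $C_m=|m|^{o(1)}$, matching the error term in the theorem exactly.
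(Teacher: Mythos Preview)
Your approach is correct and lands on the same main term and error as the paper, but the organization differs in a way worth noting. The paper proves Theorem~\ref{forAsymDelta3} by Fourier inversion against a truncated simplex: Propositions~\ref{tail} and~\ref{majorMesur} localize both integrals, and then Proposition~\ref{transFourrier} expands $\hat\mu_{m;a_1,\dots,a_r}(t/\sqrt{N_m})$ as $e^{-\|t\|^2/2}$ times an explicit polynomial in $C_m/\sqrt{N_m}$, $B_m/N_m$, and $1/N_m$ (hence the auxiliary polynomials $P_{s,d,l}$ and the truncation parameter $L$); the $\alpha_j,\lambda_j,\beta_{j,k}$ arise by integrating each monomial coefficient. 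You instead absorb the quadratic part of the expansion into a genuine Gaussian $\mathcal N(\mu_m,I+E_m)$, reduce to $\delta=G_m+O(1/N_m)$, and then Taylor-expand $G_m$ in $(\mu_m,E_m)$; the identity $\bigl(\tfrac{q+\sqrt q}{q-1}\bigr)^2+\bigl(\tfrac{q-\sqrt q}{q-1}\bigr)^2=\tfrac{2(q+q^2)}{(q-1)^2}$ that merges the two $C_m^2$-contributions is exactly the mechanism that is hidden in the paper's bookkeeping. Your packaging is arguably cleaner (no $L$-truncation of the $B_m$-series), while the paper's version has the advantage that the ``Berry--Esseen step'' you flag as the main obstacle is already carried out in full detail: the smoothing/Plancherel argument you sketch is precisely what Propositions~\ref{majorMesur}--\ref{tail} and Lemmas~4.2--4.3 of \cite{lamzouri} supply, so you may cite those directly to close that gap.
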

We compare this result with \cite[Theorem $2.1$]{lamzouri} where a similar asymptotic formula is proved for the densities in races with three or more competitors in the number field case. The major difference is that in the function field case we observe some new factors depending on $q$ (the number of element of the field $\mathbb{F}_{q}$). 
This is due to the factor $\mathcal{B}_{m;a_1,\dots,a_r}(t)$ of the explicit formula for the Fourier Transform $\hat{\mu}_{m;a_1,\dots,a_r}(t)$ (see below Section \ref{section5} Equation \eqref{forExplicitCha}).
A direct corollary follows.
\begin{cor}
\label{forAsymCor}
Under the same hypotheses of Theorem \ref{forAsymDelta3} we have 
\begin{equation*}
\begin{aligned}
\delta_{m;a_1,\dots,a_r} &= \frac{1}{r!} - \frac{q+ \sqrt{q}}{2 \sqrt{N_m} (q-1)} \sum\limits_{j=1}^{r} \alpha_j(r) C_{m}(a_j) 
+ \frac{1}{N_{m}} \sum\limits_{1 \leq j < k \leq r}^{} \beta_{j,k}(r) B_{m}(a_j,a_k) \\
&\quad+O_{r} \left( \frac{C_m^2}{N_m} + \frac{B_m^2}{N_m^2} \right).
\end{aligned}
\end{equation*}
\end{cor}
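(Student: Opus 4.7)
The plan is to show that Corollary \ref{forAsymCor} follows from Theorem \ref{forAsymDelta3} by absorbing the last line of lower-order main terms together with two of the error terms into the simpler error $O_r(C_m^2/N_m + B_m^2/N_m^2)$. No new analytical input is needed; the argument is purely a term-by-term comparison.

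First I would handle the quadratic-in-$C_m$ main term from Theorem \ref{forAsymDelta3}, namely
\begin{equation*}
\frac{1}{4 N_{m}} \frac{q + q^2}{(q-1)^{2}} \left(\sum_{j=1}^{r} \lambda_{j}(r) C_{m}(a_j)^{2} + 2 \sum_{1 \le j < k \le r} \beta_{j,k}(r) C_{m}(a_j) C_{m}(a_k)\right).
\end{equation*}
The integrals $\lambda_j(r)$ and $\beta_{j,k}(r)$ are absolute constants depending only on $r$, and the $q$-dependent prefactor is also bounded in terms of $q$. Using the trivial bounds $|C_m(a_j)|^2 \le C_m^2$ and $|C_m(a_j)C_m(a_k)| \le C_m^2$, the entire display above is $O_r(C_m^2/N_m)$, which fits into the new error term.

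Next I would absorb the two unwanted pieces of the error term of Theorem \ref{forAsymDelta3}. For the $O_r(1/N_m)$ piece, I use the remark preceding Theorem \ref{forAsymDelta3}, namely that $C_m(a)$ is either $-1$ (when $a$ is a quadratic non-residue) or $C_m(1) = 2^{\omega(m)} - 1 \ge 1$ (when $a$ is a quadratic residue). In either case $|C_m(a)| \ge 1$, so $C_m \ge 1$ and thus $1/N_m \le C_m^2/N_m$. For the cross term $O_r(C_m B_m/N_m^{3/2})$, I apply the AM--GM inequality to the factors $C_m/\sqrt{N_m}$ and $B_m/N_m$, giving
\begin{equation*}
\frac{C_m B_m}{N_m^{3/2}} \le \frac{1}{2}\left(\frac{C_m^2}{N_m} + \frac{B_m^2}{N_m^2}\right),
\end{equation*}
which is again subsumed into $O_r(C_m^2/N_m + B_m^2/N_m^2)$.

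Putting these observations together rewrites the asymptotic formula of Theorem \ref{forAsymDelta3} in exactly the form claimed by Corollary \ref{forAsymCor}. Since the reduction is entirely mechanical, there is no serious obstacle; the only minor point to verify carefully is the lower bound $|C_m(a)| \ge 1$, which is needed to swallow the stand-alone $O_r(1/N_m)$ error term into $O_r(C_m^2/N_m)$.
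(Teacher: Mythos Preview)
Your proof is correct and is exactly the mechanical absorption that the paper has in mind; the paper itself gives no proof beyond the sentence ``A direct corollary follows.'' Your three observations (the quadratic-in-$C_m$ line is $O_r(C_m^2/N_m)$, the bound $C_m\ge 1$ swallows $1/N_m$, and AM--GM handles the cross term) are precisely what is needed, and the $q$-dependent prefactor $(q+q^2)/(q-1)^2$ is indeed bounded absolutely for $q\ge 3$, so no extra $q$-dependence creeps into the $O_r$.
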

We recall Lemma $4.5$ of \cite{lamzouri}.
\begin{lem}[{\cite[Lemma $4.5$]{lamzouri}}]
 We have $\beta_{1,2}(2)=0.$
Moreover, one has
$$ \alpha_1(3)= \frac{1}{4\sqrt{\pi}}, \quad \alpha_2(3)= 0, \quad \alpha_3(3)= -\frac{1}{4\sqrt{\pi}},$$
and
$$ \beta_{1,2}(3)= \beta_{2,3}(3)= \frac{1}{4\pi\sqrt{3}}, \quad \beta_{1,3}(3)= -\frac{1}{2\pi\sqrt{3}}.$$
\end{lem}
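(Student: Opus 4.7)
The plan is to interpret each of these integrals as a moment of the order statistics of i.i.d.\ standard Gaussians, and then combine symmetry with a few one-variable Gaussian computations. Let $Y_{1},\dots,Y_{r}$ be independent standard normals with density $\phi$ and distribution function $\Phi$, and write $Y_{(1)}>\dots>Y_{(r)}$ for the order statistics. Since the joint density of $(Y_{(1)},\dots,Y_{(r)})$ is $r!\,(2\pi)^{-r/2}\exp\bigl(-\tfrac{1}{2}\sum_{i} x_i^{2}\bigr)$ on the chamber $\{x_1>\dots>x_r\}$, one has
\begin{equation*}
\alpha_j(r)=\tfrac{1}{r!}\,\mathbb{E}\bigl[Y_{(j)}\bigr],\qquad \beta_{j,k}(r)=\tfrac{1}{r!}\,\mathbb{E}\bigl[Y_{(j)}Y_{(k)}\bigr],
\end{equation*}
so it suffices to evaluate these moments.

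First I would dispatch the identities forced by symmetry. The vanishing $\beta_{1,2}(2)=0$ is immediate from $Y_{(1)}Y_{(2)}=Y_{1}Y_{2}$ and independence. The reflection $(x_1,\dots,x_r)\mapsto(-x_r,\dots,-x_1)$ stabilises the chamber and sends $Y_{(j)}$ to $-Y_{(r+1-j)}$, which gives $\alpha_j(r)=-\alpha_{r+1-j}(r)$ and $\beta_{j,k}(r)=\beta_{r+1-j,\,r+1-k}(r)$. For $r=3$ this already forces $\alpha_2(3)=0$, $\alpha_3(3)=-\alpha_1(3)$, and $\beta_{1,2}(3)=\beta_{2,3}(3)$.

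Next, to compute $\alpha_1(3)=\tfrac{1}{6}\mathbb{E}[\max(Y_1,Y_2,Y_3)]$ I would start from $\mathbb{E}[\max]=3\int_{\mathbb{R}} x\phi(x)\Phi(x)^{2}\,dx$, integrate by parts with $u=\Phi(x)^{2}$ and $dv=x\phi(x)\,dx=-d\phi(x)$ to reduce to $2\int \phi(x)^{2}\Phi(x)\,dx$, and invoke the identity $\phi(x)^{2}=(2\pi)^{-1/2}\phi(x\sqrt{2})$. The remaining integral becomes $(2\sqrt{\pi})^{-1}\mathbb{P}(\sqrt{2}Z-Y\le 0)$ for independent $Y,Z\sim N(0,1)$; since $\sqrt{2}Z-Y$ is centered Gaussian, this probability is $\tfrac{1}{2}$, giving $\mathbb{E}[\max]=\tfrac{3}{2\sqrt{\pi}}$ and thus $\alpha_1(3)=\tfrac{1}{4\sqrt{\pi}}$.

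Finally, the global identity $\sum_{i<j}\mathbb{E}[Y_{(i)}Y_{(j)}]=\tfrac{1}{2}\bigl(\mathbb{E}[(\sum_i Y_i)^{2}]-\sum_i \mathbb{E}[Y_i^{2}]\bigr)=0$ (both sides are invariant under reordering) yields $\beta_{1,2}(3)+\beta_{1,3}(3)+\beta_{2,3}(3)=0$. Combined with $\beta_{1,2}(3)=\beta_{2,3}(3)$, only one covariance must be computed. I would compute
\begin{equation*}
\mathbb{E}[Y_{(1)}Y_{(3)}]=6\int_{z<x} xz\,\phi(x)\phi(z)\bigl[\Phi(x)-\Phi(z)\bigr]\,dx\,dz
\end{equation*}
by performing the $x$-integral first: using $\int_{z}^{\infty}x\phi(x)\,dx=\phi(z)$ and the same integration by parts as above, the $\phi(z)\Phi(z)$ terms cancel cleanly and leave an integrand in $z$ of the shape $z\phi(z)\bigl(1-\Phi(z\sqrt{2})\bigr)$. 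One further integration by parts together with the Gaussian convolution $\int\phi(z)\phi(z\sqrt{2})\,dz=(6\pi)^{-1/2}$ gives $\mathbb{E}[Y_{(1)}Y_{(3)}]=-\sqrt{3}/\pi$, hence $\beta_{1,3}(3)=-\tfrac{1}{2\pi\sqrt{3}}$ and $\beta_{1,2}(3)=\beta_{2,3}(3)=\tfrac{1}{4\pi\sqrt{3}}$. The main obstacle is purely computational, namely keeping track of the constants through the nested integration by parts in this last step; the key identity that makes everything align is $\phi^{2}\propto\phi(\cdot\sqrt{2})$, which reduces every difficult integral to a one-variable Gaussian evaluation.
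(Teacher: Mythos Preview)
Your argument is correct: the order-statistics interpretation, the reflection symmetry $(x_1,\dots,x_r)\mapsto(-x_r,\dots,-x_1)$, and the reduction identity $\phi(x)^2=(2\pi)^{-1/2}\phi(x\sqrt{2})$ together give all the stated values, and I have checked that your nested integrations by parts for $\mathbb{E}[Y_{(1)}Y_{(3)}]$ indeed collapse to $-\sqrt{3}/\pi$ exactly as you outline. The paper itself does not prove this lemma at all --- it is simply quoted from \cite[Lemma~4.5]{lamzouri} --- so there is no in-paper argument to compare against; your self-contained derivation is a genuine addition rather than a reworking of anything the paper does.
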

Hence, in particular for $r=3$, by combining the previous lemma and Corollary \ref{forAsymCor} we get the following
\begin{cor} 
\label{cor2}
Under the same assumptions of Theorem \ref{forAsymDelta3} we have 
\begin{equation*}
\begin{aligned}
\delta_{m;a_1,a_2,a_3} &= \frac{1}{6} + \frac{q + \sqrt{q}}{8 \sqrt{\pi N_{m}} (q-1)} \left( C_{m}(a_3) - C_{m}(a_1) \right) \\
&\quad+ \frac{1}{4 \pi \sqrt{3} N_{m}} \left( B_{m}(a_1,a_2) + B_{m}(a_2,a_3) - 2 B_{m}(a_1,a_3) \right) + O\left( \frac{C_{m}^2}{N_m} + \frac{B_m^2}{N_m^2}\right).
\end{aligned}
\end{equation*}
\end{cor}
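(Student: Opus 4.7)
The plan is to deduce Corollary \ref{cor2} as a direct specialization of Corollary \ref{forAsymCor} at $r=3$, substituting the explicit numerical values of $\alpha_j(3)$ and $\beta_{j,k}(3)$ provided by Lemma $4.5$ of \cite{lamzouri}. There is no additional analysis to do: the work of estimating $N_m$ and bounding the error terms has already been absorbed into the statement of Corollary \ref{forAsymCor}, so the proof reduces to an algebraic simplification.

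Concretely, I would first rewrite the linear-in-$C_m$ term appearing in Corollary \ref{forAsymCor}. Plugging in $\alpha_1(3)=\tfrac{1}{4\sqrt{\pi}}$, $\alpha_2(3)=0$ and $\alpha_3(3)=-\tfrac{1}{4\sqrt{\pi}}$ collapses the sum $\sum_{j=1}^{3}\alpha_j(3)C_m(a_j)$ to $\tfrac{1}{4\sqrt{\pi}}(C_m(a_1)-C_m(a_3))$. Multiplying by the prefactor $-\tfrac{q+\sqrt{q}}{2\sqrt{N_m}(q-1)}$ and flipping the sign then produces precisely the term $\tfrac{q+\sqrt{q}}{8\sqrt{\pi N_m}(q-1)}(C_m(a_3)-C_m(a_1))$ in Corollary \ref{cor2}.

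Next I would simplify the bilinear-in-$B_m$ term. Substituting $\beta_{1,2}(3)=\beta_{2,3}(3)=\tfrac{1}{4\pi\sqrt{3}}$ and $\beta_{1,3}(3)=-\tfrac{1}{2\pi\sqrt{3}}$ into $\sum_{1\le j<k\le 3}\beta_{j,k}(3)B_m(a_j,a_k)$ yields
\[
\frac{1}{4\pi\sqrt{3}}\bigl(B_m(a_1,a_2)+B_m(a_2,a_3)-2B_m(a_1,a_3)\bigr),
\]
and dividing by $N_m$ reproduces the corresponding term in Corollary \ref{cor2}. The constant $\tfrac{1}{r!}$ becomes $\tfrac{1}{6}$ for $r=3$, while the error term $O_r\bigl(C_m^2/N_m+B_m^2/N_m^2\bigr)$ carries over verbatim (with the implicit constant now depending only on the specific value $r=3$).

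There is essentially no obstacle here; the only thing to be careful about is bookkeeping of signs and of the factor $(q+\sqrt{q})/(q-1)$ (which is not simplified to $\sqrt{q}/(\sqrt{q}-1)$ in the statement, so I would leave it in its displayed form to match). Once the two sums are written out coefficient by coefficient and combined with the unchanged main term $1/r!$ and error term, the asymptotic formula stated in Corollary \ref{cor2} follows immediately.
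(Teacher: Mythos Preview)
Your proposal is correct and follows exactly the approach indicated in the paper: specialize Corollary \ref{forAsymCor} to $r=3$ and insert the explicit values of $\alpha_j(3)$ and $\beta_{j,k}(3)$ from \cite[Lemma~4.5]{lamzouri}. The algebraic simplifications you describe are accurate, and the error term carries over unchanged.
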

It is easy to check that 
\begin{equation*}
\begin{aligned}
\sum\limits_{j=1}^{r} \alpha_{j}(r) &= \sum\limits_{j=1}^{r} \lambda_{j}(r) = \sum\limits_{1 \leq j < k \leq r}^{} \beta_{j,k}(r) = 0.
\end{aligned}
\end{equation*}
Moreover, we also know that if all the $a_i$ are quadratic non-residues modulo $m$ then $C_m(a_i) = -1$ for all $1 \leq i \leq r$, and if all the $a_i$ are quadratic residues modulo $m$ then $C_m(a_i) = C_m(1)$. 
Hence we deduce from Theorem \ref{forAsymDelta3} the following corollary.
\begin{cor} 
\label{cor3}
Let $r \geq 3$ be an integer and $m \in \mathcal{M}_{q}$ be of large degree. Assume LI. Then for any $(a_1,\dots,a_r) \in \mathcal{A}_{r}(m)$ such that all the $a_i$ are quadratic residues modulo $m$ or all are quadratic non-residues modulo $m$, we have 
\begin{equation*}
\begin{aligned}
\delta_{m;a_1,\dots,a_r} &= \frac{1}{r!} + \frac{1}{N_{m}} \sum\limits_{1 \leq j < k \leq r}^{} \beta_{j,k}(r) B_{m}(a_j,a_k) + O_{r} \left( \frac{1}{N_{m}} + \frac{C_{m} B_{m}}{N_{m}^{3/2}} + \frac{B_{m}^{2}}{N_{m}^{2}}\right).
\end{aligned}
\end{equation*}
\end{cor}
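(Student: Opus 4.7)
The plan is to apply Theorem \ref{forAsymDelta3} and exploit the structural observation, already noted in the paper just after Lemma \ref{varAlea}, that $C_m$ is constant on each quadratic class: $C_m(a) = -1$ if $a$ is a quadratic non-residue modulo $m$, and $C_m(a) = C_m(1)$ if $a$ is a quadratic residue modulo $m$. Hence, under either hypothesis of the corollary, there exists a single constant $\kappa \in \{-1, \, C_m(1)\}$ such that $C_m(a_j) = \kappa$ for every $j \in \{1,\dots,r\}$. This is the only nontrivial observation needed.

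Next, I would substitute $C_m(a_j) = \kappa$ into each of the three sums involving $C_m$ that appear in Theorem \ref{forAsymDelta3}. Pulling $\kappa$ outside, they become
$\sum_{j=1}^{r} \alpha_j(r) C_m(a_j) = \kappa \sum_{j=1}^{r}\alpha_j(r)$,
$\sum_{j=1}^{r} \lambda_j(r) C_m(a_j)^2 = \kappa^2 \sum_{j=1}^{r}\lambda_j(r)$,
and $\sum_{1\le j<k\le r} \beta_{j,k}(r) C_m(a_j) C_m(a_k) = \kappa^2 \sum_{1\le j<k\le r}\beta_{j,k}(r)$. Each of these three scalar sums vanishes, by the identities $\sum_j \alpha_j(r) = \sum_j \lambda_j(r) = \sum_{j<k}\beta_{j,k}(r) = 0$ recorded immediately before the statement of the corollary.

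Plugging these vanishings back into the asymptotic expansion provided by Theorem \ref{forAsymDelta3} therefore kills the linear term in $C_m$ of size $1/\sqrt{N_m}$, as well as both quadratic terms in $C_m$ of size $1/N_m$, leaving only the main term $1/r!$, the $B_m$-sum $\frac{1}{N_m}\sum_{j<k}\beta_{j,k}(r)B_m(a_j,a_k)$, and the error $O_r(1/N_m + C_m B_m/N_m^{3/2} + B_m^2/N_m^2)$. That is precisely the claimed formula, and no further modification of the error is required because the remaining pieces are left unchanged from Theorem \ref{forAsymDelta3}. I do not expect any real obstacle here: the content of Corollary \ref{cor3} is purely algebraic given Theorem \ref{forAsymDelta3}, and reduces to recognizing that $C_m$ is class-constant and invoking the three vanishing identities.
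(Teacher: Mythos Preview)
Your proposal is correct and follows exactly the same approach as the paper: the paper states the vanishing identities $\sum_j \alpha_j(r) = \sum_j \lambda_j(r) = \sum_{j<k}\beta_{j,k}(r) = 0$ and the observation that $C_m(a_j)$ is constant when all $a_j$ lie in the same quadratic class, then deduces the corollary directly from Theorem~\ref{forAsymDelta3}.
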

\begin{rem}
First, we note that our asymptotic formula for the densities $\delta_{m;a_1,\dots,a_r}$ when $r \geq 3$ is still valid in the case $r=2$, but does not imply Theorem \ref{asymDelta2} since the error term may exceed the main term. This is due to the fact that $\beta_{1,2}(2) = 0$, and hence the term involving $B_{m}(a_1,a_2)$ is missing in the case $r=2$ in the asymptotic formula for $\delta_{m;a_1,a_2}$. On the other hand, when $r \geq 3$, we will show later that the contribution of the terms $B_{m}(a_i,a_j)$ in the densities $\delta_{m;a_1,\dots,a_r}$ may be $\gg_{r,q}$ $\frac{1}{\log |m|}$.
This is the main difference between the cases $r=2$ and $r \geq 3$ that explains why $\Delta_{r}(m)$ for $r \geq 3$ behaves differently than $\Delta_2(m)$. 
\end{rem}
The following result is a strong version of Theorem \ref{aboutQuadRaces}, and is the analogue of \cite[Theorem $2.8$]{lamzouri} in the function fields setting. It shows that, like in the number field case, biases appear in races involving three or more quadratic residues (or quadratic non-residues) modulo $m$, once $\deg(m)$ is large enough.  
\begin{thm}
\label{biasedQuadratic}
Let $r \geq 3$ be an integer, $m \in \mathcal{M}_{q}$ be of large degree. Assume LI. Then there exist residue classes $(a_1,\dots,a_r),(b_1,\dots,b_r) \in \mathcal{A}_{r}(m)$ where $a_1,\dots,a_r$ are all quadratic residues modulo $m$ and  $b_1,\dots,b_r$ are all quadratic non-residues modulo $m$, and there exists a permutation $\sigma$ of the set $\{ 1,\dots,r \}$ such that 
\begin{equation*}
\begin{aligned}
\delta_{m;a_1,\dots,a_r} = \delta_{m;b_1,\dots,b_r} < \frac{1}{r!} - \frac{c_{1}(q,r)}{(\log |m|)^{3}} \ \ &\text{and} \ \ \delta_{m;a_{\sigma(1)},a_{\sigma(2)},\dots,a_{\sigma(r)}} = \delta_{m;b_{\sigma(1)},b_{\sigma(2)},\dots,b_{\sigma(r)}} > \frac{1}{r!} + \frac{c_{1}(q,r)}{(\log |m|)^3},
\end{aligned}
\end{equation*}
for some constant $c_1(q,r) > 0$ which depends on $q$ and $r$.
\end{thm}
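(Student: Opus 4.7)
The approach is to apply Corollary \ref{cor3} to a carefully constructed tuple of quadratic residues together with its image under multiplication by a fixed non-residue. For any all-QR (respectively all-NR) tuple, $C_m(a_j)$ is independent of $j$, and the identities $\sum_j\alpha_j(r)=\sum_j\lambda_j(r)=\sum_{j<k}\beta_{j,k}(r)=0$ then imply that every $C_m$-dependent contribution in Theorem \ref{forAsymDelta3} vanishes, so
\begin{equation*}
\delta_{m;a_1,\dots,a_r} = \frac{1}{r!} + \frac{1}{N_m}\sum_{1\le j<k\le r}\beta_{j,k}(r)\,B_m(a_j,a_k) + O_r\!\left(\frac{1}{N_m} + \frac{C_m B_m}{N_m^{3/2}} + \frac{B_m^2}{N_m^2}\right).
\end{equation*}
Since $B_m(a,b)$ depends on $(a,b)$ only through the ratio $a/b$, the map $a\mapsto ca$ (with $c$ a fixed non-residue) preserves the main correction, yielding $\delta_{m;a_1,\dots,a_r}=\delta_{m;ca_1,\dots,ca_r}$ up to a common error. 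The theorem thus reduces to producing a QR tuple $(a_1,\dots,a_r)$ and a permutation $\sigma\in S_r$ whose two main corrections differ in absolute value by at least $c_1(q,r)/(\log|m|)^3$.

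For $r=3$, I would take $(a_1,a_2,a_3)=(1,g,g^{-1})$ for a suitably chosen QR $g$ with $g^2\not\equiv 1\pmod m$. Using $B_m(a,b)=B_m(b,a)$ together with $\beta_{1,2}(3)=\beta_{2,3}(3)=\frac{1}{4\pi\sqrt{3}}$ and $\beta_{1,3}(3)=-\frac{1}{2\pi\sqrt{3}}$ from the preceding lemma, a direct computation gives
\begin{equation*}
\sum_{j<k}\beta_{j,k}(3)\,B_m(a_j,a_k) = \frac{B_m(g^2)-B_m(g)}{4\pi\sqrt{3}}, \qquad \sum_{j<k}\beta_{j,k}(3)\,B_m(a_{\sigma(j)},a_{\sigma(k)}) = \frac{B_m(g)-B_m(g^2)}{2\pi\sqrt{3}}
\end{equation*}
for $\sigma=(2,1,3)$, with density gap $\frac{3\bigl(B_m(g)-B_m(g^2)\bigr)}{4\pi\sqrt{3}\,N_m}+O(\text{error})$. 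For $r\geq 4$, I would append arbitrary distinct quadratic residues $a_4,\dots,a_r$ to the triple and take $\sigma$ to fix $\{4,\dots,r\}$ while permuting only the first three indices; the terms $\beta_{j,k}(r)B_m(a_j,a_k)$ with $\{j,k\}$ meeting $\{4,\dots,r\}$ cancel in the difference, and the residual $\{1,2,3\}$-part retains a nonzero, $r$-dependent multiple of $B_m(g)-B_m(g^2)$.

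The principal obstacle is exhibiting a QR $g$ for which $|B_m(g)-B_m(g^2)|\gg_q \phi(m)/(\log|m|)^2$, with additionally $B_m(g),B_m(g^2)\ll \phi(m)/\log|m|$ so that the error term $B_m^2/N_m^2$ is $o((\log|m|)^{-3})$ and does not swamp the bias. Combined with $N_m\asymp\phi(m)\log_q|m|$, this yields the claimed $(\log|m|)^{-3}$ bias. My strategy is a moment computation: writing
\begin{equation*}
B_m(g)-B_m(g^2) = 2\operatorname{Re}\sum_{\chi\ne\chi_0}\chi(g)\bigl(1-\chi(g)\bigr)T(\chi), \qquad T(\chi) := \sum_{\Im(\gamma_\chi)>0}\left|\frac{\gamma_\chi}{\gamma_\chi-1}\right|^2,
\end{equation*}
and expanding $\sum_{g\text{ QR}}|B_m(g)-B_m(g^2)|^2$ via orthogonality of Dirichlet characters modulo $m$, one obtains a main diagonal contribution of size $\phi(m)\sum_{\chi\ne\chi_0}T(\chi)^2$, which by Cauchy–Schwarz combined with the asymptotic $\sum_{\chi\ne\chi_0}T(\chi)\asymp \phi(m)\log_q|m|$ from Lemma \ref{formuleNm} is $\gg \phi(m)^2\log^2|m|$; the off-diagonal collisions $\chi_1=\bar\chi_2^2$ or $\chi_1^2=\bar\chi_2^2$ involve only $O(\sqrt{\phi(m)})$ characters and contribute at lower order. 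The delicate point — and where the specific exponent $3$ is ultimately forced — is that this mean-square estimate alone provides only $|B_m(g)-B_m(g^2)|\gtrsim \sqrt{\phi(m)}\log|m|$ generically, which is not sharp enough; sharpening it into the pointwise lower bound of size $\phi(m)/(\log|m|)^2$ requires exploiting the multiplicative structure of $(\mathbb{F}_q[T]/(m))^*$, for instance by restricting $g$ to a subgroup of index $\asymp \log|m|$ aligned with a concentration set of the $T(\chi)$ weights, following the template of the number-field argument in \cite{lamzouri}.
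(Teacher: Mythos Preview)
Your approach has a genuine gap that you yourself identify but do not resolve. The second-moment computation over quadratic residues $g$ can at best produce some $g$ with $|B_m(g)-B_m(g^2)|\gg\sqrt{\phi(m)}\,\log|m|$, which falls short of the required $\phi(m)/(\log|m|)^2$ by essentially a factor of $\sqrt{\phi(m)}$. The suggested fix---restricting $g$ to a subgroup of index $\asymp\log|m|$ ``aligned with a concentration set of the $T(\chi)$ weights''---is not an argument: there is no reason the weights $T(\chi)$ should concentrate on characters trivial on any particular subgroup, and this is not the template of \cite{lamzouri} either. Your handling of $r\geq 4$ is also incorrect as stated: if $\sigma$ moves only $\{1,2,3\}$, the cross terms $\beta_{j,k}(r)B_m(a_j,a_k)$ with $j\in\{1,2,3\}$ and $k\geq 4$ do \emph{not} cancel in the difference for arbitrary $a_4,\dots,a_r$, since $\beta_{j,k}$ depends on the position $j$ while $B_m(a_{\sigma(j)},a_k)$ gets permuted.

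The paper bypasses any averaging by an explicit construction, using Proposition~\ref{moreExpBm}, which evaluates $B_m(a,b)$ when $\deg a,\deg b$ are small compared to $M$: up to an error $O_q((|a|+|b|)M^2)$ one has $B_m(a,b)=-\frac{q}{(q-1)\log q}\,\phi(m)\,\Lambda_0\!\bigl(\mathbf{Pmax}(a,b)/\mathbf{Pmin}(a,b)\bigr)$, so $B_m(a,b)$ is of order $\phi(m)$ exactly when the ratio is a prime power, and negligible otherwise. Choosing two primes $P_1,P_2\nmid m$ with $|P_i|\leq 2qM$ (there are $\gg M/\log M$ such primes), the paper takes $a_1=1$, $a_r=P_1^2$, and $a_j=(P_1P_2)^{2j}$ for $2\leq j\leq r-1$. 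Every ratio $\mathbf{Pmax}(a_j,a_k)/\mathbf{Pmin}(a_j,a_k)$ with $\{j,k\}\neq\{1,r\}$ then involves both $P_1$ and $P_2$, so those $B_m(a_j,a_k)$ are $O_q(M^{4r})$; while $B_m(a_1,a_r)=-\frac{q}{(q-1)\log q}\,\phi(m)\,\frac{\log|P_1|}{|P_1|^2}+O_q(M^{4r})$ is of size $\phi(m)/M^2$ since $|P_1|\ll M$. Plugging into Corollary~\ref{cor3} and using $\beta_{1,r}(r)<0<\beta_{r-1,r}(r)$ gives the $1/M^3$ bias directly, with $\sigma$ the transposition of positions $1$ and $r-1$. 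Finally, the equality $\delta_{m;a_1,\dots,a_r}=\delta_{m;ba_1,\dots,ba_r}$ is exact (by \cite[Theorem~2.3]{CSK}), not merely ``up to a common error'', so multiplying by a fixed non-residue $b$ transports the construction verbatim to an all-NR tuple.
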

Using Theorem \ref{forAsymDelta3} we prove the following result which implies \eqref{deltar3}. 
\begin{thm}
\label{biasedRaces}
Let $r \geq 3$ be an integer, $m \in \mathcal{M}_{q}$ be of large degree. Assume LI, then for all $(a_1,\dots,a_r) \in \mathcal{A}_{r}(m)$ we have
\begin{equation*}
\begin{aligned}
\left| \delta_{m;a_1,\dots,a_r} - \frac{1}{r!} \right| &\ll_{r}    \frac{1}{\log_q |m|}.
\end{aligned}
\end{equation*}
Furthermore, there exist residue classes $(b_1,\dots,b_r),(d_1,\dots,d_r) \in \mathcal{A}_{r}(m)$ such that 
\begin{equation*}
\begin{aligned}
\delta_{m;b_1,\dots,b_r} > \frac{1}{r!} + \frac{c_2(r)}{\log_q |m|} &\text{  and} \ \ \delta_{m;d_1,\dots,d_r} < \frac{1}{r!} - \frac{c_2(r)}{\log_q |m|}, 
\end{aligned}
\end{equation*}
where $c_2(r)>0$ is a constant depending only on $r$. 
\end{thm}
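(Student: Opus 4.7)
My plan is to deduce both parts of Theorem~\ref{biasedRaces} from Corollary~\ref{forAsymCor} together with the asymptotics~\eqref{asymptotics}, namely $N_m\sim \tfrac{q}{q-1}\phi(m)\log_q|m|$ and $B_m\ll\phi(m)$, combined with the bounds $|C_m(a)|\le|m|^{o(1)}$ and $\phi(m)=|m|^{1+o(1)}$.

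For the upper bound I substitute these estimates into each summand of Corollary~\ref{forAsymCor}. The linear term in $C_m$ satisfies
\[
\frac{q+\sqrt q}{2\sqrt{N_m}(q-1)}\sum_j\alpha_j(r)C_m(a_j)\ll_r \frac{|m|^{o(1)}}{\sqrt{\phi(m)\log_q|m|}}=o(1/\log_q|m|);
\]
the bilinear term in $B_m$ is $O_r(\phi(m)/N_m)=O_r(1/\log_q|m|)$, which is the dominant contribution; and the two error terms contribute $C_m^2/N_m\ll|m|^{o(1)}/(\phi(m)\log_q|m|)$ and $B_m^2/N_m^2\ll 1/(\log_q|m|)^2$, both of lower order. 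Summing these gives $|\delta_{m;a_1,\dots,a_r}-1/r!|\ll_r 1/\log_q|m|$, uniformly in $(a_1,\dots,a_r)$.

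For the lower bound I must exhibit residue classes that realise bias of the prescribed order and sign. Since every contribution in Corollary~\ref{forAsymCor} other than the bilinear term is $o(1/\log_q|m|)$, the task reduces to producing $(b_1,\dots,b_r)$ for which $\sum_{1\le j<k\le r}\beta_{j,k}(r)B_m(b_j,b_k)\gg_r\phi(m)$, and similarly a tuple $(d_1,\dots,d_r)$ for which this sum is $\ll_r-\phi(m)$. Writing $B_m(a,b)=2\sum_{\chi\ne\chi_0}\Re(\chi(a/b))\,f(\chi)$ with $f(\chi):=\sum_{\Im\gamma_\chi>0}|\gamma_\chi/(\gamma_\chi-1)|^2\ge 0$, the plan is first to exhibit an unordered set $\{b_1,\dots,b_r\}\subset(\mathbb F_q[T]/m)^*$ with $|B_m(b_j,b_k)|\gg_r\phi(m)$ for every pair. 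A natural candidate is to take the $b_j$ inside a single coset of a subgroup $H\le(\mathbb F_q[T]/m)^*$ chosen so that the pairwise ratios $b_jb_k^{-1}$ have small multiplicative order; then the characters trivial on $H$ contribute $+2f(\chi)$ to each $B_m(b_j,b_k)$, and one can use orthogonality and the lower bound for $N_m$ to show this partial contribution dominates, yielding the desired estimate. Once such a set is fixed, the identity $\sum_{j<k}\beta_{j,k}(r)=0$ (noted after Corollary~\ref{cor2}) implies that the average of $\sum_{j<k}\beta_{j,k}(r)B_m(b_{\sigma(j)},b_{\sigma(k)})$ over orderings $\sigma$ of $\{b_1,\dots,b_r\}$ vanishes; since the $\beta_{j,k}(r)$ are not all zero, some orderings produce sums of opposite signs and of absolute value $\gg_r\phi(m)$, yielding the two tuples $(b_j)$ and $(d_j)$.

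The main obstacle is the quantitative construction of the set $\{b_1,\dots,b_r\}$ and the proof that $|B_m(b_j,b_k)|\gg_r\phi(m)$ for each pair: one must ensure that the contribution to $B_m$ coming from characters non-trivial on $H$ does not cancel the positive contribution of those trivial on $H$. This relies on the explicit formula for the inverse zeros and the character-sum machinery developed in Section~\ref{section4}, together with a careful choice of the subgroup $H$ informed by the structure of $(\mathbb F_q[T]/m)^*$ (for instance the subgroup of $k$-th power residues for a small divisor $k$ of $\phi(m)$), in analogy with the number-field treatment of \cite{lamzouri}.
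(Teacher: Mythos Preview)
Your upper bound argument is correct and essentially matches the paper's proof.

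For the lower bound, however, your plan has a genuine gap. You propose to find an unordered set $\{b_1,\dots,b_r\}$ with $|B_m(b_j,b_k)|\gg_r\phi(m)$ for \emph{every} pair, and then to average over orderings and use $\sum_{j<k}\beta_{j,k}(r)=0$ to extract orderings of both signs. But this averaging argument does not give what you claim: if all the $B_m(b_j,b_k)$ are approximately equal to the same constant $c$ (which is exactly what your coset construction would tend to produce, since the characters trivial on $H$ contribute $+2f(\chi)$ to every pair equally), then for \emph{every} permutation $\sigma$ one has $\sum_{j<k}\beta_{j,k}(r)B_m(b_{\sigma(j)},b_{\sigma(k)})\approx c\sum_{j<k}\beta_{j,k}(r)=0$, and no ordering gives a sum of size $\gg_r\phi(m)$. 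The non-vanishing of the $\beta_{j,k}(r)$ is irrelevant here; what you would need is that the $B_m$ values themselves are sufficiently \emph{unequal}, which your construction does not guarantee and which you do not address.

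The paper's construction avoids this problem by making exactly \emph{one} $B_m$ value large and all the others negligible. Concretely, one takes $a_1=1$, $a_r=-1$, and $a_j=(P_1P_2)^{2j}$ for $2\le j\le r-1$, where $P_1,P_2$ are distinct small primes coprime to $m$ with $|P_1|,|P_2|\le 2qM$. The key input is the explicit formula for $B_m(a,b)$ when $\deg a,\deg b<M$ (Proposition~\ref{moreExpBm}): the term $-\tfrac{q^2+q}{2(q-1)^2}\phi(m)\,l(a,b)$ picks up precisely when $a=-b$, giving $B_m(1,-1)=-\tfrac{q^2+q}{2(q-1)^2}\phi(m)+O_q(M^2)$, while all other pairs $(a_j,a_k)$ have ratios divisible by $P_1P_2$ (hence not prime powers), so $B_m(a_j,a_k)=O_q(M^{4r})$. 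Plugging into the asymptotic and using that $\beta_{1,r}(r)<0$ and $\beta_{r-1,r}(r)>0$ gives the ordering with $\delta>1/r!+c_2(r)/M$; swapping $a_1$ and $a_{r-1}$ gives the ordering with $\delta<1/r!-c_2(r)/M$. This is both simpler and structurally different from what you proposed: rather than trying to engineer many large $B_m$ values and extract a sign by averaging, one isolates a single dominant pair and controls its coefficient directly.
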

This theorem motivates us to determine for which residue classes $a_1,\dots,a_r$ modulo $m$ the distance $\left| \delta_{m;a_{\sigma(1)},\dots,a_{\sigma(r)}} - \frac{1}{r!} \right|$ can be $\gg_{r,q} \frac{1}{\log |m|}$ for some permutation $\sigma$ of the set $\left\{1,\dots,r \right\}$. To reach this goal, let us start by giving the following definition. 
\begin{defn}
Fix an integer $r\geq 3$ and let $m \in \mathcal{M}_{q}$ be of large degree. We say that a race $\{m; a_1,\dots, a_r\}$ is $m$-extremely biased if there exists a permutation $\sigma$ of the set $\{1,\dots,r\}$ such that 
$$\left|\delta_{m; a_{\sigma(1)}, \dots, a_{\sigma(r)}}-\frac{1}{r!}\right|\gg_{r,q} \frac{1}{ \log |m|}.$$
\end{defn}
When the degree of the residue classes $a_1,\dots,a_r$ is bounded and $\deg(m)$ is large then we can give a criteria for $m$-extremely biased races $\{m,a_1,\dots,a_r \}.$  
\begin{thm}
\label{criteriaExtreme}
Fix an integer $r\geq 3$ and let $m \in \mathcal{M}_{q}$ be of large degree. Assume LI. Let $A \geq 1$ be a real number. Then if $(a_1,a_2,\dots,a_r) \in \mathcal{A}_{r}(m)$  with  $\deg(a_i) \leq A$ for all $1 \leq i \leq r$, the race $\{ m;a_1,\dots,a_r\}$ is $m$-extremely biased if and only if: 
\begin{enumerate}
    \item There exist $1 \leq j \ne k \leq r$ such that $a_j + a_k = 0,$
    \item There exist $1 \leq j \ne k \leq r$ such that $a_j/a_k$ is a power of a prime (irreducible monic polynomial in $\mathbb{F}_{q}[T]$).
\end{enumerate}
Furthermore, if neither $(1)$ nor $(2)$ holds for any permutation $ \sigma$ of the set $\{1,\dots,r \}$, then 
\begin{equation*}
\begin{aligned}
\left|\delta_{m;a_{\sigma(1)},\dots,a_{\sigma(r)}} - \frac{1}{r!} \right| &= 
\begin{cases}
         O_{A,r,q} \left( \frac{M^2}{|m|} \right) &
         \text{if all the } a_{i} \text{ are quadratic residues (or quadratic} \\
         & \text{non-residues) $\bmod \ m$,}\\
     O_{\epsilon,r,q} \left( |m|^{-1/2 + \epsilon}\right)  & \text{otherwise.}
\end{cases}
\end{aligned}
\end{equation*}
\end{thm}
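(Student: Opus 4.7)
The plan is to apply Corollary~\ref{forAsymCor} for each reordering $\{m;a_{\sigma(1)},\dots,a_{\sigma(r)}\}$ and identify which terms actually reach size $1/\log|m|$. Using~\eqref{asymptotics} one has $N_m\asymp_q\phi(m)\log_q|m|$ and $|B_m(a_j,a_k)|\ll\phi(m)$, while $|C_m(a)|=|m|^{o(1)}$ gives
\begin{equation*}
\frac{q+\sqrt q}{2\sqrt{N_m}\,(q-1)}\sum_{j=1}^{r}\alpha_j(r)\,C_m(a_{\sigma(j)})=O_{r,q}\bigl(|m|^{-1/2+o(1)}\bigr),
\end{equation*}
which is negligible next to $1/\log|m|$. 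Moreover this linear $C_m$-term \emph{vanishes identically} when all the $a_i$ are quadratic residues (or all are non-residues) modulo $m$, because $C_m(a_{\sigma(j)})$ is then independent of $j$ and $\sum_{j=1}^{r}\alpha_j(r)=0$. The remainder $O_r(C_m^2/N_m+B_m^2/N_m^2)$ is likewise $o(1/\log|m|)$.

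Consequently, the scale $1/\log|m|$ is controlled by the bilinear quantity
\begin{equation*}
T(\sigma):=\frac{1}{N_m}\sum_{1\le j<k\le r}\beta_{j,k}(r)\,B_m(a_{\sigma(j)},a_{\sigma(k)}).
\end{equation*}
Since $N_m\asymp\phi(m)\log|m|$ and the constants $\beta_{j,k}(r)$ are not all zero for $r\ge 3$, the race is $m$-extremely biased if and only if some pair $(j,k)$ has $|B_m(a_j,a_k)|\gg_r\phi(m)$: given such a pair one picks $\sigma$ placing it in a slot where $\beta_{\cdot,\cdot}(r)\ne 0$, while conversely $B_m(a_j,a_k)=o(\phi(m))$ for every pair forces $T(\sigma)=o(1/\log|m|)$ uniformly in $\sigma$.

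The technical core is therefore to characterize when $B_m(a,b)\gg\phi(m)$. Starting from
\begin{equation*}
B_m(a,b)=\sum_{\substack{\chi\bmod m\\ \chi\ne\chi_0}}\bigl(\chi(a/b)+\chi(b/a)\bigr)\sum_{\Im\gamma_\chi>0}\left|\frac{\gamma_\chi}{\gamma_\chi-1}\right|^{2},
\end{equation*}
I expand $|\gamma_\chi/(\gamma_\chi-1)|^2$ as a power series in $\gamma_\chi$ and $\overline{\gamma_\chi}$ (using $|\gamma_\chi|=\sqrt q$), apply the function-field explicit formula (already exploited in Section~\ref{section4} to prove~\eqref{asymptotics}) to replace sums of powers of inverse zeros by sums over irreducible monic polynomials, and then use orthogonality of characters modulo $m$. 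Two sources of main terms of size $\phi(m)$ should emerge: a prime-power contribution that fires whenever some short ($\deg<M$) prime power $P^k$ satisfies $P^k\equiv a/b$ or $P^k\equiv b/a\pmod m$, which under the degree bound $\deg a_i\le A\ll M$ is equivalent to $a_j/a_k$ being literally a power of a prime in $\mathbb{F}_q[T]$ (condition~(2)); and a unit-type contribution tied to the value $\chi(-1)$ that fires precisely when $a\equiv -b\pmod m$, i.e.\ $a+b=0$ (condition~(1)). When neither contribution is present, the remaining sum should be $O_\epsilon(|m|^{1/2+\epsilon})$, which gives the equivalence.

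Finally, when neither condition holds, the bound $B_m(a_j,a_k)=O_\epsilon(|m|^{1/2+\epsilon})$ together with the $C_m$-linear term yields $|\delta-1/r!|=O_{\epsilon,r,q}(|m|^{-1/2+\epsilon})$. In the all-QR or all-QNR sub-case the $C_m$-linear term vanishes, the quadratic $C_m^2$-term of Theorem~\ref{forAsymDelta3} also vanishes via the identities $\sum_j\lambda_j(r)=0=\sum_{j<k}\beta_{j,k}(r)$, and a sharper analysis of the prime-power part of $B_m$ (where only the short-prime-power contribution matters and is absent under the hypothesis) yields the final rate $O_{A,r,q}(M^2/|m|)$. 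The main obstacle is the explicit-formula analysis of $B_m(a,b)$ in paragraph three: extracting both kinds of main terms with the correct normalization and obtaining the $|m|^{1/2+\epsilon}$ saving when both conditions fail. This parallels the number-field treatment of Fiorilli--Martin and \cite{lamzouri}, but requires additional work to handle the factor $\mathcal{B}_{m;a_1,\dots,a_r}(t)$ mentioned after Theorem~\ref{forAsymDelta3} and the role of the degree-$0$ unit $-1$.
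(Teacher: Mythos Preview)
Your high-level strategy is correct, and you correctly identify that the $C_m$-linear term is $O(|m|^{-1/2+o(1)})$ and vanishes in the all-QR/QNR case, so the decisive quantity is indeed $T(\sigma)$. Note, however, that you do not need to redo any explicit-formula analysis of $B_m(a,b)$: Proposition~\ref{moreExpBm} already gives, for $\deg a,\deg b\le A$, the formula $B_m(a,b)=-\frac{q^2+q}{2(q-1)^2}\phi(m)\,l(a,b)-\frac{q}{(q-1)\log q}\phi(m)\,\Lambda_0\!\bigl(\mathbf{Pmax}(a,b)/\mathbf{Pmin}(a,b)\bigr)+O_{A,q}(M^2)$. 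This immediately characterizes when $B_m(a,b)\asymp\phi(m)$ (precisely conditions (1) and (2)) and yields $B_m=O_{A,q}(M^2)$ otherwise---not merely $O_\epsilon(|m|^{1/2+\epsilon})$---which is what feeds into Corollary~\ref{cor3} to produce the sharper rate $O_{A,r,q}(M^2/|m|)$ in the all-QR/QNR sub-case.

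The genuine gap is in the sufficiency direction. Your sentence ``given such a pair one picks $\sigma$ placing it in a slot where $\beta_{\cdot,\cdot}(r)\ne 0$'' ignores cancellation: when \emph{several} pairs $(j,k)$ satisfy $|B_m(a_j,a_k)|\asymp\phi(m)$, these large contributions may cancel in $T(\sigma)$ for every $\sigma$ simultaneously (recall $\sum_{j<k}\beta_{j,k}(r)=0$), and for general $r$ you have no control over the individual $\beta_{j,k}(r)$. The paper resolves this in two steps. First, Lemma~\ref{lemme91} reduces the problem to $r=3$, where the explicit values $\beta_{1,2}(3)=\beta_{2,3}(3)=\tfrac{1}{4\pi\sqrt3}$, $\beta_{1,3}(3)=-\tfrac{1}{2\pi\sqrt3}$ make the relevant linear form $B_m(a_1,a_2)+B_m(a_2,a_3)-2B_m(a_1,a_3)$. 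Second, non-cancellation is forced by case analysis: when $a_j=-a_k$ one uses Lemma~\ref{majorLog3} to bound any competing prime-power contribution by $\frac{\log 3}{3}\cdot\frac{q}{(q-1)\log q}\phi(m)$ and checks numerically that this cannot kill the constant $\frac{q^2+q}{2(q-1)^2}$ from $B_m(a,-a)$; when $a_j/a_k$ is a prime power and the three degrees are pairwise distinct, Lemma~\ref{lem93} shows the three values $\Lambda_0(X_1),\Lambda_0(X_2),\Lambda_0(X_3)$ cannot all coincide, so $\Lambda_0(X_{\sigma(1)})+\Lambda_0(X_{\sigma(2)})-2\Lambda_0(X_{\sigma(3)})\ne 0$ for some $\sigma$. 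Without these two ingredients the ``if'' direction does not close.
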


In order to understand the behavior of $\delta_{m;a_1,\dots,a_r}$, we should investigate the terms $B_{m}(a_i,a_j)$ for $1 \leq i < j \leq r$ in more detail.
The following theorem determines the order of magnitude of $\left| B_{m}(a,b) \right|$ for a generic pair $(a,b) \in \mathcal{A}_{2}(m).$
We show that on average $\left| B_{m}(a,b) \right| \asymp \log_q |m|$. This generalizes \cite[Theorem $2.9$]{lamzouri}.  
\begin{thm}
\label{FirstMomBm}
Let $r \geq 3$ be an integer, $m \in \mathcal{M}_{q}$ be of large degree. Assume LI. Then 
\begin{equation*}
\begin{aligned}
\frac{q}{q-1} \log_q |m| + O \left( \log \log_q |m| \right) \leq \frac{1}{\left| \mathcal{A}_{2}(m) \right|} \sum\limits_{(a,b) \in \mathcal{A}_{2}(m)}^{} \left| B_{m}(a,b) \right|  &\leq  \frac{17 q}{q-1} \log_q |m|  + O \left(\log \log_q |m| \right).     
\end{aligned}
\end{equation*}
\end{thm}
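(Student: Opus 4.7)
The plan is first to reduce the average to a one-parameter sum. Since $B_m(a,b)$ depends only on the class $c := a b^{-1} \bmod m$, and each invertible $c \neq 1$ corresponds to exactly $\phi(m)$ pairs $(a,b) \in \mathcal{A}_2(m)$ (parametrised by $b$), we get
$$\frac{1}{|\mathcal{A}_2(m)|} \sum_{(a,b) \in \mathcal{A}_2(m)} |B_m(a,b)| = \frac{1}{\phi(m)-1} \sum_{\substack{c \bmod m \\ c \neq 1,\, (c,m)=1}} |B_m(c,1)|.$$
Expanding $B_m(c,1)$ using $\chi(c^{-1}) = \bar\chi(c)$ and re-indexing $\chi \to \bar\chi$ in the second term yields the Fourier representation $B_m(c,1) = \sum_{\chi \neq \chi_0} h(\chi) \chi(c)$, where $h(\chi) := f(\chi) + f(\bar\chi) \geq 0$, $h(\bar\chi) = h(\chi)$, and $\sum_{\chi \neq \chi_0} h(\chi) = N_m$.

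For the lower bound, extend $h$ by $h(\chi_0) = 0$; then Fourier inversion gives $\sum_c B_m(c,1) = \phi(m) h(\chi_0) = 0$, hence $\sum_{c \neq 1} B_m(c,1) = -B_m(1,1) = -N_m$. The triangle inequality yields
$$\sum_{c \neq 1} |B_m(c,1)| \geq \Bigl|\sum_{c \neq 1} B_m(c,1)\Bigr| = N_m = \frac{q}{q-1} \phi(m) \log_q|m| + O\bigl(\phi(m)\log\log_q|m|\bigr)$$
by Lemma \ref{formuleNm}; dividing by $\phi(m)-1$ produces the stated lower bound.

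The upper bound is the hard part, for which I plan to use the explicit formula. Using $|\gamma|^2 = q$ (so $|\gamma-1|^2 = (\gamma-1)(q-\gamma)/\gamma$), the partial fraction decomposition $\frac{q\gamma}{(\gamma-1)(q-\gamma)} = \frac{q/(q-1)}{\gamma-1} + \frac{q^2/(q-1)}{q-\gamma}$, and the identities $\sum_\gamma \gamma/(\gamma-1) = -L'(0,\chi)/(L(0,\chi)\log q)$, $\sum_\gamma \gamma/(q-\gamma) = L'(1,\chi)/(L(1,\chi)\log q)$, combined with the functional equation $L'(0,\chi)/L(0,\chi) = -(M-1)\log q - L'(1,\bar\chi)/L(1,\bar\chi)$ for primitive $\chi$ modulo $m$, one derives
$$h(\chi) = \frac{q(M-1)}{q-1} + \frac{q}{(q-1)\log q}\bigl(\mu(\chi) + \mu(\bar\chi)\bigr), \qquad \mu(\chi) := L'(1,\chi)/L(1,\chi).$$
The main term $q(M-1)/(q-1)$ is $\chi$-independent, so substituting into $B_m(c,1) = \sum_{\chi \neq \chi_0} h(\chi)\chi(c)$ and using $\sum_{\chi \neq \chi_0}\chi(c) = -1$ for $c \neq 1$ leaves a clean contribution $-\tfrac{q(M-1)}{q-1}$ from the main term, plus a secondary sum involving $\mu(\chi) + \mu(\bar\chi)$.

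To bound this secondary sum, I expand $\mu(\chi) = -\log q \sum_{n \geq 1} \Lambda_q(n,\chi) q^{-n}$ (absolutely convergent for non-principal $\chi$ by Weil's bound $|\Lambda_q(n,\chi)| \leq (M-1) q^{n/2}$), truncate at $n \leq N$, and swap sums via $\sum_{\chi \neq \chi_0} \chi(cf) = \phi(m) \mathbf{1}_{cf \equiv 1\,(m)} - \mathbf{1}_{(cf,m)=1}$. This reduces the error to a truncated weighted count of prime powers $P^k$ satisfying $P^k \equiv c$ or $c^{-1} \pmod m$, which is controlled uniformly in $c$ by the prime number theorem in arithmetic progressions over $\mathbb{F}_q[T]$ (via Weil's Riemann hypothesis for curves), contributing $O_q(M)$ per $c$; the tail $n > N$ is negligible for a suitable choice of $N$. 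Non-primitive $\chi$ induced from a primitive $\chi^*$ modulo $m^* \mid m$ carry extra factors $\prod_{P \mid m,\, P \nmid m^*}(1-\chi^*(P) q^{-s \deg P})$ contributing further inverse zeros of modulus $1$, and these require a separate but bounded treatment. Summing over $c \neq 1$ and dividing by $\phi(m)-1$ will yield the upper bound $\tfrac{17q}{q-1}\log_q|m| + O(\log\log_q|m|)$, with the explicit constant $17$ emerging from careful bookkeeping of main terms, prime-sum errors, and non-primitive corrections. The main obstacle will be achieving uniform-in-$c$ control of the prime-power sums together with the non-primitive character contributions, without losing the correct order $\log_q|m|$.
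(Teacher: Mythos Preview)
Your lower bound argument is correct and matches the paper's: both reduce to $\sum_{(a,b)} B_m(a,b) = -\phi(m) N_m$ via orthogonality, then invoke the asymptotic for $N_m$.

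The upper bound has two genuine gaps. First, your formula $h(\chi) = \tfrac{q(M-1)}{q-1} + \tfrac{q}{(q-1)\log q}(\mu(\chi)+\mu(\bar\chi))$ is only valid for primitive $\chi$. For non-primitive $\chi$ induced from $\chi^*$ modulo $m(\chi^*)$, the inverse zeros of modulus $\sqrt{q}$ of $L(u,\chi)$ coincide with those of $L(u,\chi^*)$ (the extra Euler factors contribute only zeros of modulus $1$, which do \emph{not} enter $B_m$ at all). Hence the main term in $h(\chi)$ is actually $\tfrac{q}{q-1}M(\chi^*)$, which is \emph{not} $\chi$-independent. The sum $\sum_{\chi\neq\chi_0} M(\chi^*)\chi(c)$ is governed by Proposition~\ref{formuleExplicite} and produces the term $\tfrac{\Lambda(m/(m,c-1))}{\phi(m/(m,c-1))}$, which you have not accounted for; your diagnosis of the non-primitive case (extra modulus-$1$ zeros needing treatment) is misplaced.

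Second, the claim that the secondary sum is ``$O_q(M)$ per $c$'' uniformly is false. After orthogonality, the secondary contribution is essentially $\phi(m)\sum_{f\equiv c^{\pm 1}\bmod m}\Lambda(f)/|f|$ (truncated). If $c$ or $c^{-1}$ has a small prime-power representative $P^k$ of degree $O(1)$, the single term $\phi(m)\cdot\tfrac{\log|P|}{|P|^k}$ already has size $\asymp\phi(m)$, not $O(M)$; the prime number theorem in arithmetic progressions only controls degrees $n>M$. Indeed Corollary~\ref{majorBm} is sharp: individual $|B_m(a,b)|$ can be $\asymp\phi(m)$. The paper's remedy is to \emph{average over $(a,b)$ before bounding}: starting from the pointwise explicit formula of Proposition~\ref{forExpBm}, one swaps the $(a,b)$-sum with the $f$-sum to obtain, e.g., $\sum_{(a,b)}\sum_{f:af\equiv b}\tfrac{\Lambda(f)}{|f|}e^{-|f|/x}\le\phi(m)\sum_f\tfrac{\Lambda(f)}{|f|}e^{-|f|/x}$, which Lemma~\ref{elemantaryBm} bounds by $4(\log q)\phi(m)M$. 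The constant $17$ comes from adding the contributions $8+1+4+4$ of the four main terms in Proposition~\ref{forExpBm}. Your strategy of bounding each $c$ individually cannot yield this.
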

Let $n$ be a positive integer. Define $\mathcal{D}_r(n)$ to be the set of ordered $r$-tuples $(e_1,e_2,\dots, e_r)$ of distinct residue classes modulo $n$ that are coprime to $n$. Feuerverger and Martin \cite{FeM1} conjectured that there should exist a ``bias factor'' $F_n(e_1,\dots,e_r)$, defined as a linear combination of the $G_n(e_j) := -1 + \sum\limits_{\substack{ b^2 \equiv e_j \bmod n \\ 1 \leq b \leq n}}^{} 1$ such that
\begin{equation}
 F_n(e_1,\dots,e_r)>F_n(j_1,\dots,j_r) \implies \tilde{\delta} (n;e_1,\dots,e_r) > \tilde{\delta} (n;j_1,\dots,j_r),
\end{equation}
where $\tilde{\delta}(n;e_1,\dots,e_r)$ is the logarithmic density of the set of real numbers $x \geq 2$ such that 
$$
\pi(x;n,e_1) > \pi(x;n,e_2) > \dots > \pi(x;n,e_r). 
$$
Lamzouri \cite[Theorem $2.11$]{lamzouri} showed (conditionally on GRH and LI for Dirichlet $L$-functions) that this conjecture does not hold when $r \geq 3$. This motivates us to generalize this result to the function field setting. We prove the following theorem. 
\begin{thm}
\label{generMartin}
Let $m \in \mathcal{M}_{q}$ be of large degree. Assume LI. 
Fix an integer $r \geq 3$  and let $(\kappa_1,\kappa_2,\dots,\kappa_r) \in \mathbb{R}^{r}$ such that $(\kappa_1,\dots,\kappa_r) \ne (0,\dots,0)$. Then there exist two $r$-tuples \\ $(a_1,\dots,a_r)$,$(b_1,\dots,b_r) \in \mathcal{A}_{r}(m)$ such that 
\begin{equation*}
\sum\limits_{1 \leq j \leq r}^{} \kappa_j C_m(a_j) > \sum\limits_{1 \leq j \leq r}^{} \kappa_j C_m(b_j) \quad \text{and} \quad \delta_{m;a_1,\dots,a_r} < \delta_{m;b_1,\dots,b_r}.     
\end{equation*}
\end{thm}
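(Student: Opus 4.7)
The plan is to use the asymptotic formula of Corollary~\ref{forAsymCor}, which expresses
\begin{equation*}
\delta_{m;c_1,\ldots,c_r}-\frac{1}{r!}=T_1(c)+T_2(c)+O_r\!\left(\frac{C_m^2}{N_m}+\frac{B_m^2}{N_m^2}\right),
\end{equation*}
with $T_1(c):=-\frac{q+\sqrt{q}}{2\sqrt{N_m}(q-1)}\sum_{j}\alpha_j(r)C_m(c_j)$ and $T_2(c):=\frac{1}{N_m}\sum_{j<k}\beta_{j,k}(r)B_m(c_j,c_k)$. Since $|C_m(c)|\le|m|^{o(1)}$ and $N_m\asymp\phi(m)\log_q|m|$, we have $|T_1(c)|\ll|m|^{-1/2+o(1)}$, which is negligible compared with $1/\log_q|m|$. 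In contrast, as in the proof of Theorem~\ref{biasedRaces}, $T_2(c)$ can be pushed to magnitude $\gg_{r,q}1/\log_q|m|$ of either sign by engineering a single $|B_m(c_j,c_k)|$ to be close to the maximum $\phi(m)$ allowed by Corollary~\ref{majorBm}. I would therefore aim to construct $(a_1,\ldots,a_r)$ and $(b_1,\ldots,b_r)$ so that $T_2(b)-T_2(a)\gg_{r,q}1/\log_q|m|$; because the $T_1$ difference is much smaller, this would yield $\delta_{m;a_1,\ldots,a_r}<\delta_{m;b_1,\ldots,b_r}$.

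To secure condition (i), $\sum_j\kappa_jC_m(a_j)>\sum_j\kappa_jC_m(b_j)$, note that $C_m$ takes only the values $-1$ (on QNRs) and $C_m(1)\ge 1$ (on QRs), so
\begin{equation*}
\sum_j\kappa_j\bigl(C_m(a_j)-C_m(b_j)\bigr)=(C_m(1)+1)\!\left(\sum_{j\,:\,a_j\text{ QR}}\kappa_j-\sum_{j\,:\,b_j\text{ QR}}\kappa_j\right).
\end{equation*}
I would pick an index $j_0$ with $\kappa_{j_0}\ne 0$ and require that $a$ and $b$ agree in QR/QNR type at every coordinate $j\ne j_0$. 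At $j_0$, I would prescribe $a_{j_0}$ a QR and $b_{j_0}$ a QNR if $\kappa_{j_0}>0$ (the reverse if $\kappa_{j_0}<0$). The right-hand side above is then a positive multiple of $|\kappa_{j_0}|$, so (i) holds. This fixes the QR/QNR patterns of $a$ and $b$ while leaving the specific residues within each pattern entirely free.

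The remainder of the proof consists of choosing those specific residues so that $T_2(b)-T_2(a)\gg_{r,q}1/\log_q|m|$. Since $r\ge 3$, there is always at least one pair $(j,k)$ with $j,k\ne j_0$ and $\beta_{j,k}(r)\ne 0$; let $\varepsilon\in\{+1,-1\}$ be the sign of such a pair of maximal $|\beta_{j,k}(r)|$. Following the constructions behind Theorems~\ref{biasedRaces} and~\ref{criteriaExtreme}, I would insert into the $b$-tuple (if $\varepsilon=+1$) or into the $a$-tuple (if $\varepsilon=-1$) a single ``special'' pair of entries $(c_j,c_k)$, either of the form $c_j=-c_k$ or with $c_j/c_k$ a prime power, placed at the selected coordinates $(j,k)$, to produce $|T_2|\gg_{r,q}1/\log_q|m|$ of the desired sign; the other tuple would be chosen free of special pairs, so that its $T_2$ has size $o(1/\log_q|m|)$ (analogous to the second branch of Theorem~\ref{criteriaExtreme}). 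The main obstacle is verifying that such a special pair can be inserted while respecting the prescribed QR/QNR pattern --- to be handled by choosing, for instance, a prime $P$ with prescribed quadratic character and setting $c_j/c_k=P^{k'}$, so that multiplication preserves the pattern --- and that the remaining entries can be chosen generically enough to avoid creating unintended large $B_m$-contributions at other coordinates. This is the function-field analogue of the corresponding step in \cite{lamzouri}, and should go through by exploiting the abundance of residue classes of each quadratic character in $\mathbb{F}_q[T]/(m)$ once $\deg(m)$ is large.
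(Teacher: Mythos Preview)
Your overall strategy matches the paper's: use the QR/QNR type at a single coordinate $j_0$ with $\kappa_{j_0}\neq 0$ to force $\sum_j\kappa_jC_m(a_j)>\sum_j\kappa_jC_m(b_j)$, and use a ``special'' pair (with $c_j=-c_k$ or $c_j/c_k$ a prime power) to make the $T_2$-term dominate and push the densities in the desired direction. The paper carries this out with explicit choices of the $a_j,b_j$ built from $1$, $P_0$, and powers of $P_1P_2$, and in fact often places the special pair at a coordinate \emph{involving} $j_0$ (e.g.\ at $(1,r)$ when $j_0=r$), which streamlines the construction and lets it invoke $\beta_{1,r}(r)<0$ directly.

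There is, however, a concrete sign error in your proposal that would make the argument fail as written. From Proposition~\ref{moreExpBm}, a special pair produces $B_m(c_j,c_k)\sim -c\,\phi(m)$ with $c>0$, so the contribution $\beta_{j,k}(r)B_m(c_j,c_k)/N_m$ has sign $-\varepsilon$ where $\varepsilon=\mathrm{sgn}\,\beta_{j,k}(r)$. You want $T_2(b)-T_2(a)>0$. If $\varepsilon=+1$ and you insert the special pair into $b$, you get $T_2(b)<0\approx T_2(a)$, the wrong inequality; the correct rule is to insert the special pair into $a$ when $\varepsilon=+1$ and into $b$ when $\varepsilon=-1$ (or, equivalently, always work with a pair where $\beta_{j,k}(r)<0$ and insert into $b$, as the paper does with $\beta_{1,r}(r)$). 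You also assert without proof that some pair $(j,k)$ with $j,k\neq j_0$ has $\beta_{j,k}(r)\neq 0$; this is true (combine $\beta_{r-1,r}(r)>0$, $\beta_{1,r}(r)<0$ from \cite[Lemma~6.3]{lamzouri} with the symmetry $\beta_{j,k}(r)=\beta_{r+1-k,r+1-j}(r)$ to cover every choice of $j_0$), but it needs to be said.
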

This shows that this conjecture is also false in the function fields case. In the other direction, combining Theorems \ref{forAsymDelta3} and \ref{FirstMomBm}, we establish the generalization of the result obtained in \cite[Theorem $2.12$]{lamzouri}. 
\begin{thm}
\label{genra212}
Fix an integer $r \geq 3$ and let $m \in \mathcal{M}_{q}$ of large degree. Assume LI. Then there is a set $\Omega_r(m) \subset \mathcal{A}_{r}(m)$ with $\left| \Omega_r(m) \right| = o \left(\left| \mathcal{A}_{r}(m) \right|\right)$ such that for all $r$-tuples $(a_1,\dots,a_r),(b_1,\dots,b_r) \in \mathcal{A}_{r}(m) \setminus{\Omega_r(m)}$ we have
\begin{equation*}
- \sum\limits_{j=1}^{r} \alpha_j(r) C_{m}(a_j) > - \sum\limits_{j=1}^{r} \alpha_j(r) C_m(b_j)   \implies    
\delta_{m;a_1,\dots,a_r} > \delta_{m;b_1,\dots,b_r}.
\end{equation*}
\end{thm}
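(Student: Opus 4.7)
Our plan is to subtract the asymptotic expansions given by Corollary \ref{forAsymCor} for the two $r$-tuples $(a_1,\dots,a_r)$ and $(b_1,\dots,b_r)$, writing
\begin{equation*}
\delta_{m;a_1,\dots,a_r}-\delta_{m;b_1,\dots,b_r}=T_1+T_2+E,
\end{equation*}
where $T_1=\frac{q+\sqrt{q}}{2(q-1)\sqrt{N_m}}\sum_{j=1}^{r}\alpha_j(r)\bigl(C_m(b_j)-C_m(a_j)\bigr)$ is the main $\alpha$-term, $T_2=\frac{1}{N_m}\sum_{j<k}\beta_{j,k}(r)\bigl(B_m(a_j,a_k)-B_m(b_j,b_k)\bigr)$ is the $\beta$-term, and $E$ collects the error terms $O_r(C_m^2/N_m+B_m^2/N_m^2)$ for both $r$-tuples. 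Under the hypothesis, $T_1$ is strictly positive, and the goal is to show that $T_2+E=o(T_1)$ outside a suitably small exceptional set $\Omega_r(m)$.

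The heart of the argument is a uniform lower bound on $T_1$ when it is positive. Since $C_m(\cdot)$ only takes the two values $-1$ (for non-residues) and $C_m(1)\geq 1$ (for residues), the ratios $\epsilon_j:=(C_m(b_j)-C_m(a_j))/(C_m(1)+1)$ lie in $\{-1,0,1\}$, so
\begin{equation*}
T_1=\frac{(q+\sqrt{q})(C_m(1)+1)}{2(q-1)\sqrt{N_m}}\sum_{j=1}^{r}\alpha_j(r)\epsilon_j.
\end{equation*}
The set $\{\sum_j\alpha_j(r)\epsilon_j:\epsilon\in\{-1,0,1\}^r\}$ is finite, and it contains positive values (for instance $\alpha_1(r)>0$), so its minimum strictly positive element $c_r>0$ is a constant depending only on $r$. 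Under the hypothesis we therefore have $T_1\geq \frac{(q+\sqrt{q})c_r}{(q-1)\sqrt{N_m}}$, and combining with \eqref{asymptotics} and $\phi(m)=|m|^{1+o(1)}$ gives $T_1\gg_{r,q}1/\sqrt{\phi(m)\log_q|m|}$.

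For the exceptional set, set
\begin{equation*}
\Omega_r(m):=\Bigl\{(a_1,\dots,a_r)\in\mathcal{A}_r(m):\max_{1\leq j<k\leq r}|B_m(a_j,a_k)|>(\log_q|m|)^2\Bigr\}.
\end{equation*}
Theorem \ref{FirstMomBm} combined with Markov's inequality shows that at most $O_q(|\mathcal{A}_2(m)|/\log_q|m|)$ pairs $(a,b)\in\mathcal{A}_2(m)$ satisfy $|B_m(a,b)|>(\log_q|m|)^2$; a union bound over the $\binom{r}{2}$ index pairs then yields $|\Omega_r(m)|=O_r(|\mathcal{A}_r(m)|/\log_q|m|)=o(|\mathcal{A}_r(m)|)$. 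For any tuple outside $\Omega_r(m)$, every $|B_m(a_j,a_k)|$ is bounded by $(\log_q|m|)^2$; together with $C_m=|m|^{o(1)}$ and $N_m\asymp\phi(m)\log_q|m|$, this gives $|T_2|\ll_r\log_q|m|/\phi(m)$ and $|E|\ll_r|m|^{o(1)}/\phi(m)+(\log_q|m|)^2/\phi(m)^2$. Since $\phi(m)=|m|^{1+o(1)}$, both bounds are $o(1/\sqrt{\phi(m)\log_q|m|})=o(T_1)$, so $\delta_{m;a_1,\dots,a_r}-\delta_{m;b_1,\dots,b_r}=T_1(1+o(1))>0$.

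The main obstacle is establishing the uniform lower bound $c_r$: one must recognize that the $\{-1,0,1\}$-valued combinations of the $\alpha_j(r)$'s form a finite set, so that the minimum strictly positive value is automatically bounded away from zero by a constant depending only on $r$. Once this is in place, everything reduces to the first-moment input from Theorem \ref{FirstMomBm}, a Markov-type argument to build $\Omega_r(m)$, and routine comparisons between the $\log_q|m|$-scale quantities appearing in $T_2$ and $E$ and the much larger $\phi(m)^{1/2}$-scale size of $T_1$.
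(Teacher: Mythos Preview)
Your proof is correct and follows essentially the same strategy as the paper's: use Theorem \ref{FirstMomBm} together with a Markov-type argument to define the exceptional set $\Omega_r(m)$, then show via Corollary \ref{forAsymCor} that outside $\Omega_r(m)$ the $B_m$-contributions and error terms are dominated by the $\alpha$-term, whose positivity forces a uniform lower bound thanks to the two-value structure of $C_m(\cdot)$. The only difference is cosmetic: the paper uses the threshold $|B_m(a,b)|\geq\sqrt{\phi(m)}$ (yielding $|\Omega_r(m)|\ll_r\phi(m)^{r-1/2}M$), whereas you use the threshold $(\log_q|m|)^2$ (yielding $|\Omega_r(m)|\ll_r\phi(m)^r/\log_q|m|$); both choices give $|\Omega_r(m)|=o(|\mathcal{A}_r(m)|)$ and small enough secondary terms, so either works.
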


In the next section, we give some preliminary results that will be used throughout this paper. In Section \ref{section4}, we establish the two asymptotic formulas for $N_m$ and $B_m(a,b)$. We also study the order of magnitude of $B_m(a,b)$,  proving Theorem \ref{FirstMomBm}. In Section \ref{section5}, we investigate properties of the Fourier and Laplace Transforms of $\mu_{m;a_1,\dots,a_r}.$ We derive asymptotic formulas for the densities in the case $r =2$ and $r \geq 3$ in Sections \ref{section6} and \ref{section7} respectively. We also show Theorem \ref{genra212} in Section \ref{section7}. The next section, Section \ref{section8}, is devoted to constructing explicitly biased races and proving Theorems \ref{biasedQuadratic}, \ref{biasedRaces} and \ref{generMartin}. In Section \ref{section9}, we find a criteria for $m$-extremely biased races establishing Theorem \ref{criteriaExtreme}. Finally, in Section \ref{section10} we give some examples of races where LI is false.

\section{Preliminaries}\label{covar}

\subsection{Background for Function Fields}

\label{background}
Recall that for any positive integer $n$, the prime counting function $\pi_q(n)$ is defined by 
\[\pi_q(n)=\#\{a \in \mathcal{P}_q \,\,|\,\,\deg(a)=n \}.\]
The prime number theorem for polynomials gives the following estimate about $\pi_q(n)$ (cf. \cite[Theorem 2.2]{rosen}):
\begin{equation}\label{PNT2}
\pi_q(n)=\frac{q^n}{n}+O\left(\frac{q^{n/2}}{n}\right).
\end{equation}
Let $m$ be an element of $\mathbb{F}_{q}[T]$ of positive degree.  From \cite[Chapter 4]{rosen}, we say that a function $\chi: \mathbb{F}_{q}[T] \to\mathbb{C}$ is a Dirichlet character modulo $m$ if it satisfies the following 
\begin{enumerate}
\item $\chi(a+bm)=\chi(a)$ for all $a,b\in\mathbb{F}_{q}[T]$,
\item $\chi(a)\chi(b)=\chi(ab)$ for all $a,b\in\mathbb{F}_{q}[T]$,
\item $\chi(a)\neq0 \Leftrightarrow (a,m)=1$.
\end{enumerate}
We recall the orthogonality relations for Dirichlet characters. Let $\chi$, $\psi$ be two Dirichlet characters modulo $m$ and $a$ and $b$ two elements in $\mathbb{F}_q[T]$ relatively prime to $m$. Then 
\begin{equation}
\label{firstOrthogo}
\begin{aligned}
\sum\limits_{ \chi \bmod m}^{} \chi(a) \overline{\chi(b)}= \phi(m) \delta(a,b),
\end{aligned}
\end{equation}
and
\begin{equation}
\label{secondOrthogo}
\begin{aligned}
\sum\limits_{c \in (\mathbb{F}_{q}[T]/m \mathbb{F}_{q}[T])^{*}}^{} \chi(c) \overline{\psi(c)}= \phi(m) \delta^{'}(\chi,\psi),
\end{aligned}
\end{equation}
where $\delta(a,b) = \left\{
    \begin{array}{ll}
        1 & \text{if } a \equiv b \bmod m, \\
        0 & \text{otherwise.}
    \end{array}
\right.$ and $\delta^{'}(\chi,\psi) = \left\{
    \begin{array}{ll}
        1 & \text{if } \chi = \psi, \\
        0 & \text{otherwise.}
    \end{array}
\right.$
\\

The Dirichlet $L$-function associated to a Dirichlet character $\chi$ modulo $m$ is defined by 
$$
L(s,\chi) = \sum\limits_{f \in \mathcal{M}_{q}}^{} \frac{\chi(f)}{|f|^s} = \prod_{P \in \mathcal{P}_{q}}\left(1-\frac{\chi(P)}{|P|^s}\right)^{-1} 
 \text{ for } \Re{(s)} > 1.
$$
We know from \cite[Proposition $4.3$]{rosen} that for a non-principal Dirichlet character $\chi$ modulo $m$, the Dirichlet $L$-function $L(s,\chi)$ is a polynomial in $q^{-s} = u$ of degree at most $\deg(m) - 1$. This shows that if $\chi$ is non-principal, then $L(s,\chi)$ can be analytically continued to an entire function on all of $\mathbb{C}$. Weil proved the Riemann Hypothesis for function fields over finite fields. 
We make the following change of variable $u =q^{-s}$ and we define 
$\mathcal{L}(u,\chi) := L(s,\chi).$ We recall Proposition $6.4$ of \cite{BCha} (which follows from Weil's Theorem \cite{weil1948varietes}).

\begin{prop}[{\cite[Proposition $6.4$]{BCha}}]
\label{propCha}
Let $m \in \mathcal{M}_{q}$ of degree $M \geq 1$. Let $\chi^{*}$ be the primitive Dirichlet character modulo a polynomial $m(\chi^{*})$ which induces a non-principal Dirichlet character $\chi$ modulo $m$. Also, let $M(\chi^{*})$ be the degree of $m(\chi^{*})$. Then we have \footnote{Note that \cite[Proposition $6.4$]{BCha} contains a typo, we have $\mathcal{L}(u,\chi) = \mathcal{L}(u,\chi^{*}) 
\prod\limits_{\substack{P|m \\ P \nmid m(\chi^{*})}} (1 - \chi^{*}(P) u^{\deg(P)})$.} 
\begin{equation}
\begin{aligned}
\label{sameInverse}
\mathcal{L}(u,\chi) &= \mathcal{L}(u,\chi^{*}) 
\prod\limits_{\substack{P|m \\ P \nmid m(\chi^{*})}} (1 - \chi^{*}(P) u^{\deg(P)}).
\end{aligned}
\end{equation}
 If $\chi^{*}$ is even ($\chi^{*}(-1) = 1$), then 
\begin{equation}
\begin{aligned}
\label{even}
\mathcal{L}(u,\chi^{*}) &= (1 - u) \prod\limits_{i=1}^{M(\chi^{*}) - 2} (1 - \gamma_{\chi_i} u)  ,
\end{aligned}
 \end{equation}
and, otherwise,
\begin{equation}
\label{odd}
\begin{aligned}
\mathcal{L}(u,\chi^{*}) &= \prod\limits_{i=1}^{M(\chi^{*}) - 1} (1 - \gamma_{\chi_i} u),  
\end{aligned}
\end{equation}
for some complex numbers $\gamma_{\chi_{i}}$ with $\left| \gamma_{\chi_i} \right| = \sqrt{q}.$
\end{prop}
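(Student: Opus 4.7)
The plan is to establish the three assertions of the proposition in turn: first the factorization \eqref{sameInverse} relating $\mathcal{L}(u,\chi)$ to the $L$-function of its inducing primitive character; second, the polynomial structure of $\mathcal{L}(u,\chi^{*})$ together with the parity-dependent degree distinction appearing in \eqref{even} and \eqref{odd}; and finally the modulus statement $|\gamma_{\chi_{i}}|=\sqrt{q}$.

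For \eqref{sameInverse}, I would compare the Euler products. Since $\chi$ is induced from the primitive character $\chi^{*}$ modulo $m(\chi^{*})$ and $m(\chi^{*})\mid m$, one has $\chi(P)=\chi^{*}(P)$ when $P\nmid m$, $\chi(P)=0$ when $P\mid m$, and $\chi^{*}(P)=0$ when $P\mid m(\chi^{*})$. Therefore
\[
\mathcal{L}(u,\chi)=\prod_{P\nmid m}\bigl(1-\chi^{*}(P)u^{\deg(P)}\bigr)^{-1}
\quad\text{and}\quad
\mathcal{L}(u,\chi^{*})=\prod_{P\nmid m(\chi^{*})}\bigl(1-\chi^{*}(P)u^{\deg(P)}\bigr)^{-1},
\]
and the ratio of the latter to the former is exactly the product over primes $P\mid m$ with $P\nmid m(\chi^{*})$, giving \eqref{sameInverse}.

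For the polynomial structure, write $\mathcal{L}(u,\chi^{*})=\sum_{n\geq 0}c_{n}u^{n}$ with $c_{n}:=\sum_{f\in\mathcal{M}_{q},\,\deg f=n}\chi^{*}(f)$. For $n\geq M(\chi^{*})$, the decomposition $f=g\,m(\chi^{*})+r$ with $g$ monic of degree $n-M(\chi^{*})$ and $\deg r < M(\chi^{*})$ shows that each residue class modulo $m(\chi^{*})$ contains exactly $q^{n-M(\chi^{*})}$ monic polynomials of degree $n$, so the non-principality of $\chi^{*}$ forces $c_{n}=0$. This recovers Proposition~$4.3$ of \cite{rosen} and gives $\deg_{u}\mathcal{L}(u,\chi^{*})\leq M(\chi^{*})-1$. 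To split the even and odd cases, I would evaluate at $u=1$: grouping each nonzero polynomial of degree $n<M(\chi^{*})$ as $c\,g$ with $g$ monic and $c\in\mathbb{F}_{q}^{*}$, one obtains
\[
0=\sum_{\substack{f\neq 0\\\deg f<M(\chi^{*})}}\chi^{*}(f)=\Bigl(\sum_{c\in\mathbb{F}_{q}^{*}}\chi^{*}(c)\Bigr)\sum_{n=0}^{M(\chi^{*})-1}c_{n}.
\]
If $\chi^{*}$ is even, then $\chi^{*}$ is trivial on $\mathbb{F}_{q}^{*}$, so the bracketed sum equals $q-1\neq 0$ and $\mathcal{L}(1,\chi^{*})=0$; hence $(1-u)\mid\mathcal{L}(u,\chi^{*})$ and the degree drops to at most $M(\chi^{*})-2$, matching \eqref{even}. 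If $\chi^{*}$ is odd, the bracket vanishes and no such factor is forced, yielding \eqref{odd}. A short additional argument, based on the primitivity of $\chi^{*}$, shows that the relevant top coefficient is nonzero, so the degrees are exactly $M(\chi^{*})-1$ and $M(\chi^{*})-2$ respectively; the factorizations into linear factors then follow from the fundamental theorem of algebra.

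Finally, to obtain $|\gamma_{\chi_{i}}|=\sqrt{q}$, the plan is to invoke Weil's Riemann Hypothesis for curves over finite fields~\cite{weil1948varietes}: after removing the trivial $(1-u)$ factor in the even case, $\mathcal{L}(u,\chi^{*})$ is a factor of the zeta function of a smooth projective curve over $\mathbb{F}_{q}$ associated to the cyclic cover cut out by $\chi^{*}$, and Weil's theorem asserts that all inverse zeros of such a zeta function have modulus $\sqrt{q}$. The main obstacle is precisely this last step: we use Weil's theorem as a black box, since a self-contained proof would require substantial algebraic geometry of curves over finite fields, which lies well outside the scope of the paper.
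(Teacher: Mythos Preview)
The paper does not prove this proposition itself: it is quoted from \cite[Proposition~6.4]{BCha} with only the remark that it ``follows from Weil's Theorem'', and no proof environment follows. Your sketch supplies exactly the standard argument behind that citation, and each step---the Euler-product comparison for \eqref{sameInverse}, the coefficient-vanishing giving $\deg_u\mathcal{L}(u,\chi^*)\le M(\chi^*)-1$, the evaluation at $u=1$ to extract the $(1-u)$ factor, and the black-box appeal to Weil's Riemann Hypothesis for $|\gamma_{\chi_i}|=\sqrt{q}$---is correct.

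One point to flag: your deduction that $(1-u)\mid\mathcal{L}(u,\chi^{*})$ in the even case rests on the assertion ``if $\chi^{*}$ is even, then $\chi^{*}$ is trivial on $\mathbb{F}_{q}^{*}$''. This is precisely the standard function-field definition of \emph{even} (as in \cite{rosen}), and with that reading your argument is valid. The paper's parenthetical, however, glosses ``even'' as $\chi^{*}(-1)=1$; for $q>3$ these two conditions are not equivalent, since a character of $(\mathbb{F}_q[T]/m)^*$ can send $-1$ to $1$ without being trivial on all nonzero constants. The factorizations \eqref{even}--\eqref{odd} hold under the standard definition (trivial on $\mathbb{F}_q^*$), so your proof should be understood as using that convention; the paper's parenthetical is a slight imprecision carried over from the classical analogy.
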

These $\gamma_{\chi_{i}}$'s are called the inverse zeros of modulus $\sqrt{q}$ of the Dirichlet $L$-function associated to the character $\chi$.
\begin{rem}
\label{eqInverses}
Note that it follows from  \eqref{sameInverse} that the inverse zeros of the Dirichlet $L$-function associated to a character $\chi$ modulo $m$ are the same as the inverse zeros of the Dirichlet $L$-function associated to a character $\chi^{*}$ modulo $m(\chi^{*})$. 
\end{rem}
\subsection{Estimate for sums over irreducible monic polynomials}
Let $\Lambda(m) = \log |P|$ if $m = P^{t}$, a prime power, and $0$ otherwise. 
\begin{lem}[Mertens Theorem in function fields]
Let $\chi$ be a non-principal Dirichlet character modulo $m$, and let $N \in \mathbb{N^{*}}$. Then 
\begin{equation*}
\label{mertens}
\begin{aligned}
\sum\limits_{|P| \leq N}^{} \frac{\Lambda(f)}{|f|} &=    \log N  + O(1).
\end{aligned}
\end{equation*}
\end{lem}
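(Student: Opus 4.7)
The plan is to deduce this Mertens-type identity directly from the prime number theorem for polynomials \eqref{PNT2}. First I would observe that the non-principal character $\chi$ does not actually appear in the displayed identity, so the claim reduces to the standard (untwisted) Mertens statement: the sum of $\Lambda(f)/|f|$ over monic polynomials $f$ with $|f| \le N$ equals $\log N + O(1)$. Since $\Lambda$ is supported on prime powers and $|f| \le N$ is equivalent to $\deg(f) \le \log_q N$, the sum can be rewritten as
$$\sum_{n \le \log_q N}\; \sum_{\substack{P \in \mathcal{P}_q \\ t \ge 1 \\ t\deg(P) = n}} \frac{\log|P|}{q^n}.$$

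The main step is to split off the $t=1$ contribution. For the genuine primes ($t=1$), grouping by degree and using $\Lambda(P) = \log|P| = (\deg P) \log q$ and the definition of $\pi_q(n)$ gives
$$\sum_{n \le \log_q N} \frac{n\log q}{q^n}\,\pi_q(n).$$
Substituting the estimate $\pi_q(n) = q^n/n + O(q^{n/2}/n)$ from \eqref{PNT2}, the main contribution becomes $\log q \cdot \bigl(\lfloor \log_q N \rfloor\bigr) = \log N + O(1)$, while the error contributes $\log q \sum_{n \le \log_q N} q^{-n/2} = O(1)$ via a convergent geometric sum.

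For the prime-power contribution with $t \ge 2$, I would bound crudely: the total is at most
$$\sum_{P \in \mathcal{P}_q} \log|P| \sum_{t \ge 2} |P|^{-t} \;\ll\; \sum_{P} \frac{\log|P|}{|P|^2} \;=\; \sum_{n \ge 1} \frac{n\log q}{q^{2n}}\,\pi_q(n),$$
and since $\pi_q(n) \ll q^n/n$ by \eqref{PNT2}, this is bounded by a convergent geometric series, hence $O(1)$. Combining the two contributions yields the desired estimate $\log N + O(1)$.

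No genuine difficulty is anticipated; the proof is a routine application of the polynomial prime number theorem. The only issues to watch are the cosmetic discrepancy between $|P|$ and $|f|$ in the displayed sum (which I read as a typo, since $\Lambda$ kills non-prime-power terms anyway), and the handling of the floor $\lfloor \log_q N \rfloor$ when converting the upper bound on the degree into the quantity $\log N$. Both are absorbed harmlessly into the $O(1)$ error.
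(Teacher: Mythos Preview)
Your proposal is correct and takes essentially the same approach as the paper: group the sum by degree, use the polynomial prime number theorem \eqref{PNT2} for the prime contribution, and absorb the higher prime-power terms into the $O(1)$. The paper organizes the computation slightly differently---it evaluates $\sum_{\deg(f)=k}\Lambda(f)=(\log q)\,q^k+O(kq^{k/2})$ at each degree and then sums over $k$, rather than separating the $t=1$ and $t\ge 2$ contributions globally as you do---but the underlying ideas and the inputs are identical; your reading of the cosmetic typos (the unused $\chi$, and $|P|$ versus $|f|$) is also correct.
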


\begin{proof}
Combining the facts that 
\begin{equation*}
\begin{aligned}
\sum\limits_{|f| \leq N}^{} \frac{\Lambda(f)}{|f|} &= \sum\limits_{k=1}^{\lfloor \frac{\log N}{\log q} \rfloor} \frac{1}{q^k} \sum\limits_{\deg(f) = k}^{} \Lambda(f),   
\end{aligned}
\end{equation*}
and 
\begin{equation*}
\begin{aligned}
\sum\limits_{\deg(f) = k}^{} \Lambda(f) &=  \sum\limits_{\deg(P) = k}^{} \log |P| +  O( k q^{k/2})\\
&= k (\log q) \pi_q(k) + O( k q^{k/2}) = (\log q) q^k + O( k q^{k/2}),
\end{aligned}
\end{equation*}
we deduce our estimate. 
\end{proof}

\begin{lem}
Let $m \in \mathcal{M}_{q}$ be of large degree. Then 
\begin{equation}
\label{majorSom}
\begin{aligned}
\sum\limits_{P | m}^{} \frac{\log |P|}{|P| - 1} &\ll  \log \log_q |m|,  
\end{aligned}
\end{equation}
and
\begin{equation}
\label{majorSomme2}
\begin{aligned}
\frac{|m|}{\phi(m)} &\ll \log_q |m|.   
\end{aligned}
\end{equation}
\end{lem}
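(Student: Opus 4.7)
The overall strategy is to combine Mertens' theorem (the preceding lemma) with the elementary observations that every prime $P$ with $P\mid m$ satisfies $|P|\le|m|$ and that the distinct prime divisors of $m$ satisfy $\sum_{P\mid m}\deg(P)\le M:=\deg(m)$.

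For \eqref{majorSom}, I would use the inequality $\frac{\log|P|}{|P|-1}\le \frac{2\log|P|}{|P|}$ (valid since $|P|\ge q\ge 3$) and split the sum at the threshold $Y=M$. The small-prime contribution is dominated by dropping the condition $P\mid m$ and invoking Mertens:
$$\sum_{\substack{P\mid m\\ |P|\le M}}\frac{\log|P|}{|P|-1}\le 2\sum_{|P|\le M}\frac{\log|P|}{|P|}\le 2\log M+O(1).$$
The large-prime contribution is handled via $\frac{\log|P|}{|P|-1}\le \frac{2\log|P|}{M}$ together with the trivial bound $\sum_{P\mid m}\log|P|\le \log|m|=M\log q$, which produces a contribution of size $O(\log q)=O(1)$ for $q$ fixed. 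Summing the two parts yields the desired $O(\log M)=O(\log\log_q|m|)$.

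For \eqref{majorSomme2}, I would exploit the Euler product $\frac{|m|}{\phi(m)}=\prod_{P\mid m}(1-1/|P|)^{-1}$. Since every factor is $\ge 1$ and $P\mid m$ implies $|P|\le|m|$,
$$\frac{|m|}{\phi(m)}\le \prod_{|P|\le|m|}\left(1-\frac{1}{|P|}\right)^{-1}.$$
Taking logarithms and using $-\log(1-x)=x+O(x^2)$ reduces the task to estimating $\sum_{|P|\le N}1/|P|$. Partial summation applied to the Mertens identity $\sum_{|f|\le N}\Lambda(f)/|f|=\log N+O(1)$ yields $\sum_{|P|\le N}1/|P|=\log\log N+O(1)$; setting $N=|m|$ and exponentiating gives $\frac{|m|}{\phi(m)}\ll \log|m|\asymp \log_q|m|$ for fixed $q$.

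The only technical point worth emphasizing is that the partial summation must be performed carefully enough that the leading constant in the asymptotic for $\sum_{|P|\le N}1/|P|$ is exactly $1$; this is what keeps the exponent of $\log_q|m|$ in \eqref{majorSomme2} equal to $1$ rather than a larger power. Everything else is routine threshold splitting together with convergent tail estimates of the form $\sum_P 1/|P|^2=O(1)$.
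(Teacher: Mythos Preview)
Your argument is correct and follows the same overall plan as the paper (threshold splitting plus Mertens), with two small variations worth noting. For \eqref{majorSom}, the paper splits at $\deg(P)=\log M$ rather than at $|P|=M$, and handles the large-prime tail by bounding the number of such primes (at most $M/\log M$, since each contributes at least $\log M$ to $\deg(m)$) times the maximum term size; your use of $\sum_{P\mid m}\log|P|\le \log|m|$ together with the uniform lower bound $|P|>M$ is an equally clean (arguably cleaner) way to dispatch that tail. For \eqref{majorSomme2}, the paper simply cites Rosen's function-field Mertens theorem for $\prod_{|P|\le N}(1-1/|P|)^{-1}\asymp \log N$, whereas you rederive that estimate by partial summation from the preceding lemma; your route is more self-contained and avoids the external reference at essentially no cost.
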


\begin{proof}
Let $M = \deg(m)$ and $\alpha \in \mathbb{R}_{+}^{*}$. Let us consider $A_{\alpha} := \#\left\{ P | m \  \text{such that} \ \deg(P) \geq \left[ \frac{M}{\alpha} \right] + 1 \right\}$, clearly $A_{\alpha} \leq \alpha$. In particular, if we take $\alpha = \frac{M}{\log M}$, then $A_{\frac{M}{\log M}} \leq \frac{M}{\log M}.$ Hence, we have 
\begin{equation*}
\begin{aligned}
\sum\limits_{P | m}^{} \frac{\log |P|}{|P| - 1} &= \sum\limits_{\substack{P | m \\ \deg(P) \leq \log M}}^{} \frac{\log |P|}{|P| - 1} +
\sum\limits_{\substack{P | m \\ \deg(P) > \log M}}^{} \frac{\log |P|}{|P| - 1} \\
&\leq  
\sum\limits_{\substack{P | m \\ \deg(P) \leq \log M}}^{} \frac{\log |P|}{|P| - 1} +  \log q \frac{M}{\log M} \frac{\log M}{q^{\log M} - 1}.
\end{aligned}
\end{equation*}
Thus from Lemma \ref{mertens}, we deduce that 
\begin{equation*}
\sum\limits_{P | m}^{} \frac{\log |P|}{|P| - 1} \ll   \log M.
\end{equation*}

We now establish  \eqref{majorSomme2}.
From \cite[Proposition $1.7$]{rosen}, we have that 
\begin{equation*}
\frac{|m|}{\phi(m)} = \prod\limits_{P | m}^{} \left( 1 - \frac{1}{|P|} \right)^{-1}.
\end{equation*}
Hence the desired estimate follows from \cite[Theorem $3$]{RosenMertens}.
\end{proof}
\begin{lem}[{\cite[Lemma $6.3$]{BCha}}]
\label{derivLogL}
Let $\chi$ be a non-principal Dirichlet character modulo $m$. Then 
\begin{equation}
\frac{L'}{L}(1,\chi) = O\left(\log \log_q |m| \right).
\end{equation}
\end{lem}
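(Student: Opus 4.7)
The plan is to use the factorization of $\mathcal{L}(u,\chi)$ from Proposition \ref{propCha} to split $\frac{L'}{L}(1,\chi)$ into a contribution from the ``bad'' local factors at primes $P\mid m$ and a contribution from the primitive inverse zeros. Writing $u=q^{-s}$, we have
$$\frac{L'}{L}(s,\chi)=-u(\log q)\,\frac{d}{du}\log\mathcal{L}(u,\chi),$$
and since $\mathcal{L}(u,\chi)=\mathcal{L}(u,\chi^{*})\prod_{P\mid m,\,P\nmid m(\chi^{*})}(1-\chi^{*}(P)u^{\deg P})$, these two factors are analyzed separately at $u=1/q$.

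Logarithmically differentiating the product of local factors at $u=1/q$ gives a sum whose terms are each $O(\log|P|/(|P|-1))$, so this part contributes $O(\log\log_q|m|)$ by \eqref{majorSom}. For the primitive part, formulas \eqref{even} and \eqref{odd} express $\mathcal{L}(u,\chi^{*})$ as $\prod_{i}(1-\gamma_{\chi_i}u)$, with a harmless additional $(1-u)$ factor in the even case that contributes only $O(1)$ at $u=1/q$. The remaining task is to bound
$$\log q\cdot\sum_{i}\frac{\gamma_{\chi_i}}{q-\gamma_{\chi_i}}=\log q\cdot\sum_{k\geq1}q^{-k}\sum_{i}\gamma_{\chi_i}^{k}.$$

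Matching coefficients of $u^{k}$ in the two formal expansions of $\log\mathcal{L}(u,\chi^{*})$---one from the zero-product and one from the Euler product---yields the explicit formula
$$\sum_{i}\gamma_{\chi_i}^{k}=-\frac{1}{\log q}\sum_{\deg f=k}\chi^{*}(f)\Lambda(f).$$
The right-hand side admits two complementary bounds: the trivial bound $\sum_{\deg f=k}\Lambda(f)=q^{k}\log q$, and the Weil bound $(M-1)q^{k/2}\log q$, which follows from $|\gamma_{\chi_i}|=\sqrt{q}$ together with the fact that there are at most $M-1$ such inverse zeros.

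Splitting the $k$-sum at the cutoff $K=\lceil 2\log_q M\rceil$, using the trivial bound for $k\leq K$ and the Weil bound for $k>K$, each range contributes $O(\log_q M)$, so the primitive part is $O(\log M)=O(\log\log_q|m|)$. Combined with the local contribution, this yields the claimed estimate. The main obstacle is the clean execution of this cutoff argument, together with keeping track of the even primitive case and verifying that the extra $(1-u)$ factor does not perturb the final bound.
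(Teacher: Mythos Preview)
The paper does not supply its own proof of this lemma; it is simply quoted from \cite[Lemma~6.3]{BCha}. So there is nothing in the present paper to compare against directly.

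Your argument is correct and is essentially the standard proof. A few minor remarks for precision: when you invoke the explicit formula $\sum_i\gamma_{\chi_i}^k=-(\log q)^{-1}\sum_{\deg f=k}\chi^{*}(f)\Lambda(f)$, note that in the even case the extra factor $(1-u)$ shifts this identity by $1$ (the ``trivial'' inverse zero at $1$), but as you observe this only perturbs the estimate by $O(1)$. Also, your bound on the large-$k$ range is actually $O(1)$ rather than $O(\log_q M)$, since $(M-1)\sum_{k>K}q^{-k/2}\ll (M-1)q^{-K/2}\ll 1$ once $K\geq 2\log_q M$; this only strengthens your conclusion. Finally, you use at most $M-1$ inverse zeros for the Weil bound, which is valid since $M(\chi^{*})\leq M$.
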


\subsection{Arithmetic sums over characters in function fields}

In this subsection, we will establish some preliminary arithmetic identities that will be needed later. Most of these results are obtained by using the same arguments as in \cite[Subsection $3.1$]{fiorilliMartin} but we give the proofs for the sake of completeness. 
\begin{lem}
\label{sommeLambda}
Let $m \in \mathcal{M}_{q}$ of degree $\geq 1$. We have 
\begin{equation}
\label{sommeLambda1in2}
\sum\limits_{f | m}^{} \Lambda \left( m/f \right) \phi(f) = \phi(m) \sum\limits_{P | m}^{} \frac{\log |P|}{|P| - 1}. 
\end{equation}
If $s$ is a proper divisor of $m$, then 
\begin{equation}
\label{sommeLambda2in1}
\sum\limits_{f | s}^{} \Lambda \left( m/f \right) \phi(f) = \phi(m) \frac{\Lambda \left( m/s \right)}{\phi \left(m/s \right)}.
\end{equation}
\end{lem}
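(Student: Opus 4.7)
The plan is to exploit the restricted support of the von~Mangoldt--type function $\Lambda$: $\Lambda(m/f)$ vanishes unless $m/f$ is a prime power $P^{t}$ with $P \in \mathcal{P}_{q}$ irreducible. Writing $f = m/P^{t}$ reindexes both sums as sums over pairs $(P,t)$ with $P \mid m$ and $1 \le t \le v_{P}(m)$, where $v_{P}$ denotes the $P$-adic valuation; in each case the summand picks up the factor $\log |P|$ coming from $\Lambda(m/P^{t})$.

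For \eqref{sommeLambda1in2}, fix such a $P$ and write $m = P^{e} n$ with $(P,n)=1$ and $e = v_{P}(m)$. Multiplicativity of $\phi$ gives $\phi(m/P^{t}) = \phi(P^{e-t})\phi(n)$, with the convention $\phi(P^{k}) = |P|^{k-1}(|P|-1)$ for $k \ge 1$ and $\phi(1)=1$. Summing the resulting geometric progression over $1 \le t \le e$ collapses, after absorbing the isolated $t=e$ term, to $\phi(n)|P|^{e-1} = \phi(m)/(|P|-1)$. Multiplying by $\log |P|$ and summing over $P \mid m$ yields the desired identity.

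For \eqref{sommeLambda2in1}, the additional constraint $f \mid s$ translates, via $f = m/P^{t}$, into $(m/s) \mid P^{t}$. If $m/s$ is not a prime power, then no such pair $(P,t)$ exists, so the left-hand side vanishes; the right-hand side also vanishes since $\Lambda(m/s)=0$. Otherwise $m/s = Q^{a}$ for some irreducible $Q$ and $a \ge 1$, which forces $P = Q$ and $t \ge a$. The same telescoping argument, now with $t$ running from $a$ to $e := v_{Q}(m)$, produces $\phi(n)|Q|^{e-a} = \phi(m)/\phi(Q^{a}) = \phi(m)/\phi(m/s)$. Multiplying by $\log|Q| = \Lambda(m/s)$ gives the right-hand side.

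I do not anticipate a serious obstacle: both parts reduce to a short geometric-series computation once the support of $\Lambda$ is exploited. The only point requiring a little care is the endpoint $t = v_{P}(m)$, where $\phi(1) = 1$ does not follow the general formula $|P|^{k-1}(|P|-1)$; a quick inspection shows this term contributes exactly the constant needed to make the telescoping come out cleanly, and the argument then mirrors the number field version of Fiorilli--Martin.
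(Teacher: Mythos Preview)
Your proposal is correct and follows essentially the same approach as the paper: both proofs use the support of $\Lambda$ to reindex the sum over the pairs $(P,t)$ with $m/f=P^{t}$, then invoke multiplicativity of $\phi$ and the identity $\sum_{j=0}^{e-1}\phi(P^{j})=|P|^{e-1}$ to collapse the inner geometric sum. The only cosmetic difference is that the paper writes $m=QP^{r}$ and parametrizes $f=QP^{r-k}$, whereas you parametrize directly by the exponent $t$ in $m/f=P^{t}$; the computations are identical.
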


\begin{proof}
First, note that the sum of $\Lambda \left( m/f \right) \phi(f)$ over $f | m$ is the same as the sum of $\Lambda \left( m/f \right) \phi(f)$ over the divisors $f$ such that $m/f$ is a power of a particular irreducible monic polynomial in $\mathbb{F}_{q}[T].$ We use the notation $P^{r} || m$ to denote that $r$ is the largest integer such that $P^{r} | m$. Then if $P^{r} || m$, we write $m = Q P^{r}$ with $Q \in \mathcal{M}_{q}$ such that $P \nmid Q$. We then get a contribution to the sum only when $f = Q P^{r-k}$ for some $1 \leq k \leq r$. Therefore 
\begin{equation*}
\begin{aligned}
\sum\limits_{f | m}^{} \Lambda \left( m/f \right) \phi(f) &= \sum\limits_{P^{r} || m}^{} \sum\limits_{k=1}^{r} \Lambda \left( P^{k} \right) \phi(Q P^{r-k}), \\
 &= \sum\limits_{P^{r} || m}^{} \phi(Q) \log |P| |P|^{r-1},
\end{aligned}
\end{equation*}
since $(Q,P) = 1$ and $\sum\limits_{k=1}^{r} \phi(P^{r-k}) = |P|^{r-1}$.
Using the fact that  $\phi(Q) = \phi(m) / \phi(P^{r}) \quad \text{(because }  P \nmid Q \text{)}$, we deduce that 
\begin{equation*}
\begin{aligned}
\sum\limits_{f | m}^{} \Lambda \left( m/f \right) \phi(f) &= \sum\limits_{P^{r} || m}^{} \frac{\phi(m)}{\phi(P^{r})} \log |P| |P|^{r-1}
    = \phi(m) \sum\limits_{P^{r} || m}^{} \frac{\log |P|}{|P| - 1}
    = \phi(m) \sum\limits_{P | m}^{} \frac{\log |P|}{|P| - 1}.
\end{aligned}
\end{equation*} 

Let us establish now the second identity. If $m/s$ has at least two distinct irreducible monic polynomial factors in $\mathbb{F}_{q}[T]$, then clearly the right hand side of \eqref{sommeLambda1in2} is equal to 0, and the left hand side is also equal to 0 because $\Lambda(m/f) = 0$ for every divisor $f$ of $s$ and hence all of the terms $\Lambda(m/f)$ will be $0$. Therefore we need only to consider the case where $m/s$ is a prime power. \\
We write $m = Q P^{r}$ with $P \nmid Q$, and since $m/s = P^{t}$ then $s = Q P^{r-t}$. So the only terms that contribute to the sum are those where $f = Q P^{r-k}$ with $t \leq k \leq r$. Therefore by a similar calculation as before we obtain that 
\begin{equation*}
\begin{aligned}
\sum\limits_{f | s}^{} \Lambda \left( m/f \right) \phi(f) = \sum\limits_{k=t}^{r} \Lambda(P^{k}) \phi(Q.P^{r-k}) &= \phi(Q) \log |P| \sum\limits_{k=t}^{r} \phi(P^{r-k}). \\
\end{aligned}
\end{equation*}
Combining the facts that $\phi(Q) = \frac{\phi(m)}{\phi(P^r)}$, $\sum\limits_{k=t}^{r} \phi(P^{r-k}) = |P|^{r-t}$ and $m/s = P^t$ we deduce that 
\begin{equation*}
\begin{aligned}
\sum\limits_{f | s}^{} \Lambda \left( m/f \right) \phi(f) &= \frac{\phi(m)}{\phi(P^r)} \log |P| |P|^{r-t} = \phi(m) \frac{\log |P|}{|P|^{t-1} \left( |P| - 1\right)}
=
\phi(m) \frac{\Lambda(m/s)}{\phi(m/s)}.
\end{aligned}
\end{equation*} 
\end{proof}

\begin{pro} 
\label{formuleExplicite}
Let $m \in \mathcal{M}_{q}$ of degree $\geq 1$. Let $\chi^{*}$ be a primitive Dirichlet character modulo $m(\chi^{*})$ inducing a non principal character $\chi$ modulo $m.$ Then 
\begin{equation*}
\label{firstTool}
\allchisum \log \left| m\left(\chi^{*} \right)\right| = \phi(m) \left( \log |m| - \sum\limits_{P | m}^{} \frac{\log |P|}{|P| - 1}\right),
\end{equation*}
while if $a\not\equiv 1 \mod m$ is a reduced residue, then 
\begin{equation*}
\label{secondTool}
\allchisum  \chi(a) \log \left| m\left(\chi^{*} \right)\right| = - \phi(m) \frac{\Lambda \left( m/ \left( m, a-1 \right) \right)}{\phi \left( m/ \left( m, a-1 \right) \right)}.
\end{equation*}
\end{pro}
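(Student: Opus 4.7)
The plan is to reduce both identities to the two combinatorial identities of Lemma \ref{sommeLambda} by expanding $\log|m(\chi^{*})|$ as a sum of von Mangoldt values, swapping the order of summation, and then using the orthogonality relations \eqref{firstOrthogo}.

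\textbf{Setup common to both identities.} For any monic polynomial $N$ of positive degree we have the standard identity
\begin{equation*}
\log|N|=\sum_{f \mid N}\Lambda(f),
\end{equation*}
which follows by writing $N$ as a product of prime powers. Since $m(\chi^{*})$ divides $m$, this lets me write
\begin{equation*}
\log|m(\chi^{*})|=\log|m|-\log\bigl|m/m(\chi^{*})\bigr|
=\log|m|-\sum_{f \mid m/m(\chi^{*})}\Lambda(f),
\end{equation*}
valid also for $\chi=\chi_{0}$ (where both sides vanish). The key tool I will use to swap sums is the classical fact: for a divisor $f$ of $m$, the condition $f \mid m/m(\chi^{*})$ is equivalent to saying that $\chi$ is induced from a character modulo $m/f$, i.e.\ that $\chi$ is trivial on the kernel $K_{f}$ of the reduction map $(\mathbb{F}_{q}[T]/m)^{*}\to(\mathbb{F}_{q}[T]/(m/f))^{*}$. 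By the orthogonality relation \eqref{firstOrthogo} applied inside $(\mathbb{F}_{q}[T]/(m/f))^{*}$, for $a$ coprime to $m$ one has
\begin{equation*}
\sum_{\substack{\chi\bmod m\\ f \mid m/m(\chi^{*})}}\chi(a)
=\begin{cases}\phi(m/f) & \text{if } a\equiv 1 \bmod m/f,\\ 0 & \text{otherwise.}\end{cases}
\end{equation*}

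\textbf{First identity.} Taking $a=1$ in the formula above gives $\phi(m/f)$ unconditionally. Therefore
\begin{equation*}
\sum_{\chi\bmod m}\log|m(\chi^{*})|
=\phi(m)\log|m|-\sum_{f \mid m}\Lambda(f)\,\phi(m/f).
\end{equation*}
The inner sum is exactly the left-hand side of \eqref{sommeLambda1in2} after the change of variables $f\mapsto m/f$, and equals $\phi(m)\sum_{P\mid m}\log|P|/(|P|-1)$. This yields the first claimed identity.

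\textbf{Second identity.} For $a\not\equiv 1\bmod m$ we have $\sum_{\chi}\chi(a)=0$, so the term $\phi(m)\log|m|$ drops out and
\begin{equation*}
\sum_{\chi\bmod m}\chi(a)\log|m(\chi^{*})|
=-\sum_{f \mid m}\Lambda(f)\sum_{\substack{\chi\bmod m\\ f \mid m/m(\chi^{*})}}\chi(a).
\end{equation*}
Set $d=(m,a-1)$ and $s=m/d$; note $s$ is a proper divisor of $m$ since $a\not\equiv 1\bmod m$. The inner sum is nonzero precisely when $(m/f) \mid a-1$, equivalently $(m/f) \mid d$, equivalently $s \mid f$. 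Writing $h=m/f$, the outer sum then ranges over $h \mid d$, and I obtain
\begin{equation*}
\sum_{\chi\bmod m}\chi(a)\log|m(\chi^{*})|
=-\sum_{h \mid d}\Lambda(m/h)\,\phi(h).
\end{equation*}
Since $d$ is a proper divisor of $m$, I apply \eqref{sommeLambda2in1} with $s$ replaced by $d$, obtaining the right-hand side $-\phi(m)\Lambda(m/d)/\phi(m/d)$, which is the second identity.

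\textbf{Main point of care.} The step that deserves the most attention is the characterization ``$f \mid m/m(\chi^{*})$ iff $\chi$ factors through $(\mathbb{F}_{q}[T]/(m/f))^{*}$,'' since getting the direction of divisibility wrong would break both identities. Once this is in place, both statements follow directly from Lemma \ref{sommeLambda}; there is no analytic difficulty beyond careful bookkeeping of divisors.
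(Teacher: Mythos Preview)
Your proof is correct and follows essentially the same route as the paper: expand $\log|m/m(\chi^{*})|=\sum_{f\mid m/m(\chi^{*})}\Lambda(f)$, swap the order of summation using the equivalence ``$m(\chi^{*})\mid m/f$ iff $\chi$ factors through $(\mathbb{F}_q[T]/(m/f))^{*}$,'' apply orthogonality, and finish with the two parts of Lemma~\ref{sommeLambda}. One small slip: where you write ``note $s$ is a proper divisor of $m$'' you mean $d$ (indeed $s=m/d$ can equal $m$ when $(m,a-1)=1$), but you correctly use $d$ when invoking \eqref{sommeLambda2in1}, so the argument is unaffected.
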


\begin{proof}
To prove these two results, we claim that it suffices to show that for any reduced residue $a \bmod m$ we have 
\begin{equation}
    \label{2in1}
    \allchisum \chi(a) \log \left| m(\chi^{*}) \right| = \log |m| \allchisum \chi(a) - \sum\limits_{f | m}^{} \Lambda(m/f) \sum\limits_{\chi \bmod f}^{} \chi(a).
\end{equation}
In fact, if $a \equiv 1 \bmod m$ we obtain by using Identity \eqref{2in1} that 
\begin{equation*}
\begin{aligned}
\allchisum \log \left|m(\chi^{*}) \right| &= \phi(m) \log |m| - 
\sum\limits_{f | m}^{} \Lambda(m/f) \phi(f).
\end{aligned}
\end{equation*}
Thus the first part of our proposition follows from \eqref{sommeLambda1in2}. \\
Otherwise, if $a\not\equiv 1 \mod m$ we have by the orthogonality relation \eqref{firstOrthogo} that  $\sum\limits_{\chi\mod m}^{} \chi(a) = 0$. Thus from \eqref{2in1}
we deduce that 
\begin{equation*}
\allchisum \chi(a) \log \left|m(\chi^{*}) \right| = - \sum\limits_{f | m}^{} \Lambda(m/f) \sum\limits_{\chi \bmod f}^{} \chi(a)  =  - \sum\limits_{f | m}^{} \Lambda(m/f) \phi(f) \iota_{f}(a),    
\end{equation*}
where $\iota_{f}(a) = 1$, if $a \equiv 1 \bmod f$, and equal $0$ otherwise. Therefore 
\begin{equation*}
\allchisum \chi(a) \log \left|m(\chi^{*}) \right| = - \sum\limits_{f | (m,a-1)}^{} \Lambda(m/f) \phi(f).  
\end{equation*}
Since $(m,a-1)$ is a proper divisor of $m$ (because $a\not\equiv 1 \mod m$) then the second result follows from \eqref{sommeLambda2in1}. 

Now, let us prove \eqref{2in1}. Given a Dirichlet character $\chi$ modulo $m$ and $f | m$, we know that 
\begin{equation*}
     \chi \text{ is induced by a character } \chi^{*} \bmod f
     \Longleftrightarrow f  \text{ is a multiple of } m(\chi^{*}). 
\end{equation*}
Then 
\begin{equation*}
    \sum\limits_{f | m}^{} \Lambda(m/f) \sum\limits_{\chi \bmod f}^{}  \chi(a) = \allchisum \chi(a) \sum\limits_{\substack{ f | m \\ m(\chi^{*}) | f}}^{} \Lambda(m/f).
\end{equation*}
So by making the change of variables $c = m/f$ we have 
\begin{equation*}
    \sum\limits_{f | m}^{} \Lambda(m/f) \sum\limits_{\chi \bmod f}^{}  \chi(a) = \allchisum \chi(a) \sum\limits_{c | m / \left( m \left( \chi^{*} \right) \right)}^{} \Lambda(c). 
\end{equation*}
Moreover, we know that 
\begin{equation*}
\sum\limits_{c | m / \left( m \left( \chi^{*} \right) \right)}^{} \Lambda(c) = \log \left| \frac{m}{m\left( \chi^{*} \right)} \right|.
\end{equation*}
Therefore 
\begin{equation*}
\sum\limits_{f | m}^{} \Lambda(m/f) \sum\limits_{\chi \bmod f}^{}  \chi(a) = \allchisum \chi(a) \log \left| \frac{m}{m(\chi^{*})} \right|= \log |m| \allchisum \chi(a) - \allchisum \chi(a) \log \left| m \left( \chi^{*} \right) \right|.
\end{equation*}
Hence, the result \eqref{2in1} follows.
\end{proof}
We end this section by establishing Lemma \ref{varAlea}.
\begin{proof}[Proof of Lemma \ref{varAlea}]
First, note that $\ex(X_{m;a})=-C_m(a) \ex(X^{'}) =- \frac{1}{2} \left( \frac{\sqrt{q}}{q-1} + \frac{q}{q-1}\right) C_m(a)$ since $\ex(U(\gamma_{\chi}))=0$ for all $\gamma_{\chi}$. Thus $\textup{Cov}_{m;a_1,\dots,a_r}(j,k)$ equals
\begin{align*}
&\ex\left(\Big(X_{m;a_j}+\frac{1}{2} \left( \frac{\sqrt{q}}{q-1} + \frac{q}{q-1}\right) C_m(a_j) \Big)\Big(X_{m;a_k}+\frac{1}{2} \left( \frac{\sqrt{q}}{q-1} + \frac{q}{q-1}\right) C_m(a_k)\Big)\right).
\end{align*} 
We define
\begin{align*}
Y_{a_j,a_k} &= \ex\left(\sum_{\substack{\chi\bmod m \\ \chi\neq \chi_0 }}\sum_{\Im(\gamma_{\chi})>0} \sum_{\substack{\psi\bmod m \\ \psi\neq \chi_0}}\sum_{\Im(\widetilde{\gamma}_{\psi})>0} \frac{\big(\chi(a_j)U(\gamma_{\chi})+ \overline{\chi(a_j)U(\gamma_{\chi})}\big)\big(\psi(a_k)U(\widetilde{\gamma}_{\psi})+ \overline{\psi(a_k)U(\widetilde{\gamma}_{\psi})}\big) |\gamma_{\chi} \widetilde{\gamma}_{\psi} |}{\left| (\gamma_{\chi} -1) (\widetilde{\gamma}_{\psi} -1) \right|} \right).
\end{align*}
Therefore 
\begin{align*}
\textup{Cov}_{m;a_1,\dots,a_r}(j,k) &= C_{m}(a_j) C_{m}(a_k) \ex((X^{'})^{2}) + Y_{a_j,a_k} - \left( \frac{\sqrt{q}}{q-1} + \frac{q}{q-1} \right) C_{m}(a_j) C_{m}(a_k) \ex(X^{'}) \\
&+ \frac{1}{4} \left( \frac{\sqrt{q}}{q-1} + \frac{q}{q-1} \right)^{2} C_{m}(a_j) C_{m}(a_k).
\end{align*}
Since $\ex(U(\gamma_{\chi})U(\widetilde{\gamma}_{\psi}))=0$ for all $\gamma_{\chi},\widetilde{\gamma}_{\psi}$  and
$$\ex\left(U(\gamma_{\chi})\overline{U(\widetilde{\gamma}_{\psi})}\right)=\begin{cases} 1& \text{ if } \chi=\psi \text{ and } \gamma_{\chi}=\widetilde{\gamma}_{\psi}\\
0 & \text{ otherwise},\end{cases}$$
we deduce that
\begin{align*}
\textup{Cov}_{m;a_1,\dots,a_r}(j,k) &= \sum\limits_{\substack{\chi  \bmod m \\ \chi \ne \chi_{0}}}^{} \sum\limits_{\Im{\left( \gamma_{\chi} \right)} > 0}^{} (\chi(a_j/a_k) + \chi(a_k/a_j)) \left| \frac{\gamma_{\chi}}{\gamma_{\chi} - 1}\right|^{2} \\
&+ \frac{1}{2} \left( \frac{q}{(q-1)^2} + \frac{q^2}{(q-1)^2} \right) C_{m}(a_j) C_{m}(a_k)  - \frac{1}{4} \left( \frac{\sqrt{q}}{q-1} + \frac{q}{q-1} \right)^{2} C_{m}(a_j) C_{m}(a_k).
\end{align*}
Thus
\begin{align*}
\textup{Cov}_{m;a_1,\dots,a_r}(j,k) = \sum_{\substack{\chi\bmod m \\ \chi\neq \chi_0 }}\sum_{\Im(\gamma_{\chi})>0} \left( \chi\left(a_j/a_k\right)+\chi\left(a_k/a_j\right) \right) \left| \frac{\gamma_{\chi}}{\gamma_{\chi} - 1}\right|^{2} + \frac{1}{4} \left( \frac{q}{q-1} - \frac{\sqrt{q}}{q-1} \right)^{2} C_{m}(a_j) C_{m}(a_k),
\end{align*}
which implies the result.
\end{proof}

\section{Asymptotic formulas for $N_{m}$ and $B_{m}(a,b)$} 

\label{section4}
The purpose of this section is to derive asymptotic formulas for $N_{m}$ and $B_{m}(a,b)$ and deduce related consequences. To achieve this goal, we use the following definition.
\begin{defn}
For any non-principal Dirichlet character $\chi \bmod m$, we define 
$$I(\chi) := \frac{1}{2}\sum\limits_{\gamma_{\chi}}^{} \left| \frac{\gamma_{\chi}}{\gamma_{\chi} - 1} \right|^{2}.$$
\end{defn}
We mention that Cha has already established in the proof of  \cite[Theorem $6.5$]{BCha} the following asymptotic formula for $I(\chi)$:
\begin{equation}
\label{asymptCha}
I(\chi) = \frac{q}{2(q-1)} M(\chi^{*}) + O \left( \log M(\chi^{*}) \right).
\end{equation}
It turns out that this is not enough for our purposes. Instead we manage to prove an exact formula of $I\left( \chi \right)$ without error terms. We then adapt the techniques used in \cite{lamzouri} to the function field setting. This enables us later to prove asymptotic formulas for $N_{m}$ and $B_{m}(a,b)$ and to show important estimates. 
\begin{pro}
\label{formuleI}
Let $m \in \mathcal{M}_{q}$ of degree $\geq 1$. Let $\chi^{*}$ be the primitive Dirichlet character modulo a polynomial $m(\chi^{*})$ which induces a non-principal Dirichlet character $\chi$ modulo $m$. Also, let $M(\chi^{*})$ be the degree of $m(\chi^{*})$. Then 
\begin{equation*}
\begin{aligned}
2 I(\chi) := \sum\limits_{\gamma_{\chi}}^{} \left| \frac{\gamma_{\chi}}{\gamma_{\chi} - 1} \right|^{2} &= 
\frac{q}{q-1} M(\chi^{*})  + \frac{2 q}{q-1} \frac{1}{\log q} \Re{\left(\frac{L^{'}}{L}(1,\chi^{*}) \right)} \\
&\quad-  \frac{  q^2 + q}{2(q-1)^2} \chi^{*}(-1) - \frac{  3 q^2 -  q}{2(q-1)^2}.
\end{aligned}
\end{equation*}
\end{pro}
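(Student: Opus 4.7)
The plan is to express $2I(\chi)$ as a combination of sums that can be read off from logarithmic derivatives of $\mathcal{L}(u,\chi^*)$, and then to use the functional equation to convert everything into $L'/L(1,\chi^*)$. Since every good inverse zero satisfies $|\gamma|^2=q$, we have $\bar\gamma=q/\gamma$ and hence $|\gamma-1|^2=(\gamma-1)(q-\gamma)/\gamma$. A partial fraction expansion then yields the key identity
$$\left|\frac{\gamma}{\gamma-1}\right|^2=\frac{q\gamma}{(\gamma-1)(q-\gamma)}=\frac{q}{q-1}\left(\frac{1}{\gamma-1}+\frac{q}{q-\gamma}\right),$$
which reduces the problem to computing $S_1:=\sum_\gamma 1/(\gamma-1)$ and $S_2:=\sum_\gamma 1/(q-\gamma)$, where $\gamma$ ranges over the inverse zeros of modulus $\sqrt{q}$.

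Writing $\mathcal{L}(u,\chi^*)=P(u)\prod_\gamma(1-\gamma u)$ with $P(u)=1$ if $\chi^*$ is odd and $P(u)=1-u$ if $\chi^*$ is even, one has $\mathcal{L}'/\mathcal{L}(u,\chi^*)=P'/P(u)-\sum_\gamma \gamma/(1-\gamma u)$. Evaluating at $u=1/q$, using $\gamma/(q-\gamma)=-1+q/(q-\gamma)$ and the conversion $\mathcal{L}'/\mathcal{L}(1/q,\chi^*)=-(q/\log q)L'/L(1,\chi^*)$, I would obtain
$$S_2=\frac{n}{q}+\frac{1}{q\log q}\frac{L'}{L}(1,\chi^*)-\varepsilon\cdot\frac{1}{q(q-1)},$$
where $n$ is the number of good inverse zeros and $\varepsilon\in\{0,1\}$ records the parity via $P'/P(1/q)=-\varepsilon q/(q-1)$. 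For $S_1$ in the odd case, $\mathcal{L}'/\mathcal{L}(1,\chi^*)=n+S_1$ combined with the functional equation $L'/L(0,\chi^*)=-(M-1)\log q-L'/L(1,\overline{\chi^*})$ yields $S_1=\frac{1}{\log q}L'/L(1,\overline{\chi^*})$. In the even case, $L(s,\chi^*)$ has a simple zero at $s=0$, so I would apply the modified functional equation
$$\frac{L'}{L}(s,\chi^*)=\frac{\log q\cdot q^{-s}}{1-q^{-s}}+\frac{\log q\cdot q^{s-1}}{1-q^{s-1}}-(M-2)\log q-\frac{L'}{L}(1-s,\overline{\chi^*})$$
and extract the finite part at $s=1$, where the two simple poles (each of residue $-1$) cancel; this produces $S_1=\frac{1}{\log q}L'/L(1,\overline{\chi^*})-\frac{1}{q-1}$.

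Putting everything together and using $L'/L(1,\overline{\chi^*})+L'/L(1,\chi^*)=2\Re L'/L(1,\chi^*)$ gives the main term $\frac{2q}{(q-1)\log q}\Re L'/L(1,\chi^*)$, while packaging $qn/(q-1)$ together with the $\varepsilon$-corrections via $n=M(\chi^*)-1-\varepsilon$ and $\varepsilon=(1+\chi^*(-1))/2$ produces exactly the stated constants $-\frac{q^2+q}{2(q-1)^2}\chi^*(-1)-\frac{3q^2-q}{2(q-1)^2}$. The main obstacle is the even-character case: both $(\log q\cdot q^{s-1})/(1-q^{s-1})$ and $L'/L(1-s,\overline{\chi^*})$ develop simple poles of residue $-1$ at $s=1$, and a careful Laurent expansion is needed to extract the $-1/(q-1)$ correction to $S_1$, which is ultimately responsible for the $\chi^*(-1)$-dependent term in the final formula.
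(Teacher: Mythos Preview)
Your proof is correct and reaches the stated formula, but it is organised differently from the paper's argument.

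The paper packages everything into the real auxiliary function $G(u,\chi^*)=\mathcal{L}(u,\chi^*)\overline{\mathcal{L}(\bar u,\chi^*)}$ (respectively $H(u,\chi^*)=\prod_\gamma(1-\gamma u)(1-\bar\gamma u)$ in the even case). From the two algebraic identities
\[
-\tfrac{G'}{G}(1,\chi^*)=\sum_\gamma \tfrac{\gamma+\bar\gamma}{|\gamma-1|^2}-4I(\chi^*),\qquad
n+\sum_\gamma \tfrac{\gamma+\bar\gamma}{|\gamma-1|^2}=\tfrac{2(q+1)}{q}I(\chi^*),
\]
it obtains $I(\chi^*)=\tfrac{q}{2(q-1)}\big(\tfrac{G'}{G}(1,\chi^*)-n\big)$ in one stroke, and then the functional equation of $G$ converts $G'/G(1)$ into $G'/G(1/q)=2\Re\,\mathcal{L}'/\mathcal{L}(1/q,\chi^*)$. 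Because the trivial factor $(1-u)^2$ is stripped off in the definition of $H$, no singularity ever appears in the even case.

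Your route via the partial fraction $\big|\gamma/(\gamma-1)\big|^2=\tfrac{q}{q-1}\big(\tfrac{1}{\gamma-1}+\tfrac{q}{q-\gamma}\big)$ separates the problem into $S_1$ and $S_2$, which you then read off from $\mathcal{L}'/\mathcal{L}$ at $u=1$ and $u=1/q$. This is more transparent term-by-term, but the price is the Laurent-expansion argument at the trivial zero when $\chi^*$ is even; the paper sidesteps this entirely by working with $H$ from the outset. (A small wording slip: your ``finite part at $s=1$'' really concerns the information at $u=1$, i.e.\ $s=0$, via the functional equation; the pole cancellation you describe does occur, though the two residues have opposite signs rather than both being $-1$ once the sign in front of $L'/L(1-s,\overline{\chi^*})$ is taken into account.) Both approaches use the same functional-equation input and arrive at the same constants; the paper's bundling into $G$ and $H$ is the cleaner bookkeeping device.
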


\begin{proof}
\textbf{-} Let us first consider the case $\chi^{*}$ is odd. \\
Define $G(u,\chi^{*}) = \mathcal{L}(u,\chi^{*}) \overline{\mathcal{L}(\overline{u},\chi^{*})}$ so by \eqref{odd}, it follows that 
\begin{equation*}
G(u,\chi^{*}) = \prod\limits_{i=1}^{M(\chi^{*}) -1} (1 - \gamma_{\chi_i} u) (1 - \overline{\gamma_{\chi_i}} u).     
\end{equation*}
Therefore 
\begin{equation}
\label{res1}
- \frac{G'}{G}(1,\chi^{*}) = \sum\limits_{i=1}^{M(\chi^{*}) - 1}   \left(  \frac{ \gamma_{\chi_{i}}}{1 - \gamma_{\chi_{i}} } +
\frac{\overline{\gamma_{\chi_{i}}}}{1 - \overline{\gamma_{\chi_{i}}}}
\right) = \sum\limits_{i=1}^{M(\chi^{*}) - 1}   \frac{\gamma_{\chi_i} + \overline{\gamma_{\chi_i}}}{\left| \gamma_{\chi_i} - 1 \right|^{2}} - 4 I(\chi^{*}).   
\end{equation}
Moreover, we know that 
\begin{equation}
\label{res2}
M(\chi^{*}) - 1 + \sum\limits_{i=1}^{M(\chi^{*}) - 1}   \frac{\gamma_{\chi_i} + \overline{\gamma_{\chi_i}}}{\left| \gamma_{\chi_i} - 1 \right|^{2}} =\sum\limits_{i=1}^{M(\chi^{*}) - 1}   
\frac{1 + q}{\left| \gamma_{\chi_i} - 1 \right|^{2}}
= \frac{2 (1+q)}{q} I(\chi^{*}).
\end{equation}
Combining \eqref{res1} and \eqref{res2} it follows that 
\begin{equation}
\label{finalRes}
I(\chi^{*}) = \frac{q}{2(q-1)} \left( \frac{G'}{G}(1,\chi^{*}) - (M(\chi^{*}) - 1) \right).
\end{equation}
Using that $\gamma_{\chi} \gamma_{\overline{\chi}} = q$ together with \eqref{odd}, it is easy to derive the following functional equation 
\begin{equation*}
G(u,\chi^{*}) = q^{(M(\chi^{*}) - 1)} u^{2 (M(\chi^{*}) - 1)} G(1/qu, \chi^{*}).     
\end{equation*}
Thus 
\begin{equation}
\label{derivLogG}
\frac{G'}{G}(1,\chi^{*}) = 2 \left( M(\chi^{*}) - 1 \right) - \frac{1}{q} \frac{G'}{G}(1/q,\chi^{*}) = 2 \left( M(\chi^{*}) - 1 \right) - \frac{1}{q} \left( \frac{\mathcal{L}'}{\mathcal{L}}(1/q,\chi^{*}) + \overline{\frac{\mathcal{L}'}{\mathcal{L}}(1/q, \chi^{*})} \right).      
\end{equation}
Moreover, by using the fact that $u = q^{-s}$ we get
\begin{equation}
\label{changeVar}
\frac{-1}{q} \frac{\mathcal{L}'}{\mathcal{L}}(1/q,\chi^{*}) = \frac{1}{\log q} \frac{L'}{L}(1,\chi^{*}).    
\end{equation}
Therefore, by combining \eqref{finalRes}, \eqref{derivLogG} and \eqref{changeVar} we deduce that 
\begin{equation}
\label{resultOdd}
I(\chi^{*}) = \frac{q}{q-1} \left( \frac{M(\chi^{*}) - 1}{2} \right) + \frac{q}{2 (q-1)} \frac{1}{\log q} \left( \frac{L'}{L}(1,\chi^{*}) + \overline{\frac{L'}{L}(1,\chi^{*})} \right).  
\end{equation} 
\textbf{-} Now, let us consider the case that $\chi^{*}$ is even. \\
We define 
\begin{equation*}
H(u,\chi^{*}) = \prod\limits_{i=1}^{M(\chi^{*}) - 2} ( 1 - \gamma_{\chi_i} u) ( 1 - \overline{\gamma_{\chi_i}} u).      
\end{equation*}
By the same argument as in \eqref{finalRes}, we find that 
\begin{equation}
\label{formulaEven}
I(\chi^{*}) = \frac{q}{2(q-1)} \left( \frac{H'}{H}(1,\chi^{*}) - (M(\chi^{*}) - 2) \right).    
\end{equation}
We can check easily that 
\begin{equation*}
\frac{H'}{H}(1,\chi^{*}) = 2 \left( M(\chi^{*}) - 2 \right) - \frac{1}{q} \frac{H'}{H}(1/q,\chi^{*}).
\end{equation*}
Using \eqref{even}, we have $H(u,\chi^{*}) (1-u)^{2} = \mathcal{L}(u,\chi^{*}) \overline{\mathcal{L}(\overline{u},\chi^{*} )}$, and thus  
\begin{equation*}
\frac{H'}{H} (1/q, \chi^{*}) = \frac{\mathcal{L}'}{\mathcal{L}}(1/q, \chi^{*}) + \overline{\frac{\mathcal{L}'}{\mathcal{L}}(1/q, \chi^{*})} + \frac{2}{1 - \frac{1}{q}}.   
\end{equation*}
Therefore 
\begin{equation}
\label{derivHformula}
\frac{H'}{H} (1,\chi^{*}) = 2 \left( M(\chi^{*}) - 2 \right)
- \frac{1}{q} \left( \frac{\mathcal{L}'}{\mathcal{L}}(1/q,\chi^{*}) + \overline{\frac{\mathcal{L}'}{\mathcal{L}}(1/q,\chi^{*})} \right) - \frac{2}{q-1}.
\end{equation}
Hence it follows from \eqref{formulaEven} and \eqref{derivHformula} that 
\begin{equation}
\label{secondRes}
I(\chi^{*}) = \frac{q}{q-1} \left( \frac{M(\chi^{*}) - 2}{2} \right) + \frac{q}{2 (q-1)} \frac{1}{\log q} \left( \frac{L'}{L}(1,\chi^{*}) + \overline{\frac{L'}{L}(1,\chi^{*})} \right) - \frac{q}{(q-1)^2}.
\end{equation}
\\
By combining \eqref{finalRes} and \eqref{secondRes} we deduce that 
\begin{equation*}
I(\chi^{*}) = \frac{q}{q-1} \frac{M(\chi^{*}) - 1}{2} + \frac{q}{q-1} \frac{1}{\log q} \Re\left({\frac{L'}{L} (1,\chi^{*})} \right) + \frac{1}{2} \left( \chi^{*}(-1) + 1 \right) \left( \frac{-q}{2(q-1)} - \frac{q}{(q-1)^2} \right).    
\end{equation*}
Hence by using Remark \ref{eqInverses}, Proposition \ref{formuleI}  follows. 
\end{proof}
The asymptotic formula of $N_{m}$ follows then directly by using the previous proposition. 
\begin{lem} 
\label{formuleNm}
Let $m \in \mathcal{M}_{q}$ be of degree $M$, and assume LI. Then 
\begin{equation*}
\begin{aligned}
N_m &= \frac{q}{q-1} \phi(m) M  + O\left( \phi(m) \log M \right).
\end{aligned}
\end{equation*}
\end{lem}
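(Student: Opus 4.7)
The plan is to reduce $N_m$ to a sum of values $I(\chi)$, apply Proposition~\ref{formuleI} termwise, and then evaluate the resulting character sums using Proposition~\ref{formuleExplicite} together with Lemma~\ref{derivLogL}.

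First I would rewrite $N_m$ as an unrestricted sum over all inverse zeros. Under LI no inverse zero $\gamma_\chi$ is real, since the LI hypothesis forbids both $0$ and $\pi$ from appearing in the multiset of imaginary parts of zeros, and hence no inverse zero equals $\pm\sqrt{q}$. Because the coefficients of $\mathcal{L}(u,\chi)$ and $\mathcal{L}(u,\overline{\chi})$ are complex conjugates, the inverse zeros of $\mathcal{L}(u,\overline{\chi})$ are the complex conjugates of those of $\mathcal{L}(u,\chi)$. Thus the involution $\chi\mapsto\overline{\chi}$ on non-principal characters modulo $m$ exchanges the contributions of zeros with $\Im \gamma_\chi>0$ and $\Im \gamma_\chi<0$. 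Combining these two facts yields
$$N_m = \sum_{\substack{\chi\bmod m\\ \chi\neq\chi_0}}\sum_{\gamma_\chi}\left|\frac{\gamma_\chi}{\gamma_\chi-1}\right|^2 = \sum_{\substack{\chi\bmod m\\ \chi\neq\chi_0}} 2I(\chi).$$

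Next I would substitute the exact expression of Proposition~\ref{formuleI} termwise. Its last two summands are bounded independently of $m$ and therefore contribute $O(\phi(m))$ after summing over the $\phi(m)-1$ non-principal characters, giving
$$N_m = \frac{q}{q-1}\sum_{\chi\neq\chi_0} M(\chi^*) + \frac{2q}{(q-1)\log q}\sum_{\chi\neq\chi_0} \Re\!\left(\frac{L'}{L}(1,\chi^*)\right) + O(\phi(m)).$$
For the first sum I would apply Proposition~\ref{formuleExplicite} with $a=1$: since $\log|m(\chi^*)| = M(\chi^*)\log q$ and $M(\chi_0^*)=0$, it gives
$$\sum_{\chi\neq\chi_0} M(\chi^*) = \phi(m)M - \frac{\phi(m)}{\log q}\sum_{P\mid m}\frac{\log|P|}{|P|-1},$$
and the secondary term is $O(\phi(m)\log M)$ by \eqref{majorSom} together with $\log_q|m|=M$. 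For the second sum, Lemma~\ref{derivLogL} yields $L'/L(1,\chi)\ll \log M$ for each non-principal $\chi$; the same bound transfers to $L'/L(1,\chi^*)$, since the two logarithmic derivatives differ by a contribution coming from the Euler factors at primes $P\mid m$ with $P\nmid m(\chi^*)$, which is $O(\log M)$ via \eqref{majorSom}. Summing over $\chi\neq\chi_0$ contributes $O(\phi(m)\log M)$, and combining the two estimates yields the stated asymptotic.

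The main obstacle is the symmetrization step: one must invoke LI precisely to exclude real inverse zeros and then deploy the $\chi\leftrightarrow\overline{\chi}$ bijection to identify $N_m$ with $\sum_{\chi\ne\chi_0} 2I(\chi)$. The remaining manipulations are routine bookkeeping, relying on the two arithmetic identities developed in Section~\ref{covar}.
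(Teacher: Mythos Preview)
Your proof is correct and follows essentially the same approach as the paper's: both arguments rewrite $N_m=\sum_{\chi\ne\chi_0}2I(\chi)$ via the $\chi\leftrightarrow\overline{\chi}$ symmetry (using LI to rule out real inverse zeros), substitute Proposition~\ref{formuleI}, evaluate $\sum_{\chi}M(\chi^*)$ through Proposition~\ref{formuleExplicite} and \eqref{majorSom}, and bound the $L'/L$ contribution by Lemma~\ref{derivLogL}. Your treatment is in fact slightly more careful than the paper's in explicitly bridging $L'/L(1,\chi)$ and $L'/L(1,\chi^*)$ via the finite Euler product at primes $P\mid m$, $P\nmid m(\chi^*)$; the paper leaves that passage implicit.
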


\begin{proof}
For $\chi \ne \chi_{0}$, since $\chi^{*}(-1) = \chi(-1)$, then it follows from Proposition \ref{formuleI} that 
\begin{equation*}
2 I(\chi) = \frac{q}{q-1} M\left( \chi^{*} \right) + \frac{2 q}{q-1} \frac{1}{\log q} \Re{\left(\frac{L^{'}}{L}(1,\chi^{*})\right)} - \frac{ q^2 + q}{2(q-1)^2} \chi(-1) - \frac{3 q^2 - q}{2(q-1)^2}.
\end{equation*}
Since $N_{m} = 2 \sum\limits_{\substack{\chi  \bmod m \\ \chi \ne \chi_{0}} }^{} \sum\limits_{\Im{\left( \gamma_{\chi} \right) > 0}}^{} \left| \frac{\gamma_{\chi}}{\gamma_{\chi} - 1} \right|^{2} = \sum\limits_{\substack{\chi  \bmod m \\ \chi \ne \chi_{0}}}^{} \left( I(\chi) + I(\overline{\chi}) \right)$, then we have 
\begin{equation*}
\begin{aligned}
N_{m} =  \frac{q}{q-1} \sum\limits_{\substack{ \chi  \bmod m \\ \chi \ne \chi_{0}}}^{} M\left( \chi^{*} \right) + \frac{2 q}{q-1} \frac{1}{\log q} \sum\limits_{\substack{ \chi  \bmod m \\ \chi \ne \chi_{0}}}^{} \Re{\left(\frac{L^{'}}{L}(1,\chi^{*})\right)}  
+ \frac{q^2 + q}{2(q-1)^{2}} -  \frac{ 3 q^2 - q}{2(q-1)^2} \left( \phi(m) - 1 \right).
\end{aligned}
\end{equation*}
From Lemma \ref{derivLogL} and Proposition \ref{formuleExplicite} we finally deduce that 
\begin{equation*}
N_{m} = \frac{q}{q-1} \phi(m) M + O\left( \phi(m) \log M \right). 
\end{equation*}
\end{proof}
\begin{rem}
Let $m \in \mathcal{M}_{q}$ and $\chi_{m}$ be the primitive quadratic Dirichlet character modulo $m$. We denote by $mult(\chi_{m})$ the multiplicity of $\pm \sqrt{q}$ as an inverse zero of $\mathcal{L}(u,\chi_{m})$.
Wanlin LI showed in \cite{wanli} the existence of a family of polynomials $m \in \mathcal{M}_{q}$ satisfying $mult(\chi_{m}) >0$. This is the reason why we assume LI in the previous lemma. 
\end{rem}

Before giving an asymptotic formula for $B_{m}(a,b)$ (Proposition \ref{forExpBm}), we begin by establishing some useful lemmas.   
\begin{lem}  [\cite{BCha}, p $1371$] 
\label{formuleCha}
Let $\chi$ be a non-principal character modulo $m$ (of degree $M$), then for all  $x \geq 1$ we have 
\begin{equation*}
\frac{L'}{L} (1, \chi) = - \sum\limits_{f \in \mathcal{M}_{q}}^{} \frac{\chi(f) \Lambda(f)}{|f|} \exp{\left( - q^{\deg(f)} / x \right)} + O\left( \frac{M}{x^{1/4}} \right).
\end{equation*}
\end{lem}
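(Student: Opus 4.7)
The plan is to use a Mellin-inversion (Perron-type) smoothing argument. First I would write the smoothing weight as
\begin{equation*}
e^{-y/x} = \frac{1}{2\pi i}\int_{(c)} \Gamma(s)\left(\frac{x}{y}\right)^{s}\, ds, \qquad c > 0, \; x,y > 0,
\end{equation*}
and apply it with $y = |f| = q^{\deg(f)}$. Since the Dirichlet series $-\frac{L'}{L}(s,\chi) = \sum_{f \in \mathcal{M}_{q}} \chi(f)\Lambda(f)|f|^{-s}$ converges absolutely for $\Re(s) > 1$, choosing any $c > 0$ lets me interchange sum and integral to obtain
\begin{equation*}
\sum_{f \in \mathcal{M}_{q}} \frac{\chi(f)\Lambda(f)}{|f|}\, e^{-q^{\deg(f)}/x} = -\frac{1}{2\pi i} \int_{(c)} \Gamma(s)\, x^{s}\, \frac{L'}{L}(1+s,\chi)\, ds.
\end{equation*}

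Next I would shift the contour from $\Re(s)=c$ to $\Re(s)=-1/4$. By Proposition \ref{propCha}, $L(s,\chi)$ is a polynomial in $q^{-s}$: its primitive part has at most $M(\chi^{*})$ inverse zeros of modulus $\sqrt{q}$ (hence zeros of $L$ on $\Re(s) = 1/2$), while the additional ``trivial'' factors $1 - \chi^{*}(P) q^{-s \deg P}$ for $P \mid m$, $P \nmid m(\chi^{*})$ contribute zeros on $\Re(s) = 0$. In particular $L(1,\chi) \neq 0$, so $L'/L(1+s,\chi)$ is holomorphic in the strip $-1/4 \leq \Re(s) \leq c$, and the only singularity of the integrand there is the simple pole of $\Gamma(s)$ at $s=0$, contributing residue $L'/L(1,\chi)$. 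Stirling's exponential decay of $\Gamma$ on vertical strips kills the horizontal segments, so the contour shift is legitimate.

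Finally I would bound the shifted integral on $\Re(s)=-1/4$. Logarithmically differentiating the product representation from Proposition \ref{propCha} gives
\begin{equation*}
-\frac{L'}{L}(1+s,\chi) = \log q \sum_{i} \frac{\gamma_{\chi_{i}}\, q^{-(1+s)}}{1 - \gamma_{\chi_{i}}\, q^{-(1+s)}} + \log q \sum_{\substack{P \mid m \\ P \nmid m(\chi^{*})}} \frac{\deg(P)\, \chi^{*}(P)\, q^{-(1+s)\deg P}}{1 - \chi^{*}(P)\, q^{-(1+s)\deg P}},
\end{equation*}
plus a correction from the factor $(1-u)$ when $\chi^{*}$ is even. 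On $\Re(s) = -1/4$ one has $|\gamma_{\chi_{i}} q^{-(1+s)}| = q^{-1/4}$ and $|\chi^{*}(P) q^{-(1+s)\deg P}| \leq q^{-3\deg P/4}$, both safely bounded away from $1$, so every summand is $O_{q}(1)$. The total number of terms is at most $M(\chi^{*}) + \#\{P \mid m\} \leq 2M$, yielding $|L'/L(1+s,\chi)| \ll M$ uniformly on that line. Combined with $|x^{s}| = x^{-1/4}$ and $\int_{\mathbb{R}} |\Gamma(-1/4 + it)|\, dt < \infty$, the shifted integral is $\ll M\, x^{-1/4}$. Rearranging the identity produced by the contour shift yields the claimed formula.

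The main obstacle is the uniform bound $|L'/L(1+s,\chi)| \ll M$ on the shifted line; this is precisely where the polynomial (finite-degree) nature of function-field $L$-functions is essential, and it also dictates why the contour cannot be pushed past $\Re(s) = -1/2$, where the nontrivial zeros sit, fixing the exponent $1/4$ in the error term. Once this input is secured, the rest is routine Mellin inversion.
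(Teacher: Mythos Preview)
The paper does not supply its own proof of this lemma; it is simply quoted from \cite[p.~1371]{BCha}. Your Mellin--inversion/contour--shift argument is the standard way to establish such a smoothed explicit formula, and it is correct.

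One small slip in the bookkeeping: in the second sum of your expression for $-L'/L(1+s,\chi)$, each $P$-summand carries the factor $\deg(P)$, so it is $O_q(\deg P)$ rather than $O_q(1)$. The correct tally is therefore
\[
\Bigl|\tfrac{L'}{L}(1+s,\chi)\Bigr| \;\ll\; M(\chi^{*}) \;+\; \sum_{P\mid m}\deg(P) \;\le\; 2M,
\]
using that the squarefree kernel of $m$ divides $m$. This yields the same $O(M)$ bound you claim, so the conclusion is unaffected.
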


\begin{lem} 
\label{lemFIM}
Let  $m \in \mathcal{M}_{q}$,  $P \in \mathcal{P}_{q}$, $e\in \mathbb{N^{*}},$ and $r \in \mathbb{F}_{q}[T] $ such that $(r,m) = 1$. \\
If $P \nmid m$ then 
\begin{equation*}
\sum\limits_{\chi  \bmod m}^{} \chi(r) \left( \chi^{*}(P^e) - \chi(P^e) \right) = 0.
\end{equation*}
Otherwise, if $P | m$ then 
\begin{equation*}
\sum\limits_{\chi  \bmod m}^{} \chi(r) 
\left( \chi^{*}(P^e) - \chi(P^e) \right) = \left\{
\begin{array}{ll}
     \phi(m/P^{\nu}) &  \text{if } r P^e \equiv 1  \bmod m/P^{\nu} ,\\
     0 & \text{otherwise,} 
\end{array}
\right.
\end{equation*}
where $\nu \geq 1$ is the integer such that $P^{\nu} || m.$
\end{lem}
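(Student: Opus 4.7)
The plan is to split the proof into the two stated cases and exploit the fact that a Dirichlet character $\chi$ mod $m$ and its inducing primitive character $\chi^{*}$ agree on arguments coprime to $m$.

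\textbf{Case $P \nmid m$.} Here $(P^{e},m)=1$, so the relation between $\chi$ and $\chi^{*}$ (namely $\chi(f)=\chi^{*}(f)$ whenever $(f,m)=1$) gives $\chi^{*}(P^{e})-\chi(P^{e})=0$ for every $\chi \bmod m$. Hence the sum vanishes termwise, with no orthogonality needed.

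\textbf{Case $P\mid m$.} Write $m=P^{\nu}n$ with $(P,n)=1$. Since $P\mid m$, we have $\chi(P^{e})=0$ for every $\chi \bmod m$, so the sum reduces to $\sum_{\chi \bmod m}\chi(r)\chi^{*}(P^{e})$. I would then decompose characters by CRT: every $\chi \bmod m$ factors uniquely as $\chi=\chi_{1}\chi_{2}$ with $\chi_{1} \bmod P^{\nu}$ and $\chi_{2} \bmod n$. The key observation is that $P\mid m(\chi^{*})$ if and only if $\chi_{1}$ is non-principal; equivalently, $\chi^{*}(P^{e})=0$ unless $\chi_{1}$ is the principal character modulo $P^{\nu}$, in which case $\chi^{*}(P^{e})=\chi_{2}(P^{e})$ because $(P,n)=1$ ensures $\chi_{2}$ and its primitive inducer agree at $P^{e}$. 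Writing $r\equiv r_{1}\bmod P^{\nu}$ and $r\equiv r_{2}\bmod n$, so that $\chi(r)=\chi_{1}(r_{1})\chi_{2}(r_{2})$, the sum collapses (since $(r,P)=1$ gives $\chi_{1}(r_{1})=1$ when $\chi_{1}$ is principal) to
\[
\sum_{\chi_{2}\bmod n}\chi_{2}(r_{2}P^{e}).
\]
By the orthogonality relation \eqref{firstOrthogo}, this equals $\phi(n)=\phi(m/P^{\nu})$ precisely when $r_{2}P^{e}\equiv 1\bmod n$, i.e.\ when $rP^{e}\equiv 1 \bmod m/P^{\nu}$, and vanishes otherwise.

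The only delicate step is the identification $\chi^{*}(P^{e})=\chi_{2}(P^{e})\mathbf{1}[\chi_{1}=\chi_{0,P^{\nu}}]$, which I would justify by the general fact that the conductor of $\chi=\chi_{1}\chi_{2}$ is $m(\chi_{1}^{*})m(\chi_{2}^{*})$, so $P\mid m(\chi^{*})$ exactly when $\chi_{1}$ itself is non-principal; once this is in place the rest is a direct application of orthogonality. I do not expect any real obstacle beyond clearly bookkeeping the CRT decomposition.
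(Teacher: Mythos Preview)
Your proof is correct and takes essentially the same approach as the paper. The only difference is organizational: you make the CRT factorization $\chi=\chi_{1}\chi_{2}$ explicit to isolate the characters with $\chi^{*}(P^{e})\neq 0$, whereas the paper phrases the same restriction directly via the conductor, noting that $\chi^{*}(P^{e})=0$ whenever $P\mid m(\chi^{*})$ and hence the sum runs over $\chi$ with $m(\chi^{*})\mid m/P^{\nu}$; in both cases this set is identified with the characters modulo $m/P^{\nu}$ and orthogonality finishes the argument.
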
 

\begin{proof}
Since $P \nmid m$, we have for every character $\chi \bmod m$  that $\chi^{*}(P^{e}) = \chi(P^{e})$. Hence the first assertion follows. \\
Now, if $P | m$ then $\chi(P^e) = 0$ for every $\chi$. Therefore using the
fact that $\chi(r) = \chi^{*}(r)$ for every $\chi \bmod m$ due to the hypothesis that $(r,m) = 1$, we have 
\begin{equation*}
\sum\limits_{\chi \bmod m}^{} \chi(r) 
\left( \chi^{*}(P^e) - \chi(P^e) \right) = \sum\limits_{\chi \bmod m}^{} \chi(r) \chi^{*}(P^e) = \sum\limits_{\chi \bmod m}^{}
\chi^{*}(r P^{e}).
\end{equation*}
We also know that $\chi^{*}(P^e) = 0$ for any character $\chi \bmod m$ such that 
$P | m(\chi^{*})$. Thus 
\begin{equation*}
\sum\limits_{\chi \bmod m}^{}
\chi^{*}(r P^{e}) = \sum\limits_{\substack{\chi \bmod m \\ m(\chi^{*}) | m/P^{\nu}}}^{}
\chi^{*}(r P^{e}),
\end{equation*}
since $(P^{e}, m/P^{\nu}) = 1.$ By using the orthogonality relation \eqref{firstOrthogo}, the second result follows.
\end{proof}

\begin{lem} 
\label{elemantaryBm}
Let $m \in \mathcal{M}_{q}$ be of large degree $M$ and $x \geq |m|$ be a real number. Then 
\begin{equation*}
\sum\limits_{\substack{f \in \mathcal{M}_{q} \\ \deg(f) \geq 1 \\ (f,m) = 1}}^{} \frac{\Lambda(f)}{|f|} \exp{\left( - |f|/x \right)} = \log(x) + O\left( \log(\log x)\right). \\
\end{equation*}
\end{lem}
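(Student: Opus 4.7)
The plan is to remove the coprimality condition, evaluate the resulting unrestricted sum, and then control the discarded terms via the bound \eqref{majorSom}. Write
\begin{equation*}
\sum_{\substack{f\in\mathcal{M}_q\\ \deg(f)\geq 1\\ (f,m)=1}}\frac{\Lambda(f)}{|f|}e^{-|f|/x}
=\sum_{\substack{f\in\mathcal{M}_q\\ \deg(f)\geq 1}}\frac{\Lambda(f)}{|f|}e^{-|f|/x}
-\sum_{\substack{P\in\mathcal{P}_q,\, P\mid m\\ e\geq 1}}\frac{\log|P|}{|P|^{e}}e^{-|P|^{e}/x},
\end{equation*}
since $\Lambda$ is supported on prime powers and the coprimality fails exactly when $P\mid m$.

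For the first (unrestricted) sum, I would use the prime polynomial theorem in the form
$\sum_{\deg(f)=k}\Lambda(f)=(\log q)\,q^{k}+O(k\,q^{k/2})$, which follows from \eqref{PNT2} upon summing over $k$ and absorbing lower prime powers. Regrouping by degree yields
\begin{equation*}
\sum_{\substack{f\in\mathcal{M}_q\\ \deg(f)\geq 1}}\frac{\Lambda(f)}{|f|}e^{-|f|/x}
=(\log q)\sum_{k\geq 1}e^{-q^{k}/x}+O\!\Bigl(\sum_{k\geq 1}k\,q^{-k/2}\Bigr)
=(\log q)\sum_{k\geq 1}e^{-q^{k}/x}+O(1).
\end{equation*}
To evaluate the remaining series, set $K=\lfloor\log_{q}x\rfloor$. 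For $k\leq K$ one has $q^{k}\leq x$, so $1-e^{-q^{k}/x}\leq q^{k}/x$, giving $\sum_{k=1}^{K}e^{-q^{k}/x}=K+O(1)$ because $\sum_{k\leq K}q^{k}/x\leq 2q^{K}/x\leq 2$. For $k>K$ one has $q^{k}/x\geq 1$, and the geometric decay of the exponentials produces $\sum_{k>K}e^{-q^{k}/x}=O(1)$. Consequently,
\begin{equation*}
(\log q)\sum_{k\geq 1}e^{-q^{k}/x}=(\log q)\bigl(\log_{q}x+O(1)\bigr)=\log x+O(1).
\end{equation*}

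For the second (discarded) sum, bound trivially
\begin{equation*}
\sum_{\substack{P\mid m\\ e\geq 1}}\frac{\log|P|}{|P|^{e}}e^{-|P|^{e}/x}
\leq\sum_{P\mid m}\frac{\log|P|}{|P|-1}\ll\log\log_{q}|m|,
\end{equation*}
by estimate \eqref{majorSom}. Since $x\geq|m|$ gives $\log\log_{q}|m|\leq\log\log x+O(1)$, combining the three estimates yields the claimed identity. The only delicate point is the splitting estimate for $\sum_{k\geq 1}e^{-q^{k}/x}$ at $K=\lfloor\log_{q}x\rfloor$; everything else is a routine application of prior estimates.
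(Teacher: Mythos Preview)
Your proof is correct. Both you and the paper first strip the coprimality condition using \eqref{majorSom} in the same way, but the evaluation of the unrestricted sum differs: the paper splits the range of $|f|$ into three pieces ($|f|\leq x\log\log x$, $x\log\log x<|f|\leq x\log^{2}x$, $|f|>x\log^{2}x$) and handles each with Mertens and crude bounds on the exponential, whereas you regroup by degree and reduce immediately to $(\log q)\sum_{k\geq 1}e^{-q^{k}/x}$, splitting only once at $K=\lfloor\log_q x\rfloor$. Your route is a bit more direct and exploits the exact shape of the function-field von~Mangoldt sum (indeed, one even has $\sum_{\deg f=k}\Lambda(f)=(\log q)q^{k}$ exactly, so your $O(1)$ error could be removed entirely); the paper's route is closer in spirit to the classical number-field argument. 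Both give the same $\log x+O(\log\log x)$.
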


\begin{proof}
By \eqref{majorSom} we have 
\begin{equation*}
\begin{aligned}
\sum\limits_{\substack{f \in \mathcal{M}_{q} \\ \deg(f) \geq 1 \\ (f,m) > 1}}^{} \frac{\Lambda(f)}{|f|} \exp{\left( - |f|/x \right)} &\leq \sum\limits_{P | m}^{} \sum\limits_{k=1}^{\infty} \frac{\log |P|}{|P|^{k}} = 
\sum\limits_{P | m}^{} \frac{\log |P|}{|P| - 1} \ll \log M \ll \log \log x.
\end{aligned}
\end{equation*}
Thus it is enough to evaluate $\sum\limits_{\substack{f \in \mathcal{M}_{q} \\ \deg(f) \geq 1 }}^{} \frac{\Lambda(f)}{|f|} \exp{\left( - |f|/x \right)}.$ \\
We split the above sum into three parts : $|f| > x\log^{2} x$, $x \log \log x < |f| \leq x \log^{2} x$, and $|f| \leq x \log \log x$. 
Since $\exp{\left( - |f|/x \right)} \leq \frac{1}{|f|^{2}}$ for $|f| > x \log^{2} x$ then the contribution of the first part is
\begin{equation*}
\sum\limits_{|f| > x \log^{2} x}^{} \frac{\Lambda(f)}{|f|} \exp{ \left( - |f|/x \right) } \leq  \sum\limits_{|f| > x \log^{2} x}^{} \frac{1}{|f|^{2}} \ll \frac{1}{x}.
\end{equation*}
By using Lemma \ref{mertens} it follows that the contribution of the second part is 
\begin{equation*}
\sum\limits_{x \log \log x < |f| \leq x \log^{2} x}^{} \frac{\Lambda(f)}{|f|} \exp{ \left( - |f|/x \right) } \leq
\frac{1}{\log x} \sum\limits_{|f| \leq x \log^{2} x}^{} \frac{\Lambda(f)}{|f|}  \ll 1.
\end{equation*}
Finally combining the fact that $e^{-t} = 1 + O(t) $ for all $t>0$,
with \eqref{PNT2} and Lemma \ref{mertens} we deduce that the contribution of the last part equals 
\begin{equation*}
\sum\limits_{|f| \leq x \log\log x}^{} \frac{\Lambda(f)}{|f|} \exp{\left( - |f| / x \right)} = \sum\limits_{|f| \leq x \log\log x}^{} \frac{\Lambda(f)}{|f|} + O\left(
\frac{1}{x} \sum\limits_{|f| \leq x \log\log x}^{} \Lambda(f) \right) = \log x + O\left( \log \log x \right).
\end{equation*}
\end{proof}

The following proposition will be very useful in later proofs. 
\begin{pro}  
\label{forExpBm}
Let $m \in \mathcal{M}_{q}$ be of large degree $M$ and $(a,b) \in \mathcal{A}_{2}(m)$, and let $x = \left( \phi(m) M \right)^{4}$. Assume LI, then 
\begin{equation*}
\begin{aligned}
B_m(a,b) &= \frac{8q}{(q-1)\log q} \log \phi(m) - \frac{q}{q-1} \frac{\phi(m)}{\log q} \frac{\Lambda(m/(m,a-b))}{\phi(m/(m,a-b))} -  \frac{q^2 + q}{2(q-1)^2} \phi(m) l_{m}(a,b) \\ &\quad+ \frac{q}{ (q-1) \log q} \left( - \phi(m) \sum\limits_{\substack{1 \leq |f| \leq 2x \log x \\ af \equiv b \bmod m}}^{} \Lambda(f) \frac{\exp{\left( - |f|/ x \right)}}{|f|}  - \phi(m) \sum\limits_{\substack{1 \leq |f| \leq 2x \log x \\ bf \equiv a \bmod m}}^{} \Lambda(f) \frac{\exp{\left( - |f|/ x \right)}}{|f|} \right. \\
&\left. \quad - \phi(m) \sum\limits_{P^v || m}^{}   \sum\limits_{n=1}^{\infty} \sum\limits_{\substack{ 1 \leq e \leq 2 \log x \\ \deg(P^e) = n \\ a P^{e} \equiv b \bmod m/P^v}}^{}  \frac{\log |P|}{|P|^{e+v-1} (|P| - 1)}   \right.\\
&\left.\quad - \phi(m) \sum\limits_{P^v || m}^{}   \sum\limits_{n=1}^{\infty} \sum\limits_{\substack{ 1 \leq e \leq 2 \log x \\ \deg(P^e) = n \\ b P^{e} \equiv a \bmod m/P^v}}^{}  \frac{\log |P|}{|P|^{e+v-1} (|P| - 1)}  \right) + O\left( \log M \right),
\end{aligned}
\end{equation*}
where $l_{m}(a,b) = 1$, if $a+b \equiv 0 \bmod m$, and $0$, otherwise.
\end{pro}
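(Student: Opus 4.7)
The strategy is to reduce $B_m(a,b)$ to a character sum weighted by the quantities $I(\chi)$ from Proposition \ref{formuleI}, and then to expand each resulting piece using orthogonality together with Lemmas \ref{formuleCha}, \ref{lemFIM} and \ref{elemantaryBm}. The starting point is the LI-based identity
\begin{equation*}
B_m(a,b) = 2\sum_{\substack{\chi \bmod m \\ \chi \neq \chi_{0}}}\chi(a/b)\, I(\chi).
\end{equation*}
To prove this I would write $B_m = \sum_\chi \chi(a/b) T_\chi + \sum_\chi \chi(b/a) T_\chi$ with $T_\chi := \sum_{\Im(\gamma_\chi) > 0}|\gamma_\chi/(\gamma_\chi-1)|^2$, and substitute $\chi \mapsto \overline{\chi}$ in the second piece. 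Since the inverse zeros of $\mathcal{L}(u,\overline{\chi})$ are the complex conjugates of those of $\mathcal{L}(u,\chi)$, this yields $B_m = \sum_\chi \chi(a/b)(T_\chi + T_{\overline{\chi}})$, and under LI (no real inverse zeros) the inner sum equals $\sum_{\gamma_\chi}|\gamma_\chi/(\gamma_\chi-1)|^2 = 2I(\chi)$.

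Next, I would insert Proposition \ref{formuleI} for $2I(\chi)$ and split into four subsums. The $M(\chi^{*}) = \log|m(\chi^{*})|/\log q$ piece combined with Proposition \ref{formuleExplicite} applied to $a/b \not\equiv 1 \bmod m$ (noting $(m,\,a/b-1) = (m,a-b)$) produces the term $-\frac{q}{(q-1)\log q}\phi(m)\Lambda(m/(m,a-b))/\phi(m/(m,a-b))$. The $\chi(-1)$ piece, via $\sum_{\chi \neq \chi_{0}}\chi(-a/b) = \phi(m) l_m(a,b) - 1$, gives the $-\frac{q^2+q}{2(q-1)^2}\phi(m)\, l_m(a,b)$ contribution plus an $O(1)$ constant, and the bare constant contributes only $O(1)$ via $\sum_{\chi\neq\chi_{0}}\chi(a/b) = -1$. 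Both $O(1)$ remainders will be absorbed into $O(\log M)$.

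The core of the argument is the $\Re(L'/L(1,\chi^{*}))$ piece. Choosing $x = (\phi(m)M)^{4}$, Lemma \ref{formuleCha} gives a pointwise expansion whose error aggregates to $O(\phi(m) M / x^{1/4}) = O(1)$ after summing over $\chi$. The key orthogonality computation is
\begin{equation*}
\sum_{\chi \bmod m}\chi(c)\chi^{*}(P^e) = \begin{cases} \phi(m)[cP^e \equiv 1 \bmod m] & \text{if } P \nmid m, \\ \phi(m/P^{v})[cP^e \equiv 1 \bmod m/P^{v}] & \text{if } P^{v} \| m, \end{cases}
\end{equation*}
proved by writing $\chi^{*}(P^e) = \chi(P^e) + (\chi^{*}(P^e) - \chi(P^e))$ and applying \eqref{firstOrthogo} to the first piece and Lemma \ref{lemFIM} to the second. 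Symmetrizing $\Re(\chi(a/b))\Re(\chi^{*}(P^e))$ via $\Re(z)\Re(w) = \tfrac{1}{4}(z+\overline{z})(w+\overline{w})$ and folding two of the four expanded terms by $\chi \mapsto \overline{\chi}$ collapses everything to $\tfrac{1}{2}[\sum_\chi \chi(a/b)\chi^{*}(P^e) + \sum_\chi \chi(b/a)\chi^{*}(P^e)]$. Applying the identity above for both $c = a/b$ and $c = b/a$ produces exactly the four constrained sums inside the parenthesis of the proposition, while the $-1$ from removing the $\chi_{0}$-term yields the ``free'' sum $\sum_{f,\,\deg(f)\geq 1}\Lambda(f)|f|^{-1} e^{-|f|/x}$. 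By Lemma \ref{elemantaryBm} (extended to drop the coprimality condition at cost $O(\log M)$ via \eqref{majorSom}), this free sum equals $\log x + O(\log \log x) = 4\log \phi(m) + O(\log M)$, yielding the leading $\tfrac{8q}{(q-1)\log q}\log \phi(m)$ after multiplication by $\tfrac{2q}{(q-1)\log q}$.

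The main technical obstacle will be the final error analysis: justifying the truncations $1 \leq |f| \leq 2x\log x$ and $1 \leq e \leq 2\log x$, and, for the $P \mid m$ sums, removing the weight $e^{-|P|^e/x}$. For the $P \nmid m$ sums the tail beyond $2x \log x$ is super-polynomially small thanks to the factor $e^{-|f|/x}$; for the $P \mid m$ sums the geometric decay $|P|^{-e-v+1}$ combined with the arithmetic-progression structure of the solutions $e$ to $aP^e \equiv b \bmod m/P^{v}$ (and similarly for $bP^e \equiv a \bmod m/P^v$) ensures that both the truncation and the replacement $e^{-|P|^e/x} \rightsquigarrow 1$ contribute at most $O(\log M)$ uniformly in $m$, which is then folded into the stated remainder.
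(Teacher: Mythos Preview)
Your proposal is correct and follows essentially the same route as the paper's proof: both start from $B_m(a,b)=\sum_{\chi\neq\chi_0}(\chi(a/b)+\chi(b/a))I(\chi)$, expand $2I(\chi)$ via Proposition~\ref{formuleI}, handle the $M(\chi^*)$ and $\chi^*(-1)$ pieces with Proposition~\ref{formuleExplicite} and orthogonality, and treat the $L'/L$ piece through Lemma~\ref{formuleCha} followed by Lemma~\ref{lemFIM} and Lemma~\ref{elemantaryBm}. Your truncation paragraph is slightly more heuristic than the paper's (which simply uses $e^{-|f|/x}\le 1/|f|$ for $|f|>2x\log x$ and $1-e^{-t}\le 2t$ for the $P\mid m$ sums, with no appeal to any arithmetic-progression structure), but the conclusions match.
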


\begin{proof}
We know that $B_{m}(a,b) = 2 \sum\limits_{\substack{\chi  \bmod m \\ \chi \ne \chi_{0}}}^{} \Re{ \left( \chi(a/b) \right)} I(\chi^{*})$. So from Propositions \ref{formuleExplicite} and \ref{formuleI} we have that 
\begin{equation*}
\begin{aligned}
B_m(a,b) &= - \frac{q}{q-1} \frac{\phi(m)}{\log q} \frac{\Lambda(m/(m,a-b))}{\phi(m/(m,a-b))} -   \frac{q^2 +q}{2(q-1)^2} \phi(m) l_{m}(a,b) \\
&\quad+ \frac{q}{ (q-1) \log q} \sum\limits_{\substack{\chi  \bmod m \\ \chi \ne \chi_{0}}}^{} \left( \chi(a/b) + \chi(b/a) \right) \Re{\left(\frac{L'}{L}(1,\chi^{*}) \right)} + \frac{3 q^2 - q}{2(q-1)^2} +  \frac{q^2 + q}{2(q-1)^2}.
\end{aligned}
\end{equation*}
In order to get our result, it is sufficient to evaluate
$\sum\limits_{\substack{\chi  \bmod m \\ \chi \ne \chi_{0}}}^{} \left( \chi(a/b) + \chi(b/a) \right) \frac{L'}{L}(1,\chi^{*}).$ \\
By using Lemma \ref{formuleCha} we find that  
\begin{equation*}
\frac{L'}{L} (1, \chi^{*}) = - \sum\limits_{f \in \mathcal{M}_{q}}^{} \frac{\chi^{*}(f) \Lambda(f)}{|f|} \exp{\left( - q^{\deg(f)} / x \right)} + O\left( \frac{1}{\phi(m) } \right).
\end{equation*}
Therefore $\sum\limits_{\substack{\chi  \bmod m \\ \chi \ne \chi_{0}}}^{} \left( \chi(a/b) + \chi(b/a) \right)\Re{\left( \frac{L'}{L}(1,\chi^{*}) \right)}$ equals
\begin{equation*}
\begin{aligned}
- \Re\left(\sum\limits_{f \in \mathcal{M}_{q}}^{}  \frac{\Lambda(f)}{|f|} \exp{\left( - q^{\deg(f)} / x \right) \sum\limits_{\substack{\chi  \bmod m \\ \chi \ne \chi_{0}}}^{} \left( \chi(a/b) \chi^{*}(f) +  \chi(b/a) \chi^{*}(f)
\right)  }\right) + O(1).
\end{aligned}
\end{equation*}
We know by Lemma \ref{lemFIM} that   
\begin{equation*}
\begin{aligned}
\sum\limits_{\chi \bmod m}^{} \chi(a/b) \chi^{*}(P^e) &= \left\{
\begin{array}{cc}
     \phi(m) & \text{if} \ P \nmid m \ \text{and} \ aP^{e} \equiv b \bmod m,\\
     \phi(m/P^{v}) & \text{if} \ P^{v} || m \ \text{and} \ aP^{e} \equiv b \bmod m/P^{v}, \\
     0 & \text{otherwise.}
\end{array}
\right.
\end{aligned}
\end{equation*}
Moreover, it is clear that $aP^{e} \equiv b \bmod m$ implies that $P \nmid m$, and by using Lemma \ref{elemantaryBm}
 it follows that  $\sum\limits_{\substack{\chi  \bmod m \\ \chi \ne \chi_{0}}}^{} \left( \chi(a/b) + \chi(b/a) \right) \Re \left(\frac{L'}{L}(1,\chi^{*}) \right)$ is 
\begin{equation*}
 8 \log \phi(m) - \phi(m) \sum\limits_{n=1}^{\infty} \sum\limits_{\substack{\deg(f) = n \\ af \equiv b \bmod m }}^{} \Lambda(f) \frac{\exp{\left( - |f|/ x \right)}}{|f|}  
\end{equation*}
\begin{equation*}
- \phi(m) \sum\limits_{n=1}^{\infty} \sum\limits_{\substack{\deg(f) = n \\ bf \equiv a \bmod m }}^{} \Lambda(f) \frac{\exp{\left( - |f|/ x \right)}}{|f|} 
- \sum\limits_{P^v || m}^{} \phi\left( \frac{m}{P^v} \right) \sum\limits_{n=1}^{\infty} \sum\limits_{\substack{ e\geq 1 \\ \deg(P^e) = n \\ a P^{e} \equiv b \bmod m/P^v}}^{}  \frac{\log |P|}{|P|^{e}} \exp{\left( - |P|^{e} /x \right)} 
\end{equation*}
\begin{equation*}
- \sum\limits_{P^v || m}^{} \phi\left( \frac{m}{P^v} \right) \sum\limits_{n=1}^{\infty} \sum\limits_{\substack{ e\geq 1 \\ \deg(P^e) = n \\ b P^{e} \equiv a \bmod m/P^v}}^{}  \frac{\log |P|}{|P|^{e}} \exp{\left( - |P|^{e} /x \right)} + O\left( \log M  \right).
\end{equation*}
Since if $|f| \geq 2 x \log x$ we have $\exp{\left( - |f|/x \right)} \leq \frac{1}{|f|}$, it follows that  
\begin{equation*}
\begin{aligned}
\sum\limits_{\substack{ |f| > 2 x \log x \\ b f \equiv a \bmod m}}^{} \Lambda(f) \frac{\exp{\left( - |f|/x \right)}}{|f|}
+ \sum\limits_{P^v || m}^{} \sum\limits_{n=1}^{\infty} \sum\limits_{\substack{ e > 2 \log x \\ \deg(P^e) = n \\ a P^{e} \equiv b \bmod m/P^v}}^{}  \frac{\log |P|}{|P|^{e}} \exp{\left( - |P|^{e} /x \right)}  &\ll \sum\limits_{|f| > 2 x \log x}^{} \frac{\Lambda(f)}{|f|^2} \ll \frac{1}{|m|^{2}}.
\end{aligned}
\end{equation*}
We notice that $\phi(m/P^{v}) = \phi(m)/ \left( |P|^{v-1} (|P| - 1) \right)$ (because $(P^{v}, m/P^{v}) = 1$) and \\
$1 - \exp{(-t)} \leq 2 t$ for all $t > 0$. Thus, we deduce that 
\begin{equation*}
\begin{aligned}
\sum\limits_{P^{v} || m}^{} \phi\left(\frac{m}{P^{v}} \right) \sum\limits_{n=1}^{\infty} 
\sum\limits_{\substack{ 1 \leq e \leq 2 \log x \\ a P^{e} \equiv b \bmod m/P^v\\ \deg(P^e) = n}}^{} \frac{\log |P|}{|P|^{e}} \left( 1 - \exp{ \left( - |P|^{e} / x \right)}\right)
&\ll \frac{\phi(m)}{x} \sum\limits_{P^{v} || m}^{} 
\sum\limits_{n=1}^{\infty}  \sum\limits_{\substack{ 1 \leq e \leq 2 \log x \\ \deg(P^e) = n \\ a P^{e} \equiv b \bmod m/P^v}}^{} \frac{\log |P|}{|P| - 1} \\
&\ll \phi(m) \frac{\log x}{x} \sum\limits_{P | m}^{}  \frac{\log |P|}{|P| - 1} \ll  \frac{\log M}{(\phi(m) M)^{2}}.
\end{aligned}
\end{equation*}
Hence by combining the above estimates our proposition follows. 
\end{proof}

\begin{rem}
We have that $B_{m}(a,b) < 0$ if $|B_{m}(a,b)| > \frac{9q}{(q-1) \log q} \log \phi(m).$ 
\end{rem}
A direct consequence of Proposition \ref{forExpBm} is the following corollary. 
\begin{cor} 
\label{majorBm}
For any $(a,b) \in \mathcal{A}_{2}(m)$, we have  $|B_m(a,b)| \ll \phi(m).$
\end{cor}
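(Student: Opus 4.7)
The plan is to apply the explicit formula for $B_m(a,b)$ established in Proposition \ref{forExpBm} and show that each of its constituent terms is $O(\phi(m))$. The formula splits into four types of contributions: (i) the leading logarithmic term $\frac{8q}{(q-1)\log q}\log\phi(m)$ together with the error $O(\log M)$; (ii) the single prime-power term $-\frac{q}{q-1}\frac{\phi(m)}{\log q}\frac{\Lambda(m/(m,a-b))}{\phi(m/(m,a-b))}$ and the indicator term $-\frac{q^2+q}{2(q-1)^2}\phi(m)\,l_m(a,b)$; (iii) the two sums over $f$ with $af\equiv b\bmod m$ (or $bf\equiv a\bmod m$); and (iv) the two double sums over primes $P^v\|m$ and exponents $e$.

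For (i), both quantities are $O(\log|m|)$, hence $o(\phi(m))$. For (ii), the ratio $\Lambda(m/(m,a-b))/\phi(m/(m,a-b))$ vanishes unless $m/(m,a-b)=P^t$ for some irreducible $P$, in which case it equals $\log|P|/(|P|^{t-1}(|P|-1))\le\log q/(q-1)$ uniformly, and $l_m(a,b)\in\{0,1\}$. Thus both terms contribute $O(\phi(m))$.

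For (iii), since $(a,m)=(b,m)=1$ the congruence $af\equiv b\bmod m$ forces $(f,m)=1$. Using the elementary count that a fixed reduced residue class modulo $m$ contains at most one monic polynomial of degree $n$ when $n<M$ and exactly $q^{n-M}$ when $n\ge M$, I would split the sum at $n=M$:
\begin{align*}
\sum_{\substack{1\leq|f|\leq 2x\log x\\ af\equiv b\bmod m}} \frac{\Lambda(f)}{|f|}
&\leq \sum_{n<M}\frac{n\log q}{q^{n}} \;+\; \sum_{n=M}^{\log_q(2x\log x)} q^{n-M}\cdot\frac{n\log q}{q^{n}} \\
&= O(1) \;+\; O\!\left(\frac{(\log_q x)^{2}}{|m|}\right) \;=\; O(1),
\end{align*}
because the choice $x=(\phi(m)M)^{4}$ gives $\log_q x=O(M)$ and $M^{2}/|m|=o(1)$. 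Multiplying by $\phi(m)/\log q$ yields $O(\phi(m))$.

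For (iv), dropping the congruence constraint the inner sum over $e\ge 1$ is bounded by $\sum_{e\ge 1}\log|P|/(|P|^{e+v-1}(|P|-1))=\log|P|/(|P|^{v-1}(|P|-1)^{2})$, and summing over $P\mid m$ yields $O(1)$, since $\sum_{P}\log|P|/|P|^{2}$ converges absolutely (PNT for $\mathbb{F}_q[T]$ gives $\sum_{\deg P=n}\log|P|/|P|^{2}\asymp \log q/q^{n}$, a geometric series). Multiplying by $\phi(m)$ gives $O(\phi(m))$. Summing the four bounds yields $|B_m(a,b)|\ll\phi(m)$. This is more of a bookkeeping exercise than a genuine obstacle, since Proposition \ref{forExpBm} has already done the analytic work; the only subtle point is checking that the cutoffs in (iii) and (iv) are compatible with $x=(\phi(m)M)^{4}$, which in fact leaves comfortable margins.
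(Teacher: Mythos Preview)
Your proof is correct and follows essentially the same route as the paper: both apply Proposition~\ref{forExpBm} and bound each of the resulting terms by $O(\phi(m))$, with identical treatments of the prime-power ratio term, the indicator term, and the double sum over $P^v\|m$. The one genuine difference is in part~(iii): the paper invokes Lemma~\ref{lemElemMajBm} to identify the sum over $f$ as $\Lambda(s)/|s|+O_q(M^2/q^M)$ (where $s$ is the least-degree representative of $ab^{-1}\bmod m$) and then bounds $\Lambda(s)/|s|\le(\log q)/q$, whereas you bypass this lemma entirely with a direct count of monic polynomials in a residue class, splitting at $n=M$. Your argument is slightly more elementary and self-contained here, at the cost of not isolating the main term $\Lambda(s)/|s|$ that the paper later reuses in Proposition~\ref{moreExpBm}; for the purposes of this corollary alone, your approach is perfectly adequate.
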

To deduce this corollary, we need the following lemma. 
\begin{lem}
\label{lemElemMajBm}
Let $m \in \mathcal{M}_{q}$ be of large degree $M$, $(a,b) \in \mathcal{A}_{2}(m)$, and denote by $s$ the residue of  $ab^{-1}$ mod $m$ with the least degree. Moreover, put $x = \left( \phi(m) M \right)^{4}$. Then 
\begin{equation*}
\sum\limits_{\substack{ 1 \leq |f| \leq 2x \log x \\ bf \equiv a \bmod m}}^{} \frac{\Lambda(f)}{|f|} \exp{\left( - |f|/x \right) }= \frac{\Lambda(s)}{|s|} + O_q \left( \frac{M^2}{q^{M}} \right). 
\end{equation*}
\end{lem}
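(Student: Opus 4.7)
The plan is to parametrise the monic $f$ satisfying $bf\equiv a\bmod m$ explicitly as $f=s+km$ with $k\in\mathbb{F}_q[T]$, and then to isolate the single ``short'' contribution $f=s$ from a tail whose total mass is of order $M^2/q^M$.

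Since $(b,m)=1$, the congruence $bf\equiv a\bmod m$ is equivalent to $f\equiv s\bmod m$. A monic $f$ in this residue class is either $f=s$ itself (possible only when $s$ is already monic) or of the form $f=s+km$ for some nonzero monic $k\in\mathcal{M}_q$; these two sets are disjoint. Recall the paper's convention: $\Lambda(g)=\log|P|$ only when $g=P^t$ with $P$ monic irreducible. In particular $\Lambda(s)=0$ whenever $s$ fails to be monic, so the formula to be proved remains consistent in that corner case (the diagonal contribution $\Lambda(s)/|s|$ simply vanishing in parallel with Case $A$ being empty).

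For the diagonal term $f=s$ (when $s$ is monic) one uses $|s|\le q^{M-1}$ together with the bound $\phi(m)\gg|m|/\log_q|m|$ from \eqref{majorSomme2}, which gives $x\gg q^{4M}/(\log M)^4$. Hence $|s|/x=O_q((\log M)^4 q^{-3M})$ and $\exp(-|s|/x)=1+O(q^{-2M})$, so the $f=s$ contribution equals $\Lambda(s)/|s|+O_q(Mq^{-2M})$, well inside the claimed error.

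For the tail, write $f=s+km$ with $k$ nonzero monic; then $|f|=q^M|k|$, so the truncation $|f|\le 2x\log x$ becomes $|k|\le Y:=2x\log x/q^M$. From $x\le q^{4M}M^4$ one reads off $\log_q Y\ll M$. Using the trivial estimate $\Lambda(s+km)\le\deg(s+km)\log q=(M+\deg k)\log q$ together with the count $\#\{k\text{ monic}:|k|=q^n\}=q^n$, the tail contribution is at most
\[
\frac{\log q}{q^M}\sum_{n=0}^{\lfloor\log_q Y\rfloor}q^n\cdot\frac{M+n}{q^n}\ll_q \frac{1}{q^M}\bigl(M\log_q Y+(\log_q Y)^2\bigr)\ll_q \frac{M^2}{q^M},
\]
as claimed. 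The only real bookkeeping point is the correct handling of the monicity of $s$ so that Case $A$ interlocks cleanly with the convention that $\Lambda$ vanishes on non-monic arguments; beyond that, the proof is an elementary counting argument after the parametrisation $f=s+km$.
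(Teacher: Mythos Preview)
Your proof is correct and follows essentially the same approach as the paper's: both isolate the term $f=s$ and parametrise the remaining monic $f\equiv s\bmod m$ as $f=s+km$ (the paper writes $r$ for your $k$), then bound the tail by $\ll_q \frac{M}{q^M}\sum_{0\le\deg k\ll M}\frac{1}{|k|}\ll_q M^2/q^M$. Your treatment is slightly more explicit about the monicity of $s$ and $k$, but the argument is the same.
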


\begin{proof}
We know that 
\begin{equation*}
\begin{aligned}
\sum\limits_{\substack{1 \leq |f| \leq 2x \log x \\ f \equiv s \bmod m}}^{} \Lambda(f) \frac{\exp{\left( - |f|/ x \right)}}{|f|} &= \frac{\Lambda(s)}{|s|} \exp{\left( - |s|/x \right)} +  \sum\limits_{\substack{1 \leq |f| \leq 2x \log x \\ f \equiv s \bmod m \\ f \ne s}}^{} \frac{\Lambda(f)}{|f|} \exp{\left( - |f| / x \right)}. 
\end{aligned}
\end{equation*}
Moreover, if $f \ne s$ with $f \equiv s \bmod m$, then $f = s + r m$ with $\deg(r) \geq 0$. Thus 
\begin{equation*}
\sum\limits_{\substack{1 \leq |f| \leq 2x \log x \\ f \equiv s \bmod m \\ f \ne s}}^{} \frac{\Lambda(f)}{|f|} \exp{\left( - |f| / x \right)} \ll_q \frac{M}{q^{M}} \left(
\sum\limits_{0 \leq \deg(r) \leq 24 M}^{} \frac{1}{|r|} \right) \ll_q \frac{M^2}{q^M}.
\end{equation*}
By using the fact that $e^{-t} = 1 + O(t) $ for all $t>0$, we obtain 
\begin{equation*}
\frac{\Lambda(s)}{|s|} \exp{\left( - |s|/x \right)} 
= \frac{\Lambda(s)}{|s|} + O_q \left( \frac{1}{\phi(m)^{4} M^{3}} \right).
\end{equation*}
Hence, the result follows upon collecting the above estimates since we have that $\frac{|m|}{\phi(m)} \ll M$ from \eqref{majorSomme2}. 
\end{proof}
\begin{proof}[Proof of Corollary \ref{majorBm}]
We denote by $s$ the residue of  $ab^{-1}$ mod $m$ of the least degree. First, note that $\frac{\Lambda(s)}{|s|} \leq \frac{\log q}{q}$ for $\deg(s) \geq 1$. 
Furthermore, we have that  $\Lambda(m/(m,a-b))/\phi(m/(m,a-b)) \ne 0$  if and only if $m/(m,a-b) = P^{l}$ with $P \in \mathcal{P}_{q}$, and $l \in \mathbb{N^{*}}.$ Thus, in this case 
\begin{equation*}
\frac{\Lambda(m/(m,a-b))}{\phi(m/(m,a-b)} = \frac{\log |P|}{|P|^{l-1} \left( |P| - 1 \right)} \leq \frac{\log |P|}{|P| - 1} \leq \log 2.
\end{equation*}
Moreover, we know that  
\begin{equation*}
\sum\limits_{P^{\nu} || m}^{} \sum\limits_{n=1}^{\infty} \sum\limits_{\substack{ 1 \leq e \leq 2 \log x \\ b.P^{e} \equiv a \bmod m/P^v\\ \deg(P^{e}) = n }}^{} \frac{\log |P|}{|P|^{e + \nu - 1} \left( |P| - 1 \right)} \leq \sum\limits_{P|m}^{} \frac{\log |P|}{\left( |P| - 1\right)^{2}} \ll 1.
\end{equation*}
Therefore by combining these different estimates with Proposition \ref{forExpBm} and Lemma \ref{lemElemMajBm}, the corollary follows.  
\end{proof}

Another consequence of Proposition \ref{forExpBm} is Theorem \ref{FirstMomBm} which affirms that the lower and upper bounds for the first moment of $|B_{m}(a,b)|$ over pairs of residue classes $(a,b) \in \mathcal{A}_{2}(m)$ have the same order of magnitude. 

\begin{proof}[Proof of Theorem \ref{FirstMomBm}]
We first prove the lower bound of our theorem. By definition of $B_{m}(a,b)$, we have that
\begin{equation*}
\begin{aligned}
\sum\limits_{(a,b) \in \mathcal{A}_{2}(m)}^{} B_m(a,b) &= \sum\limits_{\substack{\chi  \bmod m \\ \chi \ne \chi_{0}}}^{} \sum\limits_{\Im{\left( \gamma_{\chi} \right)} > 0}^{} \sum\limits_{\substack{a \bmod m \\ (a,m) = 1}}^{} \sum\limits_{\substack{b \ne a \bmod m \\ (b,m) = 1}}^{}  (\chi(a/b) + \chi(b/a)) \left| \frac{\gamma_{\chi}}{\gamma_{\chi} - 1}\right|^{2}.
\end{aligned}   
\end{equation*}
Using the orthogonality relations for Dirichlet characters modulo $m$, it is easy to check that 
\begin{equation*}
\sum\limits_{\substack{a \bmod m \\ (a,m) = 1}}^{} \sum\limits_{\substack{b \ne a \bmod m \\ (b,m) = 1}}^{}  (\chi(a/b) + \chi(b/a)) = - 2 \phi(m).
\end{equation*}
Thus we obtain that $\sum\limits_{(a,b) \in \mathcal{A}_{2}(m)}^{} B_{m}(a,b) = - \phi(m) N_{m}$, and since $\left| \mathcal{A}_{2}(m) \right| = \phi(m)^2 - \phi(m) \leq \phi(m)^2$, we have 
\begin{equation*}
\frac{1}{\left| \mathcal{A}_{2}(m) \right|} \sum\limits_{(a,b) \in \mathcal{A}_{2}(m)}^{} \left| B_{m}(a,b) \right| \geq - \frac{1}{\left| \mathcal{A}_{2}(m) \right|} \sum\limits_{(a,b) \in \mathcal{A}_{2}(m)}^{} B_{m}(a,b) = \frac{N_{m}}{\phi(m) - 1}.
\end{equation*}
Moreover, we know that $N_{m} = \phi(m) \left( \frac{q}{q-1}  M + O\left(\log M \right) \right)$, therefore 
\begin{equation*}
\frac{1}{\left| \mathcal{A}_{2}(m) \right|} \sum\limits_{(a,b) \in \mathcal{A}_{2}(m)}^{} \left| B_{m}(a,b) \right| \geq \frac{q}{q-1} M + O\left(\log M \right).
\end{equation*}

Next, it remains to establish the upper bound of our theorem. Let $(a,b) \in \mathcal{A}_{2}(m)$ and $d = (m,a-b)$. Then $a-b = ds$ with $0 \leq \deg(s) \leq \deg(m/d)$ and $(s,m/d) = 1.$ So for any choice of $d$ and $s$ satisfying these conditions there are at most $\phi(m)$ pairs $(a,b) \in \mathcal{A}_{2}(m)$ such that $a-b = ds.$ Thus we have 
\begin{equation*}
\begin{aligned}
\sum\limits_{(a,b) \in \mathcal{A}_{2}(m)}^{} \frac{\Lambda \left(m/(m,a-b) \right)}{\phi\left( m/(m,a-b)\right)}&\leq \phi(m) \sum\limits_{d | m}^{} \frac{\Lambda(m/d)}{\phi(m/d)} \left( \sum\limits_{\substack{0 \leq \deg(s) \leq \deg(m/d) \\ (m/d,s) = 1}}^{} 1 \right) \\
&\leq \phi(m) \sum\limits_{d|m}^{} \Lambda(m/d) = (\log q) \phi(m) M. 
\end{aligned}
\end{equation*}
We choose $x = \left( \phi(m) M \right)^{4}$, then by Lemma \ref{elemantaryBm} it follows that
\begin{equation*}
\begin{aligned}
\sum\limits_{(a,b) \in \mathcal{A}_{2}(m)}^{} \sum\limits_{\substack{1 \leq |f| \leq 2x \log x \\ f \equiv ab^{-1} \bmod m}}^{} \frac{\Lambda(f)}{|f|} \exp{\left( - |f|/x \right)} &= \sum\limits_{\substack{1 \leq |f| \leq 2x \log x \\ 
(f,m) = 1}}^{} \frac{\Lambda(f)}{|f|} \exp{ \left( - |f|/ x \right)} \left( \sum\limits_{ \substack{(a,b) \in \mathcal{A}_{2}(m) \\ a b^{-1} \equiv f \bmod m}}^{} 1 \right) \\
&\leq \phi(m) \sum\limits_{\substack{1 \leq |f| \leq 2x \log x \\ (f,m) = 1}}^{} \frac{\Lambda(f)}{|f|} \exp{\left( - |f|/x \right)}  \\
&\leq \phi(m) \sum\limits_{\substack{f \in \mathcal{M}_{q} \\ \deg(f) \geq 1 \\ (f,m) = 1}}^{} \frac{\Lambda(f)}{|f|} \exp{\left( - |f|/x \right)} \\
&\leq 4 (\log q) \phi(m) M + O\left( \phi(m) \log M \right).
\end{aligned}
\end{equation*}
On the other hand, we obtain 
\begin{equation*}
\begin{aligned}
\sum\limits_{(a,b) \in \mathcal{A}_{2}(m)}^{} \sum\limits_{P^{v} || m}^{} \sum\limits_{\substack{1 \leq e \leq 2 \log x \\ aP^{e} \equiv b \bmod m/P^v \\ \deg(P^e) = n}}^{} \frac{\log |P|}{|P|^{e+v-1} \left( |P| - 1 \right)}  &\leq \phi(m) \sum\limits_{P | m}^{} \frac{\log |P|}{(|P| - 1)^{2}} \ll \phi(m).  
\end{aligned}
\end{equation*}
Hence our theorem follows from Proposition \ref{forExpBm} combined with these different estimates. 
\end{proof}

\section{The Fourier and Laplace transforms of $\mu_{m;a_1,\dots,a_r}$}

\label{section5}The aim of this section is to study properties of the Fourier and Laplace transforms of $\mu_{m;a_1,\dots,a_r}$.  We first recall the explicit formula for the Fourier transform $\hat{\mu}_{m;a_1,\dots,a_r}$ obtained by Cha in  \cite[Theorem $3.4$]{BCha}. Under the assumption of LI, we have that for all $t= \left( t_1,\dots,t_r \right) \in \mathbb{R}^{r}$ 
\begin{equation}
\label{forExplicitCha}
\begin{aligned}
\hat{\mu}_{m;a_1,\dots,a_r}(t) &= \mathcal{B}_{m;a_1,\dots,a_r}(t) \prod\limits_{\substack{\chi  \bmod m \\ \chi \ne \chi_{0} }}^{} \prod\limits_{\Im{\left( \gamma_{\chi}\right)} > 0 }^{} 
J_{0} \left( \left| \frac{2 \gamma_{\chi}}{\gamma_{\chi} - 1} \right|
\left| \sum\limits_{j=1}^{r} \chi(a_j) t_j \right| \right) ,
\end{aligned}    
\end{equation}
where 
$$ J_{0}(z) = \sum\limits_{n=0}^{\infty} \frac{(-1)^n (z/2)^{2n}}{(n!)^2} $$
is the Bessel function of order $0$, and 
\begin{equation*}
\mathcal{B}_{m;a_1,\dots,a_r}(t) = \frac{1}{2} \left[ 
\exp{\left( i \frac{\sqrt{q}}{q-1} \sum\limits_{j=1}^{r} C_{m}(a_j) t_j \right)} + 
\exp{\left( i \frac{q}{q-1} \sum\limits_{j=1}^{r} C_{m}(a_j) t_j \right)}
\right]. 
\end{equation*}
First, we give the following definition. 
\begin{defn} 
For any non-principal Dirichlet character $\chi$ modulo $m$, we define 
\begin{equation*}
F(x,\chi) := \prod_{\Im{(\gamma_{\chi})} > 0}^{} J_{0} \left( 2 x \left| \frac{\gamma_{\chi}}{\gamma_{\chi} - 1} \right|  \right),  
\end{equation*}
for all $x \in \mathbb{R}$.
\end{defn}
\begin{lem}
\label{lemFourierTrans}
Let $\chi$ be a non-principal Dirichlet character modulo $m$. Assume LI, then 
\begin{equation*}
\left| F(x,\chi) F(x,\overline{\chi}) \right| \leq \exp{ \left( - \frac{\log x}{2} (M(\chi^{*}) - 2) \right)},
\end{equation*}
for all $x > 2 \sqrt{q}$.
\end{lem}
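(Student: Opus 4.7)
The plan is to rewrite $F(x,\chi)F(x,\overline{\chi})$ as a single product of Bessel factors ranging over \emph{all} nontrivial inverse zeros of $\mathcal{L}(u,\chi)$, and then bound each factor individually using the standard asymptotic estimate for $J_{0}$.

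First, since $\overline{\mathcal{L}(\overline{u},\chi)} = \mathcal{L}(u,\overline{\chi})$, the inverse zeros of $\mathcal{L}(u,\overline{\chi})$ are exactly the complex conjugates $\overline{\gamma_{\chi}}$ of the inverse zeros of $\mathcal{L}(u,\chi)$. Because $|\overline{\gamma_{\chi}}/(\overline{\gamma_{\chi}}-1)|=|\gamma_{\chi}/(\gamma_{\chi}-1)|$, the condition $\Im(\overline{\gamma_{\chi}})>0$ translates to $\Im(\gamma_{\chi})<0$, so after pairing the two products
$$F(x,\chi)\,F(x,\overline{\chi}) \;=\; \prod_{\substack{\gamma_{\chi}\\ \Im(\gamma_{\chi})\neq 0}} J_{0}\!\left(2x\left|\frac{\gamma_{\chi}}{\gamma_{\chi}-1}\right|\right),$$
with the product running over the nontrivial inverse zeros $\gamma_{\chi_{i}}$ appearing in Proposition~\ref{propCha}.

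Second, under LI every nontrivial inverse zero of $\mathcal{L}(u,\chi)$ is simple and distinct from $\pm\sqrt{q}$: such a zero would correspond to $\gamma\in\{0,\pi\}$ in the set of the LI definition, which would contradict the $\mathbb{Q}$-linear independence of that multiset (in particular it would collide with the element $\pi$). Since $|\gamma_{\chi}|=\sqrt{q}$, this forces $\Im(\gamma_{\chi})\neq 0$ for every nontrivial inverse zero. Hence the product above actually runs over \emph{all} $N$ nontrivial inverse zeros, where $N=M(\chi^{*})-1$ if $\chi^{*}$ is odd and $N=M(\chi^{*})-2$ if $\chi^{*}$ is even; in either case $N\ge M(\chi^{*})-2$.

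Third, estimate each factor by the classical asymptotic bound $|J_{0}(z)|\le\sqrt{2/(\pi z)}$, valid for $z$ above a modest absolute threshold. Since $|\gamma_{\chi}|=\sqrt{q}$ and $|\gamma_{\chi}-1|\le \sqrt{q}+1$, we have $|\gamma_{\chi}/(\gamma_{\chi}-1)|\ge \sqrt{q}/(\sqrt{q}+1)\ge 1/\pi$ for $q\ge 3$, whence $z:=2x|\gamma_{\chi}/(\gamma_{\chi}-1)|\ge 2x/\pi$. For $x>2\sqrt{q}$ this comfortably places $z$ in the asymptotic regime, and the bound becomes
$$\left|J_{0}\!\left(2x\left|\frac{\gamma_{\chi}}{\gamma_{\chi}-1}\right|\right)\right|\le \sqrt{\frac{2}{\pi z}}\le \sqrt{\frac{2}{\pi\cdot 2x/\pi}}=\frac{1}{\sqrt{x}}.$$
Multiplying these $N$ inequalities,
$$|F(x,\chi)\,F(x,\overline{\chi})|\le x^{-N/2}\le x^{-(M(\chi^{*})-2)/2}=\exp\!\left(-\frac{\log x}{2}\,(M(\chi^{*})-2)\right),$$
which is the desired inequality. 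The only slightly delicate point is matching the constant in the Bessel bound so that each factor is controlled by $x^{-1/2}$ (rather than $C\cdot x^{-1/2}$); the elementary geometric lower bound $|\gamma_{\chi}/(\gamma_{\chi}-1)|\ge 1/\pi$ coupled with the sharp asymptotic $|J_{0}(z)|\le\sqrt{2/(\pi z)}$ is exactly what achieves this, and the hypothesis $x>2\sqrt{q}$ is what legitimates the use of the asymptotic.
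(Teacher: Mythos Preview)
Your proof is correct and follows essentially the same approach as the paper: both arguments rewrite $F(x,\chi)F(x,\overline{\chi})$ as a product over all nontrivial inverse zeros (using LI to rule out real zeros), apply the Bessel bound $|J_0(z)|\le\sqrt{2/(\pi z)}$, and then use a geometric estimate on $|\gamma_\chi/(\gamma_\chi-1)|$ to squeeze each factor below $1/\sqrt{x}$. The only cosmetic difference is that the paper uses the upper bound $|(\gamma_\chi-1)/\gamma_\chi|<2$ (from $|\gamma_\chi|=\sqrt{q}>1$) in place of your lower bound $|\gamma_\chi/(\gamma_\chi-1)|\ge 1/\pi$; both routes give $|J_0(z)|\le 1/\sqrt{x}$, and you are more explicit than the paper about why LI forces $\Im(\gamma_\chi)\neq 0$.
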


\begin{proof}
We know that $\left| J_0(x) \right| \leq \min{\left\{ 1, \sqrt{\frac{2}{\pi x}} \right\}}$ for all $x > 2 \sqrt{q}$, therefore for $x > 2 | \gamma_{\chi}|$ we have 
\begin{equation*}
\begin{aligned}
\left| F(x,\chi) F(x,\overline{\chi}) \right| &\leq \prod_{\gamma_{\chi}}^{} J_{0} \left( 2 x \left| \frac{\gamma_{\chi}}{\gamma_{\chi} - 1} \right|  \right) \leq \prod_{\gamma_{\chi}}^{} \left| \frac{\gamma_{\chi} - 1}{\gamma_{\chi}}\right|^{\frac{1}{2}} \frac{1}{\sqrt{\pi x}}.
\end{aligned}
\end{equation*}

Moreover, since $q > 2$ we have $\left| \frac{\gamma_{\chi} - 1}{\gamma_{\chi}} \right| \leq 1 + \frac{1}{|\gamma_{\chi}|} < 2$. 
Thus 
\begin{equation*}
\begin{aligned}
\left| F(x,\chi) F(x,\overline{\chi}) \right| &\leq \prod_{\gamma_{\chi}}^{} \sqrt{\frac{2}{\pi x}} \leq \prod_{\gamma_{\chi}}^{}  \frac{1}{\sqrt{x}} \leq  \exp{\left( - \frac{\log x}{2} \#\{ \gamma_{\chi} \} \right)}.
\end{aligned}
\end{equation*}
Hence it follows from Proposition \ref{propCha} that 
\begin{equation*}
\left| F(x,\chi) F(x,\overline{\chi}) \right| \leq \exp{\left( - \frac{\log x}{2} ( M(\chi^{*}) - 2) \right)}.
\end{equation*}
\end{proof}

Now, we are able to give a decreasing upper bound for $\hat{\mu}_{m;a_1,\dots,a_r}(t)$ in the following proposition. 
\begin{pro} 
\label{majorMesur}
Let $r \geq 2$ be a fixed integer, $m \in \mathcal{M}_{q}$ be of large degree $M$ and let $\epsilon \in ]0,\frac{1}{4}[$ be a real number. Assume LI. Then for all $(a_1,\dots,a_r) \in \mathcal{A}_{r}(m)$ we have 
\begin{equation*}
\begin{aligned}
\left| \hat{\mu}_{m;a_1,\dots,a_r}(t_1,\dots,t_r) \right| &\leq
\exp{\left( - c_1(r) \epsilon^{2} \phi(m) M \right)},
\end{aligned}
\end{equation*}
for $t= \left( t_1,\dots,t_r \right) \in \mathbb{R}^{r}$ with $\epsilon \leq \left\lVert t \right\rVert \leq 4 \sqrt{q}$ and
\begin{equation*}
\begin{aligned}
\left| \hat{\mu}_{m;a_1,\dots,a_r}(t_1,\dots,t_r) \right| &\leq 
\exp{ \left( - c_2(r) \phi(m) M \log \left\lVert t \right\rVert \right)},
\end{aligned}
\end{equation*}
for $\left\lVert t \right\rVert > 4 \sqrt{q}$, where $c_1(r)$ and $c_2(r)$ are positive constants depending only on $r$.
\end{pro}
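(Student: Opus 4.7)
The plan is to start from the explicit formula \eqref{forExplicitCha}, use the trivial bound $|\mathcal{B}_{m;a_1,\dots,a_r}(t)|\le 1$, and analyze the Bessel product. Set $y_\chi := \sum_{j=1}^{r}\chi(a_j)t_j$. The orthogonality relation \eqref{firstOrthogo} yields $\sum_{\chi\bmod m}|y_\chi|^2 = \phi(m)\|t\|^2$, and subtracting the principal contribution $|y_{\chi_0}|^2\le r\|t\|^2$ gives $\sum_{\chi\neq\chi_0}|y_\chi|^2\ge \tfrac{1}{2}\phi(m)\|t\|^2$ as soon as $\phi(m)\ge 2r$. Combined with the trivial upper bound $|y_\chi|\le \sqrt{r}\|t\|$, an elementary averaging argument produces a subset $\mathcal{G}\subset\{\chi\bmod m:\chi\neq\chi_0\}$, closed under complex conjugation (since $y_{\bar\chi}=\overline{y_\chi}$, hence $|y_{\bar\chi}|=|y_\chi|$), with $|\mathcal{G}|\gg_r\phi(m)$ and $|y_\chi|\ge\|t\|/2$ for all $\chi\in\mathcal{G}$. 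Proposition \ref{formuleExplicite} together with \eqref{majorSom} give $\sum_{\chi\neq\chi_0}M(\chi^*)=\phi(m)M+O(\phi(m)\log M)$, and since $M(\chi^*)\le M$ on the complement of $\mathcal{G}$, this forces $\sum_{\chi\in\mathcal{G}}M(\chi^*)\gg_r\phi(m)M$ once $\deg(m)$ is large.

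For $\|t\|>4\sqrt{q}$, the condition $|y_\chi|\ge\|t\|/2>2\sqrt{q}$ is precisely what Lemma \ref{lemFourierTrans} requires. Applying it pairwise gives $|F(|y_\chi|,\chi)F(|y_\chi|,\bar\chi)|\le \exp\bigl(-\tfrac{1}{2}\log|y_\chi|\,(M(\chi^*)-2)\bigr)$, and taking a square root in the real case (or pairing with $\bar\chi$ in the complex case) yields $|F(|y_\chi|,\chi)|\le \exp\bigl(-\tfrac{1}{4}\log|y_\chi|(M(\chi^*)-2)\bigr)$ for every $\chi\in\mathcal{G}$. Multiplying over $\mathcal{G}$, using $\log|y_\chi|\ge\log(\|t\|/2)\ge\tfrac{1}{2}\log\|t\|$ (valid for $\|t\|>4\sqrt{q}$ since $q\ge 3$), and substituting $\sum_{\chi\in\mathcal{G}}M(\chi^*)\gg_r\phi(m)M$ gives $|\hat\mu_{m;a_1,\dots,a_r}(t)|\le \exp(-c_2(r)\phi(m)M\log\|t\|)$, as required.

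For the bounded regime $\epsilon\le\|t\|\le 4\sqrt{q}$ the Bessel arguments stay bounded, so Lemma \ref{lemFourierTrans} no longer applies; instead I plan to invoke a uniform bound of the form $|J_0(x)|\le\exp(-c_0\min(x^2,1))$ for every real $x$, obtained by combining the Taylor expansion $J_0(x)=1-x^2/4+O(x^4)$ near the origin with the strict inequality $\sup_{|x|\ge 1}|J_0(x)|<1$ (a standard property of the Bessel function, which has no zero in $[0,1]$ and decays oscillatorily afterwards). Since $|\gamma_\chi|=\sqrt{q}$ and $q>2$, the factor $|2\gamma_\chi/(\gamma_\chi-1)|$ is bounded above and below by positive constants depending only on $q$, so the per-zero argument $a_\chi:=|2\gamma_\chi/(\gamma_\chi-1)|\,|y_\chi|$ satisfies $\min(a_\chi^2,1)\gg_q\min(|y_\chi|^2,1)$; and for $\chi\in\mathcal{G}$ the constraint $\epsilon<1/4$ forces $\epsilon/2\le|y_\chi|\le\sqrt{r}\|t\|$, whence $\min(|y_\chi|^2,1)\ge\epsilon^2/4$. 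Multiplying the resulting lower bound on $-\log|J_0(a_\chi)|$ over the $\asymp M(\chi^*)/2$ inverse zeros of $L(s,\chi^*)$ with positive imaginary part and summing over $\chi\in\mathcal{G}$, then inserting $\sum_{\chi\in\mathcal{G}}M(\chi^*)\gg_r\phi(m)M$, will deliver $|\hat\mu_{m;a_1,\dots,a_r}(t)|\le\exp(-c_1(r)\epsilon^2\phi(m)M)$. The main technical obstacle I anticipate is making the uniform Bessel inequality explicit and tracking the $q$-dependent constants carefully so they can cleanly be absorbed into $c_1(r)$, together with handling the distinction between real characters and complex conjugate pairs when multiplying the per-character estimates.
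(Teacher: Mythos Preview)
Your proposal is correct and follows essentially the same route as the paper: both isolate the set $V_m=\{\chi\ne\chi_0:|y_\chi|\ge\|t\|/2\}$ via orthogonality, show $|V_m|\ge\phi(m)/(2r)$ and hence $\sum_{\chi\in V_m}M(\chi^*)\gg_r\phi(m)M$, invoke Lemma~\ref{lemFourierTrans} for $\|t\|>4\sqrt q$, and in the bounded regime use what amounts to $|J_0(x)|\le\exp\bigl(-\tfrac14\min(x^2,1)\bigr)$ (the paper phrases this as monotonicity of $J_0$ on $[0,1]$ together with $|J_0(x)|\le J_0(1)$ for $x\ge1$ and $J_0(x)\le\exp(-x^2/4)$ on $[0,1]$, then sums $|\gamma_\chi/(\gamma_\chi-1)|^2$ into the quantity $D_m$). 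Your worry about $q$-dependent constants is unnecessary: since $q\ge3$ one has the absolute lower bound $|\gamma_\chi/(\gamma_\chi-1)|\ge\sqrt q/(\sqrt q+1)\ge\sqrt3/(\sqrt3+1)$, so everything folds cleanly into a constant $c_1(r)$ depending only on $r$.
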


\begin{proof}
By using the explicit formula \eqref{forExplicitCha}, we have that for all $t= \left( t_1,\dots,t_r \right) \in \mathbb{R}^{r}$
$$
\left| \hat{\mu}_{m;a_1,\dots,a_r}(t) \right| \leq \prod_{ \substack{\chi  \bmod m \\ \chi \ne \chi_{0} }}^{} \prod_{\Im{(\gamma_{\chi})} > 0}^{} \left| J_{0} \left( 2 \left| \frac{\gamma_{\chi}}{\gamma_{\chi} - 1} \right| \left| \sum\limits_{j=1}^{r} \chi(a_j) t_j \right| \right) \right|.
$$
Let $V_m$ be the set of all non-principal Dirichlet characters modulo $m$ such that  $$\left| \sum\limits_{j=1}^{r} \chi(a_j) t_j \right| \geq \frac{\left\lVert t \right\rVert}{2}.$$
First, note that $\chi \in V_m$ if and only if $\overline{\chi} \in V_m$.
Let $\epsilon \leq \left\lVert t \right\rVert \leq 4 \sqrt{q}$. If $\chi \in V_{m}$, then $2 \left| \sum\limits_{j=1}^{r} \chi(a_j) t_j \right| \geq \left\lVert t \right\rVert \geq \epsilon$. Furthermore, we know that $\epsilon \left| \frac{\gamma_{\chi}}{\gamma_{\chi} - 1} \right| \leq \epsilon \left( 1 + \frac{1}{\sqrt{q} - 1} \right) \leq 1$, $J_{0}$ is a decreasing positive function on $\left[ 0,1 \right]$ and $\left| J_{0}(x) \right| \leq J_{0}(1)$ for $x \geq 1$. This implies 
\begin{equation*}
\begin{aligned}
    \left| \hat{\mu}_{m;a_1,\dots,a_r}(t_1,\dots,t_r) \right| &\leq \prod\limits_{\chi \in V_m}^{} \prod\limits_{\Im{\left( \gamma_{\chi} \right)} > 0}^{} \left| J_{0} \left( 2 \left| \sum\limits_{j=1}^{r} \chi(a_j) t_j \right| \left| \frac{\gamma_{\chi}}{\gamma_{\chi} - 1} \right| \right) \right| \\
    &\leq \prod\limits_{\chi \in V_m}^{} \prod\limits_{\Im{\left( \gamma_{\chi} \right)} > 0}^{} \left| J_0 \left( \epsilon \left| \frac{\gamma_{\chi}}{\gamma_{\chi} - 1} \right| \right) \right|.
\end{aligned}
\end{equation*}
Moreover, for $x \leq 1$ we have that $J_0(x) \leq \exp{\left( - x^{2}/4 \right)},$ therefore 
\begin{equation}
\label{introDm}
\begin{aligned}
\left| \hat{\mu}_{m;a_1,\dots,a_r}(t_1,\dots,t_r) \right| \leq 
\exp{\left( \frac{- \epsilon^{2}}{4}  D_{m} \right)},
\end{aligned}
\end{equation}
where $D_m :=  \sum\limits_{\chi \in V_{m}}^{} \sum\limits_{\Im{\left( \gamma_{\chi} \right)} > 0}^{} \left| \frac{\gamma_{\chi}}{\gamma_{\chi} - 1} \right|^{2}$.
Since $\chi \in V_{m} \Longleftrightarrow \overline{\chi} \in V_{m}$, it is clear that $D_{m} = \frac{1}{2} \sum\limits_{\chi \in V_{m}}^{} \left( I(\chi) + I(\overline{\chi}) \right)$. Hence from \eqref{asymptCha}, it follows that
\begin{equation}
\begin{aligned}
\label{formuleDm}
D_{m} &= \frac{q}{2(q-1)} \sum\limits_{\chi \in V_{m}}^{} M(\chi^{*}) + O\left( \sum\limits_{\chi \in V_{m}}^{} \log M(\chi^{*}) \right) \\
&= \frac{q}{2(q-1)}  \sum\limits_{\chi \in V_{m}}^{} M(\chi^{*}) + O\left( \phi(m) \log M \right).
\end{aligned}
\end{equation}
By using Proposition \ref{firstTool} we have 
\begin{equation}
\begin{aligned}
\label{mesureLemmF11}
\sum\limits_{\chi \in V_{m}}^{} M(\chi^{*}) &= \sum\limits_{\chi \bmod m}^{} M(\chi^{*}) - \sum\limits_{\chi \not\in V_{m}}^{} M(\chi^{*})\\
&= \phi(m) M - \frac{\phi(m)}{\log q} \sum\limits_{P | m}^{} \frac{\log |P|}{|P| - 1} - \sum\limits_{\chi \not\in V_{m}}^{} M(\chi^{*}) \\
 &\geq |V_{m}| M - \frac{\phi(m)}{\log q} \sum\limits_{P | m}^{} \frac{\log |P|}{|P| - 1}.
\end{aligned}
\end{equation}
Therefore, it remains to find an adequate lower bound for $|V_{m}|$ to establish the first part of our proposition. \\
Let 
\begin{equation*}
\begin{aligned}
 S(t) &= \sum\limits_{\substack{ \chi  \bmod m \\ \chi \ne \chi_{0}}}^{} \left| \sum\limits_{j=1}^{r} \chi(a_j) t_j \right|^{2}= \sum\limits_{\chi \bmod m}^{} \left| \sum\limits_{j=1}^{r} \chi(a_j) t_j \right|^{2} - \left( \sum\limits_{j=1}^{r} t_j\right)^{2}\\
&= \sum\limits_{j=1}^{r} \sum\limits_{k=1}^{r} t_j t_k \sum\limits_{\chi \bmod m}^{} \chi(a_j) \overline{\chi}(a_k) - \left( \sum\limits_{j=1}^{r} t_j \right)^{2}.
\end{aligned}
\end{equation*}
Thus from the Cauchy-Schwartz inequality, we obtain 
\begin{equation}
\begin{aligned}
\label{mesureLem21}
S(t) &= \phi(m) \sum\limits_{j=1}^{r} t_j^{2} - \left( \sum\limits_{j=1}^{r} t_j \right)^{2} \geq (\phi(m) - r) \sum\limits_{j=1}^{r} t_j^{2} = (\phi(m) - r) \left\lVert t \right\rVert^{2}.
\end{aligned}
\end{equation}
Furthermore, since $ \left| \sum\limits_{j=1}^{r} \chi(a_j) t_j \right|^{2} \leq r \lVert t \rVert^{2}$ we get that 
\begin{equation}
\begin{aligned}
\label{mesureLem22}
S(t) &= \sum\limits_{\chi \in V_m}^{} \left| \sum\limits_{j=1}^{r} \chi(a_j) t_j \right|^{2} + \sum\limits_{\chi \notin V_m}^{} \left| \sum\limits_{j=1}^{r} \chi(a_j) t_j \right|^{2} \leq r \left|V_m \right| \left\lVert t \right\rVert^{2} + \frac{\phi(m)}{4} \left\lVert t \right\rVert^{2}. 
\end{aligned}
\end{equation}
Hence, combining \eqref{mesureLem21} and \eqref{mesureLem22} we deduce that if $\deg(m) = M$ is large enough then 
\begin{equation}
\label{mesureLemmF3}
\begin{aligned}
\left| V_m \right| \geq \frac{\phi(m)}{2r}.
\end{aligned}
\end{equation}
Thus for $M$ large enough, we deduce from \eqref{majorSom}, \eqref{mesureLemmF11}, and \eqref{mesureLemmF3} that 
\begin{equation}
\label{sumMChi}
\begin{aligned}
\sum\limits_{\chi \in V_{m}}^{} M(\chi^{*}) \geq \frac{ \phi(m)M}{4 r}.
\end{aligned}
\end{equation}
Therefore it follows from \eqref{formuleDm} and \eqref{sumMChi} that for $M$ large enough we have
\begin{equation}
\label{InegDm}
\begin{aligned}
D_{m} &\geq \frac{\phi(m) M}{8 r} + O\left( \phi(m) \log M \right).
\end{aligned}
\end{equation}
Hence by combining \eqref{introDm} and \eqref{InegDm} the first part of our proposition follows.

Now assume that $ \lVert t \rVert > 4 \sqrt{q}$. 
Since $\left| J_{0}(x) \right| \leq 1$ for all $x \in \mathbb{R}$, then 
\begin{equation*}
\begin{aligned}
\left| \hat{\mu}_{m;a_1,\dots,a_r}(t_1,\dots,t_r) \right|^{2} &\leq \prod\limits_{\chi \in V_{m}}^{} \left| F\left( \left| \sum\limits_{j=1}^{r} \chi(a_j) t_j \right|, \chi \right) F\left( \left| \sum\limits_{j=1}^{r} \chi(a_j) t_j \right|, \overline{\chi} \right) \right|.
\end{aligned}
\end{equation*}
Thus for $\chi \in V_{m}$ we have  $\left| \sum\limits_{j=1}^{r} \chi(a_j) t_j \right| \geq \frac{\lVert t \rVert}{2} > 2 \sqrt{q}$. Hence we deduce by using Lemma \ref{lemFourierTrans} that 
\begin{equation}
\label{mesTailproof}
\begin{aligned}
\left| \hat{\mu}_{m;a_1,\dots,a_r}(t_1,\dots,t_r) \right|^{2} &\leq \prod\limits_{\chi \in V_{m}}^{} \exp{\left( -\frac{1}{2} \left( \log \lVert t \rVert - \log 2 \right) (M(\chi^{*}) - 2) \right) }. 
\end{aligned}
\end{equation}
Therefore, by combining the fact that $\log 2 \leq \frac{\log \lVert t \rVert}{2}$ with \eqref{sumMChi} and \eqref{mesTailproof} we deduce
 the second part of our proposition. 
\end{proof}

The following result is an asymptotic formula for the Fourier transform in the range $\lVert t \rVert \ll \phi(m)^{-1/2}$. This is a crucial ingredient in the proof of Theorem \ref{forAsymDelta3}.
\begin{pro}
\label{transFourrier}
Fix an integer $r \geq 2$ and let $m \in \mathcal{M}_{q}$ be of large degree $M.$ Assume LI. Then for every constant $A = A(r) > 0$, there exists $L(A) > 0$ such that for all $L \geq L(A)$ and $t = (t_1,\dots,t_r) \in \mathbb{R}^{r}$ with $\lVert t \rVert \leq A M^{1/2}$ we have 
\begin{equation*}
\begin{aligned}
&\hat{\mu}_{m;a_1,\dots,a_r}\left( \frac{t_1}{\sqrt{N_{m}}}, \dots, \frac{t_r}{\sqrt{N_{m}}}\right) = \exp{\left( - \frac{t_1^{2} + \dots + t_r^{2}}{2} \right)}\left( 1 + \frac{i}{2 \sqrt{N_{m}}} \frac{\sqrt{q} + q}{q-1} \left(  \sum\limits_{j=1}^{r} C_{m}(a_j) t_j \right) \right. \\
&- \left. \frac{1}{4 N_{m}} \frac{q + q^{2}}{(q-1)^{2}} \left( \sum\limits_{j=1}^{r} C_{m}(a_j)^{2} t_j^{2} \right) - \frac{1}{N_{m}}  \sum\limits_{1 \leq j < k \leq r}^{} \left( B_{m}(a_j,a_k) + \frac{1}{2} \frac{q + q^2}{(q - 1)^{2}} C_{m}(a_j) C_{m}(a_k)  \right) t_j t_k  
\right. \\
&+ \frac{Q_{4}(t_1,\dots,t_r)}{N_{m}}  +\left. \sum\limits_{s=0}^{1} \sum\limits_{d=0}^{2}   \sum\limits_{\substack{0 \leq l \leq L \\ 2l \geq 3 - 2s - d}}^{} \frac{1}{2} \left( \frac{q^{d/2} + q^d}{(q-1)^{d}} \right)
\frac{C_m^{d} B_m^{l}}{N_m^{d/2 + l + s}} P_{s,d,l}(t_1,\dots,t_r) + O\left( \frac{r^{L} B_{m}^{L} \lVert t \rVert^{2L}}{L! N_{m}^{L}}\right) \right),
\end{aligned}
\end{equation*}
where $Q_4$ is a homogeneous polynomial of degree $4$ with bounded coefficients and the $P_{s,d,l}$ are homogeneous polynomials of degree $d + 2l + 4s$ whose coefficients are bounded uniformly uniformly by a function of $l$. 
\end{pro}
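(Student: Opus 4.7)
The plan is to derive the formula by combining Taylor expansions of both factors in the explicit formula \eqref{forExplicitCha} and collecting terms by their order in $N_m^{-1/2}$, uniformly for $\|t\| \leq A M^{1/2}$.

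First I would check that the arguments of the Bessel functions appearing in the product are uniformly small in this regime. For each non-principal $\chi$ and each inverse zero $\gamma_\chi$ with $\Im(\gamma_\chi)>0$, set
\[
z_{\chi,\gamma} := \frac{2}{\sqrt{N_m}}\left|\frac{\gamma_\chi}{\gamma_\chi - 1}\right|\left|\sum_{j=1}^r \chi(a_j)\, t_j\right|.
\]
The bound $|\gamma_\chi/(\gamma_\chi - 1)| \leq \sqrt{q}/(\sqrt{q}-1)$, Cauchy--Schwarz on the inner character sum, and Lemma \ref{formuleNm} giving $N_m \asymp \phi(m)M$ together yield $z_{\chi,\gamma} \ll \sqrt{r}\,\|t\|/\sqrt{\phi(m)M} \ll \phi(m)^{-1/2}$, which is small for $\deg(m)$ large. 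On this range the Taylor expansion $\log J_0(z) = -z^2/4 - z^4/64 - \sum_{n \geq 3}\tau_n z^{2n}$ converges absolutely with uniformly bounded coefficients $\tau_n$.

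The central computation is to identify the quadratic term of this expansion. Summing $-z_{\chi,\gamma}^2/4$ over all non-principal $\chi$ and $\Im(\gamma_\chi)>0$, and expanding $\bigl|\sum_j \chi(a_j) t_j\bigr|^2 = \sum_j t_j^2 + \sum_{j \ne k}\chi(a_j/a_k)t_j t_k$, the definition of $N_m$ and of $B_m(a_j,a_k)$ collapse this to exactly
\[
-\frac{t_1^2+\cdots+t_r^2}{2} \;-\; \frac{1}{N_m}\sum_{1 \leq j<k \leq r} B_m(a_j,a_k)\, t_j t_k.
\]
The quartic term ($n=2$) produces a homogeneous polynomial of degree $4$ in $t$ with coefficients of size $O(1/N_m)$ (since $\sum_{\chi,\gamma}|\gamma_\chi/(\gamma_\chi-1)|^4 \ll N_m$), which is the $Q_4(t)/N_m$ appearing in the statement; higher Bessel terms ($n \geq 3$) contribute $O(\|t\|^{2n}/N_m^{n-1})$ and will be absorbed into the final error. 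Simultaneously I would expand
\[
\mathcal{B}_{m;a_1,\dots,a_r}\!\bigl(t/\sqrt{N_m}\bigr) = \frac{1}{2}\sum_{d \geq 0}\frac{i^d}{d!}\,\frac{q^{d/2}+q^d}{(q-1)^d}\,\frac{1}{N_m^{d/2}}\Bigl(\sum_{j=1}^r C_m(a_j)\,t_j\Bigr)^{\!d},
\]
and verify that the $d=1$ and $d=2$ terms reproduce the explicit linear-in-$C_m$ and quadratic-in-$C_m$ corrections displayed in the proposition (the expansion $(\sum_j C_m(a_j)t_j)^2 = \sum_j C_m(a_j)^2 t_j^2 + 2\sum_{j<k} C_m(a_j)C_m(a_k) t_j t_k$ exactly produces the coefficients that combine with $B_m(a_j,a_k)$ to give the displayed cross-term).

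The last step is to factor $\exp(-\|t\|^2/2)$ out of $\prod J_0$, multiply by the $\mathcal{B}$-expansion, and expand the residual exponential via $e^x = \sum_{l<L} x^l/l! + O\bigl(|x|^L\,e^{|x|}/L!\bigr)$ applied to $x = -\frac{1}{N_m}\sum_{j<k} B_m(a_j,a_k) t_j t_k$. Corollary \ref{majorBm} combined with $\|t\| \leq A\sqrt{M}$ gives $|x| \ll A^2$, producing the stated tail $O\bigl(r^L B_m^L \|t\|^{2L}/(L!\,N_m^L)\bigr)$. Grouping the resulting products by a triple $(s,d,l)$, where $d$ indexes the $\mathcal{B}$-expansion exponent, $l$ is the power of the $B_m$-correction, and $s \in \{0,1\}$ counts the number of quartic Bessel ($n=2$) factors used, produces the explicit sum on the right-hand side; the constraint $d/2 + l + s \geq 3/2$ (equivalently $2l \geq 3 - 2s - d$) isolates exactly the terms not already written explicitly. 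The main obstacle will be the bookkeeping: verifying that all $d \geq 3$ contributions from $\mathcal{B}$, all $s \geq 2$ contributions from quartic Bessel factors, and all $n \geq 3$ contributions from higher Bessel terms are uniformly dominated by the stated error for $L$ large enough in terms of $A$, $r$, and $q$; and confirming the homogeneity degree $d+2l+4s$ claimed for each $P_{s,d,l}$ together with uniform $l$-dependent bounds on their coefficients, which reduces via orthogonality to comparing monomial character sums against powers of $N_m$.
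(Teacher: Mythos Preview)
Your proposal is correct and follows essentially the same route as the paper's proof: expand $\log J_0$ in a Taylor series to extract the quadratic term $-\tfrac{1}{2}\sum t_j^2 - \tfrac{1}{N_m}\sum_{j<k}B_m(a_j,a_k)t_jt_k$ and the quartic term $Q_4/N_m$, Taylor-expand $\mathcal{B}_{m;a_1,\dots,a_r}$ to order $d\le 2$, exponentiate, and organize the resulting product as a triple sum over $(s,d,l)$ with the tail controlled via Corollary~\ref{majorBm}. The only cosmetic difference is that the paper writes the $P_{s,d,l}$ down explicitly (so the coefficient bounds are immediate rather than requiring the orthogonality argument you sketch), and it absorbs the $n\ge 3$ Bessel contributions and the $d\ge 3$ terms of $\mathcal{B}$ directly as $O_r(M/\phi(m)^2)$ and $O_r(C_m^3/\phi(m)^{3/2})$ before exponentiating, rather than tracking them through the product.
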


\begin{proof}
From the explicit formula \eqref{forExplicitCha},we have 
\begin{equation*}
\begin{aligned}
\log \hat{\mu}_{m;a_1,\dots,a_r} \left( \frac{t_1}{\sqrt{N_{m}}}, \dots, \frac{t_r}{\sqrt{N_{m}}} \right) &= \log \mathcal{B}_{m;a_1,\dots,a_r}\left( \frac{t_1}{\sqrt{N_{m}}},\dots, \frac{t_r}{\sqrt{N_{m}}} \right) \\ 
&\quad+ \sum\limits_{\substack{\chi  \bmod m \\ \chi \ne \chi_{0} }}^{} \sum\limits_{\Im{\left( \gamma_{\chi}\right)} > 0 }^{} \log J_{0} \left( \frac{2}{\sqrt{N_{m}}} \left| \frac{\gamma_{\chi}}{\gamma_{\chi} - 1} \right| \left| \sum\limits_{j=1}^{r} \chi(a_j) t_j \right| \right).
\end{aligned}
\end{equation*}
By \cite[Lemma $2.8$]{fiorilliMartin}  we know that for $|s| \leq 1$
\begin{equation*}
\begin{aligned}
\log J_0(s) = - \sum\limits_{n=1}^{\infty} u_{2n} s^{2n},
\end{aligned}
\end{equation*}
where $u_2 = 1/4$ and $u_{2n} \ll \left( \frac{5}{12} \right)^{2n}$ for $n \geq 2$. Thus 
\begin{equation}
\label{contrib1}
\begin{aligned}
\log \hat{\mu}_{m;a_1,\dots,a_r} \left( \frac{t_1}{\sqrt{N_{m}}}, \dots, \frac{t_r}{\sqrt{N_{m}}} \right) &= \log \mathcal{B}_{m;a_1,\dots,a_r}\left( \frac{t_1}{\sqrt{N_{m}}},\dots, \frac{t_r}{\sqrt{N_{m}}} \right) \\
&\quad- \sum\limits_{n=1}^{\infty} \frac{u_{2n} 2^{2n}}{N_{m}^{n}}
\sum\limits_{\substack{\chi  \bmod m \\ \chi \ne \chi_{0} }}^{} \sum\limits_{\Im{\left( \gamma_{\chi}\right)} > 0 }^{} 
\left| \frac{\gamma_{\chi}}{\gamma_{\chi} - 1} \right|^{2n}
\left| \sum\limits_{j=1}^{r} \chi(a_j) t_j \right|^{2n}.
\end{aligned}
\end{equation}
 The contribution of the term $n = 1$ to the RHS of \eqref{contrib1} equals
\begin{equation*}
\begin{aligned}
- \frac{1}{N_{m}} \sum\limits_{\substack{\chi  \bmod m \\ \chi \ne \chi_{0} }}^{} \sum\limits_{\Im{\left( \gamma_{\chi}\right)} > 0 }^{} \left| \frac{\gamma_{\chi}}{\gamma_{\chi} - 1} \right|^{2} 
\sum\limits_{1 \leq j,k \leq r}^{} \chi(a_j) \overline{\chi}(a_k) t_j t_k &= - \frac{1}{2} \left( \sum\limits_{j=1}^{r} t_j^{2} \right) - \frac{1}{N_{m}} \sum\limits_{1 \leq j < k \leq r}^{} B_{m}(a_j,a_k) t_j t_k.
\end{aligned}
\end{equation*}
The term $n=2$ contributes $\frac{Q_4(t_1,\dots,t_r)}{N_{m}}$, where
\begin{equation*}
\begin{aligned}
Q_4(t_1,\dots,t_r) &: = - \frac{16 u_4}{N_{m}} \sum\limits_{\substack{\chi  \bmod m \\ \chi \ne \chi_{0} }}^{} \sum\limits_{\Im{\left( \gamma_{\chi}\right)} > 0 }^{} \left| \frac{\gamma_{\chi}}{\gamma_{\chi} - 1} \right|^{4}
\left| \sum\limits_{j=1}^{r} \chi(a_j) t_j \right|^{4} \\
&= - \frac{16 u_4}{N_{m}} \sum\limits_{1 \leq j_1,j_2,j_3,j_4 \leq r}^{} \sum\limits_{\substack{\chi  \bmod m \\ \chi \ne \chi_{0} }}^{} \sum\limits_{\Im{\left( \gamma_{\chi}\right)} > 0 }^{} \chi(a_{j_1}) \chi(a_{j_2}) \overline{\chi}(a_{j_3}) \overline{\chi}(a_{j_4}) \left| \frac{\gamma_{\chi}}{\gamma_{\chi} - 1}\right|^{4} t_{j_1} t_{j_2} t_{j_3} t_{j_4}.
\end{aligned}
\end{equation*}
Since
$\sum\limits_{\substack{\chi  \bmod m \\ \chi \ne \chi_{0} }}^{} \sum\limits_{\Im{\left( \gamma_{\chi}\right)} > 0 }^{} \left| \frac{\gamma_{\chi}}{\gamma_{\chi} - 1}\right|^{4} \leq \left( \sqrt{2} + 1 \right)^{2} N_{m}$,
then $Q_4(t_1,\dots,t_r)$ is a homogeneous polynomial of degree $4$ with bounded coefficients. Moreover, since
$ \sum\limits_{\substack{\chi  \bmod m \\ \chi \ne \chi_{0} }}^{} \sum\limits_{\Im{\left( \gamma_{\chi}\right)} > 0 }^{} \left| \frac{\gamma_{\chi}}{\gamma_{\chi} - 1}\right|^{2n} \leq 16^{n-1} N_{m}$ then the contribution of the terms $n \geq 3$ to the RHS of \eqref{contrib1} is $\ll \lVert t \rVert^{6} / N_{m}^{2} \ll_{r} \frac{M}{\phi(m)^{2}}$. Therefore  
\begin{equation}
\label{logTransFour}
\begin{aligned}
\log \hat{\mu}_{m;a_1,\dots,a_r} \left( \frac{t_1}{\sqrt{N_{m}}}, \dots, \frac{t_r}{\sqrt{N_{m}}} \right) &= \log \mathcal{B}_{m;a_1,\dots,a_r}\left( \frac{t_1}{\sqrt{N_{m}}},\dots, \frac{t_r}{\sqrt{N_{m}}} \right) \\
&\quad- \frac{1}{2} \left( \sum\limits_{j=1}^{r} t_j^{2} \right)
- \frac{1}{N_{m}} \sum\limits_{1 \leq j < k \leq r}^{} B_{m}(a_j,a_k) t_j t_k \\
&\quad+ \frac{Q_4(t_1,\dots,t_r)}{N_{m}} + O_{r} \left( \frac{M}{\phi(m)^{2}} \right). 
\end{aligned}
\end{equation}
In order to establish the asymptotic formula of $\hat{\mu}_{m;a_1,\dots,a_r} \left( \frac{t_1}{\sqrt{N_{m}}}, \dots, \frac{t_r}{\sqrt{N_{m}}} \right)$, we will focus on the exponentiation of the RHS of \eqref{logTransFour}.
We have 
\begin{equation*}
\begin{aligned}
\mathcal{B}_{m;a_1,\dots,a_r}\left( \frac{t_1}{\sqrt{N_{m}}},\dots, \frac{t_r}{\sqrt{N_{m}}} \right) &= \frac{1}{2} \left[  \exp{\left( i \frac{\sqrt{q}}{(q-1) \sqrt{N_{m}}} \sum\limits_{j=1}^{r} C_{m}(a_j) t_j \right)} \right. \\
&\left. \quad+ \exp{\left( i \frac{q}{(q-1) \sqrt{N_{m}}} \sum\limits_{j=1}^{r} C_{m}(a_j) t_j \right)} \right].  \\
\end{aligned}
\end{equation*}
Hence in our range of $t$, it follows that 
\begin{equation*}
\begin{aligned}
\mathcal{B}_{m;a_1,\dots,a_r}\left( \frac{t_1}{\sqrt{N_{m}}},\dots, \frac{t_r}{\sqrt{N_{m}}} \right) &= \frac{1}{2} 
\left( \sum\limits_{d=0}^{2}  \frac{1}{d! N_{m}^{d/2}} \frac{q^{d/2} + q^d}{(q-1)^{d}} \left( i \sum\limits_{j=1}^{r} C_{m}(a_j) t_j \right)^{d} \right) \\
&\quad+   O_{r} \left( \frac{C_{m}^{3}}{\phi(m)^{3/2}} \right). 
\end{aligned}
\end{equation*}
Moreover, we know that 
\begin{equation*}
\begin{aligned}
\exp{\left( Q_{4}(t_1,\dots,t_r)/N_{m} \right)} &= 1 + Q_{4}(t_1,\dots,t_r)/N_{m} + O_{r} \left( \frac{M^{2}}{\phi(m)^{2}} \right),  
\end{aligned}
\end{equation*}
and that
\begin{equation*}
\begin{aligned}
\exp{\left( - \frac{1}{N_{m}} \sum\limits_{1 \leq j < k \leq r}^{} B_{m}(a_j,a_k) t_j t_k \right)} &= \sum\limits_{l=0}^{\infty} \frac{\left( - \sum\limits_{1 \leq j < k \leq r}^{} B_{m}(a_j,a_k) t_j t_k \right)^{l}}{l! N_{m}^{l}}. 
\end{aligned}
\end{equation*}
Thus dividing $\hat{\mu}_{m;a_1,\dots,a_r} \left( \frac{t_1}{\sqrt{N_{m}}}, \dots, \frac{t_r}{\sqrt{N_{m}}} \right)$ by $\exp{ \left( - \frac{\sum\limits_{i=1}^{r} t_{i}^{2}}{2} \right)}$, we obtain
\begin{equation}
\label{summands}
\begin{aligned}
\frac{1}{2} \sum\limits_{s=0}^{1} \sum\limits_{d=0}^{2}  \sum\limits_{l=0}^{\infty}
\frac{Q_{4}(t_1,\dots,t_r)^{s}}{d! l! N_{m}^{d/2 + s + l}} \left( \frac{q^{d/2} + q^d}{(q - 1)^{d}} \right) \left( i \sum\limits_{j=1}^{r} C_{m}(a_j) t_j \right)^{d} &  \left( - \sum\limits_{1 \leq j < k \leq r}^{} B_{m}(a_j,a_k) t_j t_k \right)^{l} \\ &+ O_{r} \left( \frac{C_m^{3}}{\phi(m)^{3/2}} \right).
\end{aligned}
\end{equation}

We collect the summands above according to $D = d  + 2s + 2l.$ The contribution of the terms $0 \leq D \leq 2$ to the main term of \eqref{summands} is equal to 
\begin{equation*}
\begin{aligned}
&1 + \frac{i}{2 \sqrt{N_{m}}} \left( \frac{q + \sqrt{q}}{q-1}\right) \sum\limits_{j=1}^{r} C_{m}(a_j) t_j  - \frac{1}{4N_{m}} \left( \frac{q + q^2}{(q-1)^2}\right) \left( \sum\limits_{j=1}^{r} C_{m}(a_j) t_j \right)^{2}\\
&- \frac{1}{N_{m}} \left( \sum\limits_{1 \leq j < k \leq r}^{} B_{m}(a_k,a_k) t_j t_k \right) + \frac{Q_{4}(t_1,\dots,t_r)}{N_{m}}.
\end{aligned}
\end{equation*}
Let $P_{s,d,l}(t_1,\dots,t_r)$ be the homogeneous polynomial of degree $d + 2l + 4s$ defined by 
\begin{equation*}
P_{s,d,l} (t_1,\dots,t_r) =
\frac{1}{d! l!} C_{m}^{-d} B_{m}^{-l} Q_{4}(t_1,\dots,t_r)^{s} \left( i \sum\limits_{j=1}^{r} C_{m}(a_j) t_j \right)^{d} \left(- \sum\limits_{1 \leq j < k \leq r}^{} B_{m}(a_j,a_k) t_j t_k \right)^{l}. 
\end{equation*}
Then the contribution of the terms with $D \geq 3$ to \eqref{summands} is
\begin{equation*}
\sum\limits_{s=0}^{1} \sum\limits_{d=0}^{2}  \sum\limits_{\substack{l \geq 0 \\ 2l \geq 3 - 2s - d}}^{}  \frac{1}{2} \left( \frac{q^{d/2} + q^d}{(q - 1)^{d}}\right) \frac{C_{m}^{d} B_{m}^{l}}{N_{m}^{d/2 + l + s}} P_{s,d,l}(t_1,\dots,t_r).
\end{equation*}
Since $r$ is fixed and $s,d \leq 2$, then it is clear that the coefficients of $P_{s,d,l}$ are uniformly bounded by a function of $l$. Moreover, since $C_{m} = |m|^{o(1)}$ and $\frac{|m|}{\phi(m)} \ll  M$ (from \eqref{majorSomme2}), then 
\begin{equation*}
\begin{aligned}
\frac{1}{2} \left( \frac{q^{d/2} + q^{d}}{(q-1)^d} \right) \frac{C_{m}^{d} B_{m}^{l}}{N_{m}^{d/2 + l + s}} P_{s,d,l}(t_1,\dots,t_r) &\ll  \frac{r^{d/2 + l} C_{m}^{d} B_{m}^{l}}{d! l! N_{m}^{d/2 + l + s}} \lVert t \rVert^{d + 2l + 4s} \ll \frac{r^{l} B_{m}^{l} \lVert t \rVert^{2l} }{l! N_{m}^{l}}.
\end{aligned}
\end{equation*}
Furthermore, by using Corollary \ref{majorBm} we know that there exists $c > 0$ such that $B_{m} \leq c \phi(m)$. Thus for $\lVert t \rVert \leq A M^{1/2}$ we have  $r^{2} B_{m} \lVert t \rVert^{2} / (l N_{m}) \ll r^{2}  A^{2} / l.$ 
Hence for a suitably large constant $L(A) > 0$ (which also depends on $r$), we deduce that for $L \geq L(A)$ we have
\begin{equation*}
\sum\limits_{s=0}^{1}  \sum\limits_{d=0}^{2} \sum\limits_{\substack{l \geq L \\ 2l \geq 3 - 2s - d}}^{} \frac{1}{2} \left( \frac{q^{d/2} + q^d}{(q-1)^d} \right) \frac{C_{m}^{d} B_{m}^{l}}{N_{m}^{d/2 + l + s}} P_{s,d,l}(t_1,\dots,t_r) \ll  \frac{r^{L} B_{m}^{L} \lVert t \rVert^{2L}}{L! N_{m}^{L}}, 
\end{equation*}
which completes the proof.    
\end{proof}

For $s = (s_1,s_2,\dots,s_r) \in \mathbb{R}^{r}$, we define
\begin{equation*}
\begin{aligned}
\mathcal{L}_{m;a_1,\dots,a_r}(s_1,\dots,s_r) &:= \int_{x \in \mathbb{R}^{r}}^{} e^{s_1 x_1 + \dots + s_r x_r} {d}\mu_{m;a_1,\dots,a_r}(x_1,\dots,x_r),
\end{aligned}
\end{equation*}
if this integral converges. By using similar arguments to those used by Cha to obtain the explicit formula \eqref{forExplicitCha} of $\hat{\mu}_{m;a_1,\dots,a_r}$, 
it follows that under the assumption of LI,  $\mathcal{L}_{m;a_1,\dots,a_r}(s)$ exists for all $s \in \mathbb{R}^{r}$ and 
\begin{equation}
\label{similFourrier}
\begin{aligned}
\mathcal{L}_{m;a_1,\dots,a_r}(s) &= \mathcal{A}_{m;a_1,\dots,a_r}(s) \prod_{ \substack{\chi  \bmod m \\ \chi \ne \chi_{0} }}^{} \prod_{\Im{(\gamma_{\chi})} > 0}^{} I_{0} \left( 2 \left| \frac{\gamma_{\chi}}{\gamma_{\chi} - 1} \right| \left| \sum\limits_{j=1}^{r} \chi(a_j) s_j \right| \right),
\end{aligned}
\end{equation}
where $I_{0}(t) = \sum\limits_{n=0}^{\infty} \left( t/2 \right)^{2n} / n!^{2}$ is the modified Bessel function of order $0$ and 
\begin{equation*}
\begin{aligned}
\mathcal{A}_{m;a_1,\dots,a_r}(s) = \frac{1}{2} \left[ \exp{\left( - \frac{\sqrt{q}}{q - 1} \sum\limits_{j=1}^{r} C_{m}(a_j) s_j \right)} + \exp{\left( - \frac{q}{q-1} \sum\limits_{j=1}^{r} C_{m}(a_j) s_j \right)} \right]. 
\end{aligned}
\end{equation*}
The next step is to bound the tail of the measure $\mu_{m;a_1,\dots,a_r}$. To this end, we relate this tail with the Laplace Transform of $\mu_{m;a_1,\dots,a_r}$ defined above. This leads us to prove the following 

\begin{pro}
\label{tail}
Fix an integer $r \geq 2$, and let $m \in \mathcal{M}_{q}$ be of large degree $M.$ Assume LI. Then there exists $\alpha > 0$ such that for $ R \geq \sqrt{\phi(m) M}$ we have 
\begin{equation*}
\begin{aligned}
\mu_{m;a_1,\dots,a_r}(|x|_{\infty} > R) &\leq 2 r \exp{\left( - \frac{R^{2}}{\alpha \phi(m) M}  \left( 1 + O\left( \frac{\log M}{M} \right) \right) \right)}.
\end{aligned}
\end{equation*}
\end{pro}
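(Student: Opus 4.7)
The strategy is to reduce this multidimensional tail estimate to one-dimensional Chernoff-type bounds via a union bound, using the Laplace transform formula \eqref{similFourrier}. Since $\{|x|_\infty > R\} = \bigcup_{j=1}^r \{|x_j| > R\}$,
$$
\mu_{m;a_1,\dots,a_r}(\{|x|_\infty > R\}) \leq \sum_{j=1}^{r}\Bigl(\mu_{m;a_j}(\{x_j>R\})+\mu_{m;a_j}(\{x_j<-R\})\Bigr),
$$
where $\mu_{m;a_j}$ denotes the one-dimensional marginal of $\mu_{m;a_1,\dots,a_r}$. It therefore suffices to prove a one-sided tail bound of the desired shape for each $\mu_{m;a_j}$; the prefactor $2r$ in the conclusion comes from this step.

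For the one-sided bound I apply Markov's inequality: for any $s>0$,
$$
\mu_{m;a_j}(\{x_j>R\}) \leq e^{-sR}\,\mathcal{L}_{m;a_j}(s),
$$
where $\mathcal{L}_{m;a_j}(s)$ is the Laplace transform given by \eqref{similFourrier} specialized to $r=1$. The key ingredient is the elementary inequality $I_0(t)\leq \exp(t^2/4)$, which follows by term-by-term comparison of the series $I_0(t)=\sum_{n\geq 0}(t/2)^{2n}/(n!)^2$ and $\exp(t^2/4)=\sum_{n\geq 0}(t/2)^{2n}/n!$. Combining this with $|\chi(a_j)|=1$, the identity $\sum_{\chi\neq\chi_0}\sum_{\Im\gamma_\chi>0}|\gamma_\chi/(\gamma_\chi-1)|^2 = N_m/2$, and the trivial bound $|\mathcal{A}_{m;a_j}(s)|\leq \exp\bigl(\tfrac{q}{q-1}|C_m(a_j)|s\bigr)$, I obtain
$$
\mathcal{L}_{m;a_j}(s) \leq \exp\Bigl(\tfrac{q}{q-1}|C_m(a_j)|\,s + \tfrac{N_m}{2}\,s^2\Bigr).
$$

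The near-optimal choice $s=R/N_m$ then gives
$$
\mu_{m;a_j}(\{x_j>R\}) \leq \exp\Bigl(-\tfrac{R^2}{2N_m}+O\bigl(\tfrac{|C_m(a_j)|R}{N_m}\bigr)\Bigr),
$$
and a symmetric argument (replacing $s$ by $-s$) handles $\{x_j<-R\}$. Applying Lemma \ref{formuleNm}, which gives $N_m = \tfrac{q}{q-1}\phi(m)M\bigl(1+O(\log M/M)\bigr)$, and using the standing estimate $|C_m(a_j)|=|m|^{o(1)}$, the desired bound follows with $\alpha=2q/(q-1)$.

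The argument is essentially a standard sub-Gaussian Chernoff bound, so there is no real obstacle. The only subtle point is verifying that the linear-in-$s$ contribution from $\mathcal{A}_{m;a_j}$ together with the logarithmic error from Lemma \ref{formuleNm} are absorbed uniformly into the $\bigl(1+O(\log M/M)\bigr)$ correction in the exponent throughout the range $R\geq\sqrt{\phi(m)M}$. This is guaranteed by the crude estimate $|C_m(a_j)|=|m|^{o(1)}$, which grows much more slowly than $\sqrt{\phi(m)M}$, so $|C_m(a_j)|R/N_m$ is indeed of smaller order than $(\log M/M)\cdot R^2/N_m$ in the relevant range.
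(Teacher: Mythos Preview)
Your proof is correct and follows essentially the same approach as the paper: union bound plus Chernoff/Markov with the Laplace transform \eqref{similFourrier}, the inequality $I_0(t)\le\exp(t^2/4)$, and Lemma \ref{formuleNm}. The only cosmetic differences are that the paper chooses $s=R/(3\phi(m)M)$ rather than your optimal $s=R/N_m$ (hence does not pin down $\alpha$), and handles the two tails $\{x_j>R\}$ and $\{x_j<-R\}$ separately rather than uniformly via $|C_m(a_j)|$.
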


\begin{proof}
Notice that 
\begin{equation*}
\mu_{m;a_1,\dots,a_r}(|x|_{\infty} > R) \leq \sum\limits_{j=1}^{r} \mu_{m;a_1,\dots,a_r}(x_j > R) + \sum\limits_{j=1}^{r} \mu_{m;a_1,\dots,a_r}(x_j < - R).
\end{equation*}
Thus in order to prove this inequality, it is sufficient to prove that there exists $\alpha > 0$ such that $\mu_{m;a_1,\dots,a_r}(x_j > R)$ and $\mu_{m;a_1,\dots,a_r}(x_j < -R)$ are smaller than  $\exp{\left( - \frac{R^{2}}{\alpha \phi(m) M}  \left( 1 + O\left( \frac{\log M}{M} \right) \right) \right)}.$ \\
We know that
\begin{equation*}
\begin{aligned}
\mu_{m;a_1,\dots,a_r} \left( x_j > R\right) &= \int_{\substack{(x_1,\dots,x_r) \in \mathbb{R}^{r} \\ x_j > R}}^{} {d}\mu_{m;a_1,\dots,a_r}(x_1,\dots,x_r).
\end{aligned}
\end{equation*}
Let $s > 0$, then we get
\begin{equation*}
\begin{aligned}
\mu_{m;a_1,\dots,a_r}(x_j > R) &\leq \exp{(- s R)} \int_{(x_1,\dots,x_r) \in \mathbb{R}^{r}}^{} \exp{(s x_j)} {d}\mu_{m;a_1,\dots,a_r}(x_1,\dots,x_r).
\end{aligned}
\end{equation*}
Hence from  \eqref{similFourrier} we obtain 
\begin{equation*}
\begin{aligned}
\mu_{m;a_1,\dots,a_r}(x_j > R) &\leq \exp{(- s R)} \mathcal{A}_{m;a_1,\dots,a_r}(0,\dots,s,\dots,0) \\
& \quad \times \prod_{ \substack{\chi  \bmod m \\ \chi \ne \chi_{0} }}^{} \prod_{\Im{(\gamma_{\chi})} > 0}^{} I_{0} \left( 2 \left| \frac{\gamma_{\chi}}{\gamma_{\chi} - 1} \right| s \right).
\end{aligned}
\end{equation*}
Since $I_{0}(s) \leq \exp{\left(s^2/4 \right)}$ for all $s \in \mathbb{R}$, then we have
\begin{equation*}
\begin{aligned}
\prod_{ \substack{\chi  \bmod m \\ \chi \ne \chi_{0} }}^{} \prod_{\Im{(\gamma_{\chi})} > 0}^{} I_{0} \left( 2 \left| \frac{\gamma_{\chi}}{\gamma_{\chi} - 1} \right| s \right)  &\leq
\prod_{ \substack{\chi  \bmod m \\ \chi \ne \chi_{0} }}^{} \prod_{\Im{(\gamma_{\chi})} > 0}^{} \exp{\left( \left| \frac{\gamma_{\chi}}{\gamma_{\chi} - 1} \right|^{2} s^2 \right)}.
\end{aligned}
\end{equation*}
Thus 
\begin{equation*}
\begin{aligned}
\prod_{ \substack{\chi  \bmod m \\ \chi \ne \chi_{0} }}^{} \prod_{\Im{(\gamma_{\chi})} > 0}^{} I_{0} \left( 2 \left| \frac{\gamma_{\chi}}{\gamma_{\chi} - 1} \right| s \right) &\leq \exp{\left( s^2 \sum\limits_{\substack{\chi  \bmod m \\ \chi \ne \chi_{0} }}^{} \sum\limits_{\Im{\left( \gamma_{\chi}\right)} > 0 }^{} \left| \frac{\gamma_{\chi}}{\gamma_{\chi} - 1}\right|^{2} \right)}.
\end{aligned}
\end{equation*}
On the other hand, by Lemma \ref{formuleNm} we have 
\begin{equation*}
\begin{aligned}
\sum\limits_{\substack{\chi  \bmod m \\ \chi \ne \chi_{0} }}^{} \sum\limits_{\Im{\left( \gamma_{\chi}\right)} > 0 }^{} \left| \frac{\gamma_{\chi}}{\gamma_{\chi} - 1}\right|^{2} &=
\frac{q}{2 (q - 1)} \phi(m) M + O\left( \phi(m) \log M \right).
\end{aligned}
\end{equation*}
Then it follows that 
\begin{equation*}
\begin{aligned}
\prod_{ \substack{\chi  \bmod m \\ \chi \ne \chi_{0} }}^{} \prod_{\Im{(\gamma_{\chi})} > 0}^{} I_{0} \left( 2 \left| \frac{\gamma_{\chi}}{\gamma_{\chi} - 1} \right| s \right) &\leq \exp{\left( s^2 \frac{q}{2(q-1)} \phi(m) M \left( 1 + O\left( \frac{\log M}{M} \right) \right) \right)}.
\end{aligned}
\end{equation*}
We finally deduce that 
\begin{equation*}
\begin{aligned}
\mu_{m;a_1,\dots,a_r}(x_j > R) &\leq \exp{\left(- sR + \frac{q}{q-1} s \right)} \exp{\left( s^2 \frac{q}{2(q-1)} \phi(m) M \left( 1 + O\left( \frac{\log M}{M} \right) \right) \right)}.
\end{aligned}
\end{equation*}
So if we take $s = \frac{R}{3 \phi(m) M}$ then there exists $\alpha_{1} > 0$ such that $$\mu_{m;a_1,\dots,a_r}(x_j > R) \leq \exp{\left( - \frac{R^{2}}{\alpha_{1} \phi(m) M}  \left( 1 + O\left( \frac{\log M}{M} \right) \right) \right)}.$$
\\
To get an upper bound for $\mu_{m;a_1,\dots,a_r}(x_j < -R)$ we use similar arguments to obtain 
\begin{equation*}
\begin{aligned}
\mu_{m;a_1,\dots,a_r}(x_j < -R) &\leq \frac{1}{2} \left[ \exp{\left( -R s + \frac{q}{q-1} C_{m}(a_j) s \right)} + 
\exp{\left( -R s + \frac{\sqrt{q}}{q-1} C_{m}(a_j) s \right)} \right] \\
&\quad \times \exp{\left( s^2 \frac{q}{2(q-1)} \phi(m) M \left( 1 + O\left( \frac{\log M}{M} \right) \right) \right)}.
\end{aligned}
\end{equation*}
Then by using the fact that $C_{m}(a_j) = O_{\epsilon}\left( \left(q^M \right)^{\epsilon} \right)$ for any $\epsilon > 0$, and by taking $s = \frac{R}{3 \phi(m) M}$ it follows that there exists $\alpha_{2} > 0$ such that $\mu_{m;a_1,\dots,a_r}(x_j < -R) \leq \exp{\left( - \frac{R^{2}}{\alpha_{2} \phi(m) M}  \left( 1 + O\left( \frac{\log M}{M} \right) \right) \right)}.$ \\
Hence by taking $\alpha = \max(\alpha_{1},\alpha_{2})$ the result follows.
\end{proof}

\section{An asymptotic formula for the densities $\delta_{m;a_1,a_2}$}
\label{section6}The goal of this section is to derive an asymptotic formula for the densities when $r=2$ and prove Theorem \ref{asymDelta2}. 
We first adapt the proof of Proposition \ref{transFourrier} to establish the following key lemma. 
\begin{lem} 
\label{lemSimCase2}
For $t=(t_1,t_2)\in \mathbb{R}^2$ with $||t||\leq N_m^{1/4}$ we have
$$
\hat{\mu}_{m;a_1,a_2}
\left(\frac{t_1}{\sqrt{N_m}},\frac{t_2}{\sqrt{N_m}}\right)
=\exp\left(-\frac{t_1^2+t_2^2}{2}-\frac{B_m(a_1,a_2)}{N_m}t_1t_2\right)
F_{m;a_1,a_2}(t_1,t_2),$$
where
$$ F_{m;a_1,a_2}(t_1,t_2)=1 + \frac{i}{2 \sqrt{N_{m}}} \left( \frac{\sqrt{q} + q}{q-1} \right) \left(   C_{m}(a_1) t_1 + C_{m}(a_2) t_2 \right)
+ O\left(\frac{||t||^4}{N_m}+ \frac{||t||^2C_{m}(1)^2}{N_m}\right).$$
\end{lem}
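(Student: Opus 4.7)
\medskip

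The plan is to imitate the proof of Proposition \ref{transFourrier} in the special case $r=2$, but to track only the terms required for the sharper expansion stated in the lemma, allowing larger errors of order $\|t\|^4/N_m$. Specializing the explicit formula \eqref{forExplicitCha} to $r=2$ gives the factorization $\hat{\mu}_{m;a_1,a_2}(t_1/\sqrt{N_m},t_2/\sqrt{N_m}) = \mathcal{B}_{m;a_1,a_2}(t_1/\sqrt{N_m},t_2/\sqrt{N_m}) \cdot \prod_{\chi\neq\chi_0}\prod_{\Im\gamma_\chi>0} J_0\!\bigl(\tfrac{2}{\sqrt{N_m}}|\tfrac{\gamma_\chi}{\gamma_\chi-1}|\,|S_\chi|\bigr)$, where $S_\chi := \chi(a_1)t_1 + \chi(a_2)t_2$. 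For $\|t\|\le N_m^{1/4}$ one checks that the argument of every $J_0$ is bounded (in fact $\ll N_m^{-1/4}$), so the expansion $\log J_0(s)=-\sum_{n\ge 1} u_{2n}s^{2n}$ from \cite[Lemma~2.8]{fiorilliMartin} applies uniformly.

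The $n=1$ term of this expansion contributes
\[
-\frac{1}{N_m}\sum_{\chi\neq\chi_0}\sum_{\Im\gamma_\chi>0}\Bigl|\tfrac{\gamma_\chi}{\gamma_\chi-1}\Bigr|^2\bigl(t_1^2+t_2^2 + (\chi(a_1/a_2)+\chi(a_2/a_1))t_1 t_2\bigr),
\]
which, using $N_m = 2\sum|\gamma_\chi/(\gamma_\chi-1)|^2$ and the definition of $B_m(a_1,a_2)$, equals exactly $-(t_1^2+t_2^2)/2 - B_m(a_1,a_2)t_1t_2/N_m$. For $n\ge 2$, using $|S_\chi|^{2n}\le (2\|t\|^2)^n$ together with the bound $\sum|\gamma_\chi/(\gamma_\chi-1)|^{2n} \ll 16^{n-1}N_m$ already used in the proof of Proposition \ref{transFourrier}, the total contribution is $\ll \|t\|^4/N_m$. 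Exponentiating and using $e^{O(\|t\|^4/N_m)} = 1+O(\|t\|^4/N_m)$ (valid because $\|t\|^4/N_m\le 1$), we obtain
\[
\prod_{\chi,\gamma_\chi} J_0(\cdots) = \exp\!\Bigl(-\tfrac{t_1^2+t_2^2}{2}-\tfrac{B_m(a_1,a_2)}{N_m}t_1t_2\Bigr)\bigl(1+O(\|t\|^4/N_m)\bigr).
\]

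Next, for the prefactor we set $y=(C_m(a_1)t_1+C_m(a_2)t_2)/\sqrt{N_m}$; in our range $|y|\ll C_m(1)\|t\|/\sqrt{N_m} = o(1)$ since $C_m(1)=|m|^{o(1)}$. A second-order Taylor expansion of the two exponentials defining $\mathcal{B}_{m;a_1,a_2}$ yields
\[
\mathcal{B}_{m;a_1,a_2}\!\bigl(\tfrac{t_1}{\sqrt{N_m}},\tfrac{t_2}{\sqrt{N_m}}\bigr) = 1 + \frac{i}{2\sqrt{N_m}}\Bigl(\frac{\sqrt{q}+q}{q-1}\Bigr)\bigl(C_m(a_1)t_1+C_m(a_2)t_2\bigr) + O\!\Bigl(\frac{C_m(1)^2\|t\|^2}{N_m}\Bigr),
\]
using $|C_m(a_i)|\le C_m(1)$ to absorb the quadratic terms.

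Finally, multiplying the two factors, the cross-products between the $O(\|t\|^4/N_m)$ term from the Bessel product and the linear or quadratic terms from $\mathcal{B}_{m;a_1,a_2}$ are controlled by $C_m(1)\|t\|^5/N_m^{3/2}$ and $C_m(1)^2\|t\|^6/N_m^2$, both of which are dominated by the error terms already present (since $C_m(1)^2\|t\|^2/N_m = o(1)$ and $\|t\|^4/N_m\le 1$ in our range). This yields the claimed formula for $F_{m;a_1,a_2}$. The only mildly delicate point, which will receive explicit attention, is verifying that the product of the Bessel error and the prefactor error is indeed absorbed into $O(\|t\|^4/N_m + C_m(1)^2\|t\|^2/N_m)$; everything else is a direct application of the method of Proposition \ref{transFourrier}.
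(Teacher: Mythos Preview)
Your proof is correct and follows essentially the same approach as the paper: both use the explicit formula \eqref{forExplicitCha}, expand $\log J_0$ via \cite[Lemma~2.8]{fiorilliMartin} to isolate the $n=1$ contribution $-(t_1^2+t_2^2)/2 - B_m(a_1,a_2)t_1t_2/N_m$ and bound the $n\ge 2$ tail by $O(\|t\|^4/N_m)$, then Taylor-expand $\mathcal{B}_{m;a_1,a_2}$ to first order with error $O(C_m(1)^2\|t\|^2/N_m)$, and finally multiply. The only cosmetic difference is that the paper phrases the first step as an expansion of $\log\hat{\mu}_{m;a_1,a_2}$ (so that $\log\mathcal{B}$ appears) and leaves the exponentiation and cross-term absorption implicit in the final ``combining'' sentence, whereas you keep the two factors separate throughout and make the cross-term analysis explicit.
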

\begin{proof} For $||t||\leq N_m^{1/4}$ the explicit formula \eqref{forExplicitCha} implies that $\log\hat{\mu}_{m;a_1,a_2}
\left(t_1N_m^{-1/2},t_2N_m^{-1/2}\right)$ equals
\begin{equation}
\label{estimate61}
\begin{aligned}
&\log \mathcal{B}_{m;a_1,a_2}\left(\frac{t_1}{\sqrt{N_m}},\frac{t_2}{\sqrt{N_m}}\right) - \frac{1}{N_m}\sum_{\substack{\chi\neq\chi_0\\ \chi \bmod m}}\sum\limits_{\Im{\left( \gamma_{\chi} \right)} > 0}^{} \left| \frac{\gamma_{\chi}}{\gamma_{\chi} - 1} \right|^{2} |\chi(a_1)t_1+\chi(a_2)t_2|^2
+O\left(\frac{||t||^4}{N_m}\right)\\
=&\log \mathcal{B}_{m;a_1,a_2}\left(\frac{t_1}{\sqrt{N_m}},\frac{t_2}{\sqrt{N_m}}\right)
-\frac{t_1^2+t_2^2}{2}-\frac{B_m(a_1,a_2)}{N_m}
t_1t_2+O\left(\frac{||t||^4}{N_m}\right).
\end{aligned}
\end{equation}
On the other hand, we have 
\begin{equation}
\label{inter1case2}
\begin{aligned}
\mathcal{B}_{m;a_1,a_2}\left(\frac{t_1}{\sqrt{N_m}},\frac{t_2}{\sqrt{N_m}}\right) &= \frac{1}{2} \left[ \exp{\left(i \frac{\sqrt{q}}{q-1} \left( C_{m}(a_1)\frac{t_1}{\sqrt{N_m}} +C_{m}(a_2)\frac{t_2}{\sqrt{N_m}} \right) \right)} \right.\\
&\left. \quad+ \exp{\left(i \frac{q}{q-1} \left( C_{m}(a_1) \frac{t_1}{\sqrt{N_m}} +C_{m}(a_2) \frac{t_2}{\sqrt{N_m}} \right) \right)} \right] \\
&= 1 + \frac{i}{2 \sqrt{N_m}} \left( \frac{\sqrt{q}+q}{q-1} \right) \left( C_{m}(a_1)t_1+C_{m}(a_2)t_2 \right) + O\left(\frac{||t||^2C_{m}(1)^2}{N_m}\right).
\end{aligned}
\end{equation}
Thus, by combining \eqref{estimate61} and \eqref{inter1case2} our lemma follows.
\end{proof}
To prove Theorem \ref{asymDelta2}, we need the following lemma of \cite{lamzouri}. 
\begin{lem} [{\cite[Lemma $8.2$]{lamzouri} }]
\label{lemInter}
Let $\rho$ be a real number such that $|\rho|\leq 1/2$, $n_1$, $n_2$ are  fixed non-negative integers and $S$ a large positive number. Then
\begin{equation*}
\begin{aligned}
&\int_{||t||\leq S}e^{i(t_1x_1+t_2x_2)}t_1^{n_1}t_2^{n_2}\exp\left(
-\frac{t_1^2+t_2^2+2\rho t_1t_2}{2}\right)dt_1dt_2\\
&= \frac{1}{i^{n_1+n_2}}\frac{\partial^{n_1+n_2}
\Phi_{\rho}(x_1,x_2)}{\partial x_1^{n_1}\partial x_2^{n_2}}      + O\left(\exp\left(-\frac{S^2}{8}\right)\right),
\end{aligned}
\end{equation*}
where
$$\Phi_{\rho}(x_1,x_2)= \frac{2\pi}{\sqrt{1-\rho^2}}
\exp\left(-\frac{1}{2(1-\rho^2)}(x_1^2+x_2^2-2\rho x_1x_2)\right).
$$
\end{lem}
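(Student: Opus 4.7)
The plan is to reduce the claim to a standard Gaussian Fourier identity and then bound the tail contribution outside $\lVert t\rVert\leq S$. My approach has three ingredients: (i) evaluate the full integral over $\mathbb{R}^2$ as a known Gaussian Fourier transform, (ii) differentiate under the integral sign to produce the polynomial factor $t_1^{n_1}t_2^{n_2}$, and (iii) use the eigenvalue bound on the quadratic form $Q(t)=t_1^2+t_2^2+2\rho t_1t_2$ to control the tail.

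First, I would recognise that $\exp(-Q(t)/2)$ is (up to normalisation) the characteristic function of a centred bivariate Gaussian with covariance matrix $\Sigma=\bigl(\begin{smallmatrix}1&\rho\\\rho&1\end{smallmatrix}\bigr)$. Since $\det\Sigma=1-\rho^2$ and $\Sigma^{-1}=\frac{1}{1-\rho^2}\bigl(\begin{smallmatrix}1&-\rho\\-\rho&1\end{smallmatrix}\bigr)$, Fourier inversion for Gaussians yields the baseline identity
$$\int_{\mathbb{R}^2} e^{i(t_1x_1+t_2x_2)}\exp\!\left(-\frac{t_1^2+t_2^2+2\rho t_1t_2}{2}\right)dt_1\,dt_2 = \frac{2\pi}{\sqrt{1-\rho^2}}\exp\!\left(-\frac{x_1^2+x_2^2-2\rho x_1x_2}{2(1-\rho^2)}\right)=\Phi_\rho(x_1,x_2).$$
One can make this self-contained by writing $Q(t)=\tfrac{1+\rho}{2}(t_1+t_2)^2+\tfrac{1-\rho}{2}(t_1-t_2)^2$, performing the orthogonal change of variables $u=(t_1+t_2)/\sqrt 2$, $v=(t_1-t_2)/\sqrt 2$, and applying the one-dimensional identity $\int_\mathbb{R}e^{i\xi u}e^{-au^2/2}du=\sqrt{2\pi/a}\,e^{-\xi^2/(2a)}$ in each variable.

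Second, to introduce the polynomial factor, I differentiate the baseline identity $n_1$ times in $x_1$ and $n_2$ times in $x_2$. Since $\partial_{x_j}e^{i(t_1x_1+t_2x_2)}=it_je^{i(t_1x_1+t_2x_2)}$ and the Gaussian decay in $t$ dominates all polynomial weights uniformly on compact $x$-sets, differentiation under the integral sign is legitimate and produces
$$\int_{\mathbb{R}^2} i^{n_1+n_2}t_1^{n_1}t_2^{n_2}\,e^{i(t_1x_1+t_2x_2)}\exp(-Q(t)/2)\,dt = \frac{\partial^{n_1+n_2}\Phi_\rho}{\partial x_1^{n_1}\partial x_2^{n_2}}(x_1,x_2).$$
Dividing by $i^{n_1+n_2}$ gives precisely the main term stated in the lemma.

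Third, for the truncation error I exploit the eigenvalue structure of $Q$: its eigenvalues are $1\pm\rho$, so the hypothesis $|\rho|\leq 1/2$ gives $Q(t)\geq (1-|\rho|)\lVert t\rVert^2\geq\tfrac12\lVert t\rVert^2$. Therefore
$$\left|\int_{\lVert t\rVert>S}t_1^{n_1}t_2^{n_2}\,e^{i(t_1x_1+t_2x_2)}\exp(-Q(t)/2)\,dt\right| \leq \int_{\lVert t\rVert>S}\lVert t\rVert^{n_1+n_2}\exp(-\lVert t\rVert^2/4)\,dt = 2\pi\int_S^\infty r^{n_1+n_2+1}e^{-r^2/4}\,dr,$$
and standard integration by parts gives $\int_S^\infty r^{k+1}e^{-r^2/4}dr=O_k(S^k e^{-S^2/4})$. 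Since $S^{n_1+n_2}e^{-S^2/8}$ is bounded for $S$ large, this is $O(e^{-S^2/8})$ with constants depending only on $n_1,n_2$. Subtracting this tail bound from the $\mathbb{R}^2$-identity produced in steps one and two yields the claim.

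The argument is essentially routine, so no step is genuinely hard; the only place to be careful is the exponent in the error term, and this is precisely where the uniform lower bound $Q(t)\geq\lVert t\rVert^2/2$ coming from $|\rho|\leq 1/2$ is used. Any fixed $|\rho|<1$ would work, with the $1/8$ in the exponent replaced by a constant depending on $\rho$.
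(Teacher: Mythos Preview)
Your proof is correct and follows the standard route: identify the full $\mathbb{R}^2$-integral as a bivariate Gaussian Fourier transform, differentiate under the integral to produce the monomial weight, and bound the tail using the eigenvalue lower bound $Q(t)\geq\tfrac12\lVert t\rVert^2$ coming from $|\rho|\leq 1/2$. The paper does not supply its own proof of this lemma; it simply quotes it from \cite[Lemma~8.2]{lamzouri}, so there is nothing to compare against beyond noting that your argument is exactly the kind of direct verification one would expect for such a citation.
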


\begin{proof}[Proof of Theorem \ref{asymDelta2}]
Write $\mu_{m}=\mu_{m;a_1,a_2}$ and let $R=\sqrt{N_m}M$, then we have from Proposition \ref{tail} that 
$$ \delta_{m;a_1,a_2}=\int_{-R<y_2<y_1<R}d\mu_m(y_1,y_2)+ O\left(\exp\left(-\frac{M^2}{\beta}\right)\right),$$
for some constant $\beta > 0$.
By applying the Fourier inversion formula to the measure $\mu_m$ we get
\begin{equation}
\label{densityCase21}
 \delta_{m;a_1,a_2}= \frac{1}{(2\pi)^2}\int_{-R<y_2<y_1<R}\int_{s\in \mathbb{R}^2}e^{i(s_1y_1+s_2y_2)}\hat{\mu}_m(s_1,s_2)d{\bf s}d{\bf y}+ O\left(\exp\left(-\frac{M^2}{\beta}\right)\right).
\end{equation}
Furthermore, by using Proposition \ref{majorMesur} with $\epsilon=M N_m^{-1/2}$ we obtain that
$$\int_{s\in \mathbb{R}^2}e^{i(s_1y_1+s_2y_2)}\hat{\mu}_m(s_1,s_2)d{\bf s}= \int_{||s||\leq \epsilon }e^{i(s_1y_1+s_2y_2)}\hat{\mu}_m(s_1,s_2)d{\bf s} +O\left(\exp\left(-c M^2\right)\right),
$$
for some constant $c>0$.
Inserting this estimate in \eqref{densityCase21}, and making the change of variables $t_j=\sqrt{N_m}s_j$ and $x_j=y_j/\sqrt{N_m}$ for $j=1,2$, we deduce from Lemma \ref{lemSimCase2} that
\begin{equation}
\label{estimate64}
\begin{aligned}
\delta_{m;a_1,a_2}&= \frac{1}{(2\pi)^2}
\int_{-M<x_2<x_1<M}\int_{||t||<M}e^{i(t_1x_1+t_2x_2)}\hat{\mu}_m\left(\frac{t_1}{\sqrt{N_m}},
\frac{t_2}{\sqrt{N_m}}\right)d{\bf t}d{\bf x} \\
&\quad+O\left(\exp\left(-M^{3/2}\right)\right).\\
&= I_0+ \frac{i (\sqrt{q} + q)}{2(q-1)} \frac{C_{m}(a_1)}{\sqrt{N_m}}I_1+
\frac{i (\sqrt{q} + q)}{2(q-1)} \frac{C_{m}(a_2)}{\sqrt{N_m}}I_2 +O\left(\frac{C_{m}(1)^2M^2}{N_m}\right),
\end{aligned}
\end{equation}
where
$$ I_0= \frac{1}{(2\pi)^2}
\int_{-M<x_2<x_1<M}\int_{||t||<M}e^{i(t_1x_1+t_2x_2)}
\exp\left(-\frac{t_1^2+t_2^2}{2}-\frac{B_m(a_1,a_2)}{N_m}t_1t_2\right)
d{\bf t}d{\bf x},$$
and
$$ I_j= \frac{1}{(2\pi)^2}
\int_{-M<x_2<x_1<M}\int_{||t||<M}e^{i(t_1x_1+t_2x_2)}
t_j\exp\left(-\frac{t_1^2+t_2^2}{2}-\frac{B_m(a_1,a_2)}{N_m}t_1t_2\right)
d{\bf t}d{\bf x},$$
for $j=1,2$. We shall first evaluate $I_0$. Let $\rho= B_m(a_1,a_2)/N_m$. Then Corollary \ref{majorBm} implies that $|\rho|\leq 1/2$ for $M$ large enough. Hence Lemma \ref{lemInter} gives
\begin{equation*}
\begin{aligned}
I_0&= \frac{1}{2\pi\sqrt{1-\rho^2}}
\int_{-M<x_2<x_1<M}\exp\left(-\frac{1}{2(1-\rho^2)}(x_1^2+x_2^2-2\rho x_1x_2)\right)dx_1dx_2\\
&\quad+ O\left(\exp\left(-\frac{M^2}{10}\right)\right).\\
\end{aligned}
\end{equation*}
The integral on the RHS of the last estimate equals
$$\frac{1}{2\pi\sqrt{1-\rho^2}}
\int_{x_1>x_2}\exp\left(-\frac{1}{2(1-\rho^2)}(x_1^2+x_2^2-2\rho x_1x_2)\right)dx_1dx_2 + O\left(\exp\left(-\frac{M^2}{10}\right)\right).$$
Therefore, using that the integrand is symmetric in $x_1$ and $x_2$, along with the fact that
$$ \frac{1}{2\pi\sqrt{1-\rho^2}}
\int_{-\infty}^{\infty}\int_{-\infty}^{\infty}\exp\left(-\frac{1}{2(1-\rho^2)}(x_1^2+x_2^2-2\rho x_1x_2)\right)dx_1dx_2= 1, $$ we obtain that
\begin{equation}
\label{estimate65}
 I_0= \frac{1}{2}+  O\left(\exp\left(-\frac{M^2}{10}\right)\right).
\end{equation}
Using similar ideas along with Lemma \ref{lemInter} yields
\begin{equation*}
\begin{aligned}
 I_1&= \frac{1}{(2\pi)^2 i}\int_{x_1>x_2}\frac{\partial \Phi_{\rho}(x_1,x_2)}{\partial x_1}dx_1dx_2+O\left(\exp\left(-\frac{M^2}{10}\right)\right)\\
 &=-\frac{1}{(2\pi)^2 i}\int_{-\infty}^{\infty} \Phi_{\rho}(x_2,x_2)dx_2+O\left(\exp\left(-\frac{M^2}{10}\right)\right) .\\
 \end{aligned}
 \end{equation*}
 Furthermore, we know that
 $$ \int_{-\infty}^{\infty} \Phi_{\rho}(y,y)dy= \frac{2\pi}{\sqrt{1-\rho^2}}\int_{-\infty}^{\infty} \exp\left(-\frac{y^2}{2}\left(\frac{2}{1+\rho}\right)\right)dy= \frac{2\pi^{3/2}}{\sqrt{1-\rho}}.$$
Since $2(1-\rho)= V_m(a_1,a_2)/N_m$, then upon combining the above estimates we obtain that 
\begin{equation}
I_1= - \frac{\sqrt{N_m}}{i\sqrt{2\pi V_m(a_1,a_2)}} + O\left(\exp\left(-\frac{M^2}{10}\right)\right).
\end{equation}
Similarly, it is easy to check that
\begin{equation}
\label{estimate67}
I_2= \frac{\sqrt{N_m}}{i\sqrt{2\pi V_m(a_1,a_2)}} + O\left(\exp\left(-\frac{M^2}{10}\right)\right).
\end{equation}
Finally, inserting the estimates \eqref{estimate65}-\eqref{estimate67} into  \eqref{estimate64}, and using the fact that $N_m \sim \frac{q}{q-1} \phi(m) M \log_q |m|$ the desired result follows.
\end{proof}

\section{An asymptotic formula for the densities $\delta_{m;a_1,\dots,a_r}$ for $r \geq 3$}
\label{section7}In this section, we will prove an asymptotic formula for $\delta_{m;a_1,\dots,a_r}$ (Theorem \ref{forAsymDelta3}) as well as Theorem \ref{genra212}. We have already found a bound for the tail of the measure $\mu_{m;a_1,\dots,a_r}$ in Proposition \ref{tail}. We also found a formula for the Fourier transform $\hat{\mu}_{m;a_1,\dots,a_r}$ in Proposition \ref{transFourrier}. Thus following the strategy used to obtain the asymptotic formula when $r=2$ leads us to prove Theorem \ref{forAsymDelta3}.

\begin{proof}[Proof of Theorem \ref{forAsymDelta3}]
First we shorten the notations using $\delta_{m}$ and $\mu_{m}$ to refer to $\delta_{m;a_1,\dots,a_r}$ and $\mu_{m;a_1,\dots,a_r}$ respectively. 
We have 
\begin{equation*}
\begin{aligned}
\delta_{m} &= \int_{\substack{y_1 >\dots>y_r \\ \left| y \right|_{\infty} \leq R}}^{} {d} \mu_{m}(y_1,\dots,y_r) 
+ \int_{\substack{y_1 >\dots>y_r \\ \left| y \right|_{\infty} > R}}^{} {d} \mu_{m}(y_1,\dots,y_r).
\end{aligned}
\end{equation*}
Thus from Proposition \ref{tail} it follows that
\begin{equation*}
\begin{aligned}
\mu_{m;a_1,\dots,a_r}( \left| y \right|_{\infty} > R) &\leq 
2r \exp{\left( \frac{- R^{2}}{\alpha \phi(m) M} \left( 1 + O\left( \frac{\log M}{M} \right) \right) \right)}.
\end{aligned}
\end{equation*}
In particular we take $R = \sqrt{N_{m}} M$, and hence there exists $\beta > 0$ such that 
\begin{equation*}
\begin{aligned}
\delta_{m} &=  \int_{\substack{y_1 >\dots>y_r \\ \left| y \right|_{\infty} \leq R}}^{} {d} \mu_{m}(y_1,\dots,y_r) 
+ O_{r} \left( \exp{\left( \frac{- M^{2}}{\beta} \right)} \right).
\end{aligned}
\end{equation*}
By applying the Fourier inversion formula to $\mu_{m}$ we obtain
\begin{equation*}
\begin{aligned}
\int_{\substack{y_1 >\dots>y_r \\ \left| y \right|_{\infty} \leq R}}^{} {d} \mu_{m}(y_1,\dots,y_r)  &= (2 \pi)^{-r} \int_{\substack{y_1 >\dots>y_r \\ \left| y \right|_{\infty} \leq R}}^{} \int_{s \in \mathbb{R}^{r}}^{} e^{i (s_1 y_1 +\dots+ s_r y_r)} \hat{\mu}_{m} (s_1,\dots,s_r) {d}s {d}y.  
\end{aligned}
\end{equation*}
Now, let $A = A(r) \geq r$ be a suitably large constant. Therefore, we obtain by using Proposition \ref{majorMesur} with $\epsilon = A (N_m)^{-1/2} M^{1/2}$ that 
\begin{equation*}
\begin{aligned}
  \int_{s \in \mathbb{R}^{r}}^{} e^{i (s_1 y_1 +\dots+ s_r y_r)} \hat{\mu}_{m} (s_1,\dots,s_r) {d}s  
  = \int_{\lVert s \rVert \leq \epsilon}^{} e^{i (s_1 y_1 +\dots+ s_r y_r)} \hat{\mu}_{m} (s_1,\dots,s_r) {d}s  
+ O\left( \exp{ \left( - 2 A M \right)} \right).
\end{aligned}
\end{equation*}
Since we know that $R^r \exp{\left(-2 A M \right)} \ll \exp{\left( - A M \right)}$, then it follows that 
\begin{equation*}
\begin{aligned}
\delta_{m} &= (2 \pi)^{-r} \int_{\substack{y_1 >\dots>y_r \\ \left| y \right|_{\infty} \leq R}}^{} \int_{\lVert s \rVert \leq \epsilon}^{} e^{i (s_1 y_1 +\dots+ s_r y_r)} \hat{\mu}_{m} (s_1,\dots,s_r) {d}s {d}y + O\left( \exp{ \left( - A M \right)} \right).
\end{aligned}
\end{equation*}
By making the following change of variables $t_j = \sqrt{N_{m}} s_j$ and $x_j = \frac{y_j}{\sqrt{N_{m}}}$, we deduce that 
\begin{equation*}
\begin{aligned}
\delta_{m} &= (2 \pi)^{-r} \int_{\substack{x_1 >\dots>x_r \\ \left| x \right|_{\infty} \leq M}}^{} \int_{\lVert t \rVert \leq A M^{1/2}}^{} e^{i (t_1 x_1 +\dots+ t_r x_r)} \hat{\mu}_{m} \left(\frac{t_1}{\sqrt{N_{m}}},\dots,\frac{t_r}{\sqrt{N_{m}}} \right) {d}t {d}x  + O\left( \exp{\left(- A M \right)}\right).
\end{aligned}
\end{equation*} 

We take $L = L(A) \geq 2 r$ a suitably large constant. Hence by using Proposition \ref{transFourrier} we have 
\begin{equation}
\label{A1}
\begin{aligned}
\delta_{m} &= J_0 + \frac{i}{2 \sqrt{N_{m}}} \left( \frac{q + \sqrt{q}}{q - 1}\right) \sum\limits_{j=1}^{r} C_{m}(a_j) G_{j} - \frac{1}{4 N_m} \left( \frac{q+q^2}{(q-1)^2} \right) \sum\limits_{j=1}^{r} C_m(a_j)^2 R_j \\
& \quad- \frac{1}{N_{m}} \sum\limits_{1 \leq j < k \leq r}^{} \left[ B_{m}(a_j,a_k) + \frac{1}{2} \left(\frac{q + q^2}{(q-1)^2}\right) C_{m}(a_j) C_{m}(a_k) \right] S_{j,k} \\
& \quad+ \frac{T_0}{N_{m}}  + \sum\limits_{s=0}^{1} \sum\limits_{d=0}^{2} \sum\limits_{\substack{ 0 \leq l \leq L \\ 2l \geq 3 - 2s - d}}^{} \frac{1}{2} \left( \frac{q^{d/2} + q^d}{(q-1)^{d}} \right) \frac{C_{m}^{d} B_{m}^{l}}{N_{m}^{d/2 + s + l}} Y_{s,d,l}  + E_{1},
\end{aligned}
\end{equation}
where $E_1 \ll \frac{M^r B_m^{L}}{N_m^{L}}$ and 
$$J_0 = (2 \pi)^{-r} \int_{\substack{x_1 >\dots>x_r \\ \left| x \right|_{\infty} \leq M}}^{} \int_{\lVert t \rVert \leq A M^{1/2}}^{} e^{i (t_1 x_1 +\dots+ t_r x_r)} \exp{\left( - \frac{t_1^{2} + \dots + t_r^{2}}{2} \right)} {d}t {d}x ,$$
$$ G_{j} = (2 \pi)^{-r} \int_{\substack{x_1 >\dots>x_r \\ \left| x \right|_{\infty} \leq M}}^{} \int_{\lVert t \rVert \leq A M^{1/2}}^{} t_j e^{i (t_1 x_1 +\dots+ t_r x_r)} \exp{\left( - \frac{t_1^{2} + \dots + t_r^{2}}{2} \right)} {d}t {d}x ,$$
$$
R_{j} = (2 \pi)^{-r} \int_{\substack{x_1 >\dots>x_r \\ \left| x \right|_{\infty} \leq M}}^{} \int_{\lVert t \rVert \leq A M^{1/2}}^{} t_j^{2} e^{i (t_1 x_1 +\dots+ t_r x_r)} \exp{\left( - \frac{t_1^{2} + \dots + t_r^{2}}{2} \right)} {d}t {d}x ,
$$
$$ S_{j,k} = (2 \pi)^{-r} \int_{\substack{x_1 >\dots>x_r \\ \left| x \right|_{\infty} \leq M}}^{} \int_{\lVert t \rVert \leq A M^{1/2}}^{} t_j t_k e^{i (t_1 x_1 +\dots+ t_r x_r)} \exp{\left( - \frac{t_1^{2} + \dots + t_r^{2}}{2} \right)} {d}t {d}x ,$$
$$
T_0 = (2 \pi)^{-r} \int_{\substack{x_1 >\dots>x_r \\ \left| x \right|_{\infty} \leq M}}^{} \int_{\lVert t \rVert \leq A M^{1/2}}^{} e^{i (t_1 x_1 +\dots+ t_r x_r)} \exp{\left( - \frac{t_1^{2} + \dots + t_r^{2}}{2} \right)} Q_{4}(t_1,\dots,t_r) {d}t {d}x,
$$
and 
$$
Y_{s,d,l} = (2 \pi)^{-r} \int_{\substack{x_1 >\dots>x_r \\ \left| x \right|_{\infty} \leq M}}^{} \int_{\lVert t \rVert \leq A M^{1/2}}^{}  e^{i (t_1 x_1 +\dots+ t_r x_r)} \exp{\left( - \frac{t_1^{2} + \dots + t_r^{2}}{2} \right)} P_{s,d,l}(t_1,\dots,t_r) {d}t {d}x .
$$
Furthermore, by using \cite[Lemma $4.3$]{lamzouri} we have 
\begin{equation}
\label{A2}
\begin{aligned}
& \frac{T_0}{N_m} + \sum\limits_{s=0}^{1} \sum\limits_{d=0}^{2} \sum\limits_{\substack{ 0 \leq l \leq L \\ 2l \geq 3 - 2s - d}}^{} \frac{1}{2} \left( \frac{q^{d/2} + q^d}{(q-1)^{d}} \right) \frac{C_{m}^{d} B_{m}^{l}}{N_{m}^{d/2 + s + l}} Y_{s,d,l} + E_1 \ll_r \frac{1}{N_m} + \frac{C_m B_m}{N_m^{3/2}} + \frac{B_m^2}{N_m^2}.
\end{aligned}
\end{equation}
Moreover, by \cite[Lemma $4.2$]{lamzouri} it follows that 
\begin{equation}
\begin{aligned}
\label{A3}
J_{0} &= \frac{1}{(2 \pi)^{r/2}}
 \int_{\substack{x_1 >\dots>x_r \\ \left| x \right|_{\infty} \leq M}}^{} \exp{\left( - \frac{x_1^2 +\dots+x_r^2}{2} \right)} {d}x + O\left( \exp{\left( - A M \right)}\right) \\
 &= \frac{1}{r! (2 \pi)^{r/2}} \int_{x \in \mathbb{R}^{r}}^{} \exp{\left( - \frac{x_1^{2} + \dots + x_r^{2}}{2} \right)} {d}x + O\left( \exp{\left( - A M \right)}\right) = \frac{1}{r!} + O\left( \exp{\left( - A M \right)}\right). 
\end{aligned}
\end{equation}
By using the same argument, we can see that  for all $1 \leq j \leq r$ we have  
\begin{equation}
\label{A4}
\begin{aligned}
G_j
&=  i \alpha_j(r) + O\left( \exp{\left( - A M \right)}\right), 
\end{aligned}
\end{equation}
and that for all $1 \leq j \leq r$  we have
\begin{equation}
\label{A5}
\begin{aligned}
R_j
&=  - \lambda_j(r) + O\left( \exp{\left( - A M \right)}\right). 
\end{aligned}
\end{equation}
For all $1 \leq j < k \leq r$ we deduce by analogy that
\begin{equation}
\label{A6}
\begin{aligned}
S_{j,k} &=  - \beta_{j,k}(r) + O\left( \exp{\left( - A M \right)}\right). 
\end{aligned}
\end{equation}
Hence, by combining the estimates \eqref{A1}-\eqref{A6} our theorem follows.
\end{proof}
We end this section by proving Theorem \ref{genra212}.
\begin{proof}[Proof of Theorem \ref{genra212}]
Let $M = \deg(m)$ and $S_{m}$ be the set of pairs $(a,b) \in \mathcal{A}_{2}(m)$ such that $\left| B_{m}(a,b) \right| \geq \sqrt{\phi(m)}$. Then by Theorem \ref{FirstMomBm} we have 
\begin{equation*}
\begin{aligned}
\left| S_{m} \right| \sqrt{\phi(m)} &\leq \sum\limits_{(a,b) \in \mathcal{A}_{2}(m)}^{} \left| B_m(a,b) \right| \ll \phi(m)^{2} M. 
\end{aligned}
\end{equation*}
Hence
\begin{equation*}
\begin{aligned}
\left| S_m \right| &\ll \phi(m)^{3/2} M.
\end{aligned}
\end{equation*}
Let $\Omega_{r}(m)$ be the set of $r$-tuples $(a_1,\dots,a_r) \in \mathcal{A}_{r}(m)$ such that there exist $1 \leq i \ne j \leq r$ with $(a_i,a_j) \in S_m$, hence  
\begin{equation*}
\begin{aligned}
\left| \Omega_r(m) \right| &\ll_{r} \phi(m)^{r- \frac{1}{2}} M.    
\end{aligned}
\end{equation*} 
Since $\left| \mathcal{A}_{r}(m) \right| =  \phi(m)^r + O_{r} (\phi(m)^{r-1})$, it follows that $\left| \Omega_r(m) \right| = o \left(\left| \mathcal{A}_{r}(m) \right| \right).$ 
On the other hand, if $(a_1,\dots,a_r) \in \mathcal{A}_{r}(m) \setminus{\Omega_r(m)}$ then for all $1 \leq i < j \leq r$ we have  $\left| B_{m}(a_i,a_j) \right| \leq \sqrt{\phi(m)}$, therefore we deduce from Corollary \ref{forAsymCor} that
\begin{equation*}
\begin{aligned}
\delta_{m;a_1,\dots,a_r} &= \frac{1}{r!} - \frac{q + \sqrt{q}}{2 \sqrt{N_m} (q-1)} \sum\limits_{j=1}^{r} \alpha_j(r) C_{m}(a_j) + O_{r} \left( \frac{1}{\sqrt{N_m M}} \right).
\end{aligned}
\end{equation*}
Thus for all $r$-tuples $(a_1,\dots,a_r), (b_1,\dots,b_r) \in \mathcal{A}_{r}(m) \setminus{\Omega_r(m)}$ we obtain
\begin{equation*}
\begin{aligned}
\delta_{m;a_1,\dots,a_r} - \delta_{m;b_1,\dots,b_r} &= \frac{q + \sqrt{q}}{2 \sqrt{N_m} (q-1)} \sum\limits_{j=1}^{r} - \alpha_j(r) \left( C_{m}(a_j) - C_m(b_j) \right) + O_{r} \left( \frac{1}{\sqrt{N_m M}} \right).
\end{aligned}
\end{equation*}
Moreover, since  for all $1 \leq j \leq r$ we know that $C_{m}(a_j)$ and $C_m(b_j)$ are integers, it follows that if $ - \sum\limits_{j=1}^{r} \alpha_j(r) \left( C_{m}(a_j) - C_m(b_j) \right)  > 0$ then $ - \sum\limits_{j=1}^{r}  \alpha_j(r) \left( C_{m}(a_j) - C_m(b_j) \right) \gg_r 1$, hence we finally obtain that 
$\delta_{m;a_1,\dots,a_r} > \delta_{m;b_1,\dots,b_r}$.
\end{proof}

\section{Explicit constructions of biased races}
\label{section8}The aim of this section is to prove Theorems \ref{biasedQuadratic}, \ref{biasedRaces} and \ref{generMartin}. The idea behind these explicit constructions of the $a_i$ is analogous to the ideas relative to  \cite[Section $6$]{lamzouri}. 

We first define 
\begin{equation}
\label{lambdaDef}
\begin{aligned}
\Lambda_{0}(f) := \left\{
    \begin{array}{ll}
        \frac{\Lambda(f)}{|f|} & \text{if } f \in \mathbb{F}_{q}[T], \\
        0 & \text{otherwise.}
    \end{array}
\right.
\end{aligned}
\end{equation}

Next, we will find a new estimates of the term $B_m(a,b)$ when $0 \leq \deg(a),\deg(b) < M$.
\begin{pro}
\label{moreExpBm}
Let $m \in \mathcal{M}_{q}$ be of large degree $M$. Let $(a,b) \in \mathcal{A}_{2}(m)$ such that $0 \leq \deg(a),\deg(b) < M$. 
\begin{enumerate}
    \item If $\deg(a) = \deg(b)$ then  
\begin{equation*}
B_{m}(a,b) = - \frac{q^2 + q}{2(q-1)^2} \phi(m) l(a,b) 
+ O_q\left( \left( |a| + |b| \right) M^2 \right),
\end{equation*}
where $l(a,b) = 1$ if $a=-b$ and $0$ otherwise. 
    \item If $\deg(a) \ne \deg(b) $ then 
\begin{equation*}
B_{m}(a,b) = \frac{-q}{(q-1) \log q} \phi(m) \Lambda_{0}\left( \frac{\mathbf{Pmax}(a,b)}{\mathbf{Pmin}(a,b)}\right) 
+ O_q\left( \left(|a| + |b| \right) M^2 \right), 
\end{equation*}    
\end{enumerate}
where 
\begin{equation*}
\mathbf{Pmax}(a,b) = 
\begin{cases}
a & \text{ if } \deg(a) > \deg(b), \\
b & \text{otherwise.}
\end{cases}
\quad \text{and} \quad
\mathbf{Pmin}(a,b) = 
\begin{cases}
b & \text{ if } \deg(a) > \deg(b), \\
a & \text{otherwise.}
\end{cases}
\end{equation*}
\end{pro}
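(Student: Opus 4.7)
The plan is to start from the explicit formula in Proposition \ref{forExpBm} and show that, when $\deg(a), \deg(b) < M$, all but one structural term is absorbed into the stated error. That formula decomposes $B_m(a,b)$ as a $\log \phi(m)$ contribution (trivially $O(M)$), a factor involving $\Lambda(m/(m,a-b))/\phi(m/(m,a-b))$, the indicator $l_m(a,b)$ for the congruence $a+b\equiv 0 \pmod{m}$, two Mertens-type sums $\Sigma_1, \Sigma_2$ over polynomials $f$ satisfying $af\equiv b$ or $bf\equiv a \pmod{m}$, and two double sums $\Sigma_3, \Sigma_4$ over primes $P\mid m$ and exponents $e\geq 1$ with $aP^e\equiv b$ or $bP^e \equiv a \pmod{m/P^v}$.

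First I would dispose of the easy pieces. Since $\deg(a+b)<M$, the congruence $a+b\equiv 0 \pmod{m}$ forces $a+b=0$, so $l_m(a,b)=l(a,b)$, and this vanishes in case~(2) because $\deg a\neq\deg b$. The $(m,a-b)$-term is tamed by the elementary bound $\phi(m)/\phi(m/d)\leq |d|$ combined with $|d|\leq |a-b|\leq |a|+|b|$ and $\Lambda(m/d)\leq \log|m|$, giving a contribution of order $(|a|+|b|)M$. Next, applying Lemma \ref{lemElemMajBm}, each of $\Sigma_1, \Sigma_2$ reduces to $\Lambda(s)/|s|$ for the least-degree representative $s$ of the corresponding residue class. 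The crucial dichotomy is whether $af=b$ as polynomials: if $a\mid b$ then $s=b/a$ and the contribution equals $\Lambda_0(b/a)$; if $a\nmid b$ then $as-b$ is a nonzero multiple of $m$, and using $|b|\leq |m|/q$ yields $|s|\geq |m|/(2|a|)$, which makes $\Lambda(s)/|s| \ll M|a|/|m|$ and hence negligible after multiplication by $\phi(m)$.

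The main obstacle is bounding $\Sigma_3, \Sigma_4$. The key observation here is that the equality $aP^e=b$ is impossible whenever $P\mid m$ and $(a,m)=(b,m)=1$, since it would force $P\mid b$. All contributing pairs $(P,e)$ therefore come from proper congruences with $|aP^e-b|\geq |m/P^v|$. I would split on whether $\deg(aP^e)>\deg(b)$: the first branch forces $|P|^{e+v}\geq |m|/|a|$, the second forces $|P|^v\geq |m|/|b|$. In either case the summand is at most $O(\log|P|\cdot(|a|+|b|)/|m|)$ after performing the geometric sum in $e$, and summing over primes via $\sum_{P\mid m}\log|P|\leq \log|m|$ produces $\phi(m)(\Sigma_3+\Sigma_4)\ll (|a|+|b|)M$. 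Collecting all the above and observing that in case~(1) the potential $\Lambda_0(b/a)$ main term vanishes (since $\deg a=\deg b$ with $a\mid b$ forces $b/a$ to be a unit), while in case~(2) exactly one of $\Sigma_1, \Sigma_2$ produces $-\frac{q\phi(m)}{(q-1)\log q}\Lambda_0(\mathbf{Pmax}/\mathbf{Pmin})$, the stated formulas follow after absorbing all errors into $O_q((|a|+|b|)M^2)$.
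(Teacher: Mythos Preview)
Your proposal is correct and follows essentially the same route as the paper: you start from Proposition~\ref{forExpBm}, reduce $l_m(a,b)$ to $l(a,b)$ using $\deg(a+b)<M$, bound the $\Lambda(m/(m,a-b))/\phi(m/(m,a-b))$ term via $|(m,a-b)|\le |a|+|b|$, invoke Lemma~\ref{lemElemMajBm} for $\Sigma_1,\Sigma_2$, and then argue case by case on whether $a\mid b$ (respectively $b\mid a$) exactly as the paper does. The only substantive difference is your treatment of $\Sigma_3,\Sigma_4$: the paper isolates this as Lemma~\ref{majorSomme62} and bounds each term uniformly by $(|a|+|b|)/|m|$ times $|P|\log|P|/(|P|-1)$, then multiplies by the $O(M)$ values of $e$, obtaining $\ll_q (|a|+|b|)M^2/|m|$; you instead split on $\deg(aP^e)$ versus $\deg b$ and perform the geometric sum in $e$, which actually gives the sharper bound $\ll (|a|+|b|)M/|m|$. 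Both are comfortably absorbed by the stated error $O_q((|a|+|b|)M^2)$, so the refinement buys nothing here, but your inline argument is a legitimate (and slightly cleaner) substitute for the paper's lemma.
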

To prove this result, we need the following lemma.
\begin{lem} 
\label{majorSomme62}
Let $m \in \mathcal{M}_{q}$ be of large degree $M$. Let $a,b \in \mathbb{F}_{q}[T]$  such that $a \ne b$, $(a,m) = 1$ and $(b,m) =1$ and $0 \leq \deg(a), \deg(b)  < M$. Then 
\begin{equation*}
\begin{aligned}
\sum\limits_{P^{v} || m}^{} \sum\limits_{n=1}^{\infty} \sum\limits_{\substack{ 1 \leq e \leq (9 \log q) M \\ aP^e \equiv b \bmod m/P^v \\ \deg(P^e) = n}}^{} \frac{\log |P|}{|P|^{e+v-1} (|P| - 1)}  &\ll_q  \frac{(|a| + |b|) M^2}{|m|}.   
\end{aligned}
\end{equation*}
\end{lem}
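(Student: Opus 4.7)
The plan is to fix a prime divisor $P\mid m$ with $P^v\| m$, let $g = m/P^v$, and isolate the inner sum
$$
T_P := \sum_{\substack{e\geq 1\\ aP^e\equiv b\bmod g}}\frac{\log|P|}{|P|^e},
$$
so that (after dropping the restriction $e\leq (9\log q)M$, which only enlarges the sum) the quantity to estimate becomes $\sum_{P\mid m}T_P/\phi(P^v)$. The crucial observation is that $(b,m)=1$ together with $P\mid m$ forces $P\nmid b$, hence $aP^e\neq b$ as polynomials for every $e\geq 1$. Consequently the congruence $g\mid (aP^e-b)$ upgrades to the strict inequality $|aP^e-b|\geq |g|=|m|/|P|^v$, which by the ultrametric property yields $\max(|a||P|^e,|b|)\geq |m|/|P|^v$.

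Next I would rewrite the congruence as $P^e\equiv ba^{-1}\pmod g$. Since $(P,g)=1$, the set of admissible exponents $e\geq 1$ is either empty or a single arithmetic progression $\{e_1, e_1+d, e_1+2d,\dots\}$ with $d=\mathrm{ord}_g(P)$. When $|g|\geq 2$, the relation $P^d\equiv 1\pmod g$ with $P^d\neq 1$ gives $|P|^d\geq |g|$, whence a geometric-series bound produces
$$
T_P \leq \frac{2\log|P|}{|P|^{e_1}}.
$$
The degenerate case $|g|=1$ (equivalently $m=cP^v$ for a unit $c$) is treated directly: the congruence is automatic, and the resulting geometric sum, combined with $|P|^v=|m|$, gives $T_P/\phi(P^v)\ll_q M/|m|$, already within the claimed bound.

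For the main case I would split the primes $P\mid m$ according to whether $|P|^v\geq |m|/|b|$. If this holds, the trivial bound $|P|^{e_1}\geq |P|$ and $\phi(P^v)\geq |P|^v/2$ control the individual term by $4|b|\log|P|/(|m||P|)$; summing and invoking $\sum_{P\mid m}\log|P|/(|P|-1)\ll\log M$ from \eqref{majorSom} contributes $O(|b|\log M/|m|)$. Otherwise $|b|<|m|/|P|^v$, so the strict inequality above forces $|a||P|^{e_1}\geq |m|/|P|^v$, i.e.\ $|P|^{e_1+v}\geq |m|/|a|$; the individual term is then bounded by $4|a|\log|P|/|m|$, summing to $O_q(|a|M/|m|)$. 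Combining the two contributions and using $|a|M+|b|\log M\ll (|a|+|b|)M^2$ yields the desired estimate.

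The hardest part is recognizing why $(b,m)=1$ is indispensable: without it, the configuration $aP^e=b$ for some $P\mid m$ would contribute a single term of size $\asymp_q |a|/|b|$ per such prime, which does not decay with $|m|$ and would prevent the claimed estimate. The hypothesis rules this out by guaranteeing $P\nmid b$, and it is precisely this strict inequality $|aP^e-b|\geq |g|$ that makes the geometric-series argument effective.
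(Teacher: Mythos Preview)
Your proof is correct, but it takes a considerably more elaborate route than the paper's. Both arguments begin with the same crucial observation: $(b,m)=1$ and $P\mid m$ force $aP^e-b\neq 0$, so the congruence $g\mid(aP^e-b)$ with $g=m/P^v$ yields a nontrivial lower bound on $|aP^e-b|$. From here the approaches diverge.

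The paper proceeds very directly. From $|m|/|P|^v\leq |aP^e-b|\leq (|a|+|b|)|P|^e$ it extracts $1/|P|^{e+v}\leq(|a|+|b|)/|m|$, which bounds \emph{every} admissible term uniformly by $\frac{|P|\log|P|}{|P|-1}\cdot\frac{|a|+|b|}{|m|}$. It then exploits the truncation $e\leq(9\log q)M$ to say there are at most $O_q(M)$ terms per prime, and closes with $\sum_{P\mid m}\log|P|\leq\log|m|\ll_q M$. No arithmetic-progression structure, no case split --- three lines.

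Your argument drops the truncation on $e$, recognizes the admissible exponents as a coset of $d\mathbb{Z}$ with $d=\mathrm{ord}_g(P)$, and controls the resulting geometric series via $|P|^d\geq|g|$. The subsequent split on whether $|P|^v\geq|m|/|b|$ lets you use the sharper ultrametric alternative $\max(|a||P|^{e_1},|b|)\geq|g|$ rather than the sum. The payoff is a genuinely stronger intermediate estimate, of order $(|a|M+|b|\log M)/|m|$, which you then relax to the stated $(|a|+|b|)M^2/|m|$. So your method saves essentially a factor of $M$, at the cost of a longer argument; since the application in Proposition~\ref{moreExpBm} only needs the weaker bound, the paper's shorter proof suffices there.
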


\begin{proof}

If $P | m$ and $(b,m) = 1$ then it is clear that $aP^{e} - b \ne 0$.  This implies that if $ (m/P^{v})   | (aP^e - b)$, then  we have $|m/P^v| \leq |aP^e - b| \leq |a| |P|^{e} + |b|$, hence $|m/P^v| \leq \left(|a| + |b| \right) |P|^{e}$, therefore  $\frac{1}{|P|^{e+v}} \leq \frac{|a| + |b|}{|m|}.$ 
Thus we obtain 
\begin{equation}
\label{lem62Inter1}
\begin{aligned}
\sum\limits_{P^{v} || m}^{} \sum\limits_{n=1}^{\infty} \sum\limits_{\substack{ 1 \leq e \leq 9 \log |m| \\ aP^e \equiv b \bmod m/P^v \\ \deg(P^e) = n}}^{} \frac{\log |P|}{|P|^{e+v-1} (|P| - 1)} &\ll_q M \left( \sum\limits_{P | m}^{} \frac{|P| \log|P|}{|P| - 1} \right) \frac{(|a| + |b|)}{|m|}.    
\end{aligned}
\end{equation}
On the other hand, writing $m = \alpha P_1^{t_1} \dots P_k^{t_k}$ where $\alpha \in \mathbb{F}_q^{*}$, each $P_i$ is an irreducible monic polynomial in $\mathbb{F}_q[T]$ such that $P_i \ne P_j$ for $1 \leq i \ne j \leq k$, and each $t_i$ is a positive integer, we obtain
\begin{equation}
\label{lem62Inter2}
\begin{aligned}
\sum\limits_{P | m}^{} \log |P| &\leq  \sum\limits_{i = 1}^{k} t_i \log |P_i| = \log |m|
\end{aligned}
\end{equation}
Hence by combining \eqref{lem62Inter1} and \eqref{lem62Inter2}
we deduce our result. 
\end{proof}

\begin{proof}[Proof of Proposition \ref{moreExpBm}]
Since $0 \leq \deg(a), \deg(b) < \deg(m) = M$, then $a+b \equiv 0 \bmod m$ implies $a= -b$. Furthermore by using the fact that $|(m,a-b)| \leq |a| + |b|$ and \eqref{majorSomme2}, we obtain that 
\begin{equation*}
\begin{aligned}
\frac{\Lambda(m/(m,a-b))}{\phi(m/(m,a-b))} &\ll_q \frac{M^2}{|m|} (|a| + |b|).
\end{aligned}
\end{equation*}
Hence from Proposition \ref{forExpBm}, and Lemmas \ref{lemElemMajBm} and  \ref{majorSomme62} we deduce that 
\begin{equation}
\label{formuleExpliciteBm}
\begin{aligned}
B_{m}(a,b) &= - \phi(m) \left( \frac{q^2 + q}{2(q-1)^2 }l(a,b) + \frac{q}{(q-1) \log q} \left( \frac{\Lambda(s_1)}{|s_1|} + \frac{\Lambda(s_2)}{|s_2|} \right) \right) \\ &\quad+ O_q \left( (|a| + |b|) M^2 \right),
\end{aligned}
\end{equation}
where $s_1$ and $s_2$ denote the residues of  $ba^{-1} \bmod m$, $ab^{-1} \bmod m$ with the least degree, respectively.  \\
Let us first prove the first part of our proposition. We assume that $\deg(a) = \deg(b)$. Therefore,
if $s_1 a = b$ then $\Lambda(s_1) = 0.$ Otherwise, since $m | s_1 a - b$ then $|s_1| \geq \frac{|m|}{(|a| + |b|)}$, therefore  $\frac{\Lambda(s_1)}{|s_1|} \ll_q \frac{(|a| + |b|) M}{|m|}$. Then we similarly distinguish the cases $s_2 b = a$, $s_2 b \ne a$ and it follows that $\frac{\Lambda(s_1)}{|s_1|} + \frac{\Lambda(s_2)}{|s_2|} \ll_q \frac{(|a| + |b|) M}{|m|}$. Therefore, we deduce from  \eqref{formuleExpliciteBm} that  
\begin{equation*}
B_m(a,b) = - \frac{q^2 + q}{2 (q-1)^2} \phi(m) l(a,b) +    O_{q}\left( \left(|a| + |b| \right) M^2 \right). 
\end{equation*}

Now, we assume that $\deg(a) \ne \deg(b)$. Without loss of generality we assume that $\deg(a) <  \deg(b)$.
In this case we necessarily have that $s_2 b \ne a$. This implies that $ \frac{\Lambda(s_2)}{|s_2|} \ll_q \frac{(|a| + |b|) M}{|m|}$. \\
If $a \nmid b$ then $s_1 a \ne b$ so $\frac{\Lambda(s_1)}{|s_1|} \ll_q \frac{(|a| + |b|) M}{|m|}$. Otherwise $\frac{\Lambda(s_1)}{|s_1|} = \Lambda_{0}\left(\frac{b}{a}\right)$ where $\frac{b}{a} \in \mathbb{F}_{q}[T]$. Hence  
\begin{equation*}
\frac{\Lambda(s_1)}{|s_1|} = \Lambda_{0}\left(\frac{b}{a} \right)  + O_q\left( \frac{\left( |a| + |b| \right) M}{|m|} \right).
\end{equation*}
By combining these estimates with  \eqref{formuleExpliciteBm}, we obtain 
the second part of our proposition. 
\end{proof}

\quad Let $P'$ be one of those irreducible monic polynomials in  $\mathbb{F}_{q}[T]$ with the largest degree such that $P' | m.$ Let $P_{0}$ be one of those irreducible monic polynomials in  $\mathbb{F}_{q}[T]$ with the least degree which is a quadratic non-residue modulo $P'$. Then from \cite[Corollary $2.2$]{CNH} 
we have $\deg(P_{0}) \leq 2 + 2 \log_q(1 + \deg(P))$ hence
$ \left| P_{0} \right| \leq q^2 ( 1 + M)^2.$ \\
\quad Let us consider $S$ the set of primes $P \in \mathbb{F}_{q}[T]$ such that $|P| \leq 2qM  $ and $P \nmid m$. We define the following product 
\begin{equation*}
\begin{aligned}
Q &:= \prod\limits_{|P| \leq 2qM}^{} |P| = \exp{\left( \log q \left( \sum\limits_{1 \leq \deg(P) \leq \lfloor \frac{\log(2qM)}{\log q} \rfloor}^{} \deg(P) \right) \right)}.
\end{aligned}
\end{equation*}
Therefore by using  \eqref{PNT2} it follows that
\begin{equation*}
\begin{aligned}
Q &= \exp{\left( \log q \left( \frac{q^{\lfloor \frac{\log(2qM)}{\log q} \rfloor + 1} }{q-1} + O\left(\sqrt{q}^{\lfloor \frac{\log 2qM}{\log q} \rfloor}  \right)  \right) \right)} \geq  |m|^2 (1 + o(1)).  
\end{aligned}
\end{equation*}
On the other hand, we know that $\prod\limits_{P \in S}^{} |P| = \frac{Q}{\prod\limits_{\substack{|P| \leq 2qM \\ P | m}}^{} |P|} \geq \frac{|m|^2 (1 + o(1))}{\prod\limits_{\substack{|P| \leq 2qM \\ P | m}}^{} |P|}.$
Furthermore, it is clear that $\prod\limits_{\substack{|P| \leq 2qM \\ P | m}}^{} |P| \leq \prod\limits_{P|m}^{} |P| \leq |m|$ and hence
$\prod\limits_{P \in S}^{} |P| \geq |m| (1 + o(1))$.
Moreover, since $P \in S$ it follows that $|P| \leq 2qM$, so
\begin{equation*}
(2qM)^{|S|} \geq |m| (1 + o(1)).    
\end{equation*}
Therefore 
\begin{equation*}
|S| \geq \frac{M}{\log(2qM)}.    
\end{equation*}
In particular $|S| \geq 2$, and so there exist $P_1$, $P_2 \ne P_{0}$ two distinct irreducible monic polynomials in $\mathbb{F}_{q}[T]$ such that $|P_1|,|P_2| \leq 2qM.$ \\
We end this section by proving Theorems \ref{biasedQuadratic} and \ref{biasedRaces} and \ref{generMartin}.

\begin{proof}[Proof of Theorem \ref{biasedQuadratic}]
Since we know from \cite[Theorem $2.3$]{CSK} that $\delta_{m;ba_1,\dots,ba_r} = \delta_{m;a_1,\dots,a_r}$ for any residue class modulo $m$, then it is sufficient to construct quadratic residues $a_j$ modulo $m.$ In fact, we take $b_j = b a_j$ for any $b$ quadratic non-residue modulo $m$ to get the analogous result for quadratic non-residues modulo $m$. \\

Let $a_1 = 1, a_r = P_1^2$ and $a_j =(P_1 P_2)^{2j}$ for all $2 \leq j \leq r-1$. Hence we have $ \left| a_j \right| \leq (2qM)^{4(r-1)}$ for all $1 \leq j \leq r$. Moreover, we have $P_1 P_2 | a_k/a_j$ for all $1 \leq j < k \leq r-1$. Thus from part $2$ of Proposition \ref{moreExpBm} it follows that
\begin{equation*}
\begin{aligned}
B_m(a_j,a_k) &\ll_q M^{4r}    \quad \text{for} \ 1\leq j < k \leq r-1.
\end{aligned}
\end{equation*}
On the other hand, since $a_r/a_1 = P_1^{2}$ then again by part $2$ of Proposition \ref{moreExpBm} we have 
\begin{equation*}
\begin{aligned}
B_{m}(a_1,a_r) &= - \frac{\phi(m) q}{(q-1) \log q}  \frac{\log |P_1|}{|P_1|^{2}} + O_q \left( M^{4r} \right).    
\end{aligned}
\end{equation*}
It follows from \cite[Lemma $6.3$]{lamzouri} that 
\begin{equation}
\label{lamLemme63}
\begin{aligned}
\beta_{r-1,r}(r) > 0, \quad \text{and} \quad \beta_{1,r}(r) < 0.
\end{aligned}
\end{equation}
Hence by combining these estimates with Corollary \ref{cor3}, Lemma \ref{formuleNm} and  \eqref{lamLemme63} we obtain
 \begin{equation*}
\begin{aligned}
\delta_{m;a_1,\dots,a_r} &= \frac{1}{r!} + \frac{\beta_{1,r}(r) B_{m}(a_1,a_r)}{N_{m}} + O_q \left( \frac{M^{4r}}{\phi(m)} \right)    > \frac{1}{r!} + \frac{\left| \beta_{1,r}(r) \right|}{10 q^2 M^{3}}. 
\end{aligned}
\end{equation*}
Moreover, let $\sigma$ be a permutation of the set $\{1,\dots,r \}$ defined by $\sigma(1) = r - 1$, $\sigma(r-1) = 1$ and $\sigma(j) = j$ for all the other values of $j.$ Therefore we deduce from  \eqref{lamLemme63} that
\begin{equation*}
\begin{aligned}
\delta_{m;a_{\sigma(1)},\dots,a_{\sigma(r)}} &= \frac{1}{r!} + \frac{\beta_{r-1,r}(r) B_{m}(1,P_1^{2})}{N_m} +  O_q \left( \frac{M^{4r}}{\phi(m)} \right) < \frac{1}{r!} - \frac{\left| \beta_{r-1,r}(r) \right|}{10 q^2 M^{3}}.    
\end{aligned}
\end{equation*}
\end{proof}

\begin{proof}[Proof of Theorem \ref{biasedRaces}]
Let $M = \deg(m)$. By combining the estimate $|C_m| = |m|^{o(1)}$, together with Theorem \ref{forAsymDelta3}  and Corollary \ref{majorBm} we obtain 
\begin{equation*}
\begin{aligned}
\left| \delta_{m;a_1,\dots,a_r} - \frac{1}{r!} \right| &\ll_{r}
\frac{1}{M}.
\end{aligned}
\end{equation*}

On the other hand, we choose $a_1 = 1, a_r = -1$ and $a_j = \left(P_1 P_2 \right)^{2j}$ for $2 \leq j \leq r-1.$ Hence it follows that $|a_j| \leq (2qM)^{4(r-1)}$ for all $1 \leq j \leq r$. 
Since $P_1 P_2 | a_k/a_j$ then by using part $2$ of Proposition \ref{moreExpBm} we obtain 
\begin{equation*}
\begin{aligned}
B_m(a_j,a_k) &\ll_q M^{4r}    \quad \text{for} \ 1\leq j < k \leq r-1.
\end{aligned}
\end{equation*}
Moreover, since $P_1 P_2 | a_j/a_r$ for all $2 \leq j \leq r-1$ then
\begin{equation*}
\begin{aligned}
B_m(a_j,a_k) &\ll_q M^{4r}    \quad \text{for} \ 2\leq j \leq r-1.
\end{aligned}
\end{equation*}
Furthermore, it follows from part $1$ of Proposition \ref{moreExpBm} that
\begin{equation*}
\begin{aligned}
B_m(a_1,a_r) &= - \frac{q^2 + q}{2(q-1)^2} \phi(m) + O_q \left( M^{2} \right).     
\end{aligned}
\end{equation*}
Thus, we deduce by combining Theorem \ref{forAsymDelta3} with Lemma \ref{formuleNm} and  \eqref{lamLemme63} that
\begin{equation*}
\begin{aligned}
\delta_{m;a_1,\dots,a_r} = \frac{1}{r!} + \frac{\beta_{1,r}(r) B_{m}(a_1,a_r)}{N_{m}} + O_{\epsilon,q}\left( \frac{1}{\phi(m)^{1/2 - \epsilon}} \right) > \frac{1}{r!} + \frac{|\beta_{1,r}(r)| (q^2 + q)}{6 (q-1)^2 M}.    
\end{aligned}
\end{equation*}
Therefore 
\begin{equation*}
\begin{aligned}
\delta_{m;a_1,\dots,a_r} &> \frac{1}{r!} + \frac{\left|\beta_{1,r}(r) \right|}{6 M}.    
\end{aligned}
\end{equation*}
On the other hand, if we take $b_1 = a_{r-1}$, $b_{r-1} = a_1$ and $b_j = a_j$ for all the other values of $j$ then we deduce by using  \eqref{lamLemme63} that 
\begin{equation*}
\begin{aligned}
\delta_{m;b_1,\dots,b_r} &= \frac{1}{r!} + \frac{\beta_{r-1,r}(r) B_m(b_{r-1},b_r)}{N_{m}} +  O_{\epsilon,q}\left( \frac{1}{\phi(m)^{1/2 - \epsilon}} \right) < \frac{1}{r!} - \frac{|\beta_{r-1,r}(r)| (q^2 + q)}{6(q-1)^2 M }.     
\end{aligned}
\end{equation*}
Hence 
\begin{equation*}
\begin{aligned}
\delta_{m;b_1,\dots,b_r} &< \frac{1}{r!} - \frac{\left| \beta_{r-1,r}(r) \right|}{6 M}.    
\end{aligned}
\end{equation*}
\end{proof}

\begin{proof}[Proof of Theorem \ref{generMartin}]
Let $M = \deg(m)$. Since $(\kappa_1,\dots,\kappa_r) \ne (0,\dots,0)$ then there exists $1 \leq l \leq r$ such that $\kappa_{l} \ne 0.$ \\
\noindent {\bf Case 1}: $\kappa_r\neq 0$ or $\kappa_1\neq 0$.

We only handle the case $\kappa_r\neq 0$, since the treatment of the case $\kappa_1\neq 0$ follows simply by switching $a_1$ with $a_r$, and $b_1$ with $b_r$ in every construction we make below.

Let us suppose that $\kappa_r > 0.$ In this case, we take $a_1 = 1, a_j = P_{0} \left( P_{1} P_{2} \right)^{2j}$ for $2 \leq j \leq r-1$, and $a_r = \left( P_1 P_2 \right)^{2}.$ Then $a_1$ and $a_r$ are quadratic residues modulo $m$ and $a_j$ is a quadratic non-residue modulo $m$ for $2 \leq j \leq r-1.$ Moreover choose $b_j = a_j$ for all $1 \leq j \leq r-1$ and $b_r = P_{0}.$ In this case, $b_1$ is the only quadratic residue modulo among the $b_j$ modulo $m$. Furthermore since $C_{m}(1) > -1$ it follows that 
\begin{equation*}
\sum\limits_{j=1}^{r} \kappa_j C_{m}(a_j) - \sum\limits_{j=1}^{r} \kappa_j C_m(b_j) = \kappa_r C_m(a_r) - \kappa_r C_m(b_r) = \kappa_r ( C_m(1) + 1) > 0.      
\end{equation*}
On the other hand, we know that $\left| a_j \right| \ll q^2 M^2 \left( 2 q M \right)^{4(r-1)} $ for all $1 \leq j \leq r,$ and that $P_1 P_2$ divides $\mathbf{Pmax}(a_j,a_k)/ \mathbf{Pmin}(a_j,a_k)$ for all $1 \leq j < k \leq r.$ Then by using part $2$ of Proposition \ref{moreExpBm} we get 
\begin{equation*}
\left| B_{m}(a_j,a_k) \right| \ll_q M^{4 r + 1} \ \text{for all} \ 1 \leq j < k \leq r.    
\end{equation*} 
Hence by Theorem \ref{forAsymDelta3} we deduce that 
\begin{equation}
\label{eq1}
\delta_{m;a_1,\dots,a_r} = \frac{1}{r!} + O_{\epsilon,q} \left( \frac{1}{\phi(m)^{1/2 - \epsilon}} \right) . 
\end{equation}
Similarly, by using part $2$ of Proposition \ref{moreExpBm} we obtain  $\left| B_{m}(b_j,b_k) \right| \ll_q M^{4r + 1}$ for all $1 \leq j < k \leq r$ with $\left\{ j,k \right\} \ne \left\{ 1,r \right\}$, and 
\begin{equation*}
\begin{aligned}
B_{m}(b_1,b_r) &= - \frac{q \phi(m)}{(q-1) \log q}  \frac{\log |P_{0}|}{|P_{0}|} + O_q \left( M^{4r + 1} \right).    
\end{aligned}
\end{equation*}
Hence combining Theorem \ref{forAsymDelta3} with  \eqref{lamLemme63} and \eqref{eq1} we deduce that 
\begin{equation*}
\delta_{m;b_1,\dots,b_r} = \frac{1}{r!} + \frac{\beta_{1,r}(r) B_{m}(b_1,b_r)}{N_{m}} + O_{\epsilon,q} \left( \frac{1}{\phi(m)^{1/2 - \epsilon}} \right) > \frac{1}{r!} +  \frac{\left| \beta_{1,r}(r)\right| \log |P_{0}|}{2 |P_{0}| (\log q) M} > \delta_{m;a_1,\dots,a_r}.  
\end{equation*}
\\
Now suppose that $\kappa_r < 0.$ In this case, we choose $a_1 = 1$, and $a_j = P_{0} (P_1 P_2)^{2j}$ for all $2 \leq j \leq r$ (in this case $a_1$ is the only quadratic residue $m$ among the $a_j$ modulo $m$), and $b_j = a_j$ for all $1 \leq j \leq r-1,$ and $b_r = P_{1}^{2}$ (in this case both $b_1$ and $b_r$ are quadratic residues modulo $m$). Therefore by using similar arguments to the case $\kappa_r > 0$ we get from Theorem \ref{forAsymDelta3} and part $2$ of Proposition \ref{moreExpBm} combined with  \eqref{lamLemme63} that
\begin{equation*}
\begin{aligned}
\sum\limits_{j=1}^{r} \kappa_j C_{m}(a_j) - \sum\limits_{j=1}^{r} \kappa_j C_m(b_j) = - \kappa_r \left( 1 + C_{m}(1) \right) > 0, \\
\delta_{m;a_1,\dots,a_r} = \frac{1}{r!} + O_{\epsilon,q} \left( \frac{1}{\phi(m)^{1/2 - \epsilon}} \right),
\end{aligned}
\end{equation*}
and 
\begin{equation*}
\delta_{m;b_1,\dots,b_r} = \frac{1}{r!} + \frac{\beta_{1,r}(r) B_{m}(b_1,b_r)}{N_{m}} + O_{\epsilon,q} \left( \frac{1}{\phi(m)^{1/2 - \epsilon}} \right) > \frac{1}{r!} + \frac{\left| \beta_{1,r}(r) \right| \log |P_1|}{2 (\log q) |P_1|^{2} M} > \delta_{m;a_1,\dots,a_r}.    
\end{equation*}
\\
\textbf{Case 2:} There exists $2 \leq l \leq r-1$ such that $\kappa_{l} \ne 0$.

First, assume that $\kappa_{l} > 0.$ We take $a_1 = 1, a_l = (P_1 P_2)^2$,  
$a_j = P_{0} \left( P_1 P_2 \right)^{4j}$ for $2 \leq j \ne l \leq r$ and $b_r = P_{0}$, $b_l = P_{0} \left( P_1 P_2 \right)^{4l}$, and $b_j = a_j$ for all $1 \leq j\ne l \leq r-1$.
Hence by using similar arguments to the case $1$ we obtain that if $M$ is large enough that
\begin{equation*}
\sum\limits_{j=1}^{r} \kappa_j C_{m}(a_j) - \sum\limits_{j=1}^{r} \kappa_j C_m(b_j) = \kappa_{l} \left( C_{m}(1) + 1 \right) > 0, \quad \text{and} \quad \delta_{m;b_1,\dots,b_r} > \delta_{m;a_1,\dots,a_r}.     
\end{equation*}

Now if $\kappa_{l} < 0,$ we choose $a_1 = 1, a_r = \left( P_1 P_2 \right)^{4}$, $a_j =P_{0} \left( P_1 P_2 \right)^{4j}$ for $2 \leq j \leq r-1$, $b_l = \left( P_1 P_2 \right)^{4}, b_r = P_1^{2}$ and $b_j = a_j$ for the other values of $j$, which leads to our result.
\end{proof}

\section{$m$-extremely biased races}
\label{section9}The goal of this section is to prove Theorem \ref{criteriaExtreme}. In order to do this,
we will adapt the proof of \cite[Theorem $2.6$]{lamzouri}. The idea is to
see when the term $B_m(a_i,a_j)$ have a large contribution to the density $\delta_{m;a_1,\dots,a_r}.$ By using Proposition \ref{moreExpBm} we respond to this question. We start by reducing our study to the case $r=3$ which is treated in the following lemma.
\begin{lem}
\label{lemme91}
Let $r \geq 3$ be a fixed integer, $m \in \mathcal{M}_{q}$ of large degree $M$ and $(a_1,\dots,a_r) \in \mathcal{A}_{r}(m)$. If there exist $1 \leq i_1 < i_2 <i_3 \leq r$ such that the race $\{m;a_{i_1},a_{i_2},a_{i_3} \}$ is $m$-extremely biased, then $\{ m;a_1,\dots,a_r \}$ is $m$-extremely biased.  
\end{lem}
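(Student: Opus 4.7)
The plan is a direct combinatorial decomposition of densities followed by pigeonhole; no analytic input beyond the existence and marginal-consistency of the joint limiting distribution $\mu_{m;a_1,\dots,a_r}$ is needed, and the asymptotic formulas of the previous sections are not invoked at all.

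Since the 3-way race $\{m;a_{i_1},a_{i_2},a_{i_3}\}$ is $m$-extremely biased, by definition there is a permutation $\tau$ of $\{i_1,i_2,i_3\}$ with $|\delta_{m;a_{\tau(i_1)},a_{\tau(i_2)},a_{\tau(i_3)}}-1/6|\gg_{r,q}1/\log|m|$. Set $(j_1,j_2,j_3)=(\tau(i_1),\tau(i_2),\tau(i_3))$, three distinct elements of $\{1,\dots,r\}$. Under LI the random variables $X_{m;a_1},\dots,X_{m;a_r}$ are almost surely pairwise distinct, so partitioning the event $\{X_{m;a_{j_1}}>X_{m;a_{j_2}}>X_{m;a_{j_3}}\}$ according to the complete ordering of $(X_{m;a_\ell})_{1\le \ell\le r}$ yields
\begin{equation*}
\delta_{m;a_{j_1},a_{j_2},a_{j_3}} \;=\; \sum_{\pi\in \Sigma}\delta_{m;a_{\pi(1)},\dots,a_{\pi(r)}},
\end{equation*}
where $\Sigma=\{\pi\in S_r:\pi^{-1}(j_1)<\pi^{-1}(j_2)<\pi^{-1}(j_3)\}$. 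A standard count gives $|\Sigma|=\binom{r}{3}(r-3)!=r!/6$, so in particular $\tfrac{1}{6}=\sum_{\pi\in\Sigma}\tfrac{1}{r!}$.

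Subtracting this trivial identity from the one above and applying the triangle inequality produces a permutation $\pi\in\Sigma$ of $\{1,\dots,r\}$ such that
\begin{equation*}
\left|\delta_{m;a_{\pi(1)},\dots,a_{\pi(r)}}-\frac{1}{r!}\right| \;\ge\; \frac{6}{r!}\left|\delta_{m;a_{j_1},a_{j_2},a_{j_3}}-\frac{1}{6}\right| \;\gg_{r,q}\; \frac{1}{\log|m|},
\end{equation*}
which exhibits the $m$-extreme bias of $\{m;a_1,\dots,a_r\}$. There is no real obstacle in the argument; the only subtlety is confirming that the 3-dimensional marginal of $\mu_{m;a_1,\dots,a_r}$ at coordinates $(j_1,j_2,j_3)$ coincides with $\mu_{m;a_{j_1},a_{j_2},a_{j_3}}$, which is immediate from the explicit construction recalled in Section \ref{background}.
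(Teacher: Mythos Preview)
Your proof is correct and follows essentially the same approach as the paper: both decompose the three-way density $\delta_{m;a_{j_1},a_{j_2},a_{j_3}}$ as a sum of $r$-way densities over the set of permutations of $\{1,\dots,r\}$ that preserve the relative order of $j_1,j_2,j_3$, then apply the triangle inequality (pigeonhole) to extract a single permutation witnessing the extreme bias. Your presentation is slightly tidier in that you compute $|\Sigma|=r!/6$ exactly and use the identity $1/6=\sum_{\pi\in\Sigma}1/r!$, whereas the paper only records $|S|\le r!/6$, but the argument is the same.
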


\begin{proof}
Assume that there exist $1 \leq i_1 < i_2 <i_3 \leq r$ such that the race $\{ m;a_{i_1},a_{i_2},a_{i_3}\}$ is $m$-extremely biased. It follows that for some permutation $\nu$ of the set $\{i_1,i_2,i_3 \}$ we have
\begin{equation*}
\begin{aligned}
\left| \delta_{m;a_{\nu(i_1)},a_{\nu(i_2)},a_{\nu(i_3)}} - \frac{1}{6} \right| &\gg_{r,q} \frac{1}{M}. 
\end{aligned}
\end{equation*}
Let $j_{l} = \nu(i_l)$ for $l \in \{ 1,2,3 \}$. Let $S$ be the set of permutations $\sigma$ of $\{1,2,\dots,r \}$ such that there exist $1 \leq i < k < l \leq r$ verifying the facts that $\sigma(i) = j_1$, $\sigma(k) = j_2$, and $\sigma(l) = j_3$. Note that under LI, the density of the set of all positive integers $X$ such that 
$$
\sum\limits_{N=1}^{X}\pi_q(a_j,m,N) = \sum\limits_{N=1}^{X} \pi_q(a_k,m,N)
$$
is $0$ if $j \ne k$. Therefore 
\begin{equation*}
\begin{aligned}
\delta_{m;a_{j_1},a_{j_2},a_{j_3}} &= \sum\limits_{\sigma \in S}^{} \delta_{m;a_{\sigma(1)},\dots,a_{\sigma(r)}}.     
\end{aligned}
\end{equation*}
On the other hand, it is clear that $|S| \leq \frac{r!}{3!}$ hence 
\begin{equation*}
\begin{aligned}
\frac{1}{M} &\ll_{r,q} \left| \delta_{m;a_{j_1},a_{j_2},a_{j_3}} - \frac{1}{6} \right| \leq \sum\limits_{\sigma \in S}^{} \left| \delta_{m;a_{\sigma(1)},\dots,a_{\sigma(r)}} - \frac{1}{r!} \right| \ll_{r} \max_{\sigma \in S} \left| \delta_{m;a_{\sigma(1)},a_{\sigma(2)},\dots,a_{\sigma(r)}} - \frac{1}{r!} \right|,      
\end{aligned}
\end{equation*}
which implies that the race $\{m;a_1,\dots,a_r\}$ is $m$-extremely biased. 
\end{proof}

Next, we establish some properties related to the function $\Lambda_0(f)$ defined in \eqref{lambdaDef}.

\begin{lem}
\label{majorLog3}
The maximum of $\Lambda_{0}(f)$ in $\mathbb{F}_{q}[T]$ is less than or equal to $\log(3)/3.$ 
\end{lem}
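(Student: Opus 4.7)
The plan is to reduce the maximization to primes (rather than prime powers) and then to the smallest possible norm. First, I would observe that $\Lambda_0(f)$ vanishes unless $f$ is a prime power, so the maximum is
\[
\sup_{P \in \mathcal{P}_q,\, t \geq 1} \frac{\log|P|}{|P|^t}.
\]
For a fixed irreducible $P$, the quantity $\log|P|/|P|^t$ is strictly decreasing in $t \geq 1$ (since $|P| \geq q \geq 3 > 1$), so the supremum in $t$ is attained at $t = 1$. Thus it suffices to bound $\log|P|/|P|$ over irreducible monic $P \in \mathbb{F}_q[T]$.

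Next, I would appeal to the standard fact that the real function $x \mapsto (\log x)/x$ has derivative $(1-\log x)/x^{2}$ and is therefore strictly decreasing on $[e,\infty)$. Since $q$ is a power of an odd prime $p > 2$ we have $q \geq 3 > e$, hence $|P| = q^{\deg P} \geq 3$ implies that $\log|P|/|P|$ is maximized by minimizing $|P|$. The smallest possible value of $|P|$ is $q$, so
\[
\max_{f} \Lambda_0(f) \;=\; \frac{\log q}{q}.
\]
Since $x \mapsto (\log x)/x$ is decreasing on $[e,\infty)$ and $q \geq 3$, this is at most $\log(3)/3$, with equality precisely when $q = 3$ (for instance $P = T \in \mathbb{F}_3[T]$).

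There is no real obstacle here; the entire argument is a two-line elementary calculus check once one observes that the support of $\Lambda_0$ is confined to prime powers and that monotonicity in $t$ and in $|P|$ both hold on the relevant ranges.
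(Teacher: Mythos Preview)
Your proof is correct and follows essentially the same approach as the paper: reduce to prime powers, then to primes via monotonicity in the exponent, then use that $|P|\ge q\ge 3$ together with the decrease of $(\log x)/x$ on $[e,\infty)$. The paper's version is terser (it simply writes $\Lambda_0(f)=\log|P|/|P|^l\le \log|P|/|P|\le \log(3)/3$ without spelling out the calculus), but the argument is the same.
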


\begin{proof}
It is clear that $\Lambda_{0}(f) \ne 0 \Leftrightarrow f= P^{l}$, where  $P \in \mathcal{P}_{q}$, and $l \in \mathbb{N}^{*}.$ In this case, since $|P|= q^{\deg(P)} \geq 3$ then it follows that 
\begin{equation*}
\begin{aligned}
\Lambda_{0}(f) = \frac{\log |P|}{|P|^{l}} &\leq \frac{\log |P|}{|P|} \leq \frac{\log 3}{3}.
\end{aligned}
\end{equation*}  
\end{proof}

\begin{lem}
\label{lem93}
Let $a_1,a_2,a_3$ be elements of $\mathbb{F}_{q}[T]$ prime to $m$ representing distinct residue classes modulo $m$ and of distinct degrees. We define 
\begin{equation*}
\begin{aligned}
X_1 = \frac{\mathbf{Pmax}(a_1,a_2)}{\mathbf{Pmin}(a_1,a_2)}, & \ X_2 =\frac{\mathbf{Pmax}(a_2,a_3)}{\mathbf{Pmin}(a_2,a_3)}, \text{  and}  & X_3 = \frac{\mathbf{Pmax}(a_1,a_3)}{\mathbf{Pmin}(a_1,a_3)}.    
\end{aligned}
\end{equation*}
If one of the values $\Lambda_{0}(X_1)$,  $\Lambda_{0}(X_2)$ , $\Lambda_{0}(X_3)$ is non-zero then there exists a permutation $\sigma$ of $\{ 1,2,3 \}$ such that
\begin{equation*}
\Lambda_{0}(X_{\sigma(1)}) + \Lambda_{0}(X_{\sigma(2)}) - 2 \Lambda_{0}(X_{\sigma(3)}) \ne 0.     
\end{equation*}
\end{lem}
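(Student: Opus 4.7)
The plan is to argue by contradiction. Suppose that for every permutation $\sigma$ of $\{1,2,3\}$ we have
$$\Lambda_{0}(X_{\sigma(1)}) + \Lambda_{0}(X_{\sigma(2)}) - 2 \Lambda_{0}(X_{\sigma(3)}) = 0.$$
Since this expression is symmetric in $\sigma(1)$ and $\sigma(2)$, only the choice of $\sigma(3) \in \{1,2,3\}$ matters, yielding three linear equations. Subtracting any two of them shows at once that $\Lambda_{0}(X_1) = \Lambda_{0}(X_2) = \Lambda_{0}(X_3)$. Call this common value $\lambda$. By the hypothesis that at least one $\Lambda_{0}(X_i)$ is non-zero we must have $\lambda \neq 0$, so each $X_i$ lies in $\mathbb{F}_q[T]$ and is a (positive) power of an irreducible monic polynomial.

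Next I would exploit the multiplicative relation between the $X_i$. After relabelling the $a_i$ (which permutes the $X_i$ and does not affect the statement), assume $\deg(a_1) < \deg(a_2) < \deg(a_3)$. Then by definition of $\mathbf{Pmax}$ and $\mathbf{Pmin}$,
$$ X_1 = \frac{a_2}{a_1}, \qquad X_2 = \frac{a_3}{a_2}, \qquad X_3 = \frac{a_3}{a_1}, $$
so $X_1 X_2 = X_3$ as elements of $\mathbb{F}_q[T]$. Writing $X_i = P_i^{l_i}$ with $P_i$ irreducible monic and $l_i \geq 1$, unique factorization in the polynomial ring applied to $P_1^{l_1} P_2^{l_2} = P_3^{l_3}$ forces $P_1 = P_2 = P_3 =: P$ and $l_1 + l_2 = l_3$.

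Finally, the equality $\Lambda_{0}(X_1) = \Lambda_{0}(X_3)$ becomes $\log|P|/|P|^{l_1} = \log|P|/|P|^{l_3}$, which gives $l_1 = l_3 = l_1 + l_2$, hence $l_2 = 0$, contradicting $l_2 \geq 1$. This contradiction proves the lemma. The argument is essentially routine; the only point requiring any care is the initial reduction to the equality of the three $\Lambda_{0}(X_i)$, and I expect no real obstacle beyond verifying that the multiplicative identity $X_1 X_2 = X_3$ is the correct relation coming from the $\mathbf{Pmax}/\mathbf{Pmin}$ definitions under the chosen ordering of degrees.
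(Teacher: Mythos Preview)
Your proof is correct and follows essentially the same route as the paper's: both argue by contradiction, reduce to $\Lambda_0(X_1)=\Lambda_0(X_2)=\Lambda_0(X_3)\neq 0$, order the $a_i$ by degree to obtain the multiplicative relation $X_1X_2=X_3$, and then use that a product of two nontrivial prime powers cannot itself be a prime power with the same $\Lambda_0$-value. The only cosmetic difference is that you invoke unique factorization in $\mathbb{F}_q[T]$ directly to force $P_1=P_2=P_3$, whereas the paper argues slightly more concretely that $X_3=P^{k+a}$ and then shows $a=0$; the content is identical.
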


\begin{proof}
We suppose without loss of generality that $|a_1| > |a_2| > |a_3|$ (since $a_1,a_2,a_3$ play a symmetric role in the proof). So in this case, we obtain that $X_1 = \frac{a_1}{a_2}$, $X_2 = \frac{a_2}{a_3}$, $X_3 = \frac{a_1}{a_3}.$  
Assume that for all permutations $\sigma$ of the set $\left\{1,2,3 \right\}$ we have $\Lambda_{0}(X_{\sigma(1)}) + \Lambda_{0}(X_{\sigma(2)}) - 2 \Lambda_{0}(X_{\sigma(3)}) = 0$, then we get $\Lambda_{0}(X_1) = \Lambda_{0}(X_2) = \Lambda_{0}(X_3)$. Since this value is non-zero, it follows that $X_1 = P^k$ and $X_2 = Q^j$, where $k,j \geq 1$ are two integers and $P,Q \in \mathcal{P}_{q}.$ Since $X_3 = X_1 X_2 = P^{k} X_2$ and $\Lambda_{0}(X_3) \ne 0$ then $X_3 = P^{k+a}$ with $a$ a non-negative integer. Hence  
$\frac{\log |P|}{|P|^{k}} = \frac{\log|P|}{|P|^{k+a}}$, which is possible if and only if $a = 0.$ Therefore $a=0$, so $X_1 = X_3$ and $X_2 = 1$, thus $\Lambda_{0}(X_2) = 0.$ However this can not hold since we assumed that this value is non-zero. 
\end{proof}

The last step is to observe what happens to the main contribution of $B_m(a,b)$ when $\deg(a),\deg(b)$ are relatively small compared to $M = \deg(m)$ and $\mathbf{Pmax}(a,b)/\mathbf{Pmin}(a,b)$ equals a prime power. This leads to our proof of Theorem \ref{criteriaExtreme}. 
\begin{proof}[Proof of Theorem \ref{criteriaExtreme}]
Suppose first that neither $(1)$ nor $(2)$ hold. In this case, from Proposition \ref{moreExpBm} we obtain that $B_{m}(a_j,a_k) = O_{A,q} \left( M^2 \right)$ for all $1 \leq j < k \leq r.$  Hence, in the case where the $a_i$ are all quadratic residues modulo $m$ (or all quadratic non-residues modulo $m$), it follows from Corollary \ref{cor3}  that  $\left| \delta_{m;a_1,\dots,a_r} - \frac{1}{r!} \right| \ll_{A,r,q} \frac{M^2}{|m|}.$ On the other hand, if the last condition is not verified then we have from Theorem \ref{forAsymDelta3} that $\left| \delta_{m;a_1,\dots,a_r} - \frac{1}{r!} \right| \ll_{\epsilon,r,q} |m|^{-1/2 + \epsilon}.$ Hence in the two cases, the race $\{m;a_1,\dots,a_r \}$ is not $m$-extremely biased.  

Next, assume that there exist $1 \leq j \ne k \leq r$ such that $a_j = - a_k = a.$ 
Since $r \geq 3$ then there exists $b \in \{a_1,\dots,a_r \}$ such that $b \ne a$ and $b \ne -a.$ By Lemma \ref{lemme91}, it is sufficient to prove that the race $\{m;a,-a,b \}$ is $m$-extremely biased. In order to prove this, we will distinguish two cases. 

\textbf{Case 1:} Assume that $\deg(a) \ne \deg(b)$ then from part $2$ of Proposition \ref{moreExpBm} and Lemma \ref{majorLog3} we have
\begin{equation*}
\begin{aligned}
B_{m}(a,b) &= - \frac{q}{(q-1) \log q} \phi(m) \Lambda_{0}\left(\frac{\mathbf{Pmax}(a,b)}{\mathbf{Pmin}(a,b)} \right) + O_{A,q} \left( M^2 \right) \geq   -\frac{\log 3}{3} \frac{q}{(q-1) \log q} \phi(m) +  O_{A,q} \left( M^2 \right).
\end{aligned}
\end{equation*}
The same formula holds for $B_{m}(b,-a)$. Moreover, since $\deg(a) = \deg(-a)$ it follows from part $1$ of Proposition \ref{moreExpBm} that
\begin{equation}
\label{expreSamedeg}
\begin{aligned}
B_{m}(a,-a) &= - \frac{q^2 + q}{2(q-1)^2} \phi(m) + O_{A,q} \left( M^2 \right).    
\end{aligned}
\end{equation}
Furthermore, from Corollary \ref{cor2} combined with the fact that $\left| C_m(a) \right| = |m|^{o(1)}$ and Lemma \ref{formuleNm} we have
\begin{equation*}
\begin{aligned}
\delta_{m;a,b,-a} &\geq \frac{1}{6} + \frac{ \frac{-2q}{(q-1) \log q}  \frac{\log 3}{3}  + \frac{q^2 + q}{(q-1)^2}}{16 \pi \sqrt{3} M}.   
\end{aligned}
\end{equation*}
Since $\frac{q^2 + q}{(q-1)^2} \geq 1$ and $\frac{q}{(q-1) \log q} \leq \frac{4}{3 \log 4}$ for $q \geq 4$ then $\frac{-2q}{(q-1) \log q}  \frac{\log 3}{3}  + \frac{q^2 + q}{(q-1)^2} \geq 1 - \frac{4 \log 3}{9 \log 2}$ for any $q \geq 4$.  Moreover, when $q=3$ we have that $\frac{-2q}{(q-1) \log q}  \frac{\log 3}{3}  + \frac{q^2 + q}{(q-1)^2} = 2$. Thus in any case 
\begin{equation*}
\begin{aligned}
\delta_{m;a,b,-a} &\geq \frac{1}{6} + \frac{1 - \frac{4 \log 3}{9 \log 2}}{16 \pi \sqrt{3} M},
\end{aligned}
\end{equation*}
which shows that the race $\{m;a,-a,b\}$ is $m$-extremely biased, if $M$ is large enough.  

\textbf{Case 2:} Assume that $\deg(a) = \deg(b).$ Since $b \ne a$ and $b \ne -a$ then from part $1$ of Proposition \ref{moreExpBm} it follows that
\begin{equation*}
B_{m}(a,b),B_{m}(b,-a) = O_{A,q}\left( M^2 \right),
\end{equation*}
and the expression of $B_{m}(a,-a)$ is the same as in \eqref{expreSamedeg}, hence  
\begin{equation*}
\begin{aligned}
\delta_{m;a,b,-a} &\geq \frac{1}{6} + \frac{1}{16 \pi \sqrt{3} M} .   
\end{aligned}
\end{equation*}
Thus the race $\{m;a,-a,b\}$ is $m$-extremely biased, if $M$ is large enough. \\

Now, let us suppose that $\ a_i \ne -a_j$ for all $1 \leq i < j \leq r$ and there exist $b_1,b_2 \in \{ a_1,\dots,a_r \}$ such that $b_1 = P^{k} b_2$ where $P \in \mathcal{P}_{q}$ and $k$ a positive integer. In this case, we obtain from part $2$ of Proposition \ref{moreExpBm} that
\begin{equation}
\label{Bm1}
\begin{aligned}
B_{m}(b_1,b_2) &= - \phi(m) \frac{q}{(q-1) \log q} \frac{\log |P|}{|P|^{k}} + O_{A,q} \left( M^2 \right).  
\end{aligned}
\end{equation}
Since $r \geq 3$ then there exists $b_3 \in \{a_1,\dots,a_r \}$ where $b_3 \ne b_{i}$ for $i = 1,2$. We will distinguish again two cases.

\textbf{Case 1:} Assume that there exist $1 \leq i \ne j \leq 3$ such that $\deg(b_i) = \deg(b_j)$. Without loss of generality, assume that $\deg(b_3) = \deg(b_1).$ Since $b_1 \ne - b_3$ then 
\begin{equation*}
\begin{aligned}
B_{m}(b_1,b_3) &\ll_{A,q} M^2.    
\end{aligned}
\end{equation*}
If $b_3/b_2$ is a prime power then we have $b_3/b_2 = Q^j$ where $j$ is a positive integer and $Q$ is a prime. In this case, from part $2$ of Proposition \ref{moreExpBm} we have 
\begin{equation}
\label{Bm2}
\begin{aligned}
B_{m}(b_2,b_3) &= - \phi(m) \frac{q}{(q-1) \log q} \frac{\log |Q|}{|Q|^{j}} + O_{A,q} \left( M^2 \right).  
\end{aligned}
\end{equation}
Hence by inserting  \eqref{Bm1} and \eqref{Bm2} in Corollary \ref{cor2}, it follows from Lemma \ref{formuleNm} that
\begin{equation*}
\begin{aligned}
\delta_{m;b_1,b_2,b_3} &= \frac{1}{6} - \frac{1}{4 \pi \sqrt{3} (\log q) M} \left( \frac{\log|P|}{|P|^{k}} + \frac{\log |Q|}{|Q|^{j}} \right) \left( 1 + o(1) \right).  
\end{aligned}
\end{equation*}
Therefore the race $\{m;b_1,b_2,b_3 \}$ is $m$-extremely biased. 

\textbf{Case 2:} Assume that $\deg(b_1),\deg(b_2),\deg(b_3)$ are pairwise distinct. Let $S$ be the set of permutation of the set $\{1,2,3 \}.$ Since $\Lambda_{0} \left( b_1/b_2 \right) \ne 0$, then Lemma \ref{lem93} shows that there exists $\sigma \in S_3$ such that 
\begin{equation*}
\Lambda_{0}(X_{\sigma(1)}) + \Lambda_{0}(X_{\sigma(2)}) - 2 \Lambda_{0}(X_{\sigma(3)}) \ne 0,     
\end{equation*}
where
\begin{equation*}
\begin{aligned}
X_1 = \frac{b_1}{b_2} \text{,} \  X_2 &= \frac{\mathbf{Pmax}(b_2,b_3)}{\mathbf{Pmin}(b_2,b_3)} \text{,}    \  X_3 = \frac{\mathbf{Pmax}(b_1,b_3)}{\mathbf{Pmin}(b_1,b_3)}.
\end{aligned}
\end{equation*}
Hence by using Corollary \ref{cor2} and Proposition \ref{moreExpBm} we obtain that
\begin{equation*}
\begin{aligned}
\max_{\nu \in S_3} \left| \delta_{m;b_{\nu(1)},b_{\nu(2)},b_{\nu(3)}} - \frac{1}{6} \right| &\gg_{r,q}  \frac{\left| \Lambda_{0}(X_{\sigma(1)}) + \Lambda_{0}(X_{\sigma(2)}) - 2 \Lambda_{0}(X_{\sigma(3)}) \right|}{M},   
\end{aligned}
\end{equation*}
which implies that the race $\{m;b_1,b_2,b_3 \}$ is $m$-extremely biased. Hence our result follows from Lemma \ref{lemme91}.
\end{proof}

\section{Examples of races where LI is false}
\label{section10}The following section provides some examples of races of primes over function fields. We illustrate that when LI is violated, the races in the function fields case behave differently than in the number field case. \\
\begin{exa}
Let $m = T^2 + T + 1 \in \mathbb{F}_{3}[T]$, we can check that $ m = (T+2)^2$ so $m$
is reducible and $\phi(m) = 9 - 3 = 6$. In this case $(\mathbb{F}_{3}[T]/(m))^{*}$ is cyclic and it is simple to verify that $T+1$ is a generator of $(\mathbb{F}_{3}[T]/(m))^{*}$. More precisely, we have $(T+1)^2 \equiv T \bmod m$, 
$(T+1)^3 \equiv -1 \bmod m \equiv 2 \bmod m$,
$(T+1)^4 \equiv 2T + 2 \bmod m$, $(T+1)^5 \equiv 2(T+1)^2 \bmod m \equiv 2T \bmod m$ and finally $(T+1)^6 \equiv -T^2 - T \bmod m \equiv 1 \bmod m$.

The Dirichlet characters modulo $m$ are then determined entirely by knowing the image of the generator $T+1$. Therefore, the character table of $\left( \mathbb{F}_{3}[T] / (m) \right)^{*}$ is  
\begin{table}[H]
\begin{tabular}{|c|c|c|c|c|c|c|}
        \hline
&$1$ & $2$ & $T$ & $T+1$ & $2T$ & $2T+2$ \\
\hline
$\chi_{1}$ & 1 & -1 & $ \frac{- 1 + \sqrt{3} i}{2}$ & $  \frac{1+\sqrt{3}i}{2}$ & $ \frac{1 -\sqrt{3}i}{2}$ & $ \frac{-1 -\sqrt{3}i}{2}$\\
\hline
$\chi_{2}$ & 1 & 1 & $ \frac{-1 -\sqrt{3}i}{2}$ & $\frac{-1 + \sqrt{3} i}{2}$ & $ \frac{-1 -\sqrt{3}i}{2}$ & $ \frac{-1  + \sqrt{3} i}{2}$\\
\hline
$\chi_{3}$ & 1 & -1 & 1 & -1 & -1 & 1 \\
\hline
$\chi_{4}$ & 1 & 1 & $\frac{- 1 + \sqrt{3} i}{2}$ & $\frac{-1 -\sqrt{3}i}{2}$ & $\frac{- 1 + \sqrt{3} i}{2}$ & $ \frac{-1 -\sqrt{3}i}{2}$ \\
\hline
$\chi_{5}$ & 1 & -1 & $\frac{-1 - \sqrt{3}i}{2}$ & $\frac{1 -\sqrt{3}i}{2}$ & $\frac{1 + \sqrt{3} i}{2}$ & $ \frac{-1 +\sqrt{3}i}{2}$ \\
\hline
\end{tabular}    
\end{table}
Thus by using the fact that we have
$\mathcal{L}(u,\chi_{i}) = 1 + ( \chi_{i}(T) + \chi_{i}(T+1) ) u$ for all $1 \leq i \leq 5$, we obtain that  $\mathcal{L}(u,\chi_{1}) = (\sqrt{3}i) u + 1$, $\mathcal{L}(u,\chi_{2}) = \mathcal{L}(u,\chi_{4}) = -u + 1$,
$\mathcal{L}(u,\chi_{3}) = 1$ and $\mathcal{L}(u,\chi_{5}) = (- \sqrt{3}i) u + 1$.  Hence the only inverse zero of $L$-functions associated to the characters $\chi_2$ and $\chi_{4}$ is $1$ and there are
no inverse zeros of $L$-functions associated to $\chi_{3}$. Finally the only inverse zero of $L$-functions associated to the characters $\chi_1$ and $\chi_{5}$ are respectively $\sqrt{3} e^{-i \frac{\pi}{2}}$ and $\sqrt{3} e^{i \frac{\pi}{2}}$.
Hence the only inverse zero of all the $L$-functions above whose modulus is $\sqrt{3}$ and with argument between $0$ and $\pi$ is $\gamma_{5} = \sqrt{3} e^{i \frac{\pi}{2}}$, thus LI does not hold. We have from \cite[Theorem $2.5$]{BCha} that for all $a \in \mathbb{F}_{3}[T]$ such that $(a,m) =1$ 
\begin{equation}
\label{formEma}
\begin{aligned}
E_{m,a}(X)= -C_m(a) \mathcal{B}_{q}(X) &- \sum_{\chi \ne \chi_{0}} \overline{\chi}(a) \sum_{\gamma_{\chi}} e^{i \theta(\gamma_{\chi}) X} \frac{\gamma_{\chi}}{\gamma_{\chi} - 1} + o(1),   
\end{aligned}
\end{equation}
where we denote by $\gamma_{\chi} = \sqrt{3} e^{i \theta(\chi)}$ any inverse zero of $\mathcal{L}(u,\chi)$ whose modulus is $\sqrt{3}$ and
$X \in \mathbb{N}^{*}.$
In this case, we can easily verify that $E_{m,a}(X+4) = E_{m,a}(X)$ for $X \in \mathbb{N}^{*}$. Then by using the previous formula we get that \\
\begin{table}[H]
    \centering
    \begin{tabular}{|c|c|c|c|c|}
       \hline
        X mod 4 & $E_{m,T}(X)$ (mod $o(1)$) & $E_{m,T+1}(X)$ (mod $o(1)$) &  $E_{m,2T}(X)$ (mod $o(1)$) & $E_{m,2}(X)$ (mod $o(1)$) \\
        \hline
        1  & $\sqrt{3}/2$ & $\sqrt{3}$ & $-\sqrt{3}/2$ & $\sqrt{3}$ \\
        \hline
        2 & $-3/2$ & 3 & $3/2$ & $0$ \\
        \hline
        3 & $-3\sqrt{3}/2$ & 0 & $3 \sqrt{3}/2$ & $0$ \\
        \hline
        4 & $-3/2$ & 0 & $3/2$ & $3$  \\
        \hline
    \end{tabular}
    \caption{}
    \label{tab:table2}
\end{table}
We deduce then that $\delta_{m,T+1,2T,2} = \frac{1}{4}$. Notice that 
$2 \equiv -1 \bmod m$ so $2T \equiv - T \bmod m$ and we have that $2 T^2 \equiv -T^2 \bmod m \equiv T+1 \bmod m$  so $- T^3 \equiv T (T+1) \bmod m \equiv -1 \bmod m$. Hence by taking $\rho = T \in \mathbb{F}_{3}[T]$, $a_1 = 2 \in \mathbb{F}_{3}[T] , a_2 = 2T \in \mathbb{F}_{3}[T]$ and $a_3 = T+1 \in \mathbb{F}_{3}[T]$, we deduce the following : 
\begin{equation*}
\begin{aligned}
\rho^3 \equiv 1 \bmod m,\quad a_2 \equiv a_1 \rho \bmod m,  \quad a_3 \equiv a_1 \rho^2 \bmod m, \quad \text{and} \quad \rho \not\equiv 1 \bmod m.
\end{aligned}
\end{equation*}
On the other hand we showed that $$\delta_{m,a_3,a_2,a_1} = \frac{1}{4} \ne \frac{1}{6}.$$ 
This implies that the races $\{ m;a_1,a_2,a_3 \}$ is biased. Therefore, \cite[Theorem $6.1$]{BCha} does not hold when LI is false.
\end{exa}
\begin{exa}
\label{example2}
With the same $m$ we can find $a_1, a_2 \in \mathbb{F}_{3}[T]$ representing distinct classes in $(\mathbb{F}_{3}[T]/(m))^{*}$ such that $\delta_{m,a_1,a_2} = 0.$ 
In fact, by Table \ref{tab:table2} we can see that $E_{m,T}(X) < E_{m,T+1}(X)$ for all large enough positive integers $X$, which implies 
 $\sum\limits_{N=1}^{X} \pi_q(T,m,N) < \sum\limits_{N=1}^{X} \pi_q(T+1,m,N)$.
 Thus $$\delta_{m,T,T+1} = 0.$$ In particular this trivially implies  $\delta_{m,T,T+1,2T} = 0$. 
\begin{rem}
In $1914$, Littlewood proved that the difference between the number of primes of the form $4n+3$ up to $x$ and the number of primes of the form $4n+1$ up to $x$ changes signs for infinitely many positives integers $x$. The Example \ref{example2} disproves a generalization of Littlewood's theorem in function fields since $\sum\limits_{N=1}^{X} \pi_q(T,m,N) < \sum\limits_{N=1}^{X} \pi_q(T+1,m,N)$ for all large enough positive integers $X.$
\end{rem}
Furthermore, choosing $a_1 = T+1$, $a_2 = T$ and $a_3 = 2$ in $\left( \mathbb{F}_{3}[T] / (m) \right)^{*}$ with $ m = T^2 + T + 1 \in \mathbb{F}_{3}[T]$, we deduce from Table \ref{tab:table2} that $\delta_{m;a_1,a_2} = 1$ but $\delta_{m;a_1,a_2,a_3} = 0$. 
\end{exa}
\begin{exa}
Let $m = T (T+1) (T+2) \in \mathbb{F}_{3}[T].$ We give the table of all non-principal Dirichlet characters modulo $m$ below: \\

\begin{tabular}{|c|c|c|c|c|c|c|c|c|}
    \hline
 &  $1$ & $2$ & $T^2 +1$ & $T^2+T+2$ & $T^2+ 2T + 2$ & $2 T^2 + 2$ & $2T^2 + T + 1$ & $2T^2+ 2T + 1$ \\
\hline 
$\chi_{1}$  & $1$ & $-1$ & $-1$ & $1$ & $-1$ & $1$ & $1$ & $-1$ \\
\hline
$\chi_{2}$  & $1$ & $1$ & $1$ & $-1$ & $-1$ & $1$ & $-1$ & $-1$ \\
\hline
$\chi_{3}$ & $1$ & $-1$ & $-1$ & $-1$ & $1$ & $1$ & $-1$ & $1$ \\
\hline
$\chi_{4}$ & $1$ & $-1$ & $1$ & $-1$ & $-1$ & $-1$ & $1$ & $1$ \\
\hline
$\chi_{5}$ & $1$ & $1$ & $-1$ & $1$ & $-1$ & $-1$ & $-1$ & $1$ \\
\hline
$\chi_{6}$ & $1$ & $-1$ & $1$ & $1$ & $1$ & $-1$ & $-1$ & $-1$ \\
\hline
$\chi_{7}$ & $1$ & $1$ & $-1$ & $-1$ & $1$ & $-1$ & $1$ & $-1$ \\
\hline
\end{tabular}
\medskip
\\

We then directly deduce that for all $1 \leq i \ne 6 \leq 7$  we have $\mathcal{L}(u,\chi_{i}) = 1 - u^2$ and $\mathcal{L}(u,\chi_{6}) = 1 + 3 u^2 = (1 - \sqrt{3} i u) (1 + \sqrt{3} i u)$. Thus the only inverse zero of all $L$-functions above whose modulus is $\sqrt{3}$ and with argument between $0$ and $\pi$ is $\sqrt{3} e^{i \frac{\pi}{2}}.$ Therefore by using \eqref{formEma} we obtain the following table: \\
\begin{center}
    \begin{tabular}{|c|c|c|c|}
     \hline
 X mod 4 & $E_{m,1}(X)$ (mod $o(1)$) & $E_{m,T^2+1}(X)$ (mod $o(1)$)  & $E_{m,2T^2 + 2}(X)$ (mod $o(1)$) \\
\hline
1  & $- 4 \sqrt{3}$ & $0$ & $\sqrt{3}$  \\
\hline
2 & $- 9$ & $3$ & $0$  \\
\hline
3 & $-3 \sqrt{3}$ & $\sqrt{3}$  & $0$  \\
\hline
4 & $- 12$ & $0$ & $3$   \\
\hline
    \end{tabular}
\end{center}

\medskip
Therefore $$\delta_{m,1,T^2+1} = \delta_{m,1,2T^2+2} = 0,$$ and in this case we
can prove easily that if $a$ is a quadratic non-residue modulo $m$ then
$\delta_{m,1,a} = 0.$
\end{exa}

\section*{Acknowledgments}
I would like to thank my advisor Youness Lamzouri for his many helpful advises, remarks and suggestions and for his encouragement. I thank also my co-advisor Manfred Madritsch for his remarks.

\end{document}